\documentclass[reqno]{amsart}
\usepackage[top=1.1in, bottom=1in, left=1in, right=1in]{geometry}
\usepackage{amsfonts}
\usepackage{amssymb}
\usepackage{amsthm}
\usepackage{amsmath}
\usepackage{mathrsfs}
\usepackage{color}
\usepackage{enumerate}
\usepackage[numbers,sort&compress]{natbib}
\usepackage{pdfsync}
\usepackage{esint}
\usepackage{graphicx}
\usepackage{float}
\usepackage{caption}
\usepackage{subfigure}
\usepackage{stmaryrd}
\usepackage[titletoc]{appendix}

\allowdisplaybreaks

\usepackage{tikz} 
\usetikzlibrary{calc}
\usetikzlibrary{intersections}
\usetikzlibrary{patterns}
\usetikzlibrary{positioning, shapes.geometric}

\usepackage[colorlinks,
linkcolor=black,       
anchorcolor=blue,     
citecolor=blue,        
]{hyperref}

\makeatother

\numberwithin{equation}{section}

\newtheorem{theorem}{Theorem}[section]
\newtheorem{definition}[theorem]{Definition}

\newtheorem{lemma}[theorem]{Lemma}
\newtheorem{remark}[theorem]{Remark}

\newtheorem{proposition}[theorem]{Proposition}

\numberwithin{equation}{section}

\newcommand*{\Id}{\ensuremath{\mathrm{Id}}}

\newcommand{\aint}{{\fint}}

\newcommand{\T}{{\mathbb{T}}}

\newcommand{\rr}{\mathring{R}}
\newcommand{\ru}{\mathring{R}_q^u}
\newcommand{\rb}{\mathring{R}_q^B}
\newcommand{\p}{\partial}
\renewcommand{\P}{\mathbb{P}}
\renewcommand{\div}{{\mathrm{div}\,}}
\newcommand{\curl}{{\mathrm{curl}}}
\renewcommand{\u}{{u_q}}
\newcommand{\h}{{B_q}}
\renewcommand{\d}{{\rm d}}

\newcommand{\norm}[1]{\lVert#1\rVert}

\newcommand{\la}{\lambda_q}
\newcommand{\laq}{\lambda_{q+1}}

\newcommand{\rw}{\wt r_{\perp}}
\newcommand{\rs}{r_{\perp}}
\newcommand{\rp}{r_{\parallel}}

\newcommand{\wdc}{\wt D_{(k)}^c}

\def\cq{\chi_{q+1}}
\def\ut{\underline{T}}
\def\ot{\overline{T}}
\def\us{\underline{S}}
\def\os{\overline{S}}

\newcommand{\R}{{\mathbb R}}
\newcommand{\lbb}{\overline{\lambda}}

\def\a{{\alpha}}
\def\vf{{\varphi}}
\def\lbb{\lambda}
\def\wt{\widetilde}
\def\9{{\infty}}
\def\ve{{\varepsilon}}
\def\na{{\nabla}}
\def\bbr{{\mathbb{R}}}

\def\({\left(}
\def\){\right)}
\def\tq{{\theta_q}}
\def\tq1{{\theta_{q+1}}}

\def\Tm{{T^{\rm med}_{q+1}}}
\def\Sm{{S^{\rm med}_{q+1}}}
\newcommand{\cI}{{\mathcal{I}}}

\newcommand{\cC}{{\mathcal{C}}}
\newcommand{\cH}{{\mathcal{H}}}
\newcommand{\cL}{{\mathcal{L}}}
\newcommand{\cW}{{\mathcal{W}}}

\begin{document}
	
\title[Global dissipative solutions of the NSE and MHD] {Global dissipative solutions of the 3D Naiver-Stokes and MHD equations}

\author{Alexey Cheskidov}
\address{Institute for Theoretical Sciences, Westlake University, China.}
\email[Alexey Cheskidov]{cheskidov@westlake.edu.cn}
\thanks{}

\author{Zirong Zeng}
\address{School of Mathematics and Key Laboratory of MIIT, Nanjing University of Aeronautics and Astronautics, China.}
\email[Zirong Zeng]{beckzzr@nuaa.edu.cn}
\thanks{}

\author{Deng Zhang}
\address{School of Mathematical Sciences, CMA-Shanghai, Shanghai Jiao Tong University, China.}
\email[Deng Zhang]{dzhang@sjtu.edu.cn}
\thanks{}

\keywords{Convex integration, Navier-Stokes equations; MHD equations; Dissipative solutions.}
    
\subjclass[2020]{35A02,\ 35D30,\ 76W05.}

\begin{abstract}
For any divergence free initial data in $H^\frac12$, we prove the existence of infinitely many dissipative solutions to both the 3D Navier-Stokes and MHD equations, 
whose energy profiles are continuous and decreasing on $[0,\infty)$. If the initial data is only $L^2$, our construction yields infinitely many solutions with continuous energy, but not necessarily decreasing. Our theorem does not hold in the case of zero viscosity as this would violate the weak-strong uniqueness principle due to Lions. This was achieved by designing a convex integration scheme that takes advantage of the dissipative term.
\end{abstract}

\maketitle

{\small
\tableofcontents
}

\section{Introduction and main results}

We consider the 3D Navier-Stokes equations (NSE) 
\begin{equation}\label{equa-NSE}
	\left\{\aligned
	&\p_t u +(u\cdot \nabla )u = -\nabla p + \nu \Delta u,  \\
	&\div u = 0,
	\endaligned
	\right.
\end{equation}  
where $u:[0,\9)\times\T^3\rightarrow \R^3$ is the velocity field,
$p:[0,\9)\times\T^3\rightarrow \R$ is the pressure of the fluid,
and $\nu$ is the positive viscosity coefficient.

In 1934 Leray introduces a notion of weak solutions to the 3D Navier-Stokes equations, and proved that for any divergence-free initial datum with finite energy, there exists a weak solution satisfying the energy inequality
\begin{align}\label{lh-energy}
\frac12 \|u(t)\|_{L_x^2}^2 \leq \frac12 \|u(t_0)\|_{L_x^2}^2 - \nu \int_{t_0}^t \|\nabla u(\tau)\|_{L_x^2}^2 \, d\tau,
\end{align}
for all $t \geq t_0$, $t_0 \in [0, \infty) \setminus Ex$, where the Lebesgue measure of $Ex$ is zero and $0 \notin Ex$, see \cite{leray1934}. Such weak solutions will be referred to as Leray-Hopf solutions.

Here $Ex$ is the exceptional set of initial times $t_0$ starting from which the energy inequality does not hold (for at least one $t > t_0$). It is not known whether $Ex$ is empty. It is easy to see that
\[
\limsup_{t \to t_0+} \|u(t)\|_{L_x^2}^2 > \|u(t_0)\|_{L_x^2}^2, \qquad \forall t_0 \in Ex
\]
i.e., the energy has to instantaneously raise after each such exceptional $t_0$. This undesirable from physical point of view behavior can be ``fixed'' at a single chosen $t_0 \in Ex$ by restarting the solution at that time, but it is not known whether such a gain of the energy can be removed at every exceptional time, i.e., how to construct a solution with $Ex=\emptyset$. Thus the global existence of Leray-Hopf solutions with decreasing energy remains an open question.

Starting from the seminal paper \cite{dls09}, where De Lellis and Sz\'{e}kelyhidi introduced the method of convex integration to the mathematical fluid dynamics, and the groundbreaking paper \cite{bv19b} by Buckmaster and Vicol, convex integration became another way to construct global weak solutions of the NSE. While Leray-Hopf solutions are out of reach with this technique, it allows one to construct solutions with some desirable properties displayed by turbulent fluid flows, such as the energy cascade. In addition, solutions constructed via convex integration can actually have continuous energy profiles. Indeed, for arbitrary divergence-free finite energy initial data, solutions continuous in $L_x^2$ for positive time were constructed by Buckmaster, Colombo, and Vicol \cite{bcv21}, and recently solutions continuous in $L_x^2$ for all time (including the initial time) were constructed by the authors in \cite{czz24}. In \cite{bms21}, Burczak, Modena, and Sz\'{e}kelyhidi constructed non-unique weak solutions with arbitrary smooth energy profiles, which can be in particular decreasing, but only for the initial data constructed by the scheme itself, i.e., not prescribed.

We will call a weak solution {\it dissipative} if its energy is continuous and decreasing on $[0, \infty)$. In this paper we prove the global existence and non-uniqueness of dissipative weak solutions to the 3D NSE and MHD equations for any divergence-free initial data in a critical space, such as $H_x^{\frac12}$. For any divergence-free initial data in $L_x^2$, our construction gives infinitely many weak solutions with continuous energy, but not necessarily dissipative. 

The notion of {\it dissipative solutions} of the Euler equations was introduced by Lions in \cite{L96}. A weakly continuous in $L_x^2$ weak solution $u(t)$ of the Euler equations with energy $\|u(t)\|_{L_x^2}^2$ at positive time not exceeding the initial energy $\|u(0)\|_{L_x^2}^2$, is a dissipative solution (see \cite{dls10}). Lions' weak-strong uniqueness theorem says that a continuous in $L^2$ solution of the Euler equations $u(t)$ with $\nabla u + \nabla u^T \in L^1_tL^\infty_x$ is unique in the class of dissipative solutions. 

In this paper we show that smooth solutions of the Navier-Stokes equations are not unique in the analogous class of dissipative solutions - weak solutions with continuous decreasing energy. Note that constructed dissipative solutions can not satisfy the energy inequality, as the weak-strong uniqueness holds in the Leray-Hopf class. Also, our non-uniqueness theorem can not hold in the case $\nu=0$ due to Lions' weak-strong uniqueness principle. To achieve this, we design a scheme that takes advantage of the dissipative term. Indeed, standard convex integration schemes treat $\nu \Delta u$ as an error, and hence these schemes go through in the case of zero viscosity $\nu=0$ as well. 

In this paper, the dissipative solutions to the 3D NSE 
are defined as follows.

\begin{definition}[Dissipative solution]\label{def-dissipative}
    We say that a weak solution $u(t)$ to the 3D NSE \eqref{equa-NSE} is dissipative on $[0,+\infty)$ if its energy $\|u(t)\|_{L_x^2}$ is continuous and decreasing on $[0,+\infty)$.
\end{definition}

\medskip 
The first main result of the present work 
is the construction of dissipative solutions 
to the 3D NSE. 

\begin{theorem}[Dissipative solutions to NSE]\label{thm-adm-nse}
    Given any divergence-free initial data in $H_x^{\frac12}(\mathbb{T}^3)$, there exist infinitely many dissipative solutions to 3D NSE \eqref{equa-NSE} on $[0,\infty)$.    
\end{theorem}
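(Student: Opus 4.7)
The plan is to run a convex integration iteration for the Navier--Stokes--Reynolds system
\begin{equation*}
\p_t u_q + \div(u_q \otimes u_q) - \nu \Delta u_q + \nabla p_q = \div \mathring{R}_q, \qquad \div u_q = 0,
\end{equation*}
producing a sequence $(u_q, \mathring{R}_q)$ with $\|\mathring{R}_q\|_{L^1_{t,x}} \to 0$ and $u_q \to u$ in $C_t L^2_x$, so that the limit $u$ is a weak solution of \eqref{equa-NSE}. The novelty, compared to Buckmaster--Vicol type schemes that treat $\nu\Delta u_q$ as part of the Reynolds error, is that the iteration must be designed to use the viscous dissipation \emph{constructively}: the perturbations $w_{q+1}=u_{q+1}-u_q$ need to be chosen so that the energy profile $t\mapsto \|u_q(t)\|_{L^2}^2$ is continuous and non-increasing at every level $q$, with these properties passing to the limit.

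First I would handle the initial layer. Since the datum lies in the critical space $H_x^{\frac12}$, initialize $u_0(t,\cdot)$ using the heat flow $e^{\nu t \Delta}u_{\mathrm{in}}$ (plus a small correction absorbed into $\mathring{R}_0$); this is continuous in $L^2$ down to $t=0$ precisely because $u_{\mathrm{in}} \in H_x^{\frac12}$. At each inductive step I would multiply $w_{q+1}$ by a temporal cutoff $\chi_{q+1}(t)$ vanishing on a shrinking neighborhood $[0,t_{q+1}]$ of the origin, so that $u_{q+1}(0)=u_q(0)=\cdots=u_{\mathrm{in}}$, guaranteeing that the prescribed data is matched exactly in the limit while the initial energy equals $\|u_{\mathrm{in}}\|_{L^2}^2$.

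For the inductive step, given $(u_q, \mathring{R}_q)$, I would mollify at space-time scale $\ell_q$ and build $w_{q+1}$ as a superposition of intermittent building blocks (e.g.\ intermittent jets or spatiotemporal Mikado-type flows) oscillating at frequency $\lambda_{q+1}$, tuned so that the low-frequency part of $w_{q+1}\otimes w_{q+1}$ cancels $\mathring{R}_{q,\ell}$, with amplitude $\|w_{q+1}\|_{L^2_x}\lesssim \delta_{q+1}^{1/2}$. The key new ingredient is to exploit $-\nu\Delta w_{q+1}$: on frequency $\lambda_{q+1}$ it contributes a dissipation rate of order $\nu\lambda_{q+1}^2\|w_{q+1}\|_{L^2}^2$, which, for superexponentially growing $\lambda_q$, is vastly larger than $\delta_{q+1}$. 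By weighting the temporal profile of the building blocks so that the dissipation integral $\nu\int_s^t\|\nabla u_{q+1}\|_{L^2}^2\,d\tau$ at each level absorbs both the energy $\delta_{q+1}$ introduced by the new perturbation and the high-frequency cross terms with $u_q$, I can ensure $\|u_{q+1}(t)\|_{L^2}^2 \le \|u_{q+1}(s)\|_{L^2}^2$ for $s\le t$, up to errors summable in $q$. Passing to the limit using $u_q\to u$ in $C_tL^2_x$ then yields continuity and monotonicity of $\|u(t)\|_{L^2}^2$.

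The main obstacle is achieving all three properties simultaneously: (i) exact matching of the prescribed critical-regularity data; (ii) continuity of the energy at $t=0^+$, which is delicate because the initial datum is only in $H_x^{\frac12}$ and the bulk of the perturbations live close to the initial time; and (iii) monotonicity of the energy on all of $[0,\infty)$, which requires the viscous dissipation of $u_{q+1}$ to dominate the intermittent perturbation's self-energy \emph{uniformly in time} and in $q$. The balance of (ii) and (iii) against the usual convex-integration constraints (frequency separation, stress cancellation, $C_tL^2_x$ convergence) will force a careful choice of intermittency parameter $r_\perp$, of the temporal cutoffs $\chi_{q+1}$, and of the superexponential scale $\lambda_q$, all tuned to the viscous rate $\nu\lambda_{q+1}^2$; I expect this tuning, rather than any single estimate, to be the technical core of the proof.
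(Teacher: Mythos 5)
Your high-level plan (convex integration for NSE-Reynolds, cutoffs vanishing near $t=0$ to pin the initial datum, and some new use of viscosity) heads in the right general direction, but the proposed mechanism for monotone energy cannot work as written, and it is not the mechanism the paper uses.

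The central issue is your claim that the viscous dissipation $\nu\lambda_{q+1}^2\|w_{q+1}\|_{L^2}^2$ of the new perturbation can be made to ``absorb'' the energy $\delta_{q+1}$ that the perturbation injects, yielding $\|u_{q+1}(t)\|_{L^2}^2\le\|u_{q+1}(s)\|_{L^2}^2$. At each finite level $u_{q+1}$ solves an NSE--Reynolds system, and its energy balance reads
\begin{equation*}
\frac{d}{dt}\frac12\|u_{q+1}(t)\|_{L^2}^2=-\nu\|\nabla u_{q+1}(t)\|_{L^2}^2-\int_{\mathbb{T}^3}\mathring{R}_{q+1}:\nabla u_{q+1}\,dx .
\end{equation*}
The Reynolds error term scales like $\|\mathring{R}_{q+1}\|_{L^1_x}\|\nabla u_{q+1}\|_{L^\infty_x}\sim\delta_{q+2}\lambda_{q+1}^{c}$, which is of comparable (or larger) magnitude than the dissipation of $w_{q+1}$ itself, and can have either sign. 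So there is no a priori monotonicity at finite $q$ coming from the Laplacian. More fundamentally, the limit weak solution cannot satisfy the energy inequality at all: if it did, it would be Leray--Hopf and weak--strong uniqueness would force it to coincide with the unique strong solution emanating from $H^{1/2}$ data, contradicting non-uniqueness. Thus the energy decay of the constructed solution \emph{cannot} be produced by the dissipation term of $u$ (which, for these intermittent solutions, is not even finite); it has to be prescribed directly. The viscosity must enter somewhere else.

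The paper exploits viscosity in a different, and genuinely necessary, way. It replaces your heat-flow initialization by the solution $(u_0,B_0)$ of a ``$\Lambda$-NSE/MHD'' system, where the nonlinearity acts only on Fourier modes below a time-dependent cutoff $\Lambda(t)$ that blows up both as $t\to0^+$ (like $E_* t^{-1/3}$) and as $t\to T^-$. This produces a smooth relaxed solution whose Reynolds error $\mathring{R}_0$ vanishes at \emph{both} endpoints $0$ and $T$, which is what permits the telescoping choice of backward times $T_q\downarrow 0$ and forward times $S_q\uparrow T$. The energy $\|u_0(t)\|_{L^2}^2$ is genuinely decreasing in $t$ because $(u_0,B_0)$ satisfies a true energy balance for the $\Lambda$-system (this is where $\nu>0$ is used in an essential way). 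The prescribed energy $e(t)$ is then taken of the form $\|u_0(t)\|_{L^2}^2+f(t)$, where $f$ is a small piecewise-linear profile controlled by $\|\mathring{R}_0\|_{L^1}$; the quantitative point (Lemma~\ref{lem-im-decay}) is that, for $H^3_x$ data, $\|\mathring{R}_0(t)\|_{L^1}\lesssim E_*^{-3}t$, so by taking $E_*$ large the increase rate of $f$ near $t=0$ is dominated by the dissipation-driven decrease rate of $\|u_0(t)\|_{L^2}^2$, and $e(t)$ stays decreasing. Finally, after $T$ the $\Lambda$-system coincides with NSE/MHD and the solution is already a smooth dissipative solution, so no further perturbations are needed there. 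You would also need the local-regularity bootstrap from $H^{1/2}_x$ to $H^3_x$ on a short initial interval before this machinery applies, which your proposal does not address. In short: the role of viscosity is to make the \emph{approximate initial solution} dissipate fast enough relative to the size of its own Reynolds error, not to dissipate the energy of the convex-integration perturbations.
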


\begin{remark}
Theorem~\ref{thm-adm-nse} is stated for initial data in $H_x^{\frac12}(\mathbb{T}^3)$, but is also holds for any initial data for which there exists a local smooth solution. For instance, local regularity was proved by Fujita and Kato \cite{FK64} for $H_x^{\frac12}(\mathbb{T}^3)$ initial data, and by Koch and Tataru \cite{KT02, MS06} for $BMO^{-1}$ initial data. 
\end{remark}

Our proof actually applies to a more general model, the 3D magnetohydrodynamic (MHD) system on the torus $\T^3:=[0,1]^3$, 
\begin{equation}\label{equa-MHD}
	\left\{\aligned
	&\p_t u - \nu_1 \Delta u+(u\cdot \nabla )u -(B\cdot \nabla )B + \nabla P =0,  \\
	&\p_t B- \nu_2\Delta B + (u\cdot \nabla )B- (B\cdot \nabla )u  =0,
	\endaligned
	\right.
\end{equation}
supplemented with the incompressibility conditions
\begin{align*}
	& \div u = 0, \quad \div B = 0.
\end{align*}
In this problem, $u$ and $B: [0,\9)\times\T^3\rightarrow \R^3$ are the velocity and magnetic fields, respectively,
$P:[0,\9)\times\T^3\rightarrow \R$ stands for the pressure of fluids,
$\nu_1, \nu_2\ (> 0)$ are the viscous and resistive coefficients, respectively. 

Similarly to Definition~\ref{def-dissipative}, we say that a global weak solution to the MHD system is dissipative if its energy 
\[
e(t):= \|u(t)\|_{L_x^2}^2+\|B(t)\|_{L_x^2}^2
\]
is continuous and decreasing on $[0,\infty)$. The existence and non-uniqueness of dissipative solutions to the 3D MHD system is formulated in the theorem below.

\begin{theorem}[Dissipative solutions to MHD]\label{thm-adm-mhd}
    Given any divergence-free initial data in $\cH^{\frac12}(\mathbb{T}^3)$, there exist infinitely many dissipative solutions to 3D MHD equations \eqref{equa-MHD} on $[0,\infty)$.
\end{theorem}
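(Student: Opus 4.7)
The plan is to mirror the convex integration scheme that produced Theorem~\ref{thm-adm-nse}, extending it to the coupled MHD system and using the critical Fujita-Kato well-posedness for $\cH^{\frac12}$ data to match the prescribed datum exactly at $t=0$. Given $(u_0,B_0)\in\cH^{\frac12}(\T^3)$ with $\div u_0=\div B_0=0$, the MHD analogue of Fujita-Kato yields a unique smooth local solution $(u_\star,B_\star)$ on some $[0,T^\star]$; by the classical MHD energy identity $\tfrac{d}{dt}(\|u_\star\|_{L^2_x}^2+\|B_\star\|_{L^2_x}^2) = -2\nu_1\|\na u_\star\|_{L^2_x}^2-2\nu_2\|\na B_\star\|_{L^2_x}^2$, the energy $e_\star(t)$ is $C^\9$ and strictly decreasing. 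I would extend $(u_\star,B_\star)$ to a global weak solution by convex integration while forcing the total energy to stay continuous and non-increasing on $[0,\9)$.

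First I would fix a continuous strictly decreasing target profile $e\colon[0,\9)\to(0,\9)$ with $e\equiv e_\star$ on $[0,T^\star/2]$; varying $e$ on $(T^\star/2,\9)$ delivers the claimed infinitely many dissipative solutions. Next I would run an inductive scheme producing MHD-Reynolds iterates satisfying
\begin{align*}
 \p_t u_q-\nu_1\Delta u_q+\div(u_q\otimes u_q)-\div(B_q\otimes B_q)+\na P_q &= \div\ru,\\
 \p_t B_q-\nu_2\Delta B_q+\div(u_q\otimes B_q)-\div(B_q\otimes u_q) &= \div\rb,
\end{align*}
with $\div u_q=\div B_q=0$ and $(u_q,B_q)=(u_\star,B_\star)$, $\ru=\rb=0$ on $[0,T^\star/4]$ for every $q$. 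At stage $q+1$, perturbations $w_{q+1}^u,w_{q+1}^B$ built from intermittent Mikado-type building blocks, temporally localized, would cancel the stresses to order $\deq$ in $L^1_{t,x}$, close the gap $e(t)-\|u_q\|_{L^2_x}^2-\|B_q\|_{L^2_x}^2$ in a controlled way, and remain Cauchy in $C_tL^2_x$ so that the limiting energy is continuous and equal to $e$.

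The decisive new ingredient, signalled by the abstract and responsible for the failure at $\nu_1=\nu_2=0$ via Lions' weak-strong uniqueness, is to exploit rather than discard the dissipative terms $\nu_i\Delta$. Because the perturbations oscillate at the high frequency $\laq$, the dissipation contributes $2\nu_i\laq^2\|w_{q+1}\|_{L^2_x}^2$ to the energy budget, which is huge. Accordingly amplitude, frequency, and temporal cutoffs of the building blocks are tuned against the fixed $\nu_1,\nu_2$ so that the extra energy injected by $w_{q+1}^{u,B}$ is immediately bled off by this dissipation, producing a strictly non-positive net contribution at every step; that is exactly what elevates continuity of the energy (already available in \cite{czz24}) to monotonicity.

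The main obstacle is reconciling three requirements at once: (i) matching the prescribed strictly decreasing $e(t)$ to exponentially small error on $(T^\star/2,\9)$ while preserving all previously built perturbations, which forces the temporal cutoffs and the terminal small-error region to be handled with care; (ii) matching the $\cH^{\frac12}$ initial datum exactly, which requires every $w_{q+1}^{u,B}$ to vanish identically on $[0,T^\star/4]$ and makes the Fujita-Kato solution essential for avoiding an energy jump at $t=0$; and (iii) implementing the dissipation-aware cancellation simultaneously for the four tensor stresses $u\otimes u$, $B\otimes B$, $u\otimes B$, $B\otimes u$ of the coupled system, which demands a joint design of $w_{q+1}^u$ and $w_{q+1}^B$ whose quadratic self- and cross-interactions kill $\ru$ and $\rb$ while their linear interaction with $\nu_i\Delta$ retains the correct sign. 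Point (iii) is the technical heart; once the building-block algebra is settled, the passage to the limit and the verification that the limit is a weak solution of \eqref{equa-MHD} with continuous decreasing energy $e$ follows the analogous step of Theorem~\ref{thm-adm-nse}.
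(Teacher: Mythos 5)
Your central mechanism---that the dissipation $\nu_i\Delta w_{q+1}$, because $w_{q+1}$ oscillates at frequency $\laq$, contributes $-2\nu_i\laq^2\|w_{q+1}\|_{L^2_x}^2$ to the energy budget and thereby ``bleeds off'' the energy injected by the perturbation---misidentifies how the iteration works. The perturbation $w_{q+1}$ is a \emph{prescribed} profile added to $u_q$, not a field evolved by the heat semigroup; at every fixed time $t$ one has $\|u_{q+1}(t)\|_{L^2_x}^2\approx\|u_q(t)\|_{L^2_x}^2+\|w_{q+1}(t)\|_{L^2_x}^2$, and there is no time scale on which dissipation could remove the added energy before it is counted. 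What $\nu\Delta w_{q+1}$ actually produces is a \emph{linear Reynolds error} $\mathcal{R}^u(\nu\Delta w_{q+1})$ which the spatial intermittency is designed to make small; enlarging $\laq$ or the intermittency would make this error \emph{worse}, not cancel energy. Thus the step ``tune amplitude/frequency/cutoffs against $\nu_1,\nu_2$ so the net energy contribution is non-positive at every stage'' has no mathematical content in your scheme and could not be carried out.

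The paper's actual use of viscosity is of a different nature, and your proposal omits its load-bearing ingredients. One first replaces classical MHD by the $\Lambda$-MHD approximation \eqref{equa-mhd-2}, in which only frequencies below $\Lambda(t)=E_*t^{-1/3}$ are advected and $\Lambda$ blows up at both $t=0$ and $t=T$. This gives a \emph{smooth background} $(u_0,B_0)$ with a genuine energy balance \eqref{eq-e-2} and initial Reynolds/magnetic stresses \eqref{r0u}--\eqref{r0b} vanishing at both endpoints (Lemma~\ref{Lem-Rey-decay}), which is what enables the telescoping backward/forward time sequences $\{T_q\},\{S_q\}$. Dissipation enters monotonicity only through this background: by Poincar\'e, $\frac{\d}{\d t}\|(u_0,B_0)\|_{\cL^2_x}^2\leq -\nu C\|(u_0,B_0)\|_{\cL^2_x}^2<0$ at a fixed rate, while the growth rate of the perturbation contribution $f(t)$ in the energy channel \eqref{e-f-intro} is of order $E_*^{-3}$ by the improved stress decay of Lemma~\ref{lem-im-decay}; taking $E_*$ large makes the background dissipation dominate, yielding \eqref{oe-de}--\eqref{ue-de}. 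Your proposal has no analogue of the $\Lambda$-MHD step, no vanishing of the stress at both ends of a finite interval, and no improved decay estimate to trade against the background dissipation, so the monotonicity argument cannot be closed. Fujita--Kato does appear, but only as in Subsection~\ref{Subsec-dis-energy}: to upgrade $\cH^{\frac12}_x$ data to $\cH^3_x$ data at a small positive time $T'$, after which the $\Lambda$-MHD/telescoping construction is run on $[T',\infty)$---it is not the vehicle for the convex integration itself.
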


\begin{remark}
We note that, similar to the case of Euler equations \cite{L96}, one can prove that strong solutions to the Ideal MHD equations are unique in the class of weak solutions with continuous decreasing energy. Consequently, Theorem~\ref{thm-adm-mhd} shows that such a weak-strong uniqueness principle does not hold for the MHD equations with positive viscosity and resistive coefficients.
\end{remark}

In fact, in this paper we prove the following  stronger result.

\begin{theorem} \label{Thm-Non-MHD}
$(i)$ Continuous energy solution: 
Given any divergence-free initial data in $\cL^2_x$, there exist infinitely many global weak solutions to \eqref{equa-MHD} in the class $(u,B)\in C({[0,\infty)}; \cL_x^2)$ whose energy profiles 
\begin{align}\label{energy-d}	
e(t)= \|u(t)\|_{L^2_x}^2+\|B(t)\|_{L^2_x}^2. 
\end{align}
and cross helicity profiles
\[
h(t) = \int_{\T^3} u(t)\cdot B(t) \,\d x,
\]
are distinct.

$(ii)$ Dissipative energy solution:  if 
in addition the initial data is in $\cH^{\frac12}_x$,  
then there exists $T>0$ 
such that, for any $\varepsilon >0$ 
there exist 
two pairs of continuous functions $(e^{\mathrm{l}}, e^{\mathrm{h}}): [0,\infty)\rightarrow [0,\9)$ which are 
decreasing on the whole time interval $[0,\infty)$,  smooth on $(\varepsilon,T)$, with  
\[
e^{\mathrm{l}}(t) < e^{\mathrm{h}}(t), \qquad \forall \varepsilon < t<T,
\]
\begin{align*} 
e^{\mathrm{l}}(0)=e^{\mathrm{h}}(0)=\|u(0)\|_{L^2_x}^2+\|B(0)\|_{L^2_x}^2, 
\end{align*} 
and such that for every $e \in C([0,\infty))$, smooth on $(0,T)$, with
\[
e^{\mathrm{l}}(t) \leq e(t) \leq  e^{\mathrm{h}}(t), \qquad \forall t\in (0,T),
\]
there exists a global weak solution $(u,B)\in C({[0,\infty)}; \cL_x^2)$ to \eqref{equa-MHD} 
	satisfying \eqref{energy-d}. 
\end{theorem}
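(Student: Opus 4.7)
The plan is to prove both parts by running a single convex integration scheme on $[0,\infty)$ that takes a prescribed continuous energy profile as input, glued to a short-time smooth solution near $t=0$ when the datum has sub-critical regularity.

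For part $(ii)$, I would first invoke the Fujita--Kato local theory to produce a smooth strong solution $(\bar u,\bar B)$ on some interval $[0,T^{\ast})$, whose energy $\bar e(t)=\|\bar u(t)\|_{L^2_x}^2+\|\bar B(t)\|_{L^2_x}^2$ is smooth and strictly decreasing thanks to the viscous and resistive dissipation. Fix $T\in(0,T^{\ast})$ and take the upper envelope $e^{\mathrm h}$ to agree with $\bar e$ on $[0,T]$, extended as a smooth strictly decreasing function on $[T,\infty)$. The lower envelope $e^{\mathrm l}$ is then obtained by subtracting a smooth nonnegative bump supported in $(\varepsilon,T)$ and reshaping the tail so as to remain continuous, decreasing, and equal to $e^{\mathrm h}$ at $t=0$; strict dissipation of $\bar e$ gives enough room for $e^{\mathrm l}<e^{\mathrm h}$ on $(\varepsilon,T)$.

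The core of the argument is an iteration $(u_q,B_q,\mathring R_q^u,\mathring R_q^B)$ converging to a weak solution $(u,B)\in C([0,\infty);\cL^2_x)$ whose energy matches a prescribed profile $e(t)$ sandwiched between $e^{\mathrm l}$ and $e^{\mathrm h}$. At each step I would construct the perturbation $(w_{q+1}^u,w_{q+1}^B)$ from intermittent Mikado-type building blocks adapted to the MHD structure, multiplied by amplitude coefficients that simultaneously cancel the low-frequency part of $(\mathring R_q^u,\mathring R_q^B)$ and contribute the energy deficit $e(t)-\|u_q(t)\|_{L^2_x}^2-\|B_q(t)\|_{L^2_x}^2$. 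The novel feature, absent from all previous schemes and the reason the construction cannot be carried out in the zero-dissipation limit, is that the terms $\nu_1\Delta u_q$ and $\nu_2\Delta B_q$ are used as genuine dissipators rather than being treated as errors: the monotonicity assumption on $e$ provides a sign condition making the leftover dissipation contribute a non-negative defect, which is then absorbed by the amplitude design instead of being pushed into the new Reynolds stress. Matching this up near $t=0$ with the $\cH^{\frac12}$ initial datum is done by arranging, as in \cite{czz24}, that the perturbation vanishes in $L^2_x$ as $t\to 0^+$.

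Part $(i)$ then follows by running the same iteration without the monotonicity constraint on $e$, starting from the subsolution $(u_0,B_0)\equiv 0$ with a Reynolds stress that absorbs a regularisation of the $\cL^2_x$ datum. Distinctness of the infinitely many resulting solutions is achieved in two independent ways: varying the prescribed profile $e(t)$ changes the energy, while a free phase parameter in the amplitude coefficients changes the cross helicity $h(t)$ without changing $e(t)$, producing an uncountable family parametrised by two orthogonal degrees of freedom. The hard part will be the simultaneous control of three competing requirements: continuity in $\cL^2_x$ up to and including $t=0$, exact matching of the prescribed energy profile, and effective use of the dissipative term to close the iteration. Calibrating the temporal cutoffs of the building blocks so that the perturbation vanishes in $L^2_x$ as $t\to 0^+$ while being large enough on $(\varepsilon,T)$ to span the gap $e^{\mathrm h}-e^{\mathrm l}$, and balancing these scales against the frequency parameters $\lambda_{q+1}$ and amplitudes $\delta_{q+1}$ without breaking the dissipation-absorption mechanism, is where the technical heart of the proof lies.
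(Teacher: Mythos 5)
Your plan departs from the paper in several ways that leave genuine gaps.

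First, the initialization. For part (i) you propose to start from $(u_0,B_0)\equiv 0$ with a Reynolds stress that absorbs a regularisation of the $\cL^2_x$ datum. This is precisely the Buckmaster--Colombo--Vicol / Burczak--Modena--Sz\'ekelyhidi set-up, and it is known to produce an energy jump at $t=0$: the initial Reynolds stress does not vanish at the origin, so the constructed solution is continuous in $\cL^2_x$ only for positive times. The paper avoids this by initializing the scheme with the solution of the $\Lambda$-MHD approximation (Section~\ref{Sec-Lambda-NSE}), a frequency-truncated system with a time-dependent cut-off $\Lambda(t)\to\infty$ as $t\to 0^+$ and as $t\to T^-$. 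That choice forces the initial Reynolds and magnetic stresses \eqref{r0u}--\eqref{r0b} to vanish at \emph{both} endpoints of the inner interval $[0,T]$ (Lemma~\ref{Lem-Rey-decay}), which is what drives the backward--forward telescoping selection of times $\{T_q\},\{S_q\}$ and yields continuity at $t=0$ for arbitrary $\cL^2_x$ data. Without something playing the role of that vanishing stress, your iteration cannot close at $t=0$.

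Second, the dissipation mechanism you describe does not match a workable scheme. You assert that the monotonicity of $e$ provides a ``sign condition'' making ``leftover dissipation contribute a non-negative defect absorbed by the amplitude design.'' That is not how the viscosity enters. In the paper the linear term $\nu\Delta w_{q+1}$ is still estimated as an error (the term $\mathcal{R}^u\Delta w_{q+1}$ in $\mathring R_{lin}^u$); dissipation is exploited one level earlier, in the $\Lambda$-MHD approximation. The key quantitative input is the choice $\Lambda(t)=E_*t^{-1/3}$ near $t=0$ with a large free parameter $E_*$, giving the \emph{improved} decay $\|(\mathring R_0^u,\mathring R_0^B)\|_{C_{[0,t]}\cL^1_x}\lesssim E_*^{-3}t$ (Lemma~\ref{lem-im-decay}). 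This is what lets the growth rate of the energy perturbation profiles $\overline f,\underline f$ be tuned below the decay rate of the base energy, see the inequalities \eqref{oe-ineq}--\eqref{decre-condi-c}, so that $\overline e$ and $\underline e$ remain decreasing. You offer no comparable quantitative control, and the sign of a prescribed profile by itself does not produce one.

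Third, your choices of $e^{\mathrm h}$ and of the time horizon do not leave room for the scheme to run. You take $e^{\mathrm h}=\bar e$, the exact energy of the Fujita--Kato solution. But the amplitude $\gamma^{(e)}_q$ (and, via Lemma~\ref{lem-gamma}, the whole construction) requires a strictly positive gap $e(t)-\|u_q\|_{L^2_x}^2-\|B_q\|_{L^2_x}^2$; the paper's $\overline e$ sits \emph{above} the base energy by a positive perturbation profile $\overline f$, which is essential. Moreover, Fujita--Kato gives smoothness only up to some maximal $T^\ast$; to produce a \emph{global} weak solution past $T$ you need either eventual regularity (the paper's $\Lambda$-MHD solution solves classical MHD smoothly on $[T,\infty)$ after the carefully chosen $T$ of \eqref{def-T}) or some other gluing mechanism -- you specify neither. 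Finally, your ``free phase parameter'' for varying the cross helicity is not a mechanism: the paper uses a dedicated helicity corrector $w_{q+1}^{(h)},d_{q+1}^{(h)}$ with its own iterative estimate \eqref{helicity-est}, and the MHD geometric constraints (orthogonality of velocity and magnetic building-block directions, Geometric Lemmas~\ref{geometric lem 1}--\ref{geometric lem 2}) do not leave a free phase to tune the cross helicity independently of the energy.
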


\begin{remark}
The non-uniqueness in part (i) of Theorem~\ref{Thm-Non-MHD} is instantaneous. More precisely, there are two solutions which are distinct on any time interval $(0,\tau)$, $\tau>0$. 
\end{remark}

\begin{remark}
$(i)$ 
In a very recent work 
\cite{czz24}, we constructed stochastic solutions 
with continuous energy for both the deterministic and stochastic NSE.  
The proof there is based on a backward convex integration scheme, 
which can treat arbitrarily prescribed $L_\sigma^2$ initial data, 
but does not yield a decreasing energy. 
To prove Theorem \ref{Thm-Non-MHD} we design what we call a telescoping convex integration scheme, 
which results in a global decreasing energy. 
This method also applies to the stochastic case, 
i.e., 
stochastic NSE and stochastic MHD driven by additive noise. 
Since the present paper is already quite technical, 
we focus on the deterministic case. 

$(ii)$
Let us also mention that the convex integration for MHD requires new constructions of 
the intermittent velocity and magnetic perturbations, 
which are quite different from those of the NSE and 
need to match the geometrical structure of the MHD system. 
\end{remark}

\subsection{Review of related literature} \label{subsec-review}
In the seminal work \cite{leray1934},  
Leray proved the existence of global solutions, 
which are now referred to Leray-Hopf solutions,  
due to also the important contributions
by Hopf \cite{hopf1951} in bounded domains.  
Similar solutions have been obtained for the MHD system by Sermange and Temam \cite{ST83}. For comprehensive surveys on the Navier-Stokes and MHD equations, we refer to monographs \cite{CF, d2001, rrs16, T}.

\subsubsection{Convex integration for the NSE: intermittent schemes}
Since the seminal paper \cite{dls09} where De Lellis and Sz\'{e}kelyhidi introduced the method of convex integration to construct of non-unique solution of the Euler equation, significant progress have been made towards non-uniqueness for various hydrodynamic models. 
An important achievement is the resolution of the flexible part of Onsager's conjecture, accomplished by Isett \cite{I18}.
Another significant achievement was made by Buckmaster and Vicol \cite{bv19b} who introduced an intermittent convex integration to prove the first non-uniqueness result for weak solutions to the 3D NSE. Later, convex integration schemes addressing the Cauchy problem for the 3D NSE were constructed by Buckmaster, Colombo and Vicol in 
\cite{bcv21} and  Burczak, Modena and Sz\'{e}kelyhidi in \cite{bms21}, where non-unique solutions in $C((0,T]; L^2_\sigma)$ were constructed for arbitrary divergence-free initial data in $L^2$. The jump discontinuity of the energy at the initial time was recently removed by the authors in \cite{czz24}, where non-unique solutions in $C([0,T]; L^2_\sigma)$ were constructed.
The sharp non-uniqueness for NSE near endpoints of Lady\v{z}enskaja-Prodi-Serrin (LPS) criteria was proved in \cite{cl20.2,cl23}, where termoral intermittency was introduced to convex integration, and later extended to the 3D hyper-dissipative NSE in  \cite{lqzz22}.

\subsubsection{Convex integration for the MHD}

While fluid turbulence has been extensively studied through the NSE, magnetohydrodynamic turbulence, modeled by the MHD system, observed in various settings such as laboratory experiments on fusion confinement devices and astrophysical systems.

As pointed out by Buckmaster and Vicol  \cite{bv19r}, the application of convex integration techniques to MHD-type equations is still in its infancy. 
A key feature of the MHD system is the interaction between velocity and magnetic fields, which is not present in the NSE.  
One quick glance at the difference 
is, that the  anti-symmetric nonlinearity in the magnetic equation necessitates the utilization of a Second Geometrical Lemma (see Lemma \ref{geometric lem 2} below) for the construction of perturbations.  
As a consequence, 
the direction of the magnetic perturbation flows  shall be perpendicular to that of the velocity 
perturbation flows.  

This particular structure of the MHD equations poses a major challenge in constructing suitable intermittent flows, 
which shall respect the geometric constraints of the MHD and, 
simultaneously,  
exhibit sufficiently strong intermittency to control the viscosity and resistivity.

\subsubsection{$L^\infty$-scheme for ideal MHD} 
For the ideal MHD without the viscosity and resistivity, 
the $L^\infty$ convex integration has been 
developed in a series of papers 
by \cite{FL18, fls21, fls21.2, fls22}.

The ideal MHD system posses three invariants:
	\begin{enumerate}
		\item[$\bullet$] The total  energy:\ \
		$\displaystyle \mathcal{E}(t) =\frac12\int_{\T^3}|u(t,x)|^2+|B(t,x)|^2 \d x$;
		\item [$\bullet$] The cross helicity: \ \
		$\displaystyle \mathcal{H}_{\omega,B}(t) =\int_{\T^3}u(t,x)\cdot B(t,x) \d x$;
		\item [$\bullet$] The magnetic helicity: \ \
		$\displaystyle \mathcal{H}_{B,B}(t):=\int_{\T^3}A(t,x)\cdot B(t,x) \d x$,
		\end{enumerate}	
where $A$ is a mean-free vector field given by $\curl A=B$. In contrast to the Onsager conjecture for Euler equations, it was conjectured by Buckmaster and Vicol \cite{bv21} that $L^3_{t,x}$ is the threshold space for the solutions to ideal MHD system to conserve 
magnetic helicity.

On the rigidity side, the magnetic helicity conservation for the 3D ideal MHD was proved
in \cite{KL07,A09,FL18}. Moreover, the construction of non-trivial smooth subsolutions for the 3D ideal MHD system was initiated by Faraco and Lindberg  \cite{FL18}. 

On the flexible side, Faraco, Lindberg, and Sz\'ekelyhidi \cite{fls21} constructed bounded weak solutions to the ideal MHD which violate energy conservation while maintaining magnetic helicity.
Recently, in the remarkable paper \cite{fls21.2}, 
Faraco, Lindberg, and Sz\'ekelyhidi addressed the conjecture regarding 
the $L_{t,x}^3$ threshold for the magnetic helicity conservation, based on the convex integration method involving staircase laminates.

\subsubsection{Intermittent scheme for MHD}  

Intermittent shear flows were first used by 
Beekie, Buckmaster and Vicol  \cite{bbv20} for 
the ideal MHD, 
which resulted in a construction of weak solutions with non-conserved magnetic helicity, 
and thus, 
disproved Taylor's conjecture for weak solutions to the ideal MHD. Such building blocks can achieve intermittency dimension $D=2+$, and hence can control the hypo viscosity and resistivity of the form $(-\Delta)^\alpha$ with $\alpha \in [0,1/2)$.
We also refer to \cite{dai21} for the non-uniqueness of weak solutions in the space $C^0_t L^2_x \cap L^2_t \dot{H}^1_x$ to the Hall MHD. For the classical MHD system, with the help of temporal intermittency (introduced in \cite{cl21, cl20.2}), a sharp non-uniqueness at one endpoint of the LPS criteria was achieved by Li and the last two authors in \cite{lzz21,lzz21.2}.

Very recently, Miao and Ye \cite{MY22} designed the first convex integration construction of finite energy weak solutions to the classical MHD. For $H_x^\beta$ $(\beta>0)$ initial data, they constructed non-unique solutions in $C_TL^2_x$. This was achieved with intermittent box flows, and inverse traveling wave and heat conduction  flows to cancel the errors.

\medskip 
\subsection{$\Lambda$-MHD approximation and telescoping energy levels} 
\label{Subsec-Novelty}

We first recall a challenge in constructing continuous in $L_x^2$ solutions with arbitrary initial data in $L_x^2$. The gluing technique developed by Buckmaster, Colombo and Vicol \cite{bcv21} or the method based on constructing appropriate cut-off functions developed by Burczak, Modena and Sz\'{e}kelyhidi \cite{bms21}, result in weak solutions of the NSE in $C((0,1];L^2)$, but with a jump at the initial time. The energy jump was a consequence of a non-vanishing initial Reynolds stress at the origin. In \cite{czz24}, the authors introduced a new approximation, called the $\Lambda$-NSE, to construct smooth solutions of the NSE-Reynolds system with Reynolds stresses vanishing at the initial time. As a consequence, a convex integration scheme in \cite{czz24} produced weak solutions of the NSE in $C([0,1];L_x^2)$ for any initial data in $L_x^2$.

In this paper we construct global dissipative solutions of the NSE and MHD equation for critical (say, $H^\frac 12_x$) initial data. Continuous in $L_x^2$ solutions for such initial data were already constructed in \cite{bcv21,bms21}. In these works, as common in convex integration schemes, the dissipative term is treated as an error, and hence those schemes go through in the case of zero viscosity. Constructing  solutions with decaying energy requires a different approach. Due to the weak-strong uniqueness principle in the inviscid case, a convex integration scheme producing such (dissipative) solutions should take advantage of the dissipation term $\Delta u$. Indeed, we will exploit a new $\Lambda$-MHD system, where the dissipation term plays a crucial role in producing a smooth approximate solution of the NSE whose energy profile decays fast enough in comparison to the size of the Reynolds and magnetic stresses. This approach is motivated by our previous work \cite{czz24} where the Reynolds stress had to be controlled near the initial time due to the roughness of the initial data. In the present paper we tweak the design of the approximation to achieve a different purpose - the decay of the energy for smooth (or critical) initial data.

More precisely, we choose a wavenumber $\Lambda(t)$ satisfying
\[
\lim_{t \to 0^+}\Lambda(t)=\infty, \qquad \text{and}, \qquad \lim_{t \to T-}\Lambda(t) = \infty,
\]
for some appropriate $T>0$ (see Figure \ref{fig:1}), and design an approximation of the MHD equations, which we call $\Lambda$-MHD, such that only low-frequency modes below $\Lambda(t)$ are advected by the velocity fields, and modes above $\Lambda(t)$ just dissipate. An appropriate frequency truncation operator ensures the energy balance for the solutions, and guarantees that
the $\Lambda$-MHD  can be reformulated as a relaxed system  with suitable Reynolds stress (see \eqref{r0u} and \eqref{r0b}) vanishing at the initial time $t=0$ as well as $T>0$:
\begin{align*}
(\mathring{R}_{0}^u(t),
\mathring{R}_{0}^B(t)) \to 0, 
\quad {\rm in}\ \mathcal{L}^1_x
\quad {\rm as}\ t\to 0^+\ 
{\rm or}\ t\to T^-.  
\end{align*}
Time $T$ is chosen large enough based on the eventual regularity estimates. We show that the solution of the $\Lambda$-MHD equations is smooth before $\Lambda(t)$ blows up, i.e., on $(0,T)$. Choosing $T$ large enough ensures that the solution remains smooth even after $T$ where $\Lambda=\infty$, i.e., when $\Lambda$-MHD coincides with the classical MHD system.

\begin{figure}[b]
    \centering
    \includegraphics[width=0.7\linewidth]{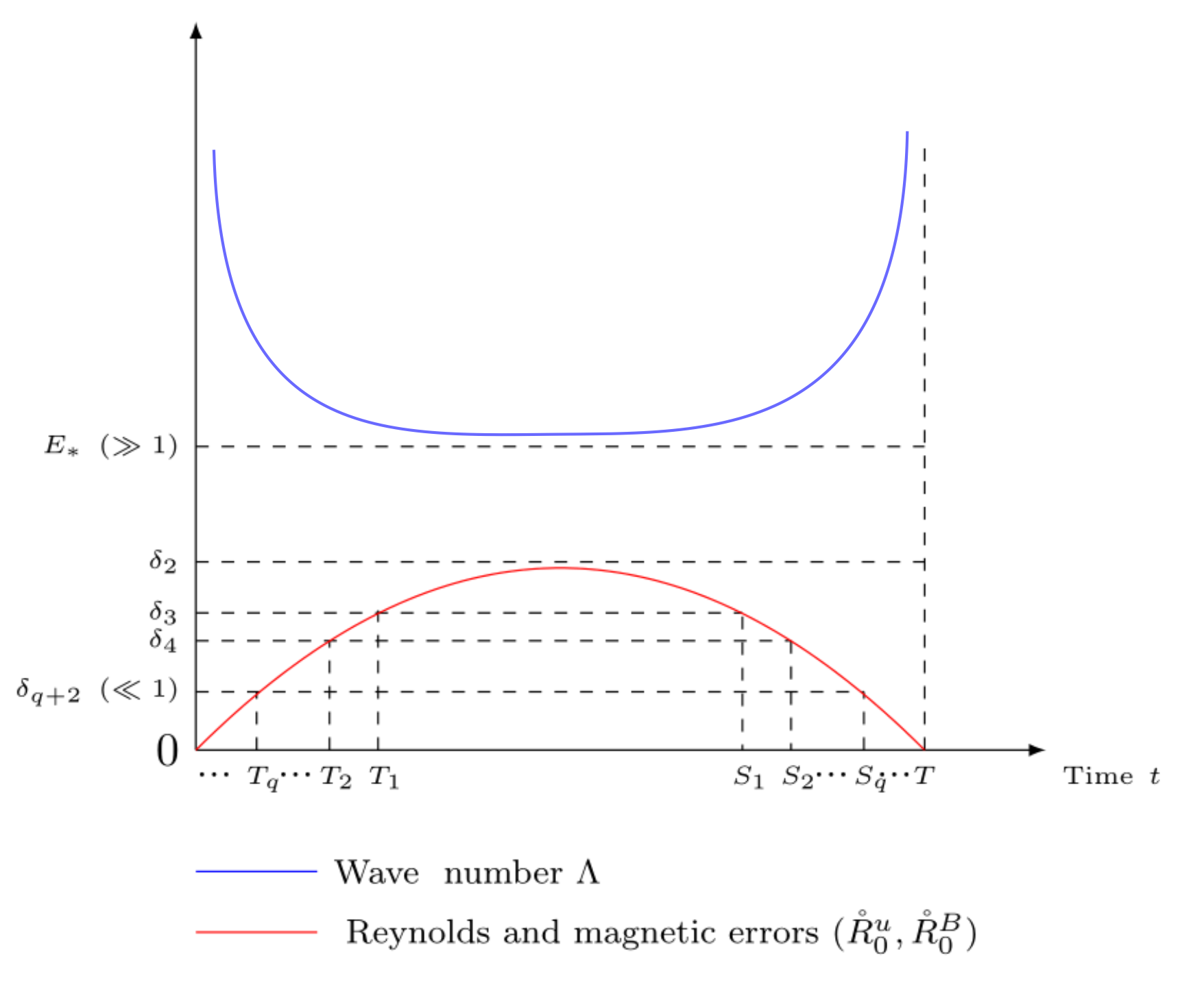}
    \caption{Wavenumber, 
    initial Reynolds and magnetic errors}
    \label{fig:1}
\end{figure}

Consequently, the convex integration construction can be focused on the inner temporal interval $[0,T]$. In this interval,
besides a backward time sequence 
$\{T_q\}$, 
we also select 
a forward time sequence $\{S_q\}$ 
according to the decay property of Reynolds and magnetic stresses at both times $0$ and $T$,  
\begin{align*}
	\|(\mathring{R}_{0}^u,\mathring{R}_{0}^B )\|_{  C_{[0,T_{q}]\cup [S_{q},T]} \cL^{1}_x} 
    \lesssim \delta_{q+2}, 
    \quad q\geq 0. 
\end{align*}  
This is possible because the initial Reynolds and magnetic stresses vanish at times $0$ and $T$. 
In practice, the backward and forward times will be modified  slightly by small parameters to take into account the mollification procedure, 
see \eqref{asp-q} below.  
It is worth noting that 
the decay parameter $\delta_q$ above is chosen as a power law of the form $10^{-3q}$, 
which decays much slower than the usual one $a^{-\beta b^q}$ 
with $a,b \gg  1$, $\beta>0$. 
Moreover, in contrast to standard convex integration schemes, 
the frequency $\lambda_q$ of building blocks depends on 
the backward and forward times $\{T_q, S_q\}$, as well as on the wavenumber  $\Lambda(t)$ in the $\Lambda$-MHD system. 
We refer to Subsection \ref{Subsec-time}  below for more details. 

To construct dissipative solutions, we will design an energy channel  
$[0,T] \ni t \mapsto [\overline e(t), \underline e(t)]$
to allow for infinitely many decreasing energy profiles. 
The boundaries of the energy channel 
are given by the upper and lower energy profiles, respectively, 
given by 
\begin{align} \label{e-f-intro}
    \overline e(t):= \|( u_0(t), B_0(t))\|_{\cL^2_x}^2+\overline f(t),\quad \underline e(t):= \|( u_0(t), B_0(t))\|_{\cL^2_x}^2+\underline f(t), 
    \quad t\in [0,T]. 
\end{align} 
As common in convex integration schemes, the velocity and magnetic perturbations produce energy errors that grow in time. To obtain a decay of the total energy, one has to control 
the growth rate of the energy perturbations in a more delicate  quantitative way. For this purpose, we choose a wavenumber  $\Lambda(t)$ for the $\Lambda$-MHD system with a blow-up rate $E_* t^{-1/3}$ near the initial time, which gives us a flexibility of choosing parameter $E_*$ large in order to decrease errors. Indeed, we derive an improved decay estimate for the Reynolds and magnetic stresses near the initial time 
(see \eqref{decay-R0-refined} below), 
\begin{align*}
\|(\mathring{R}_{0}^u,\mathring{R}_{0}^B ) \|_{C_{[0,t]}\cL^{1}_x} 
\lesssim E_*^{-3}t, 
\end{align*}  
where the implicit constant depends on the $H_x^3$ regularity of initial data.
Choosing $E_*$ sufficiently large will enable us to control the growth rate of energy perturbations to ensure that it does not exceed the decay rate of the initial energy profile, see \eqref{oe-de} and \eqref{ue-de} below. 

Note that, 
on time increments 
$[\underline{T}_{q+1},\overline{T}_{q}]$ and $[\us_{q},\os_{q+1} ]$, 
the energy perturbations have different decay orders 
\begin{align*}
 \delta_{q+2} 
 \lesssim e(t)-(\| u_0 (t)\|_{L^2_x}^2+\|  B_0 (t)\|_{L^2_x}^2) 
 \lesssim \delta_{q+1}, 
\quad t\in [\underline{T}_{q+1},\overline{T}_{q}]\cup [\us_{q},\os_{q+1}], 
\end{align*} 
where $\underline{T}_{q+1},\overline{T}_{q}$ 
are very small modifications of the time $T_q$, and so are $\underline{S}_{q+1},\overline{S}_{q}$.  
This fact allows to construct the upper energy perturbation profile $\overline{f}$ 
as a linear function on $[\overline{T}_{q+1},\overline{T}_{q}] 
\cup [\underline{S}_{q},\underline{S}_{q+1}]$  connecting levels $\delta_{q+1}$ and $\delta_{q+2}$. 
The lower energy perturbation profile $\underline{f}$ can be constructed in an analogous manner but on 
slightly different time intervals 
$[\underline{T}_{q+1},\underline{T}_{q}] 
\cup [\overline{S}_{q},\overline{S}_{q+1}]$.

We observe that $\overline{f}$ is constant in the inner region $[\ot_1,\us_1]$ and $\underline{f}$ is constant in $[\ut_1,\os_1]$. In the outer regions $[0,T]/[\ot_1,\us_1]$ and $[0,T]/[\ut_1,\os_1]$, adjusting $E_*$ ensures that the energy changes caused by perturbations are negligible relative to the initial energy.

These features of the scheme enable us to obtain a desired decreasing energy channel \eqref{e-f-intro} thereby allowing for infinitely many decreasing smooth energy profiles,   
see Figure \ref{fig:2}.

\begin{figure}[b]
    \centering
    \includegraphics[width=0.6\linewidth]{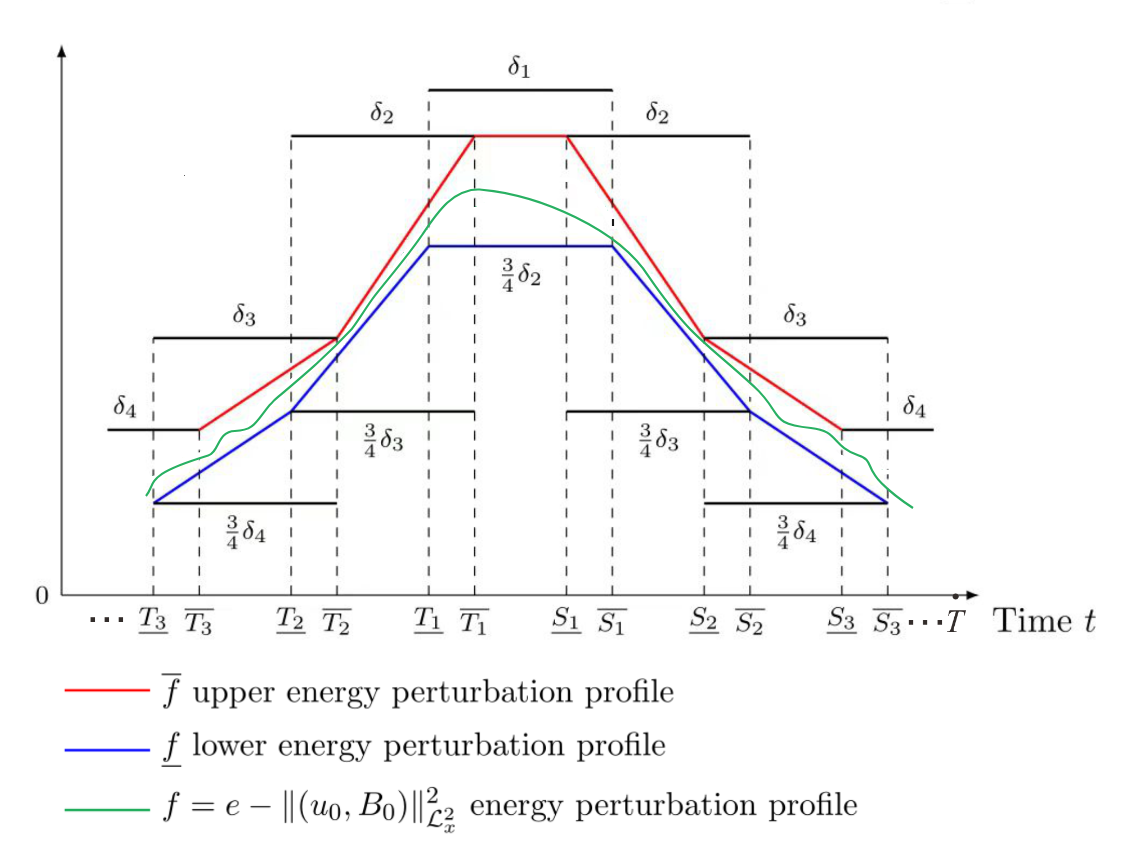}
    \caption{Upper and lower energy perturbation profiles}
    \label{fig:2}
\end{figure}

\medskip 
\paragraph{\bf Heat correctors for the 3-D MHD}  

The geometric structure of MHD forbids the use of the original 3D NSE intermittent building blocks, such as intermittent Beltrami flows and jets \cite{bv19b, bcv21}. On the other hand, the intermittent MHD shear flows used in \cite{bbv20, lzz21, lzz21.2} can only achieve the intermittency dimension $D=2+$ and hence cannot be used to construct solutions to the viscous MHD with bounded energy. In a remarkable paper \cite{MY22}, Miao and Ye introduced box flows, which resemble intermittent jets, but 
supported on cuboids with three different length scales. As in the case of jets, the intermittency dimension of such building blocks can go below $D=1$, in which case the error coming from the linear term is small. On the other hand, the interactions $\div (W\otimes W)$, $\div (D\otimes W)$, $\div (D\otimes D)$, and $\div (W\otimes D)$, where $W$ is the velocity building block and $D$ is the magnetic building block, result in large errors. While the temporal corrector introduced in \cite{bv19b} can cancel the first two errors $\div (W\otimes W)$ and $\div (D\otimes W)$, it cannot be used for the other two. To resolve this issue, Miao and Ye introduced inverse traveling wave flows and the heat conduction flows. Combined with the corrector from \cite{bv19b}, these three temporal correctors cancel all the large error terms.

Inspired by \cite{MY22}, we also use highly concentrated box-like building blocks, but with just one heat corrector instead of three to streamline the scheme. Heuristics behind this unified approach can easily be understood considering solutions to the heat equation with a space-time periodic, zero-mean in space source $f(t,x)$, 
\begin{equation} \label{eq:heat_intro}
\p_t u-\Delta u = f.
\end{equation}
The long time behavior of the solution to this equations is described by what is called the pullback attractor, which, in this case, is a single trajectory
\[
u^{\mathcal{A}}(t,x) := \int_{-\infty}^t e^{ (t-\tau)\Delta}  f(\tau,x) \, \d \tau =
 \int_{t_0}^t e^{ (t-\tau)\Delta}  f(\tau,x) \, \d \tau,
\]
for some $t_0$, which we assume is zero appropriately translating $f$ in time. Here the second equality follows from the time periodicity of $u^{\mathcal{A}}(x,t)$. All the trajectories (solutions to \eqref{eq:heat_intro}) converge to $u^{\mathcal{A}}(t,x)$ in the pullback sense (as the initial time goes to minus infinity). This holds even for the 3D NSE with small forces, see \cite{CK}. 

If spatial oscillations of $f$ dominate the temporal oscillations, then $(-\Delta)^{-1} f(t,x)$ is the leading term in $u^{\mathcal{A}}(t,x)$. However, in applications to convex integration, the non-diagonal interactions between building blocks result in errors of type 
\[
f= P_{H}\div \left(w \otimes w  \right),
\]
where $w$ is the velocity (or magnetic field) perturbation, which, for simplicity, is assumed to be a just a $T$-periodic in time building block. To control the size of the error $\Delta w$, the perturbation has to be highly intermittent (with the intermittency dimension $D<1$). Then it is easy to check that 
\[
\|(-\Delta)^{-1} \div(w\otimes w)\|_{L^2_x} \gg \|w\|_{L^2_x}.
\]
Thus, temporal oscillations of the perturbation $w$ have to dominate spatial ones.

Let
\[
\bar f(x):= \frac{1}{T}\int_0^T f(\tau,x)\, \d \tau, \qquad\p_t^{-1} f (t,x):= \int_{0}^{t} \big( f(\tau,x) - \bar f(x) \big) \, \d \tau,
\]
where $T$ is the time period of $f$. Note that $\p_t^{-1}f(t,x)$ is also time periodic. Then
\begin{equation} \label{eq:intro-heat-decomp}
u^{\mathcal{A}}(t,x)= \int_{0}^t e^{ (t-\tau)\Delta} \bar f(x) \, \d \tau +\p_t^{-1} f(t,x)+ \int_{0}^t e^{ (t-\tau)\Delta} \Delta \p_t^{-1} f(\tau,x) \, \d \tau.
\end{equation}
When the frequency of temporal oscillations is high enough, $(-\Delta)^{-1} \bar f(x)$, or the first term in \eqref{eq:intro-heat-decomp} is the leading term of $u^{\mathcal{A}}(t,x)$. Since time averaging of $f$ traveling along geodesics results in a larger intermittency dimension $D>1$, we get 
\[
\|u^{\mathcal{A}}\|_{L^2_x} \ll \|w\|_{L^2_x},
\]
and hence $u^{\mathcal{A}}$ can be used as a temporal corrector to cancel the error $f$.

\medskip
{\bf Notations.} Let $\mathbb{N}_+$ denote the set of positive integers. For $p\in [1,\infty]$ and $s\in \R$, we set
\begin{align*}
 L^p_x:=L^p(\T^3),\quad W^{s,p}_x:=W^{s,p}(\T^3),\quad H^s_x:=H^s(\T^3),
\end{align*}
where $W^{s,p}_x$ is the Sobolev space and $H^s_x=W^{s,2}_x$. We also use $L^2_\sigma$ for divergence free functions in $L^2_x$. Given any Banach space $\mathbb{X}$,
we denote by $C([0,T];\mathbb{X})$ the space of continuous functions from $[0,T]$ to $\mathbb{X}$,
equipped with the norm $\|u\|_{C_T\mathbb{X}}:=\sup_{s\in [0,T]}\|u(s)\|_\mathbb{X}$.
In particular,
for $N\in \mathbb{N}_+$ we set
\begin{align*}
	\norm{u}_{C_{T,x}^N}:=\sum_{0\leq m+|\zeta|\leq N}
	\norm{\p_t^m \na^{\zeta} u}_{L^\9([0,T];L_x^\9)},
\end{align*}
where $\zeta=(\zeta_1,\zeta_2,\zeta_3)$ denotes the multi-index
and $\na^\zeta:= \partial_{x_1}^{\zeta_1} \partial_{x_2}^{\zeta_2} \partial_{x_3}^{\zeta_3}$.
We also use the product spaces
 $$\mathcal{L}^p_x:= L^p(\T^3) \times L^p(\T^3),\ \mathcal{H}^{s}_x:= H^{s}_x\times H^{s}_x, \
\cC^1_{x}:= C^1_{x} \times C^1_{x}.$$
Throughout this paper the notation $a\lesssim b$ means that $a\leq C b$ for some constant $C>0$.
\section{Telescoping convex integration scheme} 

Let us first give the definition of solutions to \eqref{equa-MHD}
taken in the following distributional sense.

\begin{definition} \label{weaksolu}	Given any weakly divergence-free initial datum $(v_0,H_0)\in \cL^2(\T^3)$,  we say that $(u,B)\in C_w([0,+\infty);  \cL^2(\T^3))$ is a  weak solution to the MHD system \eqref{equa-MHD},
if the following hold:
\begin{itemize}
\item For all $t\geq 0$, $(u(t,\cdot), B(t,\cdot))$ is divergence free in the sense of distributions and have zero spatial mean.
\item $(u, B)$ solves \eqref{equa-MHD} in the sense of distributions, i.e.,	for any divergence-free test function  $\varphi  \in C^{\infty}_c ([0,+\infty) \times \mathbb{T}^3)$ and almost every $s>0$,
\begin{align*}
		\int_{\mathbb{T}^3} u(s)\cdot\vf(s,x) \d x &= \int_{\mathbb{T}^3} v_0 \cdot\vf(0,x) \d x+\int_0^s \int_{\mathbb{T}^3}  \partial_t \varphi \cdot u + \nu_1 \Delta \varphi \cdot u  + \nabla\varphi :( u \otimes u - B \otimes B) \d x \d t ,		\\
		\int_{\mathbb{T}^3} B(s)\cdot\vf(s,x) \d x &= \int_{\mathbb{T}^3} H_0 \cdot\vf(0,x) \d x+\int_0^s	\int_{\mathbb{T}^3}  \partial_t \varphi \cdot B +\nu_2 \Delta \varphi \cdot B + \nabla\varphi  :(B \otimes u- u \otimes B) \d x \d t.
		\end{align*}
\end{itemize}
 Here, for any $3\times 3$ matrices $A=({A_{ij}})$ and $S=({S_{ij}})$, we set 	$ A:S=\sum_{i,j=1}^{3}A_{ij}S_{ij}$. 
\end{definition}

We consider the following relaxed MHD-Reynolds system
for each integer $q\in \mathbb{N}$,
\begin{equation}\label{mhd1}
	\left\{\aligned
	&\p_t \u-\nu_1\Delta \u  + \div( \u \otimes \u - \h \otimes \h )+\nabla P_q=\div \ru,  \\
	&\p_t \h-\nu_2\Delta \h + \div( \h \otimes \u - \u \otimes \h )=\div \rb , \\
	&\div \u = 0, \quad \div \h= 0,\\
	& \u(0) = v_0, \quad \h(0)= H_0,
	\endaligned
	\right.
\end{equation}
where the Reynolds stress $\ru$ is a symmetric traceless $3\times 3$ matrix,
the magnetic stress $\rb$ is a skew-symmetric $3\times 3$ matrix.

\subsection{Initialization: 
the $\Lambda$-MHD system}   \label{Subsec-back-CI}
Given any divergence-free and mean-free initial data $v_0, H_0\in L^2_x$, we choose 
\begin{align}\label{def-T}
    T=\left(\frac{c_1}{c_2}\right)^{\frac14}\nu^{-2}\|(v_0,H_0)\|_{\cL^2_x}^2,
\end{align}
where $\nu=\min\{\nu_1, \nu_2\}$, 
and $c_1$, $c_2$ are universal constants in the energy estimate \eqref{est-hi-innp} below. Then we introduce a new type of approximate MHD system where the nonlinearties only contain low modes below certain {\it time-depending} frequency $\Lambda$ which is different from the usual Galerkin approximation:
\begin{align}\label{equa-mhd-2}
	\begin{cases}
		\p_t u-\nu_1 \Delta  u + \P_{<\Lambda(t)}  (\P_{<\Lambda(t)} u\cdot \nabla \P_{<\Lambda(t)}u-\P_{<\Lambda(t)} B\cdot \nabla \P_{<\Lambda(t)}B )+\nabla p=0,\\
		\p_t B-\nu_2 \Delta  B + \P_{<\Lambda(t)}  (\P_{<\Lambda(t)} u\cdot \nabla \P_{<\Lambda(t)}B-\P_{<\Lambda(t)} B\cdot \nabla \P_{<\Lambda(t)}u )=0,\\
		\div   u=0,\ \div   B=0,\\
		u(0)=v_0,\ B(0)=H_0.
	\end{cases}
\end{align}
Here, the wavenumber $\Lambda:(0,T)\rightarrow\R^+$ 
satisfies the following properties
\begin{itemize}
    \item $\Lambda(t)=E_* t^{-\frac13}$ on $(0,T/2]$; 
    \item $\Lambda$ is an increasing function on $[T/2,T)$; 
    \item $\Lambda(t)\rightarrow +\infty$ as $t\rightarrow T^-$.
\end{itemize}
Here, $E_*$ equals one in the case of the rough $\mathcal{L}_x^2$ initial data, 
and in the case of the critical $\mathcal{H}^\frac 12_x$ initial data 
$E_*$ is a large constant depending on the 
$\mathcal{H}^3_x$-norm of the regular solution 
to MHD \eqref{equa-MHD} at a small positive time, see \eqref{decre-condi-c} below for the 
explicit choice of $E_*$.

\begin{remark} 
We note that the wavenumber $\Lambda$ decreases on $(0,T/2]$, but increases on $[T/2,T]$, and $\Lambda$ explodes at times $0$ and $T$. 
As explained in Subsection \ref{Subsec-Novelty}, 
the explosion of $\Lambda$ at two endpoints 
is important to guarantee the vanishing of Reynolds and magnetic stresses at the corresponding times, 
which in turn allow to choose suitable backward and forward time sequences to run the whole convex integration iterative scheme. 
\end{remark}

Let $\varphi: \R^3 \rightarrow [0,1]$ be a smooth radial function such that
\begin{align*}
	\varphi(x)=\begin{cases}
		1,\quad |x|\in [0,1),\\
		0,\quad |x|\in [2,+\infty).\\
	\end{cases}
\end{align*} 
Set $\varphi_{\Lambda(t)}(\xi):= \varphi(\xi/\Lambda(t))$. For any $t\in (0,T)$, 
the frequency truncated operator $\P_{<\Lambda(t)}$ is defined by
\begin{align}\label{def-plambda}
	\widehat{\P_{<\Lambda(t)}u}(\xi):= \hat u(\xi)\varphi_{\Lambda(t)}(|\xi|).
\end{align}

The key properties of the $\Lambda$-MHD system are contained in Theorem~\ref{thm-fns} below.

\begin{theorem}   \label{thm-fns}
Given any divergence-free and mean-free initial datum $(v_0,H_0)\in \cL^2_x$, the following hold:
\begin{itemize}	
\item[(i)] 
Global existence: there exists a global solution $(\wt u,\wt B)$ to \eqref{equa-mhd-2} which is smooth for positive times and satisfies the energy equality
\begin{align}\label{eq-e-2}
\frac12  \|(\wt u, \wt B)(t)\|_{\cL^2_x}^2  + \int_{0}^{t}\nu_1 \| \nabla \wt u(s)\|_{L^2_x}^2+\nu_2 \|\nabla \wt B (s)\|_{L^2_x}^2\d s = \frac12  \|(v_0,H_0)\|_{\cL^2_x}^2 , \quad t\geq 0. 
\end{align} 

\item[(ii)] 
Strong continuity at the initial time:  
\begin{align}\label{s-con-v0}
(\wt u,\wt B)(t) \rightarrow (v_0,H_0) \quad 
{\rm strongly\  in}\ \cL^2_x, \quad  
{\rm as}\ t\to 0^+.
\end{align}
\
\item [(iii)]  
$(\wt u,\wt B)$ is a smooth solution to the classical MHD system \eqref{equa-MHD} on $[T,+\infty)$.

\item[(iv)]  
Vanishing of nonlinearity on high modes 
at two endpoints:
\begin{align}\label{deacy-non}
&	\|\P_{\geq \Lambda}(\P_{< \Lambda}\wt u\mathring \otimes \P_{< \Lambda}\wt u)\|_{C([0,t]; L^1_x)}+  	\|\P_{\geq \Lambda}(\P_{< \Lambda}\wt B\mathring \otimes \P_{< \Lambda}\wt B)\|_{C([0,t]; L^1_x)}  \to 0, \notag\\
&	\|\P_{\geq \Lambda}(\P_{< \Lambda}\wt B\mathring \otimes \P_{< \Lambda}\wt u)\|_{C([0,t]; L^1_x)}+  	\|\P_{\geq \Lambda}(\P_{< \Lambda}\wt u\mathring \otimes \P_{< \Lambda}\wt B)\|_{C([0,t]; L^1_x)}  \to 0,\quad  {\rm as}\ t\to 0^+,
\end{align}
and 
\begin{align}\label{deacy-non-2}
&	\|\P_{\geq \Lambda}(\P_{< \Lambda}\wt u\mathring \otimes \P_{< \Lambda}\wt u)\|_{C([t,T]; L^1_x)}+  	\|\P_{\geq \Lambda}(\P_{< \Lambda}\wt B\mathring \otimes \P_{< \Lambda}\wt B)\|_{C([t,T]; L^1_x)}  \to 0, \notag\\
&	\|\P_{\geq \Lambda}(\P_{< \Lambda}\wt B\mathring \otimes \P_{< \Lambda}\wt u)\|_{C([t,T]; L^1_x)}+  	\|\P_{\geq \Lambda}(\P_{< \Lambda}\wt u\mathring \otimes \P_{< \Lambda}\wt B)\|_{C([t,T]; L^1_x)}  \to 0,\quad {\rm as}\ t\to T^-.
\end{align}
\end{itemize}
\end{theorem}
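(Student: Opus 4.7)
\textbf{Proof proposal for Theorem \ref{thm-fns}.}

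The plan is to construct $(\wt u,\wt B)$ piecewise: first on $(0,T)$ where the frequency truncation makes the system well-behaved, then extend to $[T,\infty)$ where \eqref{equa-mhd-2} reduces to classical MHD and a smallness--at--time--$T$ argument (driven by the choice \eqref{def-T}) guarantees smoothness persists. First, for local existence on $(0,T)$, on any compact subinterval $[a,b]\subset(0,T)$ the wavenumber $\Lambda(t)$ is bounded, so the nonlinearity only couples Fourier modes in a fixed ball, and the system reduces to an infinite system of ODEs in which all high modes solve a pure heat equation while the finitely many modes below $2\Lambda(t)$ evolve according to a smoothly varying, locally Lipschitz ODE. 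A Picard iteration (or Galerkin scheme with cutoff at a frequency well above $\sup_{[a,b]}\Lambda(t)$) yields a local smooth solution. The energy equality \eqref{eq-e-2} is obtained by testing the two equations of \eqref{equa-mhd-2} with $\wt u$ and $\wt B$: writing $\tilde u=\P_{<\Lambda(t)}\wt u$, $\tilde B=\P_{<\Lambda(t)}\wt B$ and using the self-adjointness of $\P_{<\Lambda(t)}$ on $L^2_x$, every nonlinear contribution reduces to one of $\int(\tilde u\cdot\nabla\tilde u)\cdot\tilde u$, $\int(\tilde u\cdot\nabla\tilde B)\cdot\tilde B$, $\int(\tilde B\cdot\nabla\tilde B)\cdot\tilde u$, $\int(\tilde B\cdot\nabla\tilde u)\cdot\tilde B$; the first two vanish by $\div\tilde u=0$, and the last two sum to $-\int\tilde B_j\p_j(\tilde B_i\tilde u_i)=0$ by $\div\tilde B=0$. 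This yields \eqref{eq-e-2}, and since the left-hand side of \eqref{eq-e-2} controls $\|(\wt u,\wt B)\|_{\cL^2_x}$ globally, the local solution extends to all of $(0,T)$. Parabolic bootstrap on each $[a,b]\Subset(0,T)$ upgrades it to $C^\infty$.

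Second, for assertion (iii) and global smoothness past $T$, on $[T,\infty)$ we formally have $\P_{<\Lambda(t)}=\Id$ and \eqref{equa-mhd-2} becomes the classical MHD \eqref{equa-MHD}. The key point is that the specific choice \eqref{def-T} is calibrated against the higher-order energy estimate \eqref{est-hi-innp}: by \eqref{eq-e-2} we have $\int_0^T(\nu_1\|\nabla\wt u\|_{L^2_x}^2+\nu_2\|\nabla\wt B\|_{L^2_x}^2)\,ds\le\tfrac12\|(v_0,H_0)\|_{\cL^2_x}^2$, so a mean-value argument produces some $t^\star\in(0,T)$ at which a higher-order norm of $(\wt u,\wt B)$ is of size $\|(v_0,H_0)\|_{\cL^2_x}^2/(\nu T)$; with $T=(c_1/c_2)^{1/4}\nu^{-2}\|(v_0,H_0)\|_{\cL^2_x}^2$ this is below the threshold in \eqref{est-hi-innp} that propagates smoothness for classical MHD. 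Hence classical MHD admits a global smooth solution starting from $(\wt u(t^\star),\wt B(t^\star))$, which must agree with our $(\wt u,\wt B)$ for all $t\ge T$ by uniqueness in the smooth class.

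Third, strong continuity (ii) at $t=0$ follows from the weak-$\cL^2_x$ continuity (standard, using the equation and the uniform energy bound) together with $\|(\wt u,\wt B)(t)\|_{\cL^2_x}\to\|(v_0,H_0)\|_{\cL^2_x}$ as $t\to 0^+$ from \eqref{eq-e-2}. For (iv), since $\varphi$ is smooth and compactly supported, the kernel of $\P_{<\Lambda}$ is an approximate identity of $L^1$-norm uniform in $\Lambda$, so $\P_{<\Lambda}$ and $\P_{\ge\Lambda}=\Id-\P_{<\Lambda}$ are bounded on $L^1_x$ uniformly in $\Lambda$, and moreover $\P_{<\Lambda}f\to f$ in $L^1_x$ as $\Lambda\to\infty$ for every $f\in L^1_x$. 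By (ii), as $t\to 0^+$ we have $\P_{<\Lambda(t)}\wt u(t)\to v_0$ and $\P_{<\Lambda(t)}\wt B(t)\to H_0$ strongly in $L^2_x$, so by Cauchy--Schwarz each tensor product (such as $\P_{<\Lambda}\wt u\mathring\otimes\P_{<\Lambda}\wt u$) converges to the corresponding limit (e.g.\ $v_0\mathring\otimes v_0$) in $L^1_x$; combining the uniform $L^1\to L^1$ bound on $\P_{\ge\Lambda}$ with the approximate identity property then yields \eqref{deacy-non}. The argument for \eqref{deacy-non-2} is identical, using strong $\cL^2_x$-continuity of $(\wt u,\wt B)$ at $T$, which follows from the smoothness established on $(0,T)$ together with the energy equality.

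The main obstacle is the extension step in the second paragraph: classical 3D MHD lacks unconditional global regularity, so one must show that the specific time $T$ chosen in \eqref{def-T} (and the higher-order estimate \eqref{est-hi-innp}) force the solution at some $t^\star\le T$ into a smallness regime triggering propagation of smoothness. Matching the powers of $\nu$ and $\|(v_0,H_0)\|_{\cL^2_x}$ in \eqref{def-T} against the nonlinear exponents in \eqref{est-hi-innp} is the quantitative heart of the proof.
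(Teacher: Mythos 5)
Your proposal follows the paper's overall strategy quite closely: Galerkin (or local ODE) existence, the energy balance via self-adjointness of $\P_{<\Lambda(t)}$ together with the standard MHD cancellations (correctly identified, including the pairing $\int(\tilde B\cdot\nabla\tilde B)\cdot\tilde u+\int(\tilde B\cdot\nabla\tilde u)\cdot\tilde B=0$), a mean-value argument to locate a time of small $\dot H^1$-norm calibrated against the choice \eqref{def-T}, strong $\cL^2_x$-continuity at $t=0$ from weak continuity plus norm convergence, and the vanishing of the high-mode nonlinearity from strong continuity together with the approximate-identity property of $\P_{<\Lambda}$. Your observation that on compact subintervals $[a,b]\Subset(0,T)$ the $\Lambda$-MHD system reduces to a finite ODE for the low modes coupled with pure heat equations for the modes $|\xi|>2\Lambda$ is a clean route to interior smoothness; the paper instead reads this off Fourier-side from \eqref{u-xi-k0}, which is essentially the same mechanism.

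There is, however, a genuine gap in your treatment of part (iii). After locating $t^\star<T$ with $\|\nabla(\wt u,\wt B)(t^\star)\|^2_{\cL^2_x}$ below threshold, you claim that the \emph{classical} MHD solution launched from $(\wt u(t^\star),\wt B(t^\star))$ at time $t^\star$ is global and smooth and ``must agree with $(\wt u,\wt B)$ for all $t\ge T$ by uniqueness in the smooth class.'' This does not follow: on $(t^\star,T)$ the trajectory $(\wt u,\wt B)$ solves the truncated system \eqref{equa-mhd-2}, not \eqref{equa-MHD}, so it diverges immediately from the classical MHD solution started at $t^\star$, and the two have different values at $T$ — no uniqueness theorem lets you conclude they coincide afterwards. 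What is actually needed, and what the paper does, is to observe that the differential inequality \eqref{est-hi-innp} is derived directly for the $\Lambda$-MHD system (the cutoff $\P_{<\Lambda}$ only helps), so the $\dot H^1$-smallness at $t^\star$ propagates forward through the $\Lambda$-MHD evolution up to $T^-$; only at $t=T$, where $\Lambda=\infty$ and \eqref{equa-mhd-2} coincides with \eqref{equa-MHD}, does one invoke local well-posedness with $\cH^1$-small data plus weak-strong uniqueness (in the class satisfying the energy inequality \eqref{eq:eqn-inequality-for-LMHD}) and the same differential inequality to get a global smooth solution that agrees with $(\wt u,\wt B)$. A secondary point: your Picard/local-ODE construction on $[a,b]\Subset(0,T)$ does not by itself produce a solution attaining the prescribed $\cL^2_x$ datum at $t=0$, since $\Lambda(t)\to\infty$ as $t\to0^+$ and the system is not finite-dimensional near $0$; the paper's Galerkin construction started from $t=0$ (followed by the a posteriori Fourier regularity argument for $t>0$) is what handles this.
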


Now, we choose the solution $(\wt u, \wt B)$ to the $\Lambda$-MHD system \eqref{equa-mhd-2} as the initial step of our telescoping convex integration scheme. 
Note that  the $\Lambda$-MHD system \eqref{equa-mhd-2} can be reformulated as the relaxed MHD-Reynolds system \eqref{mhd1} with the Reynolds stress
\begin{align}
	\mathring{R}_0^u &:= \P_{\geq \Lambda }(\P_{< \Lambda }\wt u\mathring \otimes \P_{< \Lambda }\wt u)+ (\P_{< \Lambda }\wt u\mathring \otimes \P_{\geq \Lambda(t)}\wt u )+ (\P_{\geq\Lambda }\wt u\mathring \otimes \wt u) \notag\\
	&\quad-(\P_{\geq \Lambda }(\P_{< \Lambda }\wt B\mathring \otimes \P_{< \Lambda }\wt B)- (\P_{< \Lambda }\wt B\mathring \otimes \P_{\geq \Lambda(t)}\wt B )- (\P_{\geq\Lambda }\wt B\mathring \otimes \wt B),   \label{r0u}
\end{align}
the  magnetic stress
\begin{align}
	\mathring{R}_0^B &:= \P_{\geq \Lambda }(\P_{< \Lambda }\wt B\mathring \otimes \P_{< \Lambda }\wt u)+ (\P_{< \Lambda }\wt B\mathring \otimes \P_{\geq \Lambda(t)}\wt u )+ (\P_{\geq\Lambda }\wt B\mathring \otimes \wt u) \notag\\
	&\quad- (\P_{\geq \Lambda }(\P_{< \Lambda }\wt u\mathring \otimes \P_{< \Lambda }\wt B)- (\P_{< \Lambda }\wt u\mathring \otimes \P_{\geq \Lambda(t)}\wt B)- (\P_{\geq\Lambda }\wt u\mathring \otimes \wt B),   \label{r0b}
\end{align}
and the pressure
$$ 
P_0 = p-\frac{1}{3} ( \P_{\geq \Lambda }(|\P_{<\Lambda }\wt u|^2-|\P_{<\Lambda }\wt B|^2 )-|\wt u|^2+|\wt B|^2). 
$$

\medskip 
The crucial vanishing property of the initial Reynolds and magnetic stress 
is stated below.

\begin{lemma} [Decay of initial Reynolds and magnetic stresses]  \label{Lem-Rey-decay} As $t \rightarrow 0^+$, we have
\begin{align}\label{pro-r0-ub}
	\|(\mathring{R}_{0}^u,\mathring{R}_{0}^B) \|_{ C_{[0,t]}\cL^{1}_x}+\|(\mathring{R}_{0}^u,\mathring{R}_{0}^B) \|_{ C_{[T-t,T]}\cL^{1}_x} \rightarrow 0.
\end{align}
\end{lemma}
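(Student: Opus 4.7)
The plan is to decompose $\mathring{R}_0^u$ and $\mathring{R}_0^B$ into their constituent terms and estimate each separately in $\cL^1_x$. The ``doubly projected'' terms of the form
\[
\P_{\geq\Lambda}\big(\P_{<\Lambda}\wt u \mathring\otimes \P_{<\Lambda}\wt u\big),\ \P_{\geq\Lambda}\big(\P_{<\Lambda}\wt B \mathring\otimes \P_{<\Lambda}\wt B\big),\ \P_{\geq\Lambda}\big(\P_{<\Lambda}\wt B \mathring\otimes \P_{<\Lambda}\wt u\big),\ \P_{\geq\Lambda}\big(\P_{<\Lambda}\wt u \mathring\otimes \P_{<\Lambda}\wt B\big)
\]
are handled directly by \eqref{deacy-non} and \eqref{deacy-non-2} from Theorem~\ref{thm-fns}, so only the ``cross'' terms require new work.

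For each remaining cross term, such as $\P_{<\Lambda}\wt u \mathring\otimes \P_{\geq\Lambda}\wt u$ or $\P_{\geq\Lambda}\wt u \mathring\otimes \wt u$ (and the magnetic analogues), Cauchy--Schwarz yields
\[
\|\P_{<\Lambda}\wt u \mathring\otimes \P_{\geq\Lambda}\wt u\|_{L^1_x} + \|\P_{\geq\Lambda}\wt u \mathring\otimes \wt u\|_{L^1_x} \lesssim \|\wt u\|_{L^2_x}\,\|\P_{\geq\Lambda}\wt u\|_{L^2_x}.
\]
The first factor is uniformly bounded on $[0,\infty)$ by the energy identity \eqref{eq-e-2}, so the task reduces to proving
\[
\sup_{s \in [0,t]} \|\P_{\geq\Lambda(s)}\wt u(s)\|_{L^2_x} \longrightarrow 0 \quad \text{as } t \to 0^+,
\]
together with the analogue near $T^-$ and the corresponding statements for $\wt B$. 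At the initial time, I would combine the strong convergence \eqref{s-con-v0} with the blow-up $\Lambda(s)\to\infty$ as $s\to 0^+$ via the splitting
\[
\|\P_{\geq\Lambda(s)}\wt u(s)\|_{L^2_x} \leq \|\wt u(s)-v_0\|_{L^2_x} + \|\P_{\geq\Lambda(s)}v_0\|_{L^2_x}.
\]
The first term vanishes by \eqref{s-con-v0}, and the second vanishes by Plancherel together with dominated convergence on the Fourier side, since $v_0\in L^2_x$ and $\Lambda(s)\to\infty$.

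The endpoint $t\to T^-$ is the main technical point, since the analogous strong $L^2_x$ continuity of $\wt u$ at $T$ from the left is not immediate from the statement of Theorem~\ref{thm-fns} (note $\Lambda$ also blows up at $T$). I would extract it by combining two ingredients: the energy equality \eqref{eq-e-2} gives continuity of the scalar map $s\mapsto\|(\wt u,\wt B)(s)\|_{\cL^2_x}$ across $s=T$, while weak $L^2_x$ continuity at $T^-$ follows from the uniform $L^\infty_t\cL^2_x$ bound together with a standard duality argument obtained by testing the $\Lambda$-MHD equation \eqref{equa-mhd-2} against smooth divergence-free functions (the low-frequency truncations in the nonlinearity are controlled by the uniform energy bound). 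Weak convergence combined with norm convergence in the Hilbert space $\cL^2_x$ yields strong convergence, after which the same splitting around $\wt u(T)$ as in the $t\to 0^+$ case closes the argument.
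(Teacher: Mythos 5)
Your decomposition, the reduction to $\sup_{s\in[0,t]}\|\P_{\geq\Lambda(s)}\wt u(s)\|_{L^2_x}\to 0$ via Cauchy--Schwarz, and the $t\to 0^+$ argument coincide with the paper's proof of Lemma~\ref{Lem-Rey-decay} (which rests on \eqref{deacy-non} and \eqref{u-l2-decay}). The one place where you take a genuinely different route is the $t\to T^-$ endpoint. You flag that strong $L^2_x$-continuity of $\wt u$ at $T$ from the left ``is not immediate from the statement of Theorem~\ref{thm-fns},'' and then recover it by combining scalar norm continuity from the energy equality \eqref{eq-e-2} with weak $L^2_x$-continuity, invoking that weak convergence plus norm convergence yields strong convergence in Hilbert space. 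That argument is valid, but it is unnecessary: Theorem~\ref{thm-fns}(i) already asserts that $(\wt u,\wt B)$ is \emph{smooth for positive times}, i.e.\ on $(0,\infty)\supset\{T\}$, so strong (in fact $C^\infty$) continuity at $T$ is immediate, and the paper's proof of \eqref{u-l2-decay} simply invokes this smoothness directly, then performs the same splitting you describe (around $\wt u(T)$ rather than $v_0$, together with $\Lambda(t)\to\infty$). Your detour thus reproves, in a weaker form, a statement already available in the theorem you are citing; it buys nothing but is harmless. Either way the argument closes identically, so the proposal is correct.
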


\begin{proof}
By virtue of the strong convergences \eqref{deacy-non} and \eqref{u-l2-decay} below, we have
\begin{align}\label{est-non-u}
	&\quad\|\P_{\geq \Lambda }(\P_{< \Lambda }\wt u\mathring \otimes \P_{< \Lambda }\wt u) \|_{L^{1}_x} +\|(\P_{< \Lambda }\wt u\mathring \otimes \P_{\geq \Lambda }\wt u ) \|_{L^{1}_x}+ \|(\P_{\geq\Lambda }\wt u\mathring \otimes \wt u) \|_{L^{1}_x} \notag\\
	&\leq\|\P_{\geq \Lambda }(\P_{< \Lambda }\wt u\mathring \otimes \P_{< \Lambda }\wt u)\|_{L^{1}_x}+\|\P_{ <\Lambda } \wt u \|_{L^2_x}\|\P_{ \geq \Lambda } \wt u \|_{L^2_x}+ \|\P_{ \geq \Lambda } \wt u \|_{L^2_x}\|\wt u \|_{L^2_x} \notag\\
	&\lesssim \|\P_{\geq \Lambda }(\P_{< \Lambda }\wt u\mathring \otimes \P_{< \Lambda }\wt u)\|_{L^{1}_x}+\|\P_{ \geq \Lambda } \wt u \|_{L^2_x}  \|\wt u \|_{L^2_x} \to 0,\quad \text{as}\ t \to 0^+\ \text{or}\ t \to T^-,
\end{align}
and, similarly
\begin{align}\label{est-non-b}
	&\quad\|\P_{\geq \Lambda }(\P_{< \Lambda }\wt B\mathring \otimes \P_{< \Lambda }\wt B) \|_{L^{1}_x} +\|(\P_{< \Lambda }\wt B\mathring \otimes \P_{\geq \Lambda }\wt B) \|_{L^{1}_x}+ \|(\P_{\geq\Lambda }\wt B\mathring \otimes \wt B) \|_{L^{1}_x}\notag\\
	&\lesssim \|\P_{\geq \Lambda }(\P_{< \Lambda }\wt B\mathring \otimes \P_{< \Lambda }\wt B)\|_{L^{1}_x}+\|\P_{ \geq \Lambda } \wt B \|_{L^2_x}   \|\wt B \|_{L^2_x} \to 0,\quad \text{as}\ t \to 0^+\ \text{or}\ t \to T^-.
\end{align}
Taking into account \eqref{r0u}, we then obtain 
\begin{align*} 
	\|\mathring{R}_{0}^u\|_{ C_{[0,t]}L^{1}_x}+\|\mathring{R}_{0}^u\|_{ C_{[T-t,T]}L^{1}_x}\to 0,\quad \text{as}\ t\to 0^+.
\end{align*}
The case of the magnetic stress  $\mathring{R}_{0}^B$ can be proved in an analogous manner.
\end{proof}

\begin{remark}
For more regular initial data, 
the decay rate near initial time can be improved 
for the Reynolds and magnetic stresses, 
which is important to construct decreasing energy profiles. 
We refer to Lemma \ref{lem-im-decay} below  
for the case of $H_x^3$ regular data. 
\end{remark}

\subsection{Selection of backward and forward  times} \label{Subsec-time} 

We choose the amplitude parameter $\delta_{q}$ as follows:
\begin{equation}\label{la}
	\delta_{q+3}=10^{-3q},\quad q\in \mathbb{N}.
\end{equation}
The parameters $\delta_0$, $\delta_1$ and $\delta_2$ will be chosen later.

In view of the decay properties of $(\mathring{R}_0^u, \mathring{R}_0^B)$ 
in Lemma \ref{Lem-Rey-decay}, 
we can choose the backward time sequence $\{T_q\}$ and the forward time sequence $\{S_q\}$ in 
the following iterative way.

Let $\delta_2$ be a large parameter such that 
\begin{align}
	& \|(\mathring{R}_{0}^u,\mathring{R}_{0}^B ) \|_{  C_{[0,T]} \cL^{1}_x}\leq \frac18c_*\delta_2, \label{asp-0} 
\end{align}
where $c_*>0$ is a small parameter such that
\begin{align}\label{def-c8}
	0< c_*< \min\left\{1,\frac{\ve_B}{200M_*^2|\Lambda_{B}|} , \frac{\ve_u}{10^3(1+2\ve_B^{-1}M_*^2|\Lambda_{B}|)}\right\},
\end{align}
with $\ve_u$, $\ve_B$ and $M_*$ being the geometric constants given by Geometric Lemmas~\ref{geometric lem 1} and \ref{geometric lem 2}. 

\begin{remark}
The small parameter $c_*$ is introduced in order to control the energy of velocity and magnetic perturbations (see e.g. \eqref{est-gamma-part2} below),  such that the energy variation of the approximate solutions caused by perturbations is of acceptable small size and can be controlled to ensure the energy iterative estimate \eqref{energy-est} later.    
\end{remark}

Then, we choose $0<T'_1<T/2<S_1'<T$, $T_2'\in (0,T'_1/2)$ and $S_2'\in (\frac{T+S'_1}{2},T)$ such that
\begin{align*}
	\|(\mathring{R}_{0}^u,\mathring{R}_{0}^B ) \|_{  C_{[0,T'_1]\cup [S'_1,T]} \cL^{1}_x}\leq \frac18c_*\delta_3,\quad 
	\|(\mathring{R}_{0}^u,\mathring{R}_{0}^B ) \|_{  C_{[0,T'_2]\cup [S'_2,T]} \cL^{1}_x}\leq \frac18c_*\delta_4,
\end{align*}
and, take $T_1$, $S_1$ and $\ell'_0>0$ such that 
\begin{align*}
    T'_2< T_1:=T'_1-\ell'_0<T'_1,\quad S'_1< S_1:=S'_1+\ell'_0<S'_2 \quad \ell'_0\leq \min\{\frac{T'_1-\ell'_0-T'_2}{20},\frac{S'_2-\ell'_0-S'_1}{20} \}.
\end{align*}
One has
\begin{align*} 
	\|(\mathring{R}_{0}^u,\mathring{R}_{0}^B )\|_{   C_{[0,T_1+\ell'_0]\cup [ S_1-\ell'_0,T]} \cL^{1}_x}\leq \frac18c_*\delta_3.
\end{align*}

Note that $T'_1$ and $T'_2$ are chosen in the backward manner, i.e., $T'_2<T'_1$, 
while $S'_1$ and $S'_2$ are chosen in the 
forward manner. 
Such times do exist due to the vanishing properties of Reynolds and magnetic stresses 
at times $0$ and $T$. 
Moreover, $T_1$ and $S_1$ 
are the times slightly modified by $\ell'_0$, 
which is a very small parameter used in the following mollification procedure 
that will not affect the main iterative orders.

Next, we choose $T'_3\in (0,T'_2/2)$ and $S'_3\in (\frac{T+S'_2}{2},T)$ such that 
\begin{align*}
	\|(\mathring{R}_{0}^u,\mathring{R}_{0}^B )\|_{   C_{[0,T'_3]\cup [S'_3,T]} \cL^{1}_x}\leq \frac18 c_*\delta_5,
\end{align*}
and take the modified times $T_2$, $S_2$ and the small parameter $\ell'_1>0$ such that
\begin{align*}
    T'_3< T_2:=T'_2-\ell'_1<T'_2,\quad S'_2< S_2:=S'_2+\ell'_1<S'_3 \quad \ell'_1\leq \min\{\frac{T'_2-\ell'_1-T'_3}{20},\frac{S'_3-\ell'_1-S'_2}{20} \}.
\end{align*}
Proceeding iteratively, for any $q\geq 3$, we choose $T'_{q+1} \in (0,T'_{q}/2)$ and 
$S'_{q+1} \in (\frac{T+S'_q}{2},T)$ such that
\begin{align*}
	\|(\mathring{R}_{0}^u,\mathring{R}_{0}^B )\|_{ C_{[0,T'_{q+1}]\cup [S'_{q+1},T]} \cL^{1}_x}\leq \frac18 c_*\delta_{q+3},
\end{align*}
and $T_q$, $S_q$ and $\ell'_{q-1}>0$ so that $$T'_{q+1}< T_q:=T'_q-\ell'_{q-1}<T'_q,\ \ 
S'_q< S_q:=S'_q+\ell'_{q-1}<S'_{q+1}$$ 
and  
$$\ell'_{q-1}\leq \min\{\frac{T'_q-\ell'_{q-1}-T'_{q+1}}{20},\ \ \frac{S'_{q+1}-\ell'_{q-1}-S'_q}{20} \}.$$

As a consequence, 
we obtain a backward time sequence $\{T_q\}$, a forward time sequence $\{S_q\}$ and a sequence of small parameters $\{\ell'_q\}$, such that
\begin{align}  \label{ell'}
	\ell'_{q-1}\leq \min\{\frac{T_{q}-T_{q+1}}{20},\frac{S_{q+1}-S_{q}}{20} \}
\end{align}
and
\begin{align}\label{asp-q}
	\|(\mathring{R}_{0}^u,\mathring{R}_{0}^B )\|_{  C_{[0,T_{q}+\ell'_{q-1}]\cup [S_{q}-\ell'_{q-1},T]} \cL^{1}_x}\leq \frac18 c_*\delta_{q+2}.
\end{align}

Now, let $a$ be a large integer to be chosen later,
$b\in 16\mathbb{N}$ a large integer multiple of $16$,
and $\varepsilon\in \mathbb{Q}_+$ is sufficiently small such that 
\begin{align}\label{b-beta-ve} 
	\varepsilon<10^{-3},\quad b> 100 \varepsilon^{-1}\quad \text{and} \quad b\ve\in \mathbb{N}.
\end{align}
For each $q\in \mathbb{N}$, the frequency parameter $\lbb_q$ is chosen as follows \begin{align*}
	&\lambda_q^{\frac18}\in \mathbb{N},
    \quad 
    \lambda_0\geq\max \left\{ a,\, T_2^{-4},\,\Lambda^{10}(S_2),\,  (\ell'_0)^{-\frac{1}{30}}, (\ell'_1)^{-\frac{1}{30}}  \right\}, \\
	&\lambda_q\geq \max \left\{ \lambda_{q-1}^b,\, T_{q+2}^{-4}, \,\Lambda^{10}(S_{q+2}),\, (\ell'_q)^{-\frac{1}{30}}, (\ell'_{q+1})^{-\frac{1}{30}}  \right\},
\end{align*}  
and the mollification parameter $\ell_q$ is given by
\begin{align}\label{def-ell}
	\ell_q:= \lambda_q^{-30}. 
\end{align} 
One has that, 
\begin{align}\label{a-big-lambda}
	\ell_{q}\leq \min\{ \frac{T_{q+1}-T_{q+2}}{20},\frac{T_{q+2}-T_{q+3}}{20},\frac{S_{q+2}-S_{q+1}}{20},\frac{S_{q+3}-S_{q+2}}{20}\}
    \quad \text{and}\quad 1+T_{q+2}^{-2}\leq \lambda_q,\quad \forall q\in \mathbb{N}. 
\end{align}

\subsection{Selection of energy and cross helicity profiles} 
For simplicity, 
we set
\begin{align*}
    \underline{T}_q:=T_q-\ell_{q-1},\ \ \overline{T}_q:=T_q+\ell_{q-1},\   \  \underline{S}_q:=S_q-\ell_{q-1},\ \ \overline{S}_q:=S_q+\ell_{q-1}.
\end{align*}

We choose a smooth energy profile $e$, a cross helicity profile $h$ and  the parameters  $\delta_1$ large enough 
such that
\begin{align}\label{asp-e-0}
	\begin{cases}
		\displaystyle\frac34\delta_{2}\leq  e(t)-(\| u_0 (t)\|_{L^2_x}^2+ \| B_0 (t)\|_{L^2_x}^2) \leq \delta_{1},\\[2mm]
		\displaystyle \frac{1}{64}\delta_{3} \leq  h(t)- \int_{\T^3}  u_0(t)   \cdot B_0(t)  \,\d x   \leq \frac{1}{32}\delta_{2} ,
	\end{cases}\quad \text{for }\ t\in [\underline{T_1},\os_1],
\end{align}
and for $q\in \mathbb{N}_+$
\begin{align}\label{asp-e-q}
	\begin{cases}
\displaystyle\frac34\delta_{q+2}\leq  e(t)-(\| u_0 (t)\|_{L^2_x}^2+\|  B_0 (t)\|_{L^2_x}^2)\leq \delta_{q+1},\\[2mm]
		\displaystyle \frac{1}{64}\delta_{q+3} \leq h(t)-\int_{\T^3}  u_0(t) \cdot B_0(t)  \,\d x   \leq \frac{1}{32}\delta_{q+2},
	\end{cases}\quad\text{for }\ t\in [\underline{T}_{q+1},\overline{T}_{q}]\cup [\us_{q},\os_{q+1} ] ,
\end{align}
and
\begin{align}\label{bdd-e-h-c1}
    \| e\|_{C_{[T_{q+2},S_{q+2}]}^1}+\| h\|_{C_{[T_{q+2},S_{q+2}]}^1}\lesssim \lambda_q^4.
\end{align}

We note that the orders of the upper and lower bounds of cross helicity error are smaller than those of the energy error in the above iteration. 
This is important to ensure that the energy variations caused by the cross helicity correctors will not violate the energy iterative estimate \eqref{energy-est} below.

Moreover, the upper and lower orders are also different in the above iterative estimates of the energy and cross helicity. 
The open room between different upper and lower orders indeed allows us to construct 
infinitely many energy and cross helicity 
with continuous and even decreasing profiles.

\subsection{Main iterative estimates}

We are now ready to state the following main iteration result, 
which 
quantizes  the inductive estimates of the relaxed solutions $(u_q, B_q, \mathring{R}^u_q, \mathring{R}^B_q)$ to \eqref{mhd1},
and is the heart of the proof of Theorem \ref{Thm-Non-MHD}.

For every $q \in \mathbb{N}$,
we assume the following inductive estimates at level $q$:
\begin{align}
	&  \|(u_{q},B_{q})\|_{C_T\cL^2_x}\lesssim  \sum\limits_{n=0}^q \delta_{n}^{\frac12},\label{ubl2}\\
	&  \|(u_{q},B_{q}) \|_{\cC_{[T_{q+2},S_{q+2}],x}^{1}} \lesssim  \lambda_{q}^{4},\label{ubc} \\
	& 
   \|(\mathring{R}_{q}^{u},\mathring{R}_{q}^{B})\|_{ C_T\cL^{1}_x} \leq  c_*\delta_{q+ 2},   \label{rubl1s}\\
	&  \| (\mathring{R}_{q}^{u},\mathring{R}_{q}^{B}) \|_{ C^{1}_{[T_{q+2},S_{q+2}]}\cL^1_x}+ \| (\mathring{R}_{q}^{u},\mathring{R}_{q}^{B}) \|_{C_{[T_{q+2},S_{q+2}]}\cW^{1,1}_x} \lesssim \lambda_{q}^8,  \label{rbl1b-s}\\
    &\frac12\delta_{q+2} \leq e(t)-(\| u_q (t)\|_{L^2_x}^2+\| B_q (t)\|_{L^2_x}^2)\leq 2\delta_{q+1},\ \ t\in [T_{q+1},S_{q+1}], \label{energy-est} \\ 
	&\frac{1}{128}\delta_{q+3} \leq  h(t)-\int_{\T^3}  u_q(t) \cdot B_q(t)  \,\d x \leq \frac{1}{16}\delta_{q+2} , \ \ t\in [T_{q+1},S_{q+1}], \label{helicity-est} 
\end{align}
where the implicit constants are independent of $q$.

\begin{proposition} [Main iteration]\label{Prop-Iterat}
	There exists $a \in \mathbb{N}$ large enough such that the following holds:
	
	Suppose that for some $q\in \mathbb{N}$, $(\u,\h,\ru,\rb)$ solves  \eqref{mhd1}
	and satisfies  \eqref{ubl2}-\eqref{energy-est}.
	Then, there exists $(u_{q+1}, B_{q+1}, \mathring{R}^u_{q+1}$, $\mathring{R}^B_{q+1})$
	solving \eqref{mhd1} and satisfying \eqref{ubl2}-\eqref{energy-est} with $q+1$ replacing $q$.
	In addition, one has 
	\begin{align}\label{u-B-L2tx-conv}
	\|u_{q+1}-u_{q}, B_{q+1}-B_{q}\|_{C_T\cL^{2}_{x}} \lesssim	\delta_{q+ 1}^{\frac12}.
	\end{align}
	The implicit constants in the above estimates are independent of $q$.
\end{proposition}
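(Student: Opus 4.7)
The plan is to follow the standard convex integration blueprint, adapted to the MHD geometry and to the telescoping time structure, in three main stages. \textbf{Mollification.} First I would regularize $(\u, \h, \ru, \rb)$ in space and time at scale $\ell_q = \lambda_q^{-30}$, obtaining a smooth triple $(u_\ell, B_\ell, \mathring R_\ell^u, \mathring R_\ell^B)$. Using \eqref{ubc}--\eqref{rbl1b-s}, standard Constantin--E--Titi commutator estimates give
\[
\|u_\ell - \u\|_{C^0_{t,x}} + \|B_\ell - \h\|_{C^0_{t,x}} \lesssim \lambda_q^4 \ell_q, \qquad \|\mathring R_{\mathrm{com}}\|_{C_T\cL^1_x} \lesssim \lambda_q^{8}\ell_q,
\]
each comparable to $\lambda_q^{-20}$, hence dominated by $\delta_{q+3}$. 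The mollification also supplies the derivative bounds $\|(u_\ell, B_\ell)\|_{C^N_{t,x}} \lesssim \lambda_q^4 \ell_q^{-N}$ needed to tame the transport and linear errors at level $q+1$. Crucially, \eqref{rubl1s} already holds \emph{globally} on $[0,T]$ thanks to the $\Lambda$-MHD setup, so no perturbation will be needed outside the inner window $[\ut_{q+1}, \os_{q+1}]$; this is the compatibility with the telescoping stress decay \eqref{asp-q} that makes the scheme run.

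\textbf{Perturbation design.} Next I would introduce
\[
w_{q+1} = w^{p} + w^{c} + w^{t} + w^{h}, \qquad d_{q+1} = d^{p} + d^{c} + d^{t} + d^{h},
\]
all localized in time to $[\ut_{q+2}, \os_{q+2}]$ by suitable cutoffs. The principal parts $(w^p, d^p)$ are intermittent box-like building blocks oscillating at frequency $\laq$ with intermittency dimension $D<1$, whose amplitudes are chosen via Lemma \ref{geometric lem 1} (for the symmetric velocity error) and Lemma \ref{geometric lem 2} (for the antisymmetric magnetic error): the low-frequency average of $w^p \otimes w^p - d^p \otimes d^p$ matches $-\mathring R_\ell^u + \rho_u(t,x)\, \Id$, where $\rho_u$ is a scalar density that also encodes the prescribed energy increment, and the low-frequency average of $d^p \otimes w^p - w^p \otimes d^p$ matches $-\mathring R_\ell^B$; the MHD geometry forces the magnetic direction to be perpendicular to the velocity direction. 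The correctors $(w^c, d^c)$ restore divergence-freeness, $(w^t, d^t)$ are the Buckmaster--Vicol temporal correctors cancelling the low-frequency part of $\div(w^p \otimes w^p)$ and $\div(d^p \otimes w^p)$, while $(w^h, d^h)$ are the heat correctors built from the attractor identity \eqref{eq:intro-heat-decomp} that cancel the remaining $\div(w^p \otimes d^p)$ and $\div(d^p \otimes d^p)$ type interactions in a single sweep.

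\textbf{New stresses, estimates, and main obstacle.} Setting $u_{q+1} = u_\ell + w_{q+1}$, $B_{q+1} = B_\ell + d_{q+1}$, the new stresses $\mathring R^u_{q+1}, \mathring R^B_{q+1}$ arise by applying the inverse divergence operator $\mathcal R$ to the residual, which splits into linear, transport, oscillation, corrector, and mollification errors; each is controlled via stationary phase / Bernstein estimates combined with the intermittent amplitude tuning, giving the $L^1$ bound $\leq c_*\delta_{q+3}$ as long as $\laq \geq \lambda_q^{b}$ with $b$ large, which is exactly the choice made. The convergence \eqref{u-B-L2tx-conv} follows from $\|w^p\|_{\cL^2_x} + \|d^p\|_{\cL^2_x} \lesssim \delta_{q+1}^{1/2}$ and the subordinate smallness of the correctors; \eqref{ubl2} and \eqref{ubc} at level $q+1$ then follow by triangle inequality and Bernstein, while \eqref{rbl1b-s} at level $q+1$ uses the crude $\laq$-derivative bounds on the building blocks. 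The step I expect to be most delicate is the simultaneous verification of \eqref{energy-est} and \eqref{helicity-est} on $[T_{q+1}, S_{q+1}]$: the scalar density $\rho_u$ must be tuned so that $\|(u_{q+1}, B_{q+1})\|_{\cL^2_x}^2$ lands inside the channel $[e(t) - 2\delta_{q+2},\, e(t) - \tfrac12\delta_{q+3}]$ dictated by \eqref{asp-e-q}, while an additional cross-helicity building block (a coupled velocity/magnetic pair) must carry a smaller amplitude tuned to bring $\int_{\T^3} u_{q+1}\cdot B_{q+1}\,\d x$ into the one-order-finer helicity channel; the two scalar amplitudes must be adjusted without violating the positivity hypotheses of the Geometric Lemmas, and the interaction of the cross-helicity corrector with $(w^p, d^p)$ must leave the energy balance intact. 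Gluing to the untouched outer regions $[0,T_{q+1}]\cup[S_{q+1},T]$ via the cutoffs, where the previous iterate already satisfies the estimates by \eqref{asp-q} and the choice \eqref{asp-e-q}, then closes the induction.
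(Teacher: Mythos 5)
Your outline captures the correct high-level convex-integration skeleton — mollification at scale $\ell_q$, intermittent box-type building blocks with perpendicular velocity/magnetic directions keyed to the two Geometric Lemmas, incompressibility and temporal correctors, an inverse-divergence decomposition of the residual into linear/oscillation/corrector/cut-off errors, and the observation that the delicate step is the simultaneous tuning of the energy and cross-helicity channels. However, there is one genuine gap and several departures from the paper's route that are worth flagging.

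\textbf{Main gap: mollifying the velocity and magnetic fields.} You propose to mollify $(u_q,B_q)$ along with the stresses and then set $u_{q+1}=u_\ell+w_{q+1}$, $B_{q+1}=B_\ell+d_{q+1}$. The paper mollifies \emph{only} $(\mathring R_q^u, \mathring R_q^B)$ (see \eqref{mol}) and keeps $u_{q+1}=u_q+w_{q+1}$, $B_{q+1}=B_q+d_{q+1}$ exactly. This is not a cosmetic choice: the telescoping scheme requires $(u_q,B_q)(t)=(u_0,B_0)(t)$ \emph{identically} on $[0,T_{q+1}]\cup[S_{q+1},T]$, and in particular $(u_q,B_q)(0)=(v_0,H_0)$ for every $q$, with no gluing step to repair the initial value. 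These exact identities are used to verify \eqref{energy-est} and \eqref{helicity-est} on the remaining regime $\cI^{\rm p}_3$ (where the argument uses $(\rr_q^u,\rr_q^B)=(\rr_0^u,\rr_0^B)$ and $(u_q,B_q)=(u_0,B_0)$ verbatim) and to carry the decay estimate \eqref{rubl1s} through the cut-off error (\eqref{rq1b-3}). If you replace $u_q$ by a mollified $u_\ell$, the identity breaks outside the perturbation window and at $t=0$, so the non-perturbed-regime verifications fail and $u_{q+1}(0)\neq v_0$. The paper can afford to skip mollifying the fields because \eqref{ubc} already supplies the needed $C^1_{[T_{q+2},S_{q+2}],x}$ control.

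\textbf{Corrector structure.} You keep both a Buckmaster--Vicol temporal corrector $w^t$ and a heat corrector $w^h$, essentially following Miao--Ye's division of labour (with two rather than three correctors). The paper replaces all temporal correctors by a \emph{single} heat corrector $w^{(H)}$, $d^{(H)}$ (see \eqref{def-lap-cor}) that absorbs the high oscillations of all four interaction types $W\otimes W$, $D\otimes D$, $W\otimes D$, $D\otimes W$ at once; the decomposition \eqref{decom-wh} of the heat corrector into $J_{21}, J_{22}, J_{23}$ is precisely what lets one see that the classical BV piece $\p_t^{-1}\P_{\neq 0}$ is subsumed. This is one of the paper's stated streamlinings, so your route is workable but not what the paper does. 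Minor related slips: the BV corrector cancels the \emph{high} temporal-frequency part (the $\p_t^{-1}\P_{\neq 0}\psi_{(k_1)}^2$ piece), not the low-frequency part; and you have the two Geometric Lemmas swapped — Lemma~\ref{geometric lem 1} (skew-symmetric) drives the magnetic amplitudes and Lemma~\ref{geometric lem 2} (symmetric) the velocity amplitudes. Finally, the perturbation's support is roughly $[\Tm, \Sm]=[(T_{q+2}+T_{q+1})/2,(S_{q+2}+S_{q+1})/2]$ (via $\chi_{q+1}$ in \eqref{def-chiq}), not $[\ut_{q+2},\os_{q+2}]$, and the energy increment is encoded in the separate scalar $\gamma_q^{(e)}$ added to $\rho_u$ in the amplitude \eqref{velamp}, rather than being folded into $\rho_u$ itself.
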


The iterative procedure in  Proposition \ref{Prop-Iterat} 
is of backward-forward feature, 
as it contains both the backward sequence $\{T_q\}$ and the forward sequence $\{S_q\}$. 
Hence, it is very different from the 
backward iteration in \cite{czz24}. 
The proof of Proposition \ref{Prop-Iterat} occupies Sections \ref{Sec-Interm-Flow}, 
\ref{Sec-Energy} and \ref{Sec-Rey-mag-stress} and constitutes the most technical part of the present work.

\section{The $\Lambda$-MHD system}  \label{Sec-Lambda-NSE}

This section is devoted to the proof of Theorem~\ref{thm-fns} for the $\Lambda$-MHD system.  
We shall use $(u,B)$ instead of $(\wt u, \wt B)$ to simplify the notations.

\medskip
\paragraph{\bf Step $1$: Global existence.} 
We first apply the classical Galerkin method to prove the existence of a global solution $(u,B)$ to \eqref{equa-mhd-2}. More precisely, let $\{e_j\}_{j=1}^n$ be an orthogonal basis in $L^2_\sigma$ and $\P_n: L^2 \rightarrow L^2_\sigma$ be the projection operator defined by
\begin{align}\label{def-pn}
	\P_n u = \sum_{i=1}^{n} (u, e_j)e_j,
\end{align}
where $(\cdot, \cdot)$ denotes the inner product in $L_x^2$.

Now, we consider the Galerkin approximation of system
\begin{align}\label{equa-nse-g}
\begin{cases}
\p_t u_n-\nu_1  \Delta  u_n + \P_n\P_{<\Lambda(t)}  (\P_{<\Lambda(t)} u_n\cdot\nabla \P_{<\Lambda(t)}u_n-\P_{<\Lambda(t)} B_n\cdot\nabla \P_{<\Lambda(t)}B_n)=0,\\
\p_t B_n-\nu_2  \Delta  B_n + \P_n\P_{<\Lambda(t)}  (\P_{<\Lambda(t)} u_n\cdot\nabla \P_{<\Lambda(t)}B_n-\P_{<\Lambda(t)} B_n\cdot\nabla \P_{<\Lambda(t)}u_n)=0,\\
\div   u_n=0,\ \ \div B_n=0,\\
u_n(0)=\P_n v_0,\ \ B_n(0)=\P_n H_0.
\end{cases}
\end{align}

In order to solve \eqref{equa-nse-g}, we consider the solution $(u_n, B_n)$ of the form
\begin{align}\label{def-un}
	u_n(t,x)=\sum_{j=1}^{n} \theta_j^n(t) e_j(x),\quad B_n(t,x)=\sum_{j=1}^{n} \eta_j^n(t) e_j(x),
\end{align}
where $\{(\theta_j^n,\eta_j^n) \}$ are scalar functions to be determined later. Taking the $L_x^2$-inner product of \eqref{equa-nse-g} with $e_k$, $k=1,2,\dots,n$, we get the system of ODEs
\begin{align}\label{def-ode}
\begin{cases}
\displaystyle\frac{\d\theta_k^n }{\d t}=-\lambda_k \theta_k^n - \sum_{i, j=1}^n ( \P_{<\Lambda(t)} (\P_{<\Lambda(t)}e_i \cdot \nabla  \P_{<\Lambda(t)}e_j) , e_k) (\theta_i^n\theta_j^n-\eta_i^n\eta_j^n),\vspace{1mm}\\
\displaystyle\frac{\d\eta_k^n }{\d t}=-\lambda_k \eta_k^n - \sum_{i, j=1}^n ( \P_{<\Lambda(t)} (\P_{<\Lambda(t)}e_i \cdot \nabla  \P_{<\Lambda(t)}e_j) , e_k) (\theta_i^n\eta_j^n-\eta_i^n\theta_j^n).
\end{cases}
\end{align}
By the classical theory of ODEs, there exists a unique solution $\{(\theta_j^n,\eta_j^n)\}_{j=1}^n$ to \eqref{def-ode}.

In order to prove that $(\theta_j^n, \eta_j^n)$, $j=1,2,\dots,n$, does not blow up, we take the $L_x^2$ inner product of equation \eqref{equa-nse-g} with $(u_n,B_n)$ and integrate in time  to get
\begin{align}\label{eq-e}
	\frac12 (\|u_n\|_{L^2_x}^2 +\|B_n\|_{L^2_x}^2)+ \nu_1 \int_{0}^{t}\| \nabla u_n\|_{L^2_x}^2\d s+\nu_2 \int_{0}^{t}\|\nabla B_n\|_{L^2_x}^2\d s = \frac12 (\|v_0\|_{L^2_x}^2+\|H_0\|_{L^2_x}^2)<\infty
\end{align} 
for any $t \geq 0$.
Since
\begin{align*}	\sum_{j=1}^n\left|\theta_j^n(t)\right|^2=\left\|u_n(t)\right\|_{L^2_x}^2,\quad \sum_{j=1}^n\left|\eta_j^n(t)\right|^2=\left\|B_n(t)\right\|_{L^2_x}^2,
\end{align*}
it follows that $\{(\theta_j^n, \eta_j^n)\}_{j=1}^n$ is uniformly bounded on $[0,\wt T]$ and so the solution $\{(\theta_j^n, \eta_j^n)\}_{j=1}^n$ (or equivalently $(u_n, B_n)$) exists on $[0, \infty)$.

\medskip 
Next, we pass to the limit as $n \to \infty$. 
For this purpose, 
we take any test functions $\varphi\in V:= L^2_\sigma \cap H_x^1$ and take the $L_x^2$ inner product of \eqref{equa-nse-g} with $\varphi$ to get
\begin{align*}
	\begin{cases}
	( \partial_t u_n, \varphi)= (\nu_1  \Delta u_n, \varphi)-(\P_n\P_{<\Lambda(t)}  ( \P_{<\Lambda(t)} u_n\cdot\nabla \P_{<\Lambda(t)}u_n),\varphi)+(\P_n\P_{<\Lambda(t)}  ( \P_{<\Lambda(t)} B_n\cdot\nabla \P_{<\Lambda(t)}B_n),\varphi),\vspace{1mm}\\
	( \partial_t B_n, \varphi)= (\nu_2  \Delta B_n, \varphi)-(\P_n\P_{<\Lambda(t)}  ( \P_{<\Lambda(t)} u_n\cdot\nabla \P_{<\Lambda(t)}B_n),\varphi)+(\P_n\P_{<\Lambda(t)}  ( \P_{<\Lambda(t)} B_n\cdot\nabla \P_{<\Lambda(t)}u_n),\varphi).
\end{cases}
\end{align*}
By the integration-by-part formula and the H\"{o}lder inequality, we derive
\begin{align*}
	|(\nu_1 \Delta u_n+ \nu_2 \Delta B_n, \varphi)|
	\leq  \| (\nu_1\nabla u_n,\nu_2\nabla B_n)\|_{\cL^2_x} \|\varphi\|_{V}.
\end{align*}
Using the interpolation and the Sobolev embedding $\dot H^1_x\hookrightarrow L^6_x$, we have
\begin{align*}
	|(\P_n\P_{<\Lambda(t)}  (\P_{<\Lambda(t)}  u_n\cdot\nabla \P_{<\Lambda(t)}u_n),\varphi)|
	&\leq \|\P_{<\Lambda(t)} u_n\|_{L^3_x}\|\nabla \P_{<\Lambda(t)} u_n\|_{L^2_x} \|\P_{<\Lambda(t)}\P_n\varphi\|_{L^6_x}\notag\\
	&\lesssim  \left\|u_n\right\|_{L^2_x}^{\frac12}\left\|u_n\right\|_{L^6_x}^{\frac12}\left\|\nabla u_n\right\|_{L^2_x} \left\| \varphi\right\|_{V} \\
	& \lesssim \left\|u_n\right\|_{L^2_x}^{\frac12}\left\|\nabla u_n\right\|^{\frac32}_{L^2_x }\|\varphi\|_{V},
\end{align*}
and
\begin{align*}
	|(\P_n\P_{<\Lambda(t)}  (\P_{<\Lambda(t)}  u_n\cdot\nabla \P_{<\Lambda(t)}B_n),\varphi)|
&\lesssim \|\P_{<\Lambda(t)} u_n\|_{L^4_x} \|\P_{<\Lambda(t)}  B_n\|_{L^4_x }\|\nabla \P_{<\Lambda(t)}\P_n\varphi\|_{L^2}\\
&\lesssim \|u_n\|_{L^2_x}^{\frac14}\|\nabla u_n\|_{L^2_x}^{\frac34} \| B_n\|_{L^2_x }^{\frac14}\|\nabla B_n\|_{L^2_x}^{\frac34}\|\varphi\|_{V}\\
&\lesssim  \|u_n\|_{L^2_x}^{\frac14}\| B_n\|_{L^2_x }^{\frac14}( \|\nabla u_n\|_{L^2_x}^{\frac32}+  \|\nabla B_n\|_{L^2_x}^{\frac32})\|\varphi\|_{V},
\end{align*}
which imply that
\begin{align*}
    &\|\P_n\P_{<\Lambda(t)}  (\P_{<\Lambda(t)}  u_n\cdot\nabla \P_{<\Lambda(t)}u_n)\|_{V^*}+\|\P_n\P_{<\Lambda(t)}  (\P_{<\Lambda(t)}  u_n\cdot\nabla \P_{<\Lambda(t)}B_n)\|_{V^*} \notag \\ 
    \lesssim& 
    \|(u_n, B_n)\|_{\cL^2_x}^{\frac12} \|(\nabla u_n, \nabla B_n)\|^{\frac32}_{\cL^2_x }, 
\end{align*} 
where $V^*$ is the dual space of $V$. 
Similar estimate holds for the magnetic  nonlinearity as well.
Thus, for any $\wt T > 0$  we obtain
\begin{align}
	\int_{0}^{\wt T}	\|(\p_t u_n, \p_t B_n)\|_{V^*}^{\frac43}\d t &\lesssim \int_{0}^{\wt T} \| (\nabla u_n, \nabla B_n)\|_{\cL^2_x}^{\frac43}\d t+  \int_{0}^{\wt T}\|(u_n, B_n)\|_{\cL^2_x}^{\frac23} \|(\nabla u_n, \nabla B_n)\|^{2}_{\cL^2_x }\d t\notag\\
	& \lesssim \wt T^{\frac13}\left\|(u_n, B_n)\right\|_{L^2(0, \wt T; V)}^{\frac43}+ \|(u_n, B_n)\|_{L^{\infty}(0, \wt T ; \cL^2_x)}^{\frac23} \|(u_n, B_n) \|_{L^2(0, \wt T ; V)}^2\notag \\
	& \lesssim \wt T^{\frac13} \|(v_0, H_0) \|_{\cL^2_x}^{\frac43}+  \|(v_0, H_0) \|_{\cL^2_x}^{\frac83},
\end{align}
where the last step was due to \eqref{eq-e},
and the implicit constants are independent of $n$ and $\wt T$.

In view of Aubin-Lions Lemma (see, e.g., \cite{rrs16}) and a standard diagonal argument, there exist
a weakly continuous $\cL^2_\sigma$-valued function $(u,B)$ on $[0,\infty)$ and a subsequence (still denote by $\{(u_n, B_n)\}$) such that
\begin{align}
	&(u_n, B_n) \rightarrow (u, B)\quad \text { strongly in}\quad L^2(0, \wt T ; \cL^2_\sigma), \notag\\
	&(u_n, B_n) \stackrel{w^*}{\rightharpoonup} (u, B) \quad\text { weakly-}* \text {in}\quad L^{\infty}(0, \wt T ; \cL^2_\sigma), \label{conv} \\
	&(\nabla u_n, \nabla B_n) \rightharpoonup (\nabla u, \nabla B) \quad\text { weakly in }\quad L^2\left(0, \wt T ; \cL^2_\sigma \right),\notag
\end{align}
for any $\wt T > 0$. Arguing in an analogous way as in the proof of \cite[Theorem 1.2]{czz24}, one has that for any $\varphi\in C_0^\infty([0,\wt T)\times \T^3 )$,
\begin{align*}
&\int_0^{\wt T} (\P_{<\Lambda(t)}  (\P_{<\Lambda(t)}  u_n\cdot\nabla \P_{<\Lambda(t)}B_n),\varphi) \d t \rightarrow  \int_0^{\wt T} (\P_{<\Lambda(t)}  (\P_{<\Lambda(t)}  u \cdot\nabla \P_{<\Lambda(t)}B ),\varphi) \d t,
\end{align*}
and similarly for the remaining nonlinearities. This yields that the limit $(u,B)$ is a weak (distributional) solution to the $\Lambda$-MHD system \eqref{equa-mhd-2} on $[0,\infty)$. 
Moreover, the energy equality for the Galerkin approximation \eqref{eq-e} yields the energy inequality for $(u,B)$:
\begin{equation} \label{eq:eqn-inequality-for-LMHD}
\frac12  \|(\wt u, \wt B)(t)\|_{\cL^2_x}^2  + \int_{t_0}^{t} \left(\nu_1 \| \nabla \wt u(s)\|_{L^2_x}^2+\nu_2 \|\nabla \wt B (s)\|_{L^2_x}^2 \right) \d s \leq \frac12  \|(\wt u, \wt B)(t_0)\|_{\cL^2_x}^2, 
\end{equation}
for all $t \geq t_0$, a.a. $t_0\geq 0$.

\medskip
\paragraph{\bf Step $2$: Spatial-temporal regularity on $(0,T)$.}
Let us first treat the spatial regularity of the weak solution $(u,B)$ before time $t=T$. 

\medskip 
\paragraph{\bf Spatial regularity on $(0,T)$:} 
For every $\xi\in \mathbb{Z}^3$, the Fourier coefficient of $(u,B)$ satisfies
\begin{align}\label{u-xi}
	\widehat{u}(t,\xi)- \widehat{u}(0,\xi)&=\int_{0}^t -\nu |\xi|^2 \widehat{u}(\xi ) \d r-\int_{0}^t \sum_{ l+h=\xi} i \(  (\xi\cdot\widehat{u}(l) )\widehat{u}(h)  -\frac{(\xi\cdot\widehat{u}(l) )\widehat{u}(h) \cdot \xi}{|\xi|^2} \xi \)\varphi_{\Lambda}(\xi)\varphi_{\Lambda}(l)\varphi_{\Lambda}(h) \d r\notag\\
	&\quad +\int_{0}^t \sum_{ l+h=\xi} i \(  (\xi\cdot\widehat{B}(l) )\widehat{B}(h)  -  \frac{(\xi\cdot\widehat{B}(l) )\widehat{B}(h) \cdot \xi}{|\xi|^2} \xi \)\varphi_{\Lambda}(\xi)\varphi_{\Lambda}(l)\varphi_{\Lambda}(h) \d r,
\end{align}
and
\begin{align}\label{B-xi}
	\widehat{B}(t,\xi)- \widehat{B}(0,\xi)=\int_{0}^t -\nu |\xi|^2 \widehat{B}(\xi) \d r-\int_{0}^t \sum_{ l+h=\xi}
	i \(  (\xi\cdot\widehat{u}(l) )\widehat{B}(h ) - (\xi\cdot\widehat{B}(l) )\widehat{u}(h)  \)\varphi_{\Lambda}(\xi)\varphi_{\Lambda}(l)\varphi_{\Lambda}(h)\d r.
\end{align}

Let us first consider the case where $t\in (0,T/2]$. Let $k_0:= \Lambda(t)$. Then, for any $|\xi|>2k_0$ and $\tau\geq t$ one has
\begin{align*}
	\widehat{u}(\tau,\xi)- \widehat{u}(t,\xi)=-\int_{t }^{\tau} \nu |\xi|^2 \widehat{u}(r,\xi) \d r,\quad \widehat{B}(\tau,\xi)- \widehat{B}(t,\xi)=-\int_{t }^{\tau} \nu |\xi|^2 \widehat{B}(r,\xi) \d r,
\end{align*}
which yields
\begin{align}\label{u-xi-k0}
	\widehat{ u}(\tau,\xi) =\widehat{ u}(t,\xi)e^{-\nu |\xi|^2 (\tau-t)},\ \ \widehat{ B}_\xi(\tau,\xi) =\widehat{ B}_\xi(t_,\xi)e^{-\nu |\xi|^2 (\tau-t)}, \ \ { \forall\ |\xi|>2k_0,\ \tau\geq t.}
\end{align}
Moreover, for any $s\geq 0$ and $\tau\geq \frac32 t$, by \eqref{u-xi-k0} and heat kernel estimates we derive
\begin{align}\label{est-u-hs-1}
\|u(\tau)\|_{\dot{H}^s_x}^2
&\leq  \sum_{|\xi|\leq  2k_0} \left||\xi|^{s} \widehat{u} (\tau,\xi)\right|^2+ \sum_{|\xi|>2k_0 } \left| |\xi|^{s} \widehat{u} (\tau,\xi)\right|^2\notag \\
&\leq \sum_{|\xi|\leq  2k_0} \left||\xi|^{s} \widehat{u} (\tau,\xi)\right|^2+ \sum_{|\xi|>2k_0 } | |\xi|^{s} e^{-\nu |\xi|^2 (\tau-t)} \widehat{ u} (t,\xi)|^2\notag \\
&\lesssim ( \Lambda^{2s}(t)+ t^{-s})\|v_0\|_{L^2_x}^2.
\end{align}
In particular, we have
\begin{align*}
	\|u(\tau)\|_{\dot{H}^s_x}^2 \lesssim (\Lambda^{2s}(\frac23 \tau)+(\frac23 \tau)^{-s})\|(v_0,H_0)\|_{\cL_x^2}\lesssim ( 1+ \tau^{-s}) \|(v_0,H_0)\|_{\cL_x^2}^2.
\end{align*}
Since $t$ and thus $\tau$ are arbitrary in $(0,T)$, we get
\begin{equation} \label{spa-regu}
	\|u\|_{C([t,T/2];\dot{H}^s_x) } \lesssim  1+ t^{-\frac{s}{2}}  < \infty.
\end{equation}
Similar arguments also yield
\begin{equation} \label{spa-regb}
	\|B\|_{C([t,T/2];\dot{H}^s_x) } \lesssim  1+t^{-\frac{s}{2}} < \infty.
\end{equation}

Regarding the case where $t\in [T/2,T)$, for any $|\xi|>2k_0$ we have
\begin{align}\label{u-xi-k0-2}
	\widehat{ u}(t,\xi) =\widehat{ u}(T/2,\xi)e^{-\nu |\xi|^2 (t-T/2)},\ \ \widehat{ B}_\xi(t,\xi) =\widehat{ B}_\xi(T/2,\xi)e^{-\nu |\xi|^2 (t-T/2)}.
\end{align}
Then, for any $s\geq 0$, 
\begin{align}
    \|u(t)\|_{\dot{H}^s_x}^2
&\leq \sum_{|\xi|\leq  2k_0} \left||\xi|^{s} \widehat{u} (t,\xi)\right|^2+ \sum_{|\xi|>2k_0 } | |\xi|^{s} e^{-\nu |\xi|^2 (t-T/2)} \widehat{ u} (T/2,\xi)|^2\notag \\
&\lesssim \sum_{|\xi|\leq  2k_0} \left||\xi|^{s} \widehat{u} (t,\xi)\right|^2 + \|u (T/2)\|_{\dot{H}^s_x}^2\notag \\
&\lesssim \Lambda^{2s}(t)+\Lambda^{2s}(T/2)+T^{-s},
\end{align}
where in the last step we also used \eqref{est-u-hs-1}.
It follows that 
\begin{equation} \label{spa-regu-2}
	\|u\|_{C([T/2,t];\dot{H}^s_x) } \lesssim  1+ \Lambda^{s}(t)   < \infty.
\end{equation}
Similarly, we have
\begin{equation} \label{spa-regb-2}
\|B\|_{C([T/2,t];\dot{H}^s_x) } \lesssim   1+ \Lambda^{s}(t)  < \infty.
\end{equation}
Here, the implicit constants depend on $\|(v_0,H_0)\|_{\cL_x^2}$.

\medskip 
\paragraph{\bf Temporal regularity on $(0,T)$:} 
For the temporal regularity of $(u,B)$, we begin with the case where $t\in (0,T/2]$. 
By system \eqref{equa-mhd-2}, 
\begin{align}\label{tem-regu-1}
	\|\p_t u\|_{C([t,T/2];\dot{H}^s_x)}
	&\lesssim \| u \|_{C([t,T/2];\dot{H}^{s+2}_x)}+ \| \P_{<\Lambda(t)}u \|_{C([t,T/2];{H}^{s+3}_x)}^2+ \| \P_{<\Lambda(t)} B \|_{C([t,T/2];{H}^{s+3}_x)}^2\notag\\
	&\lesssim  1+t^{-\frac{s+2}{2}}  +  \Lambda^{2s+6}(t) <+\infty, 
\end{align}
and 
\begin{align}\label{tem-regb-1}
	\|\p_t B\|_{C([t,T/2];\dot{H}^s_x)}\lesssim 1+t^{-\frac{s+2}{2}}  +  \Lambda^{2s+6}(t) <+\infty.
\end{align}

The higher regularity can be prove by inductive arguments. 
More precisely, assume that $\|\p_t^{m} u\|_{C_{[t,T/2]}\dot{H}^s}$ and $\|\p_t^{m} B\|_{C_{[t,T/2]}\dot{H}^s}$ are finite for all $s>0$ and $1\leq m\leq N-1$. Using equation \eqref{u-xi}  we get
\begin{align}\label{tem-regu-2}
	\|\p_t^N u\|_{C([t,T/2];\dot{H}^s_x)}
	& \leq \|\sum_{\xi \in \mathbb{Z}^3}|\xi|^{s+2 }\widehat{\p_t^{N-1} u}_\xi \|_{C([t,T/2];L^2_\xi)}\notag\\
	&\quad +\| \sum_{\xi \in \mathbb{Z}^3}\sum_{ l+h=\xi} \sum_{N_1+N_2+N_3=N-1} |\xi|^s (\xi\cdot\widehat{\p_t^{N_1}u}_l)\widehat{\p_t^{N_2}u}_h \p_t^{N_3}(\varphi_{\Lambda}(\xi)\varphi_{\Lambda}(l)\varphi_{\Lambda}(h))\|_{C([t,T/2];L^2_\xi)}\notag\\
	&\quad +\| \sum_{\xi \in \mathbb{Z}^3}\sum_{ l+h=\xi} \sum_{N_1+N_2+N_3=N-1} |\xi|^s (\xi\cdot\widehat{\p_t^{N_1}B}_l)\widehat{\p_t^{N_2}B}_h \p_t^{N_3}(\varphi_{\Lambda}(\xi)\varphi_{\Lambda}(l)\varphi_{\Lambda}(h))\|_{C([t,T/2];L^2_\xi)}\notag\\
	&:= J_1+J_2+J_3.
\end{align}
Note that $J_1<+ \infty$, due to the inductive assumption. Moreover, by the Sobolev embedding $H^2_x\hookrightarrow L^\infty_x$, 
\begin{align}
	J_2& \leq \sum_{N_1+N_2+N_3=N-1} \left\| \sum_{\xi \in \mathbb{Z}^3}\sum_{ l+h=\xi}  |\xi|^s (\xi\cdot\widehat{\p_t^{N_1}u}_l)\widehat{\p_t^{N_2}u}_h \right\|_{C([t,T/2];L^2_\xi)}\sup_{l+h=\xi, \xi \in \mathbb{Z}^3}\|\p_t^{N_3}(\varphi_{\Lambda}(\xi)\varphi_{\Lambda}(l)\varphi_{\Lambda}(h))\|_{C([t,T/2])}\notag\\
	&\leq  \sum_{N_1+N_2+N_3=N-1}  \|  \p_t^{N_1}u \|_{C([t,T/2];H^{s+2}_x)} \|  \p_t^{N_2}u \|_{C([t,T/2];H ^{s+3}_x)}\sup_{l+h=\xi, \xi \in \mathbb{Z}^3}\|\p_t^{N_3}(\varphi_{\Lambda}(\xi)\varphi_{\Lambda}(l)\varphi_{\Lambda}(h))\|_{C([t,T/2])},
\end{align}
and
\begin{align}\label{tem-regu-j3}
	J_3 &\leq  \sum_{N_1+N_2+N_3=N-1}  \|  \p_t^{N_1}B \|_{C([t,T/2];H^{s+2}_x)} \|  \p_t^{N_2}B \|_{C([t,T];H ^{s+3}_x)}\sup_{l+h=\xi, \xi \in \mathbb{Z}^3}\|\p_t^{N_3}(\varphi_{\Lambda}(\xi)\varphi_{\Lambda}(l)\varphi_{\Lambda}(h))\|_{C([t,T/2])},
\end{align}
which are finite by the inductive assumption. Hence, we arrive at
\begin{align*}
\|\p_t^N u\|_{C([t,T/2];\dot{H}^s_x)}	<+\infty, \quad t\in (0,T/2]. 
\end{align*}
Similarly, 
for the magnetic part, 
\begin{align*}
	\|\p_t^N B\|_{C([t,T/2];\dot{H}^s_x)}	<+\infty, \quad t\in (0,T/2].
\end{align*} 

Regarding the case where $t\in (T/2,T)$, 
one also has 
\begin{align}\label{est-tem-reg-2}
   \|(\p_t u, \p_t B)\|_{C([T/2,t];\dot{\cH}^s_x)}&\lesssim \| (u,B) \|_{C([t,T/2];\dot{\cH}^{s+2}_x)}+ \| \P_{<\Lambda(t)}u \|_{C([t,T/2];{H}^{s+3}_x)}^2+ \| \P_{<\Lambda(t)} B \|_{C([t,T/2];{H}^{s+3}_x)}^2\notag\\
   &\lesssim 1+\Lambda^{2s+4}(t) +\Lambda^{2s+6}(t) <+\infty,
\end{align}
and arguing in a similar manner as \eqref{tem-regu-2}-\eqref{tem-regu-j3}, for $N\geq 2$, we get
\begin{align*}
\|(\p_t^N u, \p_t^N B)\|_{C([T/2,t];\dot{\cH}^s_x)}	<+\infty.
\end{align*}

Thus, we conclude that $(u,B)$ is smooth in both space and time on $(0,T)$. In particular, the energy balance \eqref{eq-e-2} holds due to the smoothness of the solution.

\medskip
\paragraph{\bf  Step $3$: Continuity at initial time} By the energy balance \eqref{eq-e-2}, 
one has 
\begin{align*}
	\limsup_{t\to 0^+} \|(u(t),B(t))\|_{\cL^2_x}\leq \|(v_0,H_0)\|_{\cL^2_x}.
\end{align*}
Moreover, by the weak continuity, $(u,B)(t)\rightharpoonup (v_0,H_0)$ in $\cL^2_x$ as $t\to 0^+$, 
\begin{align*}
	\|(v_0,H_0)\|_{\cL^2_x}\leq \liminf_{t\to 0^+}\|(u,B)(t)\|_{\cL^2_x}.
\end{align*}
Hence, the norm of the solution converges
\begin{align*}
	\|(u,B)(t)\|_{\cL^2_x}\to \|(v_0,H_0)\|_{\cL^2_x},\quad \text{as}\ t\to 0^+,
\end{align*}
which along with the fact that $(v_0,H_0)$ is the weak limit of $(u(t),B(t))$ in $\cL^2_x$ yields the strong convergence
\begin{align*}
	(u(t),B(t))\to (v_0,H_0)  \quad \text { strongly in}\ \cL^2_x,\quad  \text{as}\ t\to 0^+.
\end{align*}

\medskip
\paragraph{\bf Step $4$: Eventual regularity.} We will show that the weak solution $(u,B)$ is smooth on $(t_0, \infty)$ for some $t_0 < T$, which combined with Step 2 gives smoothness on the whole time interval $(0, \infty)$.

We first claim that for any $\ve>0$, there exists time $t_0 \leq T':=  (2\ve\nu)^{-1}\|(v_0,H_0)\|_{\cL^2_x}^2$, 
such that 
\begin{align}  \label{energy-small}
    \|(\na u(t_0), \na B(t_0))\|_{\cL^2_x}^2 \leq \ve.
\end{align}
To this end, by the energy inequality \eqref{eq-e-2}, we note that 
\begin{align}\label{energy-vis}
    \nu \int_{0}^{t} \|\nabla  u(t_0), \nabla  B(t_0) \|_{\cL^2_x}^2\d s \leq \frac12  \|(v_0,H_0)\|_{\cL^2_x}^2. 
\end{align}
Suppose that \eqref{energy-small} fails, 
that is, 
\begin{align*}
    \|\nabla  u(t), \nabla  B(t) \|_{\cL^2_x}^2> \ve \quad \text{for all} 
    \ t\in [0,T'].
\end{align*}
Then, one has 
\begin{align*}
 \nu \int_{0}^{T'} \|\nabla  u(t_0), \nabla  B(t_0) \|_{\cL^2_x}^2\d s>  \nu T'\ve = \frac12 \|(v_0,H_0)\|_{\cL^2_x}^2,
\end{align*}
which contradicts the energy inequality  \eqref{energy-vis}, 
thereby proving \eqref{energy-small}, as claimed. 

\medskip 
Next, we prove the eventual regularity of the solution $(u,B)$. Consider any interval of regularity of $(u,B)$. Taking the $\dot H^1$ inner product of \eqref{equa-mhd-2} with $(u,B)$ and using the H\"older, Sobolev, Young, and Poincar\'e inequalities, we get
\begin{equation}\label{est-hi-innp}
\begin{split}
    \frac12\frac{\d}{\d t}\|(\nabla u, \nabla B)\|_{ \cL^2_x}^2&\leq \frac{c_1}{\nu^3}\|( \nabla  u , \nabla  B )\|_{ \cL^2_x}^6- c_2\nu\|(\nabla u , \nabla  B )\|_{ \cL^2_x}^2\\
    &=\frac{c_1}{\nu^3}\|( \nabla  u , \nabla  B )\|_{ \cL^2_x}^2\Big(\|(\nabla u , \nabla  B )\|_{ \cL^2_x}^4-\frac{c_2\nu^4}{c_1} \Big),
\end{split}
\end{equation}
for some absolute positive constants $c_1$ and $c_2$.
Let $\ve=\frac12( \frac{c_2}{c_1})^{\frac14} \nu$ and recall that  $T=(\frac{c_1}{c_2})^{\frac14}\nu^{-2}\|(v_0,H_0)\|_{\cL^2_x}^2$ by \eqref{def-T}. Then thanks to \eqref{energy-small}, there exists $t_0\in [0,T)$ such that 
\begin{align}
    \|\nabla (u(t_0), \nabla B(t_0))\|_{\cL^2_x}\leq \ve.
\end{align}
Then by \eqref{est-hi-innp} we have
\begin{align*} 
\frac{\d}{\d t}\|(\nabla u, \nabla B)\|_{\cL^2_x}^2<0, \qquad     \|(\nabla u , \nabla  B )(t)\|_{ \cL^2_x}^4<\frac{c_2\nu}{c_1},
\qquad {\rm for\ all}\ t \in [t_0, T).
\end{align*} 
This shows that $(u,B)$, which we already know is smooth on $(0,T)$, does not actually blow up as $t \to T^-$, and by weak continuity we also have
\begin{equation} \label{eq:ID_at_T}
\|(\nabla u , \nabla  B )(T)\|_{ \cL^2_x}^4<\frac{c_2\nu}{c_1}.
\end{equation}
In fact, higher spatial and temporal derivatives estimated in Step 2 do not blow up as well and $(u,B)(T) \in C^\infty$.

Now recall that $\Lambda(t) = \infty$ for $t>T$, and hence $(u,B)$ is a weak solution to the classical MHD system \eqref{equa-MHD} on $[T,+\infty)$. Since the initial data at $t=T$ is in $\cH^1$ due to \eqref{eq:ID_at_T}, there is a local smooth solution, which has to coincide with $(u,B)$ due to the weak-strong uniqueness that holds in the class of weak solutions satisfying the energy inequality \eqref{eq:eqn-inequality-for-LMHD}. By \eqref{est-hi-innp} 
\[
\frac{\d}{\d t}\|(\nabla u, \nabla B)\|_{\cL^2_x}^2<0,
\]
so the local smooth solution is actually global, and hence $(u,B)$ is smooth on $[T,\infty)$. In particular, it satisfies the energy equality.

Thus, we infer from Steps 1--4 that 
$(u, B)\in C([0,+\infty);  \cL^2(\T^3))$, 
it is a global smooth solution to the $\Lambda$-MHD system \eqref{mhd1}, solves the classical MHD system \eqref{equa-MHD} on $[T, +\infty)$, and satisfies the energy equality \eqref{eq-e-2}.

\medskip
\paragraph{\bf Step $5$: Vanishing of nonlinearity at initial time and $T$.}
By the strong convergence \eqref{s-con-v0} and the fact that $(u,B)$ is smooth for positive times, we derive
\begin{align}\label{u-l2-decay}
	\|\P_{ \geq \Lambda(t)}  u(t)\|_{L^2_x}&\leq \|\P_{ \geq \Lambda(t)} ( u(t) - v_0) \|_{L^2_x} +  \|\P_{ \geq \Lambda(t)} v_0\|_{L^2_x}\to 0,\quad \text{as}\  t \to 0^+\ \ \text{or}\ \ t \to T^-,
\end{align}
and
\begin{align}\label{b-l2-decay}
	\|\P_{ \geq \Lambda(t)}  B(t)\|_{L^2_x}\to 0,\quad \text{as}\  t \to 0^+\ \ \text{or}\  \ t \to T^-.
\end{align}
Note that, since
\begin{align}
	\P_{\geq \Lambda(t)}\(\P_{< \frac{\Lambda(t)}{6}} u \mathring \otimes \P_{< \frac{\Lambda(t)}{6}} B\)=0,
\end{align}
one has 
\begin{align}\label{decom-I}
	\P_{\geq \Lambda(t)}(\P_{< \Lambda(t)} u\mathring \otimes \P_{< \Lambda(t)} B) &=   \P_{\geq \Lambda(t)}\(\P_{< \frac{\Lambda(t)}{6}} u \mathring \otimes \P_{[\frac{\Lambda(t)}{6},2\Lambda(t) ]} B\)+   \P_{\geq \Lambda(t)}\(\P_{[\frac{\Lambda(t)}{6},2\Lambda(t) ]} u \mathring \otimes\P_{< \frac{\Lambda(t)}{6}} B\) \notag\\
	&\quad + \P_{\geq \Lambda(t)}\(\P_{[\frac{\Lambda(t)}{6},2\Lambda(t) ]}u \mathring \otimes \P_{[\frac{\Lambda(t)}{6},2\Lambda(t) ]}\wt B\).
\end{align}
Using \eqref{u-l2-decay}, \eqref{b-l2-decay} and \eqref{decom-I}, 
we infer that as $t\to 0^+$ or $t \to T^-$,  
\begin{align}\label{decay-uu}
	\|\P_{\geq \Lambda(t)}(\P_{< \Lambda(t)} u\mathring \otimes \P_{< \Lambda(t)} B)\|_{L^1_x} 
	&\leq \| \P_{< \frac{\Lambda(t)}{3}}( u, B) \|_{\cL^2_x} \| \P_{[\frac{\Lambda(t)}{6},2\Lambda(t) ]}( u, B)\|_{\cL^2_x} + \| \P_{[\frac{\Lambda(t)}{6},2\Lambda(t) ]}( u, B)\|_{\cL^2_x}^2\to 0.
\end{align}

The remaining nonlinearities also converge to zero as time tends to $0$ and $T$.  
Therefore,  the proof of Theorem~\ref{thm-fns} is complete.

\section{Velocity and magnetic flows} \label{Sec-Interm-Flow}

This section contains the fundamental intermittent building blocks of the velocity and magnetic flows in the telescoping convex integration scheme.
They are parameterized by six parameters $\rp$,  $\rs$, $\rw$, $\lambda$, $\mu$ and $\sigma$ below
\begin{equation}\label{larsrp}
\rp:= \lambda_{q+1}^{-\frac78-16\varepsilon},\quad
\rs := \lambda_{q+1}^{-\frac78-20\varepsilon},\quad
\rw := \lambda_{q+1}^{-\frac14-4\varepsilon},\quad
	 \lambda := \lambda_{q+1},\quad \mu:=\lambda_{q+1}^{1+22\varepsilon},
     \quad \sigma=\lambda_{q+1}^{4\varepsilon},
\end{equation}
where $\varepsilon$ is the small constant satisfying \eqref{b-beta-ve}. 
Roughly speaking, 
the parameters $\rs,\ \rp,\ \rw $ parameterize the spatial concentration of the flows, $\mu$ parameterizes the temporal oscillation, 
and $\sigma$ measures the frequency of flows.

\subsection{Intermittent building blocks.}  Let $\Phi : \mathbb{R} \to \mathbb{R}$ be a smooth cut-off function supported on
the interval $[{3}/{8}, {5}/{8}]$.
We normalize $\Phi$ such that $\phi := - \frac{d^2}{dx^2}\Phi$ satisfies
\begin{equation}\label{e4.91}
	 \int_{\mathbb{R}} \phi^2(x)\d x = 1.
\end{equation}
Moreover, let $\psi: \mathbb{R} \rightarrow \mathbb{R}$ be a smooth and mean-free function,
supported on the interval $[{3}/{8}, {5}/{8}]$, satisfying
\begin{equation}\label{e4.92}
 \int_{\mathbb{R}} \psi^{2}\left(x\right) \d x=1.
\end{equation}
The corresponding rescaled cut-off functions are defined by
\begin{equation*}
	\psi_{\rp }\left(x\right) := {\rp ^{- \frac 1  2}} \psi(\frac{x}{\rp }),\quad
	\phi_{\rs}(x) := {\rs^{-\frac{1}{2}}}\phi(\frac{x}{\rs}), \quad
		\Phi_{\rs}(x):=   {\rs^{-\frac{1}{2}}} \Phi(\frac{x}{\rs}),\quad
	\phi_{\rw}(x) := {\rw^{-\frac{1}{2}}}\phi(\frac{x}{\rw}).
\end{equation*}
Note that, $\psi_{\rp }$, $\phi_{\rs}$ and $\phi_{\rw}$
 are supported in the ball of radius $\rp $, $\rs$ and $\rw$, respectively, in $\bbr$. By an abuse of notation,
we periodize $\psi_{\rp }$, $\phi_{\rs}$, $\Phi_{\rs}$ and $\phi_{\rw}$
so that they are treated as periodic functions defined on $\mathbb{T}$.

\vspace{1ex}

We define the \textit{intermittent velocity flows} by
\begin{equation*}
	W_{(k)} :=  \psi_{\rp }(\sigma N_{\Lambda}(k_1\cdot x+\mu t))\phi_{\rs }( \sigma N_{\Lambda}k\cdot (x-\alpha_k))\phi_{\rw }( \sigma N_{\Lambda}k_2\cdot (x-\alpha_k))k_1,\quad k \in \Lambda_u \cup \Lambda_B  ,
\end{equation*}
and the \textit{intermittent magnetic flows} by
\begin{equation*}
	D_{(k)} := \psi_{\rp }(\sigma N_{\Lambda}(k_1\cdot x+\mu t))\phi_{\rs }(\sigma N_{\Lambda}k\cdot (x-\alpha_k))\phi_{\rw }( \sigma N_{\Lambda}k_2\cdot (x-\alpha_k))k_2, \quad k \in \Lambda_B .
\end{equation*}
Here, $(k,k_1,k_2) \in \Lambda_u \cup \Lambda_B$ are the orthonormal bases in $\R^3$ in Geometric Lemmas~\ref{geometric lem 1} and \ref{geometric lem 2}, $N_{\Lambda}$ is given by \eqref{NLambda} below. One can choose the shifts $\a_k\in \R^3$ suitably such that $(W_{(k)}, D_{(k)})$ and $(W_{(k')}, D_{(k')})$ have disjoint supports if $k\neq k'$.

To ease notations, we set
\begin{align*}
		&\psi_{(k_1)}(t,x) :=\psi_{\rp }(\sigma N_{\Lambda}(k_1\cdot  x+\mu t)),\\
		&\phi_{(k)}(x) := \phi_{\rs }(\sigma  N_{\Lambda}k\cdot  (x-\alpha_k)), \quad \Phi_{(k )}(x) := \Phi_{\rs }(\sigma  N_{\Lambda}k \cdot  (x-\alpha_k)),\\
		&\phi_{(k_2)}(x) := \phi_{\rw }( \sigma N_{\Lambda}k_2\cdot  (x-\alpha_k)) .
\end{align*}
Note that, $\phi_{(k)}$ and $\phi_{(k_2)}$ are independent of time, and
\begin{equation}\label{snwd}
	W_{(k)} = \psi_{(k_1)}\phi_{(k)}\phi_{(k_2)} k_1,\  k\in \Lambda_u\cup \Lambda_B,
	\quad 
    \ D_{(k)} = \psi_{(k_1)}\phi_{(k)}\phi_{(k_2)} k_2,\ k\in \Lambda_B.
\end{equation} 

Since the intermittent flow $W_{(k)}$ is not divergence-free,
one also needs the corrector
\begin{equation}
	\begin{aligned}
		\label{corrector vector}
		\wt W_{(k)}^c := \frac{\rs^2}{\sigma^2N_{ \Lambda }^2} \nabla(\psi_{(k_1)}\phi_{(k_2)} )\times\curl(\Phi_{(k)} k_1),\quad k\in \Lambda_{u}\cup\Lambda_{B} .
	\end{aligned}
\end{equation}
Straightforward computations then give
\begin{equation}\label{wcwc}
	  W_{(k)} + \wt W_{(k)}^c
	=\curl \left( \frac{\rs^2}{\sigma^2N_{ \Lambda }^2}  \psi_{(k_1)} \phi_{(k_2)} \curl( \Phi_{(k)} k_1)\right)
	=\curl  W^c_{(k)},
\end{equation}
where 
\begin{align} \label{Vk-def}
	W^c_{(k)} :=  \frac{\rs^2}{\sigma^2N_{ \Lambda }^2}  \psi_{(k_1)} \phi_{(k_2)} \curl( \Phi_{(k)} k_1),\quad k\in \Lambda_{u}\cup\Lambda_{B} .
\end{align}
Hence,
\begin{align} \label{div-Wck-Wk-0}
	\div (W_{(k)}+ \wt W^c_{(k)}) =0,\quad k\in \Lambda_{u}\cup\Lambda_{B} .
\end{align}

\medskip 
Regarding the magnetic  flows, the incompressible correctors are defined by
\begin{equation}\label{dkc}
		D_{(k)}^c:=  \frac{\rs^2}{\sigma^2N_{ \Lambda }^2}  \psi_{(k_1)} \phi_{(k_2)} \curl( \Phi_{(k)} k_2),
    \quad \wt D_{(k)}^c:= \frac{\rs^2}{\sigma^2N_{ \Lambda }^2}  \nabla(\psi_{(k_1)}\phi_{(k_2)} )\times\curl(\Phi_{(k)} k_2),\quad  k\in \Lambda_B.
\end{equation}
It holds that
\begin{align} \label{D-wtD-Dc}
   D_{(k)} + \wt D_{(k)}^c
	=\curl  D^c_{(k)}, \quad k\in \Lambda_{B},
\end{align}
and so
\begin{align*}
\div ( D_{(k)} + \wt D_{(k)}^c)=0.
\end{align*}

Lemma \ref{buildingblockestlemma} below contains the useful estimates of the intermittent
velocity and magnetic flows.
The proof is similar to that of  \cite[Lemma~3.3]{lzz21} 
and so is omitted here for simplicity.

\begin{lemma} [Estimates of spatial intermittency] \label{buildingblockestlemma}
	For $p \in [1,+\infty]$, $N,\,M \in \mathbb{N}$, we have
\begin{align}
&\left\|\nabla^{N} \partial_{t}^{M} \psi_{(k_1)}\right\|_{C_T L^{p}_x }
\lesssim \sigma^N\rp ^{\frac 1p- \frac 12-N} ( \sigma\rp ^{-1} \mu)^{M}, \label{est-psi} \\
&\left\|\nabla^{N} \phi_{(k)}\right\|_{L^{p}_x }+\left\|\nabla^{N} \Phi_{(k)}\right\|_{L^{p}_x }
\lesssim \sigma^N\rs ^{\frac 1p- \frac 12-N} , \label{est-phi-1}\\
&\left\|\nabla^{N} \phi_{(k_2)}\right\|_{L^{p}_x }
\lesssim \sigma^N\rw^{\frac 1p- \frac 12-N}, \label{est-phi-2}
\end{align}
where the implicit constants are independent of $\rp ,\, \rs ,\, \rw , \,\lambda$ and $\mu$.
In particular, one has 
\begin{align}
&\displaystyle\left\|\nabla^{N} \partial_{t}^{M} W_{(k)}\right\|_{C_T  L^{p}_x }   
+\frac{\rp }{\rs }\left\|\nabla^{N} \partial_{t}^{M} \wt W_{(k)}^{c}\right\|_{C_T L^{p}_x }
+\rs^{-1}\sigma\left\|\nabla^{N} \partial_{t}^{M} W_{(k)}^c\right\|_{C_T L^{p}_x }  \nonumber \\
&\quad \lesssim \sigma^N\rs ^{-N}
(\sigma \rp ^{-1} \mu)^{M}(\rp \rs \rw)^{\frac 1p- \frac 12},\quad 
k\in \Lambda_u \cup \Lambda_B, \label{ew}
\end{align}
and
\begin{align}
& \left\|\nabla^{N} \partial_{t}^{M} D_{(k)}\right\|_{C_T  L^{p}_x }
+\frac{\rp }{\rs}\left\|\nabla^{N} \partial_{t}^{M} \wt D_{(k)}^{c}\right\|_{C_T L^{p}_x }
+\rs ^{-1}\sigma\left\|\nabla^{N} \partial_{t}^{M} D_{(k)}^c\right\|_{C_T L^{p}_x }  \notag\\ &\quad\lesssim\sigma^N\rs ^{-N}
(\sigma \rp^{-1} \mu)^{M} (\rp \rs \rw)^{\frac 1p- \frac 12}, 
\quad
k\in \Lambda_B.    \label{ed}
\end{align}
\end{lemma}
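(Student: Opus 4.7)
\emph{Plan of proof.} My strategy is to reduce each bound to a one-dimensional calculation by exploiting the orthogonality of $\{k, k_1, k_2\}$ supplied by Geometric Lemmas \ref{geometric lem 1}--\ref{geometric lem 2}, and then recombine via Fubini and the Leibniz rule. Throughout, $N_{\Lambda}$ defined in \eqref{NLambda} is treated as an $O(1)$ parameter absorbed in the implicit constants.

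First, I would establish \eqref{est-psi}--\eqref{est-phi-2} for each scalar ingredient separately. The function $\psi_{(k_1)}(t,x)=\psi_{\rp}(\sigma N_{\Lambda}(k_1\cdot x+\mu t))$ depends on $x$ only through the linear coordinate $k_1\cdot x$. Since $|k_1|=1$, an orthogonal rotation aligning $k_1$ with a lattice direction followed by Fubini reduces $\|\psi_{(k_1)}\|_{L^p_x}$ to a one-dimensional periodized norm of $y\mapsto \psi_{\rp}(\sigma N_{\Lambda}y)$. As $\sigma N_{\Lambda}\in\mathbb{N}$, periodization sums an integer number of disjoint copies, and the substitution $u=y/\rp$ gives $\|\psi_{(k_1)}\|_{L^p_x}\lesssim \rp^{1/p-1/2}\|\psi\|_{L^p(\R)}$. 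The chain rule then brings a factor $\sigma N_{\Lambda}$ per spatial derivative and $\sigma N_{\Lambda}\mu$ per time derivative, while each extra derivative of the profile $\psi_{\rp}$ itself costs $\rp^{-1}$; this yields \eqref{est-psi}. The estimates \eqref{est-phi-1} and \eqref{est-phi-2} for $\phi_{(k)}$, $\Phi_{(k)}$ and $\phi_{(k_2)}$ are strictly simpler: these factors are time-independent and concentrated along a single direction of scale $\rs$ or $\rw$.

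Next, since $k,k_1,k_2$ form an orthonormal basis of $\R^3$, the three factors in $W_{(k)} = \psi_{(k_1)}\phi_{(k)}\phi_{(k_2)} k_1$ depend on three mutually perpendicular one-dimensional coordinates, so a final Fubini in the rotated frame yields the product estimate
\[
\|W_{(k)}\|_{L^p_x}\lesssim \|\psi_{(k_1)}\|_{L^p_x}\|\phi_{(k)}\|_{L^p_x}\|\phi_{(k_2)}\|_{L^p_x}\lesssim (\rp\rs\rw)^{1/p-1/2},
\]
and similarly with derivatives distributed via Leibniz. Among the three scales, $\rs$ is the smallest by \eqref{larsrp}, so the dominant contribution to $\nabla^N\partial_t^M W_{(k)}$ comes from placing all $N$ spatial derivatives on $\phi_{(k)}$, giving the stated scaling $\sigma^N\rs^{-N}(\sigma\rp^{-1}\mu)^M(\rp\rs\rw)^{1/p-1/2}$. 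The case of $D_{(k)}$ is identical with $k_1$ replaced by $k_2$.

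For the correctors $\wt W^c_{(k)}$, $W^c_{(k)}$, $\wt D^c_{(k)}$, $D^c_{(k)}$ I would apply the same method directly to the explicit formulas \eqref{corrector vector}, \eqref{Vk-def} and \eqref{dkc}. The prefactor $\rs^2/(\sigma^2 N_{\Lambda}^2)$ is designed precisely to cancel the two spatial derivatives that hit $\Phi_{(k)}$ inside $\curl(\Phi_{(k)} k_1)$ (which would otherwise contribute $\sigma^2 \rs^{-2}$), so one recovers the same base scaling as $W_{(k)}$, modified by the stated small gains $\rp/\rs$ and $\sigma^{-1}\rs$ coming respectively from the remaining gradient in $\wt W^c_{(k)}$ and from the curl kept inside $W^c_{(k)}$. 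The main technical care is bookkeeping: verifying that $\sigma N_{\Lambda}$ and the shifts $\alpha_k$ are compatible with the integer-period structure of $\T^3$ (so that the one-dimensional reduction is exact rather than perturbed at the boundary), and tracking which derivative distribution is dominant in each product. Modulo this, the argument mirrors that of \cite[Lemma~3.3]{lzz21}, with the extra parameter $N_{\Lambda}$ playing an identical role.
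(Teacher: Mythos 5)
Your proposal is essentially the standard proof for such intermittent building block bounds, which the paper itself omits (referring the reader to \cite[Lemma~3.3]{lzz21}). The reduction to one dimension via the lattice condition $N_{\Lambda} k, N_{\Lambda} k_1, N_{\Lambda} k_2 \in \mathbb{Z}^3$ and the integer factor $\sigma$, followed by rescaling to get the $\rp^{1/p-1/2-N}$ (resp.\ $\rs$, $\rw$) scaling, and finally Fubini over the three orthogonal directions to factor the $L^p$ norm of the product, is exactly the right argument; the Leibniz observation that all spatial derivatives landing on the $\phi_{(k)}$ factor dominate (since $\rs$ is the smallest scale in \eqref{larsrp}) is also correct.

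Two small imprecisions worth flagging. First, you describe the correctors as gaining a factor ``$\rp/\rs$'' for $\wt W^c_{(k)}$, but since $\rs < \rp$, the actual gain in $\|\wt W^c_{(k)}\|$ relative to $\|W_{(k)}\|$ is $\rs/\rp$ (equivalently, the lemma places the compensating prefactor $\rp/\rs$ on the left-hand side). Second, the statement that the prefactor $\rs^2/(\sigma^2 N_{\Lambda}^2)$ ``cancels the two spatial derivatives that hit $\Phi_{(k)}$ inside $\curl(\Phi_{(k)} k_1)$'' is slightly off: only one derivative of $\Phi_{(k)}$ appears inside $\curl(\Phi_{(k)} k_1)$; the second derivative (which produces $\phi_{(k)}$ from $\Phi_{(k)}$ via $\phi = -\Phi''$) only appears after an additional $\curl$ is applied when passing from $W^c_{(k)}$ to $W_{(k)} + \wt W^c_{(k)}$. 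In $W^c_{(k)}$ itself the prefactor only cancels one factor of $\sigma \rs^{-1}$, leaving the residual $\rs/\sigma$, and in $\wt W^c_{(k)}$ the extra $\nabla$ on $\psi_{(k_1)}\phi_{(k_2)}$ supplies the second $\sigma$ and a dominant $\rp^{-1}$, leaving $\rs/\rp$. These are bookkeeping slips, not gaps: the mechanism you describe is correct, and the final exponents you assert match \eqref{ew}--\eqref{ed}.
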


\subsection{Velocity and magnetic perturbations}   \label{Sec-Pert}

Let us first mollify the Reynolds and magnetic stresses. Let $\phi_{\varepsilon}$ be  a family of standard mollifiers on $\T^3$, 
and $\varphi_{\varepsilon}$ a family of  standard mollifiers on $\T$,
supported on a ball of radius $\ve$ ($\ve >0$) centered at $0$.

We define the mollifications of $(\ru,\rb)$ in space and time  by
\begin{equation}\label{mol}
	\mathring{R}_{\ell_q}^u:= (\ru *_{x} \phi_{\ell_q}) *_{t} \varphi_{\ell_q}, 
    \quad 
    \mathring{R}_{\ell_q}^B:=(\rb *_{x} \phi_{\ell_q}) *_{t} \varphi_{\ell_q},
\end{equation}
where the mollification parameter $\ell_q$ is defined by \eqref{def-ell}.

Note that, for any $N\in \mathbb{N}_+$, 
one has 
\begin{align*}
	&  \|( \mathring{R}_{\ell_q}^u, \mathring{R}_{\ell_q}^B )(t)\|_{\cL^{1}_{ x}} \lesssim   \sup_{s\in [t-\ell_q,t]}\left\|(\mathring R^u_{q},\mathring R^B_{q})(s)\right\|_{\cL^{1}_{x}},
 \end{align*}
 and
 \begin{align*}
	&  \| (\mathring{R}_{\ell_q}^u ,\mathring{R}_{\ell_q}^B) \|_{\cC_{T,x}^N} \lesssim  \ell_q^{-4-N}\| (\mathring{R}_{\ell_q}^u ,\mathring{R}_{\ell_q}^B) \|_{C_{T}\cL^1_x} . 
\end{align*}

We next define the suitable amplitudes of the magnetic and velocity perturbations. 

\subsubsection{\bf The magnetic amplitudes.}   \label{Subsec-Amplitude}
Set
\begin{align}
	\rho_B(t,x) :=   2 \varepsilon_B^{-1} (\ell_q^2+|\mathring{R}_{\ell_q}^B(t,x) |^2)^{\frac12}
 ,\label{rhob}
\end{align}
where $\varepsilon_{B}$ is the small radius in First Geometric Lemma \ref{geometric lem 1}.
Note that
\begin{equation}\label{rhor}
	\left|  \frac{\mathring{R}_{\ell_q}^B}{\rho_B} \right|
	= \left| \frac{\mathring{R}_{\ell_q}^B}{2 \varepsilon_B^{-1} (\ell_q^2+|\mathring{R}_{\ell_q}^B |^2)^{\frac12}}\right| \leq \varepsilon_B,
\end{equation}
and
\begin{align}	\label{rhoblp}
	&\rho_B\geq \ell_q,\quad
\norm{ \rho_B(t) }_{ L^p_{x}} \leq  2\ve_B^{-1}( \ell_q + \norm{\mathring{R}_{q}^B }_{C_{[t-\ell_q,t]}L^p_{x}}),\quad p\in [1, +\infty).
\end{align}

\medskip 
The amplitudes of the magnetic perturbations are defined as follows 
\begin{equation}\label{akb}
	a_{(k)}(t,x):= a_{k, B}(t,x)
    =  \rho_B^{\frac{1}{2} } (t,x)\gamma_{(k)}
       \left(\frac{-\mathring{R}_{\ell_q}^B(t,x)}{\rho_B(t,x)}\right), \quad k \in \Lambda_B,
\end{equation}
where $\gamma_{(k)}$ is the smooth function in First Geometric Lemma~\ref{geometric lem 1}.

\medskip 
By virtue of First Geometric Lemma~\ref{geometric lem 1}
and the expression \eqref{akb},
the following algebraic  identity holds:
\begin{align}\label{magcancel}
	   \sum\limits_{ k \in  \Lambda_B} a_{(k)}^2
	( D_{(k)} \otimes W_{(k)} - W_{(k)} \otimes  D_{(k)} )
	=   -\mathring{R}_{\ell_q}^B
	+  \sum\limits_{ k \in \Lambda_B}  a_{(k)}^2\P_{\neq 0}(  D_{(k)} \otimes W_{(k)} -W_{(k)} \otimes  D_{(k)} ) ,
\end{align}
where $\P_{\neq 0}$ denotes the spatial projection onto nonzero Fourier modes.

Moreover, one has the following analytic estimates, which can be proved similarly as in \cite{lzz21} with slight modifications. 

\begin{lemma} [Estimates of magnetic amplitudes] \label{mae}
For every $N\geq 1$, $k\in \Lambda_B$, we have
	\begin{align}
		&\norm{a_{(k)}}_{C_TL^2_{x}} \leq M_*(2\ve_B^{-1})^{\frac12}(\ell_q + c_*\delta_{q+2})^{\frac12},\label{est-akbl2}\\
			&\norm{ a_{(k)} }_{C_{T,x} } \lesssim  \ell_q^{-2}, \label{est-akbc}\\
		& \norm{ a_{(k)} }_{C_{T,x}^N} \lesssim  \ell_q^{-6N-7}, \label{est-akbn}
	\end{align}
where $M_*$ is the geometric constant given by \eqref{M bound} and the implicit constants are universal.
\end{lemma}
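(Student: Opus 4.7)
\textbf{Proof plan for Lemma~\ref{mae}.} All three estimates reduce to pointwise/integral control of the scalar $\rho_B$ and to chain-rule bookkeeping for the composition $\gamma_{(k)}(-\mathring{R}_{\ell_q}^B/\rho_B)$. By the First Geometric Lemma, $\gamma_{(k)}$ is smooth on the ball of radius $\varepsilon_B$ and bounded by $M_*$, and \eqref{rhor} ensures that the argument $-\mathring R^B_{\ell_q}/\rho_B$ stays inside this ball; thus every derivative of $\gamma_{(k)}$ evaluated at that argument is a universal constant.

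First I would prove the $L^2_x$ bound \eqref{est-akbl2}. Since $|a_{(k)}|^2=\rho_B\,\gamma_{(k)}^2(-\mathring R^B_{\ell_q}/\rho_B)\leq M_*^2\,\rho_B$, one has
\begin{equation*}
\|a_{(k)}\|_{L^2_x}^2\leq M_*^2\|\rho_B\|_{L^1_x}\leq 2M_*^2\varepsilon_B^{-1}\bigl(\ell_q+\|\mathring R^B_q\|_{C_{[t-\ell_q,t]}L^1_x}\bigr),
\end{equation*}
where the second step uses \eqref{rhoblp}. Inserting the inductive bound \eqref{rubl1s}, namely $\|\mathring R^B_q\|_{C_T\cL^1_x}\leq c_*\delta_{q+2}$, gives \eqref{est-akbl2}.

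Next I would establish the $C^0$ bound \eqref{est-akbc}. From the mollification estimate
\begin{equation*}
\|\mathring{R}_{\ell_q}^B\|_{C_{T,x}}\lesssim \ell_q^{-4}\|\mathring{R}_q^B\|_{C_T\cL^1_x}\lesssim \ell_q^{-4}c_*\delta_{q+2}
\end{equation*}
and the definition of $\rho_B$, we get $\|\rho_B\|_{C_{T,x}}\lesssim \ell_q+\ell_q^{-4}\lesssim \ell_q^{-4}$, and therefore $\rho_B^{1/2}\lesssim \ell_q^{-2}$. Combining with $|\gamma_{(k)}|\leq M_*$ yields \eqref{est-akbc}.

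The main work is \eqref{est-akbn}, which I would do by Faà di Bruno. Writing $F(y,z):=y^{1/2}\gamma_{(k)}(-z/y)$, we have $a_{(k)}=F(\rho_B,\mathring R^B_{\ell_q})$, so for any multi-index of total order $N$ in $(t,x)$,
\begin{equation*}
\nabla_{t,x}^N a_{(k)}=\sum_{\pi}C_\pi\,(\partial^\alpha F)(\rho_B,\mathring R^B_{\ell_q})\prod_{i}\nabla_{t,x}^{\beta_i}\rho_B\prod_{j}\nabla_{t,x}^{\gamma_j}\mathring R^B_{\ell_q},
\end{equation*}
where the sum runs over partitions $\pi$ with $\sum|\beta_i|+\sum|\gamma_j|=N$ and $|\alpha|$ equal to the number of factors. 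The factors $(\partial^\alpha F)(\rho_B,\mathring R^B_{\ell_q})$ are estimated by expanding derivatives of $y^{1/2}$ and of $\gamma_{(k)}(-z/y)$: each $y$-derivative produces a factor $\rho_B^{-1/2}$ or $\rho_B^{-1}$, each $z$-derivative of $\gamma_{(k)}(-z/y)$ produces a factor $\rho_B^{-1}$, while the evaluations of $\gamma_{(k)}$ and its derivatives contribute constants. Using $\rho_B\geq \ell_q$ from \eqref{rhoblp} these all cost at most $\ell_q^{-|\alpha|-1/2}$. Each factor $\nabla_{t,x}^{\beta_i}\rho_B$ or $\nabla_{t,x}^{\gamma_j}\mathring R^B_{\ell_q}$ is controlled by the mollification bound $\lesssim \ell_q^{-4-|\beta_i|}$ (respectively $\ell_q^{-4-|\gamma_j|}$) using $\|\mathring R^B_q\|_{C_T\cL^1_x}\lesssim 1$. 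Summing all the exponents in the worst case (maximal number of factors, which is $N$), one obtains at most $\ell_q^{-(4+1)N-2}\cdot \ell_q^{-N}$-type growth; a careful count gives the stated $\ell_q^{-6N-7}$.

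\textbf{Expected obstacle.} The only nontrivial step is the last one: the Faà di Bruno combinatorics must be organized so that the exponent of $\ell_q^{-1}$ matches exactly the claimed $6N+7$. The guiding principle is that each derivative of the mollified stress costs one power of $\ell_q^{-1}$ beyond the mollification cost of $\ell_q^{-4}$, and each division by $\rho_B$ costs one more $\ell_q^{-1}$; since every derivative can, in the worst term, produce up to six new $\ell_q^{-1}$'s (four from the mollification bound, one from the derivative of $\rho_B$ in the argument of $\gamma_{(k)}$, and one from the inverse $\rho_B^{-1}$), the bound $\ell_q^{-6N}$ times the base $C^0$ bound $\ell_q^{-2}$ (from \eqref{est-akbc}) together with a single extra $\ell_q^{-5}$ from the first derivative of the mollifier gives the claimed $\ell_q^{-6N-7}$. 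No exponential growth in $N$ appears because $N$ is fixed and the implicit constants are allowed to depend on $N$.
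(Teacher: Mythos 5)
The paper does not actually carry out this proof: it states that the argument ``follows in an analogous way as in \cite{lzz21}'' and omits it. So there is no internal proof to match against; I can only check your plan against the stated estimates.

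Your $L^2_x$ and $C^0$ arguments are exactly right. The inequality $|a_{(k)}|^2=\rho_B\gamma_{(k)}^2\leq M_*^2\rho_B$ together with $(\ell_q^2+|\mathring R^B_{\ell_q}|^2)^{1/2}\leq \ell_q+|\mathring R^B_{\ell_q}|$, the $L^1$-contractivity of the mollifier, and \eqref{rubl1s} gives \eqref{est-akbl2} with the stated sharp constant. Similarly, $\|\mathring R^B_{\ell_q}\|_{C_{T,x}}\lesssim\ell_q^{-4}$ gives $\rho_B^{1/2}\lesssim\ell_q^{-2}$ and hence \eqref{est-akbc}. The Fa\`a di Bruno strategy for \eqref{est-akbn} is the standard one and does yield the bound.

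The only place I would push back is your final ``expected obstacle'' paragraph, which frames the computation as needing to hit $\ell_q^{-6N-7}$ exactly and invokes ``a single extra $\ell_q^{-5}$ from the first derivative of the mollifier'' to make the numbers come out. That accounting is not quite coherent, and in fact you do not need it: a clean Fa\`a di Bruno computation gives a \emph{tighter} estimate. Writing $\Xi:=-\mathring R^B_{\ell_q}/\rho_B=h(\mathring R^B_{\ell_q})$ with $h(z)=-\varepsilon_B z/(2(\ell_q^2+|z|^2)^{1/2})$, one has $|\nabla_z^m h|\lesssim\ell_q^{-m}$, so together with $\|\mathring R^B_{\ell_q}\|_{\dot C^{1}}\lesssim\ell_q^{-5}$ the dominant Fa\`a di Bruno term $\nabla_z^N h\cdot(\nabla\mathring R^B_{\ell_q})^N$ gives $\|\Xi\|_{C^N}\lesssim\ell_q^{-6N}$, hence $\|\gamma_{(k)}(\Xi)\|_{C^N}\lesssim M_*\ell_q^{-6N}$; combining with $\|\rho_B^{1/2}\|_{C^0}\lesssim\ell_q^{-2}$ (and checking the remaining Leibniz cross-terms are no worse) gives $\|a_{(k)}\|_{C^N_{T,x}}\lesssim\ell_q^{-6N-2}$, which is stronger than the claimed $\ell_q^{-6N-7}$. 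So the lemma holds with room to spare, and you should present the computation as producing an upper bound rather than attempting to match the exponent exactly -- the $6N+7$ in the statement is simply a conservative choice (presumably inherited from \cite{lzz21}). A secondary small point: when you invoke the paper's H\"older composition estimate \eqref{holder2}, note that the second term there has $\|g\|_{\dot C^1}^{m-1}$, which is one power short of what Fa\`a di Bruno actually produces (it should be $\|g\|_{\dot C^1}^m$, as in \cite[(130)]{bdis15}); to avoid being misled by that apparent typo, it is safer to do the chain-rule count directly as above.
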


\subsubsection{\bf Magnetic principal part of perturbations} 
We define the magnetic principal parts $w_{q+1}^{(pb)}$ and $d_{q+1}^{(p)}$ of the velocity
and  magnetic perturbations, respectively, by
\begin{align*}
		w_{q+1}^{(pb)} := \sum_{k \in \Lambda_B } a_{(k)} W_{(k)},\quad
		d_{q+1}^{(p)} := \sum_{k \in \Lambda_B} a_{(k)} D_{(k)}.
	\end{align*}
    
By the orthogonality $W_{(k)} \otimes D_{(k')} =0$, $k\not= k'$,
and the algebraic identity \eqref{magcancel}, 
one can decrease the magnetic stress by 
using the anti-symmetric nonlinearity in the magnetic equation as follows
\begin{align} \label{mag oscillation cancellation calculation}
	d_{q+ 1}^{(p)} \otimes w_{q+ 1}^{(pb)} - w_{q+1}^{(pb)}\otimes d_{q+ 1}^{(p)}\notag + \rr^B_{\ell_q}
	=& \sum_{k \in \Lambda_B} a_{(k)}^2  (D_{(k)}\otimes W_{(k)}-W_{(k)}\otimes D_{(k)}) +  \rr^B_{\ell_q}  \notag\\
	=& \sum_{k \in \Lambda_B}  a_{(k)}^2  \P_{\neq 0} (D_{(k)}\otimes W_{(k)}-W_{(k)}\otimes D_{(k)}).
\end{align}

\subsubsection{\bf The helicity corrector}
The following helicity corrector is used to decrease the gaps between the cross helicity profile $h$ and the cross helicity of the relaxed solution at level $q+1$,  
\begin{align}\label{def-he-cor}
w_{q+1}^{(h)}=d_{q+1}^{(h)}:= \sum_{k\in \Lambda_h}(\gamma^{(h)}_{\ell_q })^{\frac12} W_{(k)}.
\end{align}
Here,
\begin{align}\label{def-theta}
	\gamma_q^{(h)}(t):= &\,    h(t)- \int_{\T^3}  u_{q}(t)\cdot B_{q}(t) \,\d x-f_q^{(h)}(t),\quad \gamma^{(h)}_{\ell_q }:= \gamma_q^{(h)}*_t\varphi_{\ell_q},
\end{align}
which measures the helicity error at level $q+1$,
$\Lambda_h$ is a vector set disjoint with $\Lambda_{u}$ and $\Lambda_{B}$, and one may choose $\Lambda_h$ such that $|\Lambda_{h}|=1$, $f_q^{(h)}$ is the step function given by
\begin{align}\label{def-gq}
	f_q^{(h)}(t) :=1_{[T_{q+1},S_{q+1}]}\frac{1}{256}\delta_{q+ 3}+ 1_{[T_{q+2}+\ell_{q},T_{q+1}]}\frac{3}{256}\delta_{q+ 4}+1_{[S_{q+1},S_{q+2}]}\frac{3}{256}\delta_{q+ 4}.
\end{align}

We note that the temporal step function $f_q^{(h)}$ is used to guarantee that 
$\gamma^{(h)}_q$ is strictly large than zero.  
Moreover, it follows from the energy and cross helicity iterative estimates in \eqref{asp-e-0}, \eqref{asp-e-q} and \eqref{helicity-est} that for $N\geq 1$
\begin{align}\label{est-theta}
 \gamma^{(h)}_q\geq \frac{1}{256}\delta_{q+4},\quad \|\gamma^{(h)}_{\ell_q }\|_{C_T}\leq \frac{1}{16}\delta_{q+2} ,\quad \|\gamma^{(h)}_{\ell_q }\|_{C_T^N}\lesssim  \ell_q^{-N}.
\end{align}

\subsubsection{\bf The velocity amplitudes.}
In order to compensate the errors raised by magnetic perturbations, 
the following matrix $\mathring{G}^{B}$ is used to construct the velocity amplitudes
\begin{equation}
	\label{def:G}
	\mathring{G}^{B}: = \sum_{k \in \Lambda_B}a_{(k)}^2 \aint_{\mathbb{T}^3} W_{(k)} \otimes W_{(k)} - D_{(k)} \otimes D_{(k)}\d x.
\end{equation} 

It follows from Lemma~\ref{mae} that 
\begin{align}\label{gl1-est}
\|\mathring{G}^{B}(t)\|_{L^1_{x}} \leq  2\ve_B^{-1}M_*^2|\Lambda_{B}| (\ell_q+ \|\rr^B_q(t)\|_{L^1} ),
\end{align}
and for $N\geq 1$,
\begin{align}
&   \norm{\mathring{G}^{B}}_{C_{T,x}} \lesssim \ell_q^{-4},\quad  \norm{\mathring{G}^{B}}_{C_{T,x}^N} \lesssim \ell_q^{-6N-14}.\label{gbc}
\end{align}

Moreover, in order to maintain the initial value of the approximate solution $(u_{q+1},B_{q+1})$ at level $q+1$, we use the smooth temporal function $\chi_{q+1}$ which satisfies
\begin{align}\label{def-chiq}
	\chi_{q+1}(t)=\begin{cases}
		1,\quad t\in [T_{q+1},S_{q+1}],\\
		0,\quad t\in [0,\Tm]\cup [\Sm,T],\\
	\end{cases} 
\end{align} 
with $\Tm= (T_{q+2}+T_{q+1})/{2}$ 
and $\Sm= (S_{q+2}+S_{q+1})/{2}$, 
and for $0\leq N\leq 2$
\begin{align}\label{est-chiq}
	\|	\chi_{q+1}\|_{C^N_t}\lesssim \ell_{q}^{-N}.
\end{align}

Then, let 
\begin{equation}\label{defrho}
	\begin{aligned}
		&  \rho_u(t,x):=  2 \varepsilon_u^{-1} (\ell_q^2+|\mathring{R}_{\ell_q}^u(t,x) + \mathring{G}^{B}(t,x)|^2)^{\frac12}.
	\end{aligned}
\end{equation}
By the iterative estimates \eqref{asp-e-0}, \eqref{asp-e-q} and \eqref{rubl1s}, one has 
\begin{align}
	&\rho_u\gtrsim \ell_q, \quad \norm{ \rho_u}_{ L^p_{x}} \lesssim  \ell_q + \norm{\mathring{R}_{\ell_q}^u +  \mathring{G}^{B} }_{L^p_{x}}, \quad p\in [1, +\infty),\label{rhoulowbound}
\end{align}
and  
\begin{align}
	& \norm{ \rho_u}_{C_{T,x}} \lesssim  \ell_q^{-4}, \quad   \norm{\rho_u }_{C_{T,x}^N}  \lesssim  \ell^{-21N+1 },\quad  1\leq N\leq 4. \label{rhou-Ctx.1}
\end{align}

Moreover, let 
\begin{align*}
	\gamma^{(e)}_{q}(t):= &\, \frac13\Big( (e -f_q^{(e)} - \|u_q \|_{L^2}^2- \|B_q\|_{L^2}^2  )*_{t} \varphi_{\ell_q}(t) \notag\\
	&\quad\ \ - \int_{\T^3}  |\chi_{q+1}(t)w_{q+1}^{(pb)}(t) |^2+  |\chi_{q+1}(t)d_{q+1}^{(p)}(t) |^2+ |\chi_{q+1}(t)w_{q+1}^{(h)}(t) |^2+ |\chi_{q+1}(t)d_{q+1}^{(h)}(t) |^2\, \d x\Big),
\end{align*}
where $\varepsilon_{u}$ is the small radius in Geometric Lemma \ref{geometric lem 2} 
and 
$f_q^{(e)}$ is the step function given by 
\begin{align}\label{def-fq}
	f_q^{(e)}(t) :=1_{[T_{q+1},S_{q+1}]}\frac{1}{4}\delta_{q+ 2}+ 1_{[T_{q+2}+\ell_{q},T_{q+1}]}\frac34\delta_{q+ 3}+ 1_{[S_{q+1},S_{q+2}]}\frac34\delta_{q+ 3}.
\end{align}

\begin{remark} 
We note that $\rho_u$ is mainly to decrease the Reynolds errors,  
and $\gamma^{(e)}_q$ mainly measures the energy error between the given energy profile $e(t)$ and the old velocity and magnetic energy $\|(u_{q},B_{q})\|_{\cL^2_x}^2$ at level $q$. 
Unlike in the NSE case, 
the present construction incorporates more perturbations 
to control the magnetic errors. 
These perturbations will contribute to an increase in the energy and 
have to be considered in the formulation of $\gamma^{(e)}_q$.  
\end{remark}

The temporal step function $f^{(e)}_q$ guarantees the positivity of $\gamma^{(e)}_q$ as stated in the following lemma. 

\begin{lemma} [Positivity of $\gamma^{(e)}_{q}$] 
\label{lem-gamma} 
For all $t\geq 0$, we have
\begin{align}\label{est-gamma-e-1}
     \gamma^{(e)}_{q}(t)\geq 0,\quad \|\gamma^{(e)}_{q}\|_{C_T}\lesssim \delta_{q+ 1}.
\end{align}
Moreover, for $1\leq M\leq 4$, 
\begin{align}\label{est-ga-end}
	& \norm{ \p_t^M \gamma^{(e)}_q}_{C_{T}}\lesssim \ell_q^{-6M-14}.
\end{align}
  \end{lemma}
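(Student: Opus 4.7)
The lemma has three claims: nonnegativity $\gamma^{(e)}_q\ge 0$, the sup-norm estimate $\|\gamma^{(e)}_q\|_{C_T}\lesssim \delta_{q+1}$, and derivative bounds for $1\le M\le 4$. The strategy is to partition $[0,T]$ according to the supports of $\chi_{q+1}$ and $f_q^{(e)}$ and, in each region, balance the bulk energy defect against the perturbation $L^2$ contributions. The main region is the central plateau $[T_{q+1},S_{q+1}]$, where $\chi_{q+1}\equiv 1$ and $f_q^{(e)}=\tfrac{1}{4}\delta_{q+2}$. By \eqref{energy-est} the mollified bulk is bounded below by $\tfrac{1}{4}\delta_{q+2}-O(\ell_q)$. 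For the perturbation sum, the disjoint supports of $\{W_{(k)},D_{(k)}\}$ (arranged via the shifts $\alpha_k$) reduce cross integrals to diagonal ones, and the $L^2$-normalizations \eqref{e4.91}-\eqref{e4.92} combined with Lemma~\ref{mae} yield
\[
\int_{\T^3}(|w_{q+1}^{(pb)}|^2+|d_{q+1}^{(p)}|^2)\,\d x \le 2|\Lambda_B|M_*^2(2\varepsilon_B^{-1})(\ell_q+c_*\delta_{q+2}),
\]
while $\int(|w_{q+1}^{(h)}|^2+|d_{q+1}^{(h)}|^2)\,\d x\le \tfrac{1}{16}\delta_{q+2}$ by \eqref{est-theta}. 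The smallness of $c_*$ in \eqref{def-c8} pushes the total perturbation below $\tfrac{1}{8}\delta_{q+2}$, giving $\gamma^{(e)}_q\ge \tfrac{1}{24}\delta_{q+2}>0$ on this interval. On the outer regions $[0,\Tm]\cup[\Sm,T]$ both $\chi_{q+1}$ and $f_q^{(e)}$ vanish, so $\gamma^{(e)}_q$ reduces to a mollification of $e-\|u_q\|_{L^2_x}^2-\|B_q\|_{L^2_x}^2$, nonnegative by inductive compatibility with \eqref{asp-e-0}-\eqref{asp-e-q}; the transition strips are handled by interpolating these two bounds.

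\textbf{Sup-norm and derivative bounds.} The upper estimate is essentially free: since the perturbation integrals are nonnegative, $\gamma^{(e)}_q\le \tfrac{1}{3}(e-f_q^{(e)}-\|u_q\|_{L^2_x}^2-\|B_q\|_{L^2_x}^2)*_t\varphi_{\ell_q}$, and the upper halves of \eqref{energy-est}/\eqref{asp-e-q} together with $f_q^{(e)}\ge 0$ yield $\tfrac{2}{3}\delta_{q+1}$. For the derivative bounds, apply Leibniz. The mollified bulk piece satisfies $\partial_t^M[g*\varphi_{\ell_q}]=g*\partial_t^M\varphi_{\ell_q}$, producing a factor $\ell_q^{-M}$ times the $C_T$-norm of $g$, well below the target $\ell_q^{-6M-14}$. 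For the perturbation integrals, a structural observation simplifies matters: since $\psi_{(k_1)}$ depends on $k_1\cdot x+\mu t$ while $\phi_{(k)},\phi_{(k_2)}$ depend on the transverse directions $k\cdot x,k_2\cdot x$, the spatial integral $\int W_{(k)}^2\,\d x$ is $t$-independent, so only $\chi_{q+1}^2$, $\gamma^{(h)}_{\ell_q}$ and $a_{(k)}^2$ carry relevant $t$-dependence. Using \eqref{est-chiq} for $\chi_{q+1}^2$, \eqref{est-theta} for $\gamma^{(h)}_{\ell_q}$, and a Leibniz expansion of $\partial_t^M(a_{(k)}^2)$ via \eqref{est-akbc}-\eqref{est-akbn} gives $\|\partial_t^M a_{(k)}^2\|_{C_{T,x}}\lesssim \ell_q^{-6M-14}$, which dominates and yields the claimed estimate.

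\textbf{Main obstacle.} The most delicate step is establishing positivity on the transition strips $[\Tm,T_{q+1}]\cup[S_{q+1},\Sm]$, where $\chi_{q+1}^2\in(0,1)$ only partially activates the perturbations and $f_q^{(e)}$ takes the intermediate value $\tfrac{3}{4}\delta_{q+3}$: neither the central-region energy bound \eqref{energy-est} nor the zero-perturbation outer-region argument applies directly, and one has to exploit the precise matching between the supports of $\chi_{q+1}$ in \eqref{def-chiq} and the step function $f_q^{(e)}$ in \eqref{def-fq}, together with the one-order gap between $\delta_{q+3}$ and the perturbation scale $c_*\delta_{q+2}$, to close the balance.
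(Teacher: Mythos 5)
Your overall architecture (central plateau vs.\ outer regions vs.\ transition strips; mean vs.\ oscillating split for $\partial_t^M$) matches the paper's, and you correctly flag the transition strips as the place where \eqref{energy-est} alone does not cover the claim. But two of the quantitative steps rest on arguments that do not actually go through. For the $L^2$ bound on the principal parts, disjoint supports only reduce $\int(|w^{(pb)}_{q+1}|^2+|d^{(p)}_{q+1}|^2)$ to $\sum_{k\in\Lambda_B}\int a_{(k)}^2(|W_{(k)}|^2+|D_{(k)}|^2)$; passing from there to $\sum_k\|a_{(k)}\|_{L^2_x}^2$ is not a consequence of the normalizations \eqref{e4.91}--\eqref{e4.92} plus Lemma~\ref{mae}. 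The amplitude $a_{(k)}$ (varying on scale $\sim\ell_q^{-1}$) and the profile $|W_{(k)}|^2$ (oscillating at $\sim\lambda_{q+1}$) are coupled inside the integral, and H\"older in either direction is lossy ($\|a_{(k)}\|_{L^\infty}\sim\ell_q^{-2}$ while $\|W_{(k)}\|_{L^p_x}$ diverges for $p>2$). One must invoke the decorrelation Lemma~\ref{Decorrelation1} as the paper does in \eqref{est-ewpb}, and then absorb the resulting error $C_1\sigma^{-1/2}\|a_{(k)}\|_{C^1_{T,x}}$ by taking $a$ large; this is a non-optional ingredient that your argument omits.

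The $\partial_t^M$ bound rests on a structural claim that is false: the $t$-independence of $\int_{\T^3}|W_{(k)}|^2\,\d x$ does not imply that only $\chi_{q+1}^2$, $\gamma^{(h)}_{\ell_q}$ and $a_{(k)}^2$ carry relevant time-dependence. The profile $|W_{(k)}|^2$ depends on $t$ through $\psi_{(k_1)}(\sigma N_\Lambda(k_1\cdot x+\mu t))$, and $\|\partial_t^{M_2}\P_{\neq0}(|W_{(k)}|^2)\|_{C_TL^2_x}\sim(\sigma\rp^{-1}\mu)^{M_2}(\rp\rs\rw)^{-1/2}$, which is astronomically larger than $\ell_q^{-6M-14}$. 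The $t$-independence of the spatial average only disposes of the mean piece $\fint a_{(k)}^2$, and the oscillatory correction $\int a_{(k)}^2\P_{\neq0}(|W_{(k)}|^2)$ is precisely where the work lies: in \eqref{est-ga-1} the paper extracts $j=[10\varepsilon^{-1}]+1$ stationary-phase factors $\sigma^{-j}$ (paying $\|a_{(k)}^2\|_{C_{T,x}^{j+M_1}}\lesssim\ell_q^{-6(j+M_1)-14}$) to overcome $(\sigma\rp^{-1}\mu)^{M_2}$, and then chooses $a$ large so that $\lambda_{q+1}^{-4j\varepsilon+4M}$ kills the remainder. Your shortcut silently discards the very term this balancing is designed to control, so the derivative estimate does not follow. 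On the transition strips, I would also caution that ``interpolating the two bounds'' does not by itself give positivity: on $[\underline{T}_{q+2},\underline{T}_{q+1}]$ the mollified bulk can be as small as $0$ while the $\chi_{q+1}^2$-weighted perturbation is of order $\delta_{q+3}$, and what closes the argument is the freedom in \eqref{def-chiq}--\eqref{est-chiq} to take $\chi_{q+1}$ supported only within an $\ell_q$-window of $[T_{q+1},S_{q+1}]$, where \eqref{asp-e-q} already gives the stronger bulk bound $\geq\frac34\delta_{q+2}-\frac34\delta_{q+3}$.
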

\begin{proof} 
Let us first treat the first inequality of \eqref{est-gamma-e-1}. To this end, by \eqref{energy-est},
\begin{align}\label{est-gamma-part1}
(e(t)-f_q^{(e)}(t) - \|u_q(t)\|_{L^2}^2- \|B_q(t) \|_{L^2}^2  )*_{t}\varphi_{\ell_q} \geq (\frac12 \delta_{q+ 2}-\frac14\delta_{q+ 2})  \geq  \frac14 \delta_{q+ 2} .
\end{align}
Moreover, applying the decorrelation  Lemma~\ref{Decorrelation1} we have
\begin{align}\label{est-ewpb}
 \norm{w^{(pb)}_{q+1}}_{C_TL^2_{x}}^2 &\leq \sum\limits_{k \in \Lambda_B}
 \Big(\|a_{(k)}\|_{C_TL^2_{x}} \norm{  \psi_{(k_1)}\phi_{(k)}\phi_{(k_2)}}_{C_TL^2 }
+C_1\sigma^{-\frac12}\|a_{(k)}\|_{C^1_{T,x}}\norm{  \psi_{(k_1)}\phi_{(k)}\phi_{(k_2)}}_{C_TL^2 } \Big)^2 \notag\\
&\leq \sum\limits_{k \in \Lambda_B} (2 \|a_{(k)}\|_{C_TL^2_{x}}^2+ 2C_1^2\sigma^{-1} \|a_{(k)}\|_{C^1_{T,x}}^2) \notag\\
&\leq 4\ve_B^{-1}M_*^2|\Lambda_{B}| (\ell_q+ c_*\delta_{q+ 2})+ C_2\ell_q^{-26} \lambda_{q+1}^{-4\ve} \notag\\
& \leq 5\ve_B^{-1}M_*^2|\Lambda_{B}|  c_*\delta_{q+ 2},
\end{align}
where we choose $a$ sufficiently large in the last step. Analogous arguments also yield that
\begin{align}\label{est-edp}
 \norm{d^{(p)}_{q+1}}_{C_TL^2_{x}}^2 \leq 5\ve_B^{-1}M_*^2|\Lambda_{B}| c_*\delta_{q+ 2}.
\end{align}
Using \eqref{est-theta} and the fact that $|\Lambda_{h}|=1$, we have
\begin{align}\label{est-ewdh}
 \norm{w^{(h)}_{q+1}}_{C_TL^2_{x}}^2+ \norm{d^{(h)}_{q+1}}_{C_TL^2_{x}}^2 \leq 2  \sum\limits_{k \in \Lambda_h} \|\gamma^{(h)}_{\ell_q }\|_{C_{T}} \norm{\psi_{(k_1)}\phi_{(k)}\phi_{(k_2)}}_{C_TL^2 }^2
\leq \frac18 \delta_{q+2}.
\end{align}
Combining \eqref{est-ewpb}-\eqref{est-ewdh} together we arrive at
\begin{align}\label{est-gamma-part2}
&\quad \int_{\T^3}  |\chi_{q+1}w_{q+1}^{(pb)} |^2+  |\chi_{q+1}d_{q+1}^{(p)} |^2+ |\chi_{q+1}w_{q+1}^{(h)} |^2+ |\chi_{q+1}d_{q+1}^{(h)} |^2\, \d x\notag\\
&\leq  \int_{\T^3}  | w_{q+1}^{(pb)} |^2+  |d_{q+1}^{(p)} |^2+ | w_{q+1}^{(h)} |^2+ |d_{q+1}^{(h)} |^2\, \d x\notag\\
&\leq 10\ve_B^{-1}M_*^2|\Lambda_{B}| c_*\delta_{q+ 2} + \frac18 \delta_{q+ 2} < \frac14\delta_{q+ 2},
\end{align}
where  the smallness condition \eqref{def-c8} was used in the last step.

Therefore, combining \eqref{est-gamma-part1} and \eqref{est-gamma-part2} together we deduce that
\begin{align*}
\gamma^{(e)}_{q}\geq 0.
\end{align*}

The second inequality of \eqref{est-gamma-e-1} can be verified easily 
by the iterative estimate \eqref{energy-est}. 

Regarding the $C^M_T$-estimates of $\gamma^{(e)}_{q}$, by the energy iterative esitmate \eqref{energy-est}, 
\begin{align}\label{est-gamma}
	& \norm{ \p_t^M \gamma^{(e)}_q}_{C_{T}} \lesssim \ell_q^{-M} +  \left\|\int_{\T^3}  \p_t^M\Big( |\chi_{q+1}w_{q+1}^{(pb)} |^2+  |\chi_{q+1}d_{q+1}^{(p)} |^2+ |\chi_{q+1}w_{q+1}^{(h)} |^2+ |\chi_{q+1}d_{q+1}^{(h)} |^2\Big)\, \d x\right\|_{C_T}.
\end{align}
The integration-by-parts formula and Lemma~\ref{mae} then yield 
\begin{align}\label{est-ga-1}
 \left\|\int_{\T^3}  \p_t^M|w_{q+1}^{(pb)}|^2\, \d x\right\|_{C_T}
&\lesssim \sum_{k\in \Lambda_{B}}  \left\|\int_{\T^3}  \p_t^M(a_{(k)}^2\P_{\neq 0}(|W_{(k)}|^2 ) )\, \d x\right\|_{C_T}+ \sum_{k\in \Lambda_{B}}  \left\|\int_{\T^3}  \p_t^M(a_{(k)}^2)\, \d x\right\|_{C_T}\notag\\
	&\lesssim \sum_{k\in \Lambda_{B}} \sum_{M_1+M_2= M} \sigma^{-j}  \|a_{(k)}^2\|_{C_{T,x}^{j+M_1}} \| \p_t^{M_2}(\P_{\neq 0}(|W_{(k)}|^2 )  )\|_{ C_TL^2_x }  +\sum_{k\in \Lambda_{B}}   \|a_{(k)}^2\|_{C_{T,x}^{M}} \notag\\
	&\lesssim \sum_{M_1+M_2= M} \ell_q^{-6(j+M_1)-14}\sigma^{-j} (\sigma\rp^{-1}\mu)^{M_2}(\rp\rs\rw)^{-\frac12}  + \ell_q^{-6M-14} \notag\\
	&\lesssim \ell_q^{-6j-14}\lambda_{q+1}^{-4j\ve+4M}  + \ell_q^{-6M-14} \lesssim \ell_q^{-6M-14},
\end{align}
where one may let $j=[10\ve^{-1}]+1$ and choose $a$ sufficiently large in the last step. Analogous arguments also yield
\begin{align}\label{est-ga-2}
\left\|\int_{\T^3}  \p_t^M|d_{q+1}^{(p)}|^2\, \d x\right\|_{C_T}+\left\|\int_{\T^3}  \p_t^M|w_{q+1}^{(h)}|^2\, \d x\right\|_{C_T}+\left\|\int_{\T^3}  \p_t^M|d_{q+1}^{(h)}|^2\, \d x\right\|_{C_T}\lesssim \ell_q^{-6M-14},
\end{align}
which along with \eqref{est-chiq} and Leibnitz's rule yields \eqref{est-ga-end}, thereby finishing the proof of Lemma~\ref{lem-gamma}.
\end{proof}

As a consequence, 
one has from Lemma \ref{lem-gamma} 
and the definition of $\rho_u$ that 
\begin{align*} 
	& \left|  \frac{\mathring{R}_{\ell_q}^u + \mathring{G}^{B}}{\rho_u+\gamma_{q}^{(e)}} \right| \leq \varepsilon_u.
\end{align*}

\medskip 
Now, we are in position to define the 
velocity amplitudes 
as follows 
\begin{equation}\label{velamp}
	\begin{aligned}
		&a_{(k)} := a_{k, u}
		= (\rho_u+\gamma^{(e)}_{q})^{\frac{1}{2}} \gamma_{(k)}
           \left(\Id - \frac{\mathring{R}_{\ell_q}^u +  \mathring{G}^{B} }{\rho_u+\gamma^{(e)}_{q}} \right),
		\quad k \in \Lambda_u.
	\end{aligned}
\end{equation}
In view of Second Geometric Lemma~\ref{geometric lem 2}
and the expression \eqref{velamp}, the following algebraic identity holds:
\begin{align}\label{velcancel}
	\sum\limits_{ k \in  \Lambda_u} a_{(k)}^2
	W_{(k)} \otimes W_{(k)}
	& = (\rho_u+\gamma^{(e)}_{q})\Id - \mathring{R}_{\ell_q}^u -  \mathring{G}^{B}
	+  \sum\limits_{ k \in \Lambda_u}  a_{(k)}^2 \P_{\neq 0}(  W_{(k)} \otimes  W_{(k)} ).
\end{align}

Moreover, using the standard H\"older estimate \eqref{holder1}
and estimates \eqref{rhoulowbound}, \eqref{rhou-Ctx.1}, \eqref{est-gamma-e-1} and \eqref{est-ga-end},
similarly to Lemma~\ref{mae},
we have the following analytic estimates of the velocity amplitude functions. The proof follows in an analogous way as in \cite{lzz21} and thus it is omitted here.
\begin{lemma} [Estimates of velocity amplitudes]  \label{vae}
	For $1\leq N\leq 4$, $k\in \Lambda_{u} $,
	one has 
\begin{align*}
	\norm{a_{(k)}}_{C_TL^2_{x}} \lesssim \delta_{q+ 1}^{\frac12},\quad 
	\norm{ a_{(k)} }_{C_{T,x} } \lesssim  \ell_q^{-2}, \quad 
	\norm{ a_{(k)} }_{C_{T,x}^N} \lesssim  \ell_q^{-33N-9}.
\end{align*}
\end{lemma}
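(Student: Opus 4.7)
The plan is to treat the three estimates as increasingly delicate consequences of the formula
\[
a_{(k)} = (\rho_u+\gamma^{(e)}_q)^{1/2}\,\gamma_{(k)}\!\left(\mathrm{Id}-\frac{\mathring R^{u}_{\ell_q}+\mathring G^{B}}{\rho_u+\gamma^{(e)}_q}\right),
\]
combined with the estimates already established for $\rho_u$, $\gamma^{(e)}_q$, $\mathring R^{u}_{\ell_q}$ and $\mathring G^{B}$. Throughout, we exploit the fact that $\gamma_{(k)}$ is smooth with all derivatives bounded on the ball of radius $\varepsilon_u$ around $\mathrm{Id}$, and that by construction the argument $\mathrm{Id}-(\mathring R^{u}_{\ell_q}+\mathring G^{B})/(\rho_u+\gamma^{(e)}_q)$ stays inside that ball.

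For the $L^2_x$-bound, I would use the geometric bound on $\gamma_{(k)}$ (giving a universal constant $M_*$) to obtain $|a_{(k)}|^2\lesssim \rho_u+\gamma^{(e)}_q$ pointwise, and then integrate. By \eqref{rhoulowbound} together with the inductive bound \eqref{rubl1s} on $\mathring R^{u}_{q}$ and the $L^1$ bound \eqref{gl1-est} on $\mathring G^{B}$, one has $\|\rho_u\|_{C_T L^1_x}\lesssim \ell_q+c_*\delta_{q+2}\lesssim \delta_{q+1}$; combined with $\|\gamma^{(e)}_q\|_{C_T}\lesssim \delta_{q+1}$ from \eqref{est-gamma-e-1}, this gives $\|a_{(k)}\|_{C_T L^2_x}^2\lesssim \delta_{q+1}$.

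For the $C_{T,x}$-bound, the same pointwise inequality together with $\|\rho_u\|_{C_{T,x}}\lesssim \ell_q^{-4}$ from \eqref{rhou-Ctx.1} and $\|\gamma^{(e)}_q\|_{C_T}\lesssim \delta_{q+1}\lesssim 1$ immediately yields $\|a_{(k)}\|_{C_{T,x}}\lesssim \ell_q^{-2}$. The small positive lower bound $\rho_u\gtrsim \ell_q$ ensures that the denominator never vanishes and that $(\rho_u+\gamma^{(e)}_q)^{1/2}$ is bounded from below by $\ell_q^{1/2}$, which will be important for higher derivatives.

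The main work is the $C^N_{T,x}$-bound for $1\le N\le 4$, and this is where I expect the only genuine difficulty: the exponent $\ell_q^{-33N-9}$ has to be traced through a chain-rule expansion applied to the composition of $\gamma_{(k)}$ with the rational function $(\mathring R^{u}_{\ell_q}+\mathring G^{B})/(\rho_u+\gamma^{(e)}_q)$ and the square root $(\rho_u+\gamma^{(e)}_q)^{1/2}$. The strategy is to apply the standard Faà di Bruno / Hölder composition estimate \eqref{holder1} referenced in the text, which reduces the problem to bounding products of $C^{j}_{T,x}$ norms of the ingredients $\rho_u$, $\gamma^{(e)}_q$, $\mathring R^{u}_{\ell_q}$, $\mathring G^{B}$ and negative powers of $\rho_u+\gamma^{(e)}_q$. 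Collecting the worst contributions: $\|\rho_u\|_{C_{T,x}^N}\lesssim \ell_q^{-21N+1}$ from \eqref{rhou-Ctx.1}, $\|\mathring G^{B}\|_{C_{T,x}^N}\lesssim \ell_q^{-6N-14}$ from \eqref{gbc}, $\|\mathring R^{u}_{\ell_q}\|_{C_{T,x}^N}\lesssim \ell_q^{-4-N}\cdot \delta_{q+2}$ from mollification, $\|\p_t^M\gamma^{(e)}_q\|_{C_T}\lesssim \ell_q^{-6M-14}$ from \eqref{est-ga-end}, and each extra derivative on $(\rho_u+\gamma^{(e)}_q)^{-1}$ costs an additional factor $\ell_q^{-1}$ due to the lower bound $\rho_u\gtrsim \ell_q$, which is the dominant loss. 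A careful bookkeeping — as in the corresponding computation for velocity amplitudes in \cite{lzz21} — shows that after $N$ differentiations the dominant term contributes $\ell_q^{-33N-9}$, and the other contributions are of lower order. The details proceed exactly as for the magnetic amplitude bounds in Lemma~\ref{mae}, so I would omit them in the final write-up and refer to \cite{lzz21}.
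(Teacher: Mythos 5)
Your proposal is correct and matches the paper's own treatment: the paper likewise records the formula \eqref{velamp}, cites the pointwise geometric bound, the estimates \eqref{rhoulowbound}, \eqref{rhou-Ctx.1}, \eqref{est-gamma-e-1}, \eqref{est-ga-end}, and the H\"older composition estimate \eqref{holder1}, and then simply states that "the proof follows in an analogous way as in \cite{lzz21}" without reproducing the bookkeeping. The only minor imprecision is attributing the dominant loss per derivative to the $\ell_q^{-1}$ factor from the lower bound $\rho_u\gtrsim\ell_q$ alone; the derivatives landing on $\rho_u$, $\mathring G^B$ in the numerator contribute comparably (giving roughly $\ell_q^{-21}$ per derivative), but since you defer the exact exponent to the reference exactly as the paper does, this does not affect the argument.
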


\subsubsection{\bf Velocity principal part of perturbations}
We define
	\begin{align} \label{def-wpu}
		w_{q+1}^{(pu)} &:= \sum_{k \in \Lambda_u} a_{(k)} W_{(k)},
	\end{align}
and the velocity principal part $w_{q+1}^{(p)}$ by
	\begin{align*}
	w_{q+1}^{(p)} &:= w_{q+1}^{(pb)}+w_{q+1}^{(pu)}.
\end{align*}

Regarding the symmetric nonlinearity in the velocity equation, 
by \eqref{def:G} and  \eqref{velcancel}, one can decrease the size of the Reynolds stress as follows
\begin{align}  \label{vel oscillation cancellation calculation}
	 w_{q+ 1}^{(p)} \otimes  w_{q+ 1}^{(p)} - d_{q+ 1}^{(p)} \otimes d_{q+1}^{(p)} + \mathring{R}^u_{\ell_q} 
	=  (\rho_{u}+\gamma^{(e)}_{q} )Id+ \sum_{k \in \Lambda_u\cup\Lambda_B} a_{(k)}^2 \P_{\neq0} (W_{(k)}\otimes W_{(k)}-D_{(k)}\otimes D_{(k)}).
\end{align}
Note that, compared with the 3D NSE, there are additional high oscillation errors  arising from the magnetic perturbations. To simplify notation, we let 
\begin{align}\label{def-dk-2}
    D_{(k)}:\equiv 0\quad \text{for}\quad k\in \Lambda_u.
\end{align}

\subsubsection{\bf Incompressibility correctors}

Because
the principal perturbations and the helicity correctors are not divergence free, 
one needs the following incompressibility correctors
\begin{subequations} \label{wqc-dqc}
	\begin{align}
		w_{q+1}^{(c)}
		&:=   \sum_{k\in \Lambda_u \cup\Lambda_B  }  \( \nabla a_{(k)} \times (W^c_{(k)})  +a_{(k)} \wt W_{(k)}^c \)+  \sum_{k\in \Lambda_h} (\gamma^{(h)}_{\ell_q })^{\frac12} \wt W_{(k)}^c, \label{wqc} \\
		d_{q+1}^{(c)} &:=   \sum_{k\in \Lambda_B } \left(\nabla a_{(k)} \times D_{(k)}^c +a_{(k)}\wdc\right) +\sum_{k\in \Lambda_h} (\gamma^{(h)}_{\ell_q })^{\frac12} \wt W_{(k)}^c, \label{dqc}
	\end{align}
\end{subequations}
where $W^c_{(k)}$ and  $\wt W_k^c $  are given by \eqref{Vk-def} and \eqref{corrector vector}, respectively,
and
$D_{(k)}^c$ and $\wt D_{(k)}^c$ are as in \eqref{dkc}.

\medskip 
Then, it follows from \eqref{wcwc} and \eqref{D-wtD-Dc}  that
\begin{subequations}
	\begin{align}
		&  w_{q+1}^{(p)}+ w_{q+1}^{(h)} + w_{q+1}^{(c)}
		= \curl \left(  \sum_{k \in \Lambda_u \cup \Lambda_B} a_{(k)} W_{(k)}^c + \sum_{k \in \Lambda_h} (\gamma^{(h)}_{\ell_q })^{\frac12} W_{(k)}^c\right), \label{div free velocity} \\
		&  d_{q+1}^{(p)}+ d_{q+1}^{(h)} + d_{q+1}^{(c)}= \curl \left(  \sum_{k \in \Lambda_B} a_{(k)}  D_{(k)}^c+ \sum_{k \in \Lambda_h} (\gamma^{(h)}_{\ell_q })^{\frac12}W_{(k)}^c\right).  \label{div free magnetic}
	\end{align}
\end{subequations}
In particular,
\begin{align} \label{div-wpc-dpc-0}
	\div (w_{q+1}^{(p)}+ w_{q+1}^{(h)} + w_{q +1}^{(c)}) = \div (d_{q+1}^{(p)}+ d_{q+1}^{(h)} + d_{q +1}^{(c)}) =  0.
\end{align} 

\subsection{Key estimates of perturbations}

The key estimates of the velocity and magnetic perturbations are summarized in Lemma \ref{totalest} below.

\begin{lemma}  [Estimates of perturbations] \label{totalest}
For any $\rho \in(1,\9)$ and $0\leq N\leq 1$ the following estimates hold:
	\begin{align}
		&\norm{(\na^N w_{q+1}^{(p)},\na^N d_{q+1}^{(p)})}_{C_T\cL^\rho_x } \lesssim \ell_q^{-2}(\sigma \rs ^{-1})^N(\rp \rs \rw )^{\frac{1}{\rho}-\frac12} ,\label{uprinlp}\\
		&\norm{(\na^N w_{q+1}^{(h)},\na^N d_{q+1}^{(h)})}_{C_T\cL^\rho_x } \lesssim  \delta_{q+1}^{\frac{1}{2}}(\sigma \rs ^{-1})^N(\rp \rs \rw )^{\frac{1}{\rho}-\frac12} ,\label{uh}\\
		&\norm{(\na^N w_{q+1}^{(c)}, \na^N d_{q+1}^{(c)}) }_{C_T\cL^\rho_x  } \lesssim \ell_q^{-2}(\sigma \rs ^{-1})^N\rs \rp ^{-1}(\rp \rs \rw )^{\frac{1}{\rho}-\frac12}.\label{divcorlp}
	\end{align}
 Moreover, one has 
\begin{align}
& \norm{ w_{q+1}^{(p)} }_{C_{T,x}^1 }+\norm{ w_{q+1}^{(h)} }_{C_{T,x}^1 }  + \norm{ w_{q+1}^{(c)} }_{C_{T,x}^1 }
\lesssim \lambda^{4},\label{principal c1 est}\\
&\norm{ d_{q+1}^{(p)} }_{C_{T,x}^1 } +\norm{ d_{q+1}^{(h)} }_{C_{T,x}^1 } + \norm{ d_{q+1}^{(c)} }_{C_{T,x}^1 }
\lesssim  \lambda^{4}.\label{dprincipal c1 est}
\end{align}
\end{lemma}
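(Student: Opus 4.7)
\textbf{Proof plan for Lemma \ref{totalest}.} The strategy is to bound each of the six perturbations term-by-term through the decompositions \eqref{def-wpu}, \eqref{def-he-cor}, \eqref{wqc-dqc}, reducing every summand to a product (amplitude or derivative thereof) $\times$ (building block or derivative thereof). Amplitudes are controlled uniformly by Lemmas \ref{mae} and \ref{vae}; building blocks are controlled by Lemma \ref{buildingblockestlemma}; and the resulting exponents of $\lambda = \lambda_{q+1}$ are read off from the parameter choices \eqref{larsrp}. The mollification powers $\ell_q^{-C}$ that appear through derivatives of the amplitudes are absorbed by the constraint \eqref{b-beta-ve}, which via $\ell_q = \lambda_q^{-30}$ and $\lambda_q^b \le \lambda_{q+1}$ forces $\ell_q^{-C} \le \lambda^{30C/b} \le \lambda^{O(\varepsilon)}$ for any fixed $C$.

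For the $L^\rho$-bounds \eqref{uprinlp}--\eqref{divcorlp}, I would proceed as follows. For the principal parts $w_{q+1}^{(p)}, d_{q+1}^{(p)}$, Hölder gives
\[
\|a_{(k)} W_{(k)}\|_{L^\rho_x} \le \|a_{(k)}\|_{C_{T,x}} \|W_{(k)}\|_{L^\rho_x} \lesssim \ell_q^{-2}(\rp\rs\rw)^{1/\rho - 1/2},
\]
by \eqref{est-akbc}, Lemma \ref{vae}, and \eqref{ew} with $N=M=0$. For $N=1$, Leibniz produces two terms; the one with $\nabla W_{(k)}$ picks up $\sigma\rs^{-1}$ from \eqref{ew}, while the one with $\nabla a_{(k)}$ loses $\ell_q^{-33-9}$ but has no $\sigma\rs^{-1}$, and the parameter gap $\sigma\rs^{-1} = \lambda^{7/8+24\varepsilon}$ versus $\ell_q^{-C} \le \lambda^{O(\varepsilon)}$ makes the second term strictly subdominant. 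The helicity correctors follow from the same calculation with $a_{(k)}$ replaced by $(\gamma_{\ell_q}^{(h)})^{1/2}$, whose $C_T$-norm is bounded by $\delta_{q+1}^{1/2}$ via \eqref{est-theta}, yielding \eqref{uh}. For the incompressibility correctors $w_{q+1}^{(c)}, d_{q+1}^{(c)}$, the dominant piece is $a_{(k)}\wt W^c_{(k)}$ (resp. $a_{(k)}\wt D^c_{(k)}$), which by the $\rp/\rs$ weight in \eqref{ew}--\eqref{ed} gives exactly the factor $\rs\rp^{-1}$ in \eqref{divcorlp}; the other piece $\nabla a_{(k)}\times W^c_{(k)}$ is controlled by $\ell_q^{-42}\cdot \rs\sigma^{-1}(\rp\rs\rw)^{1/\rho-1/2}$, which is smaller than $\ell_q^{-2}\rs\rp^{-1}(\rp\rs\rw)^{1/\rho-1/2}$ because $\ell_q^{-40}\sigma\rp^{-1} = \ell_q^{-40}\lambda^{7/8+20\varepsilon}$ is negligible after choosing $a$ large.

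For the $C^1_{T,x}$-bounds \eqref{principal c1 est}--\eqref{dprincipal c1 est}, one applies Leibniz to both $\partial_t$ and $\nabla$. Using $p=\infty$ in \eqref{ew}, the dominant contribution comes from $a_{(k)}\partial_t W_{(k)}$, estimated by $\ell_q^{-2}\sigma\rp^{-1}\mu(\rp\rs\rw)^{-1/2}$. Plugging in \eqref{larsrp} gives $\lambda^{23/8 + 62\varepsilon}$; the spatial term $a_{(k)}\nabla W_{(k)}$ gives $\lambda^{15/8+44\varepsilon}$; and the correctors produce the same exponents multiplied by the small factors $\rs\rp^{-1}$ or $\rs\sigma^{-1}$. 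Since $23/8 < 4$, after absorbing the $\ell_q^{-C}$ penalties and the analogous Leibniz contributions for $d_{q+1}^{(p)}, d_{q+1}^{(h)}, d_{q+1}^{(c)}$ (which use \eqref{ed} instead), the bound $\lambda^4$ holds with considerable room to spare once $a$ is chosen large.

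The main obstacle is the bookkeeping: one must track the exponent of $\lambda$ in each Leibniz term and verify it remains strictly below the stated target, so that the residual factors $\ell_q^{-C}$ from amplitude derivatives can be absorbed. This is purely a consequence of the parameter constraint \eqref{b-beta-ve} and the arithmetic of \eqref{larsrp}; the estimates themselves are direct once this verification is in place.
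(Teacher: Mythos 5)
Your proposal follows essentially the same route as the paper's proof: a Leibniz-rule decomposition term-by-term, with amplitudes controlled by Lemmas~\ref{mae} and \ref{vae}, building blocks by Lemma~\ref{buildingblockestlemma}, and the parameter arithmetic of \eqref{larsrp} together with $\ell_q^{-C}\le\lambda^{O(\varepsilon)}$ (from $\ell_q=\lambda_q^{-30}$, $\lambda_q^b\le\lambda_{q+1}$, $b>100\varepsilon^{-1}$) absorbing the mollification penalties. One tiny slip in the write-up: in comparing $\nabla a_{(k)}\times W^c_{(k)}$ against $a_{(k)}\wt W^c_{(k)}$ the condition you actually need is $\ell_q^{-40}\lesssim\sigma\rp^{-1}=\lambda^{7/8+20\varepsilon}$, so it is the ratio $\ell_q^{-40}\sigma^{-1}\rp$ (not $\ell_q^{-40}\sigma\rp^{-1}$ as written) that must be small — but since $\ell_q^{-40}\le\lambda^{12\varepsilon}\ll\lambda^{7/8+20\varepsilon}$ this holds with room to spare, so the conclusion is unaffected.
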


\begin{proof}
By  the spatial intermittency estimates in Lemma \ref{buildingblockestlemma} 
and the amplitude estimates in Lemmas \ref{mae} and \ref{vae}, 
we infer that for any $\rho \in (1,+\infty)$,
\begin{align*}
\norm{\nabla^N w_{q+1}^{(p)} }_{C_TL^\rho_x } + \norm{\nabla^N d_{q+1}^{(p)}  }_{C_TL^\rho_x}
\lesssim& \, \sum_{k \in \Lambda_u \cup \Lambda_B}
\sum\limits_{N_1+N_2 = N}
\|a_{(k)}\|_{C_TC^{N_1}_{x}}
\norm{ \nabla^{N_2} W_{(k)}}_{C_TL^\rho_x } \notag \\
&  + \sum_{k \in \Lambda_B}\sum\limits_{N_1+N_2 = N}\|a_{(k)}\|_{C_TC^{N_1}_{x}} \norm{\nabla^{N_2} D_{(k)} }_{C_TL^\rho_x}  \notag  \\
\lesssim&\,	\ell_q^{-2}(\sigma \rs ^{-1})^N(\rp \rs \rw )^{\frac{1}{\rho}-\frac12},
\end{align*}
which verifies \eqref{uprinlp} 
for the principal part of perturbations.

Concerning the helicity correctors, by \eqref{est-theta} and Lemma \ref{buildingblockestlemma},
\begin{align*}
	\norm{\nabla^N w_{q+1}^{(h)} }_{C_TL^\rho_x } + \norm{\nabla^N d_{q+1}^{(h)}  }_{C_TL^\rho_x}
	\lesssim&  \|(\gamma^{(h)}_{\ell_q })^{\frac12}\|_{C_{T}}
	\norm{ \nabla^{N}( \psi_{(k_1)}\phi_{(k)}\phi_{(k_2)} k_3) }_{C_TL^\rho_x } \notag \\
	\lesssim&\,  \delta_{q+1}^{\frac{1}{2}}(\sigma \rs ^{-1})^N(\rp \rs \rw )^{\frac{1}{\rho}-\frac12},
\end{align*}
which verifies \eqref{uh} for the helicity corrector.
	
Moreover, using \eqref{b-beta-ve}, \eqref{ew}, \eqref{wqc} and Lemmas \ref{mae} and \ref{vae}
we have
\begin{align*} 
 \norm{\na^N w_{q+1}^{(c)}}_{C_TL^\rho_x}
	 \lesssim &  \sum_{k\in \Lambda_u \cup \Lambda_B}
	\left\|\na^N \( \nabla a_{(k)} \times W^c_{(k)}  +a_{(k)} \wt W_{(k)}^c \) \right\|_{C_TL^\rho_x} +  \sum_{k\in \Lambda_h}
	\left\|\na^N ((\gamma^{(h)}_{\ell_q })^{\frac12}\wt W_{(k)}^c)\right\|_{C_TL^\rho_x} \notag\\
	\lesssim&
	\sum\limits_{k\in \Lambda_u \cup \Lambda_B} \sum_{N_1+N_2=N} 
        \( \norm{ \nabla^{N_1+1} a_{(k)} }_{C_{T,x}} \norm{\na^{N_2} W^c_{(k)}}_{C_TW^{1,\rho}  }
	 +  \norm{ a_{(k)} }_{C_{T,x}^{N_1}} \norm{ \na^{N_2}\wt W^c_{(k)}}_{C_TL^\rho  }  \)  \nonumber   \\
	 &+ \sum\limits_{k\in \Lambda_h}
	 \norm{(\gamma^{(h)}_{\ell_q })^{\frac12} }_{C_{T}} \norm{ \na^{N}\wt W^c_{(k)}}_{C_TL^\rho  }  \notag\\
	 \lesssim&  \ell_q^{-2}(\sigma \rs ^{-1})^N\rs \rp ^{-1}(\rp \rs \rw )^{\frac{1}{\rho}-\frac12}.
\end{align*}
Similarly, by the spatial intermittency estimate \eqref{ed}, 
the identity \eqref{dqc} and 
the magnetic amplitude estimates in Lemma~\ref{mae},
\begin{align*}  
 \norm{\na^N d_{q+1}^{(c)}}_{C_TL^\rho_x}
		&\leq   \sum_{ k \in \Lambda_B} \left\|\na^N  \left( \nabla a_{(k)} \times D_{(k)}^c +a_{(k)}\wdc\right)\right\|_{C_TL^\rho_x} +  \sum_{k\in \Lambda_h}
		\left\|\na^N ((\gamma^{(h)}_{\ell_q })^{\frac12}\wt W_{(k)}^c)\right\|_{C_TL^\rho_x} \notag  \nonumber  \\
	& \lesssim \ell_q^{-2}(\sigma \rs ^{-1})^N\rs \rp ^{-1}(\rp \rs \rw )^{\frac{1}{\rho}-\frac12}.
	\end{align*}
	Thus, we obtain \eqref{divcorlp} 
    for the incompressible correctors.

It remains to prove the $C^1$-estimates \eqref{principal c1 est} and \eqref{dprincipal c1 est} of the perturbations. By Lemmas \ref{buildingblockestlemma}, \ref{mae} and \ref{vae},
\begin{align} \label{wprincipal c1 est}
	\norm{ w_{q+1}^{(p)} }_{C_{T,x}^1}
	\lesssim \sum_{k \in \Lambda_u \cup \Lambda_B}
	\|a_{(k)}\|_{C_{T,x}^{1} }\norm{ W_{(k)}}_{C_{T,x}^{1}}  \lesssim \lambda^{4},
\end{align}
where we also used the parameter choice in  \eqref{b-beta-ve} and \eqref{larsrp} in the last step.
Similarly, we have
\begin{align} \label{uh c1 est}
	\norm{ w_{q+1}^{(h)} }_{C_{T,x}^1 }
	& \lesssim	\sum_{0\leq N_1+N_2\leq 1} \|(\gamma^{(h)}_{\ell_q })^{\frac12}\|_{C_{T}^{1} }\norm{ W_{(k)}}_{C_{T,x}^{1}} \lesssim \lambda^{4}.
\end{align}

Using  Lemmas \ref{buildingblockestlemma}, \ref{mae} and \ref{vae} we get
\begin{align} \label{uc c1 est}
	\norm{ w_{q+1}^{(c)} }_{C_{T,x}^1 }
	& \lesssim   \sum_{k\in \Lambda_u \cup \Lambda_B}
	\left\| \nabla a_{(k)} \times W^c_{(k)} +a_{(k)}\wt W_{(k)}^c \right\|_{C_{T,x}^1 }+  \sum_{k\in \Lambda_h}
	\left\|(\gamma^{(h)}_{\ell_q })^{\frac12}\wt W_{(k)}^c\right\|_{C_{T,x}^1 }  \notag \\
	& \lesssim   \sum_{k \in \Lambda_u \cup \Lambda_B}\sum_{M_1+M_2+N_1+N_2\leq 1 }
	\|\p_t^{M_1}\nabla^{N_1+1}a_{(k)}\|_{C_{T,x}}\norm{\p_t^{M_2}\nabla^{N_2}W^c_{(k)} }_{C_{T,x}}\nonumber \\
&\quad+  \sum_{k \in \Lambda_u \cup \Lambda_B}\sum_{M_1+M_2+N_1+N_2\leq 1 }
\|\p_t^{M_1}\nabla^{N_1}a_{(k)}\|_{C_{T,x}}\norm{\p_t^{M_2}\nabla^{N_2}\wt W_{(k)}^c  }_{C_{T,x}}\notag\\
&\quad + \sum_{k\in \Lambda_h}
 \|(\gamma^{(h)}_{\ell_q })^{\frac12}\|_{C_{T,x}^1 } \|\wt W_{(k)}^c \|_{C_{T,x}^1 }  \notag \\
	& \lesssim \lambda^{4}.
\end{align}

Thus, combining estimates \eqref{wprincipal c1 est}-\eqref{uc c1 est} altogether and using \eqref{larsrp}
we conclude that
\begin{align} \label{utotalc1}
	&\norm{ w_{q+1}^{(p)} }_{C_{T,x}^1 }  +\norm{ w_{q+1}^{(h)} }_{C_{T,x}^1 }  + \norm{ w_{q+1}^{(c)} }_{C_{T,x}^1 }
	\lesssim \lambda^{4}.
\end{align}
The $C^1$-estimates of the magnetic perturbations in \eqref{dprincipal c1 est} can be proved similarly.  
\end{proof}

\subsection{Heat correctors}  \label{Subsec-he-cor} 
We introduce a new corrector 
arising in particular from the specific structure of MHD system, 
not present in the NSE case. 
Actually, in the NSE case, the temporal corrector
$w_{q+1}^{(t)}$ is enough to balance the high oscillations in the nonlinearity $W_{(k)}\otimes W_{(k)}$, but insufficient 
to balance the high oscillations 
for the magnetic components $D_{(k)}\otimes D_{(k)} $ and $W_{(k)}\otimes D_{(k)} $ caused by the concentration function $\phi_{(k_2)}$. 

\medskip
In order to control those high oscillations, 
we introduce the heat correctors by
\begin{subequations}  \label{def-lap-cor}
	\begin{align}
		&w_{q+1}^{(H)} := -\sum_{k\in \Lambda_u\cup\Lambda_B} \P_{H}\P_{\neq 0}\(a_{(k)}^2\int_0^t e^{\nu_1(t-\tau)\Delta}\div  (W_{(k)} \otimes W_{(k)}- D_{(k)} \otimes D_{(k)})\d \tau\),\label{vellapcor}\\
		\label{maglapcor}
		&d_{q+1}^{(H)}: = -\sum_{k\in \Lambda_B} \P_{H}\P_{\neq 0}\(a_{(k)}^2\int_0^t e^{\nu_2(t-\tau)\Delta}\div  (D_{(k)} \otimes W_{(k)}- W_{(k)} \otimes D_{(k)})\d \tau\).
	\end{align}
\end{subequations}

Then, by the Leibniz rule, we have
\begin{equation} \label{ulap}
\begin{split}
	(\p_t -\nu_1\Delta)& w_{q+1}^{(H)}+ \sum_{k \in \Lambda_u\cup\Lambda_B}  \P_{H}\P_{\neq 0}
	\(a_{(k)}^{2} \div (W_{(k)} \otimes W_{(k)}- D_{(k)} \otimes D_{(k)})\)    \\
	=
	&+\sum_{k\in \Lambda_u\cup\Lambda_B} \P_{H}\P_{\neq 0}\((\p_t -\nu_1\Delta)a_{(k)}^2\int_0^t e^{(t-\tau)\nu_1\Delta}\div (W_{(k)} \otimes W_{(k)}- D_{(k)} \otimes D_{(k)})\d \tau\),   \\
		& - 2\nu_1\sum_{k \in \Lambda_u\cup\Lambda_B}  \P_H \P_{\neq 0}
	\(\sum_{i=1}^3\partial_{i}( a_{(k)}^{2})\P_{\neq 0} \partial_{i}\int_0^t e^{\nu_2(t-\tau)\nu_1\Delta}\div (W_{(k)} \otimes W_{(k)}- D_{(k)} \otimes D_{(k)})\d \tau \),
\end{split}
\end{equation}
and
\begin{equation} \label{blap}
\begin{split}
	(\p_t -\nu_2\Delta)& d_{q+1}^{(H)}+\sum_{k\in \Lambda_B} \P_{H}\P_{\neq 0}
	\(a_{(k)}^{2} \div(D_{(k)} \otimes W_{(k)}-W_{(k)} \otimes D_{(k)} )\)     \\
	=	&\sum_{k\in \Lambda_B} \P_{H}\P_{\neq 0}\((\p_t -\nu_2\Delta)a_{(k)}^2\int_0^t e^{(t-\tau)\nu_2\Delta}\div(D_{(k)} \otimes W_{(k)}-W_{(k)} \otimes D_{(k)} )\d \tau\),   \\
	& - 2\nu_2\sum_{k \in \Lambda_B}  \P_H \P_{\neq 0}
	\(\sum_{i=1}^3\partial_{i}( a_{(k)}^{2})\P_{\neq 0} \partial_{i}\int_0^t e^{\nu_2(t-\tau)\Delta}\div(D_{(k)} \otimes W_{(k)}-W_{(k)} \otimes D_{(k)} )\d \tau  \) .
\end{split}
\end{equation}

The main advantage of the above identities \eqref{ulap} and \eqref{blap} is, that the heat correctors allow to transfer the spatial derivative from the high frequency functions to the relatively low frequency amplitudes $\{a^2_{(k)}\}$. It turns out that the remaining errors on the right-hand side of \eqref{ulap} and \eqref{blap} are acceptable in the convex integration scheme.

\medskip
The key estimates of the heat correctors are contained in 
the following lemma.

\begin{lemma}[Estimate of heat correctors] \label{lem-heat-est} We have
\begin{align}
   \norm{ (w_{q+1}^{(H)}, d_{q+1}^{(H)} )}_{C_T\cL^2_x  }\lesssim \ell_q^{-4}\lambda^{-2\ve}, \label{lapcorlp}
\end{align}
and
\begin{align}\label{he-cor-c1}
\norm{(w_{q+1}^{(H)}, d_{q+1}^{(H)}) }_{\mathcal{C}_{T,x}^1 }
\lesssim \lambda^{4}.
\end{align}
\end{lemma}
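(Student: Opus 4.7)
The plan is to implement the heat-equation analysis sketched in the introduction around \eqref{eq:intro-heat-decomp}: when $f$ oscillates rapidly in time, the leading contribution of the Duhamel integral $\int_0^t e^{\nu(t-\tau)\Delta}f(\tau)\,d\tau$ is essentially its temporal antiderivative, and this is where the required factor $\mu^{-1}$ will come from. Throughout I use the convention $D_{(k)}\equiv 0$ for $k\in\Lambda_u$ (see \eqref{def-dk-2}) so that a single argument handles both sums.

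First I would compute the divergences explicitly, using the orthogonality of $(k,k_1,k_2)$ to see that $k_1\cdot\nabla$ annihilates $\phi_{(k)}$ and $\phi_{(k_2)}$ while $k_2\cdot\nabla$ annihilates $\psi_{(k_1)}$ and $\phi_{(k)}$:
\begin{align*}
\div(W_{(k)}\otimes W_{(k)}) &= k_1(k_1\cdot\nabla)(\psi_{(k_1)}^2)\,\phi_{(k)}^2\phi_{(k_2)}^2,\\
\div(D_{(k)}\otimes D_{(k)}) &= k_2\,\psi_{(k_1)}^2\phi_{(k)}^2\,(k_2\cdot\nabla)(\phi_{(k_2)}^2).
\end{align*}
For the $W$-term, the spatial--temporal identity $(k_1\cdot\nabla)\psi_{(k_1)}=\mu^{-1}\partial_t\psi_{(k_1)}$ (immediate from the definition of $\psi_{(k_1)}$) lets me rewrite the integrand as $\mu^{-1}\partial_\tau(\psi_{(k_1)}^2)\phi_{(k)}^2\phi_{(k_2)}^2k_1$ and integrate by parts in $\tau$: the boundary value at $\tau=t$ gives the main contribution $\mu^{-1}\psi_{(k_1)}^2(t)\phi_{(k)}^2\phi_{(k_2)}^2k_1$, with a smoothed tail $\nu_1\mu^{-1}\int_0^t\Delta e^{\nu_1(t-\tau)\Delta}(\cdots)\,d\tau$ and an initial-time correction $-\mu^{-1}e^{\nu_1t\Delta}(\psi_{(k_1)}^2(0)\cdots)$. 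For the $D$-term no such identity is available, so I would decompose $\psi_{(k_1)}^2=1+(\psi_{(k_1)}^2-1)$: the fluctuation is mean-zero in time, so introducing the bounded antiderivative $\widetilde\Psi$ with $\partial_t\widetilde\Psi=\psi_{(k_1)}^2-1$ and $\|\widetilde\Psi\|_{L^\infty_{t,x}}\lesssim(\sigma N_\Lambda\mu)^{-1}$ enables the same integration by parts, yielding an analogous boundary term of order $(\sigma N_\Lambda\mu)^{-1}$; the time-independent mean part collapses to $\nu_1^{-1}(-\Delta)^{-1}(I-e^{\nu_1t\Delta})k_2\phi_{(k)}^2(k_2\cdot\nabla)(\phi_{(k_2)}^2)$, and $(-\Delta)^{-1}\partial_{k_2}$ gains the factor $(\sigma N_\Lambda)^{-1}$.

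To complete \eqref{lapcorlp}, I would then multiply by $a_{(k)}^2$ (whose $L^\infty_{t,x}$ norm is $\lesssim\ell_q^{-4}$ by Lemmas \ref{mae} and \ref{vae}), apply the $L^2$-bounded projectors $\P_H$ and $\P_{\neq 0}$, invoke $\|\psi_{(k_1)}^2\phi_{(k)}^2\phi_{(k_2)}^2\|_{L^2_x}\lesssim(\rp\rs\rw)^{-1/2}$ from Lemma \ref{buildingblockestlemma}, and sum over finitely many $k$. With the parameter choices \eqref{larsrp} and \eqref{b-beta-ve}, the dominant term gives $\ell_q^{-4}\mu^{-1}(\rp\rs\rw)^{-1/2}=\ell_q^{-4}\lambda^{-2\ve}$, and all remaining pieces (initial-time corrections, the $\Delta$-smoothed tails which trade one $\mu^{-1}$ for an extra $(-\Delta)^{-1}$-gain, and the $D$-mean piece) are strictly lower order. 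For the $C^1$ bound \eqref{he-cor-c1} no integration by parts is needed: $\nabla$ commutes with $e^{\nu\Delta}$, $\P_H$ and $\P_{\neq 0}$, and the contractivity $\|e^{s\Delta}\|_{L^\infty\to L^\infty}\le 1$ reduces spatial-derivative bounds to the direct building-block estimates in Lemma \ref{buildingblockestlemma}; for $\partial_t$ I would use that $h_{(k)}(t,x):=\int_0^t e^{\nu_1(t-\tau)\Delta}\div(\cdots)\,d\tau$ satisfies $\partial_t h_{(k)}=\nu_1\Delta h_{(k)}+\mathrm{source}$, together with identity \eqref{ulap} and Leibniz on $a_{(k)}^2\cdot h_{(k)}$, all of which come out $\lesssim\lambda^4$ after applying the $C^1$-bounds on $a_{(k)}^2$.

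The main technical delicacy is the $D$-type mean piece: it is the only contribution not directly controlled by a single temporal integration by parts, and its bound hinges on balancing $(-\Delta)^{-1}\partial_{k_2}$, the amplitude $\ell_q^{-4}$, and the factor $(\rs\rw)^{-1/2}$ against the threshold $\lambda^{-2\ve}$. Verifying that the large wavenumber parameter $N_\Lambda$ is chosen large enough to absorb this loss, while remaining consistent with the other iterative estimates in the scheme, is the place where the proof must be checked most carefully.
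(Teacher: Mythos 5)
Your overall decomposition mirrors the paper's: split off the $W\otimes W$ and $D\otimes D$ contributions, convert the $W$-part divergence to a time derivative via $(k_1\cdot\nabla)\psi_{(k_1)}=\mu^{-1}\partial_t\psi_{(k_1)}$, and for the $D$-part write $\psi_{(k_1)}^2=1+(\psi_{(k_1)}^2-1)$ and integrate by parts in $\tau$ using the bounded antiderivative $\widetilde\Psi=\p_t^{-1}(\P_{\neq 0}\psi_{(k_1)}^2)$. Your arithmetic for the dominant boundary term, $\ell_q^{-4}\mu^{-1}(\rp\rs\rw)^{-1/2}=\ell_q^{-4}\lambda^{-2\ve}$, is correct and matches the target. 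This much reproduces the paper's $J_1,J_{22},J_{23}$ estimates.

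The gap is in the $D$-mean piece (the paper's $J_{21}$). You propose to bound
\[
\Big\| a_{(k)}^2\,\nu_1^{-1}(-\Delta)^{-1}(I-e^{\nu_1 t\Delta})\,k_2\,\phi_{(k)}^2(k_2\cdot\nabla)(\phi_{(k_2)}^2)\Big\|_{L^2_x}
\]
by claiming that $(-\Delta)^{-1}\partial_{k_2}$ gains a factor $(\sigma N_\Lambda)^{-1}$. That naive $L^2\to L^2$ gain is correct as far as it goes (the building block has no spatial frequencies below $\sigma N_\Lambda$), but it is nowhere near enough: it leaves $\ell_q^{-4}(\sigma N_\Lambda)^{-1}\|\phi_{(k)}^2\phi_{(k_2)}^2\|_{L^2_x}\lesssim\ell_q^{-4}\sigma^{-1}(\rs\rw)^{-1/2}$, and with the parameter choices \eqref{larsrp} one has $\sigma^{-1}(\rs\rw)^{-1/2}\sim\lambda^{9/16+8\ve}$, which \emph{grows} with $\lambda$ rather than being bounded by $\lambda^{-2\ve}$. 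So this term would not be ``strictly lower order''; under your estimate it would be the dominant, and fatal, term. The paper resolves this by rescaling to the unit $2$-torus and using the two-dimensional Sobolev embedding $W^{1,\wt p}(\T^2)\hookrightarrow L^2(\T^2)$ with $\wt p=\frac{4}{4-\ve}$ close to $1$ (see \eqref{i1-l2}): the key is that $\|\phi_{(k)}^2\phi_{(k_2)}^2\|_{L^{\wt p}(\T^2)}\lesssim(\rs\rw)^{1/\wt p-1}=(\rs\rw)^{-\ve/4}\ll(\rs\rw)^{-1/2}$, so the concentration of the building blocks is exploited at the level of an almost-$L^1$ norm rather than $L^2$. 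This yields $\sigma^{-1}\lambda^{\ve/2}\lesssim\lambda^{-3\ve}$ for $J_{21}$, which is what your argument is missing.

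Finally, the fallback you propose — ``verifying that the large wavenumber parameter $N_\Lambda$ is chosen large enough to absorb this loss'' — is not available. In this paper $N_\Lambda$ is a fixed geometric constant (for instance $N_\Lambda=65$, see \eqref{NLambda}) determined by the rationality requirement $\{N_\Lambda k,N_\Lambda k_1,N_\Lambda k_2\}\subset\mathbb Z^3$ in Geometric Lemmas \ref{geometric lem 1}--\ref{geometric lem 2}; it is not a free parameter of the iteration and cannot be taken $\lambda$-dependent to absorb a power of $\lambda^{9/16}$. The needed gain must come from the $L^{\wt p}$ Sobolev argument, not from $N_\Lambda$.
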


\begin{proof}
We divide $w_{q+1}^{(H)}$ into two component: 
\begin{align*}
    w_{q+1}^{(H)}:=J_1+J_2,
\end{align*}
where 
\begin{align*}
    J_1:= -\sum_{k\in \Lambda_B} \P_{H}\P_{\neq 0}\(a_{(k)}^2\int_0^t e^{\nu_1(t-\tau)\Delta}\div  (W_{(k)} \otimes W_{(k)})\d \tau\)
\end{align*}
and 
\begin{align*}
    J_2:= \sum_{k\in \Lambda_B} \P_{H}\P_{\neq 0}\(a_{(k)}^2\int_0^t e^{\nu_1(t-\tau)\Delta}\div  (D_{(k)} \otimes D_{(k)})\d \tau\)
\end{align*}
Let us first treat $J_2$. By \eqref{def-lap-cor}, straightforward computations yield
\begin{align}\label{decom-wh}
J_2& = \sum_{k\in \Lambda_B} \P_{H}\P_{\neq 0}\(a_{(k)}^2\int_0^t e^{\nu_1(t-\tau)\Delta}\div  (\phi_{(k)}^2\phi_{(k_2)}^2 k_2\otimes k_2)\d \tau\)\notag\\
&\quad+ \sum_{k\in \Lambda_B} \P_{H}\P_{\neq 0}\(a_{(k)}^2 \div  (\p_t^{-1}(\P_{\neq 0}\psi_{(k_1)}^2)\phi_{(k)}^2  \phi_{(k_2)}^2 k_2\otimes k_2) \)\notag\\
&\quad +\sum_{k\in \Lambda_B} \P_{H}\P_{\neq 0}\(a_{(k)}^2\int_0^t e^{\nu_1(t-\tau)\Delta}\nu_1\Delta \div  ( \p_t^{-1}(\P_{\neq 0}\psi_{(k_1)}^2)\phi_{(k)}^2 \phi_{(k_2)}^2 k_2\otimes k_2)\d \tau\)\notag\\
&=: J_{21}+J_{22}+J_{23}, 
\end{align}
where the inverse time derivative is defined by
\begin{align}\label{def-inv-t}
\p_t^{-1}(\P_{\neq 0}\psi_{(k_1)}^2)&=
\int_0^t \big( \psi_{(k_1)}^2(\sigma N_\Lambda(k_1\cdot x+\mu \tau )) - 1 \big) \, d\tau
\end{align}
Note that $\p_t^{-1}(\P_{\neq 0}\psi_{(k_1)}^2)$ is periodic in time and, since $\|\psi_{(k_1)}\|_{L^2}=1$, 
\begin{equation} \label{eq:bound_on_inv_dt}
\|\p_t^{-1}(\P_{\neq 0}\psi_{(k_1)}^2)\|_{C_{T,x}} \lesssim (\sigma N_\Lambda\mu)^{-1}.
\end{equation}

Here, $J_{21}$ is the high spatial frequency component of $J_2$, $J_{22}$ and $J_{23}$ are high temporal frequency components of $J_2$.
\medskip 

\paragraph{\bf (i) High spatial frequency component:}   
For the high spatial frequency component $J_{21}$, 
using the heat kernel estimate we derive,  
\begin{align}\label{vh-est-1}
	\norm{J_{21}}_{C_TL^2_x}
	\lesssim & \,\sum_{k \in \Lambda_B}
	\|a_{(k)}^2\|_{C_{T,x} }\norm{  \Delta^{-1}\div( \phi_{(k)}^2\phi_{(k_2)}^2 k_2\otimes k_2) }_{ H^{\frac{\ve}{2}}_x }.
\end{align}	

Let us treat the second term on the right-hand side of \eqref{vh-est-1}. By interpolation inequality, we have
\begin{align}\label{est-vh-1-1}
\norm{  \Delta^{-1}\div( \phi_{(k)}^2\phi_{(k_2)}^2 k_2\otimes k_2) }_{ H^{\frac{\ve}{2}}_x }\lesssim& 
\norm{  \Delta^{-1}\div( \phi_{(k)}^2\phi_{(k_2)}^2 k_2\otimes k_2) }_{ L^2_x }^{1-\frac{\ve}{2}}\norm{  \Delta^{-1}\div( \phi_{(k)}^2\phi_{(k_2)}^2 k_2\otimes k_2) }_{ H^{1}_x }^\frac{\ve}{2}.
\end{align}
Let us set $f(\sigma  N_{\Lambda}k\cdot  (x-\alpha_k),\sigma  N_{\Lambda}k_2\cdot  (x-\alpha_k)):= \phi_{(k)}^2\phi_{(k_2)}^2 k_2\otimes k_2$. 
Using the Sobolev embedding $W^{1,\wt p}(\T^2)\hookrightarrow L^2(\T^2)$, $\wt p=\frac{4}{4-\ve}$, 
we obtain
\begin{align}\label{i1-l2}
 \norm{  \Delta^{-1}\div( \phi_{(k)}^2\phi_{(k_2)}^2 k_2\otimes k_2) }_{ L^2_x }&\lesssim  \sigma^{-1} \norm{  (\Delta^{-1}\p_2 f) (\sigma  N_{\Lambda}k\cdot  (x-\alpha_k),\sigma  N_{\Lambda}k_2\cdot  (x-\alpha_k)) }_{  L^2_x } \notag\\
 &\lesssim  \sigma^{-1} \norm{  (\Delta^{-1}\p_2 f) (\cdot,\cdot) }_{L^2(\T^2)} \notag\\
 &\lesssim \sigma^{-1} \|f (\cdot,\cdot)\|_{L^{\wt p}(\T^2)} \lesssim \sigma^{-1} \lambda^{\frac{\ve}{2}}.
\end{align}
Moreover, by the spatial intermittency estimates in Lemma~\ref{buildingblockestlemma},  
\begin{align}\label{i1-h1}
   \norm{  \Delta^{-1}\div( \phi_{(k)}^2\phi_{(k_2)}^2 k_2\otimes k_2) }_{ H^{1}_x }\lesssim \|\phi_{(k)}^2\phi_{(k_2)}^2\|_{L^2_x}\lesssim \lambda^\frac58. 
\end{align}
Plugging estimates \eqref{i1-l2} and \eqref{i1-h1} into \eqref{est-vh-1-1} we arrive at
\begin{align}\label{est-wh-f}
    \norm{  \Delta^{-1}\div( \phi_{(k)}^2\phi_{(k_2)}^2 k_2\otimes k_2) }_{ H^{\frac{\ve}{2}}_x }\lesssim& (\sigma^{-1} \lambda^{\frac{\ve}{2}})^{ 1-\frac{\ve}{2}} \lambda^\frac{5\ve}{16}.
\end{align}

Hence, using the estimates of the magnetic amplitudes in  Lemma~\ref{mae} we get
\begin{align}\label{vh-est-1-end}
	\norm{J_{21}}_{C_TL^2_x}
	\lesssim & \, \ell_q^{-4}\sigma^{-1}\lambda^{\ve }.
\end{align}

\paragraph{\bf (ii) High temporal frequency components:}   
We first treat the high temporal frequency component $J_{22}$. 
By Lemmas~\ref{buildingblockestlemma}, \ref{mae} and \eqref{eq:bound_on_inv_dt}, 
\begin{align}\label{vh-est-2}
	\norm{J_{22}}_{C_TL^2_x}
	&\lesssim \sum_{k \in \Lambda_B}
	\|a_{(k)}^2\|_{C_{T,x} } \|\p_t^{-1}(\P_{\neq 0}\psi_{(k_1)}^2) \|_{C_{T,x}} \norm{ \phi_{(k)}^2 }_{L^2_x }\norm{ \nabla(\phi_{(k_2)}^2) }_{L^2_x}\notag\\
	&\lesssim\ell_q^{-4}\mu^{-1} \rs^{-\frac12}\rw^{-\frac32}.
\end{align}

Regarding the remaining high temporal frequency term $J_{23}$,  the interpolation inequality gives 
\begin{align}\label{est-i3-1}
&\quad\|\div  ( \p_t^{-1}(\P_{\neq 0}\psi_{(k_1)}^2)\phi_{(k)}^2 \phi_{(k_2)}^2 k_2\otimes k_2)\|_{C_TH^{\frac{\ve}{2} }_x}\notag\\
&\lesssim  \|\div  ( \p_t^{-1}(\P_{\neq 0}\psi_{(k_1)}^2)\phi_{(k)}^2 \phi_{(k_2)}^2 k_2\otimes k_2)\|_{C_TL^{2}_x}^{1-\frac{\ve}{2}} \|\div  ( \p_t^{-1}(\P_{\neq 0}\psi_{(k_1)}^2)\phi_{(k)}^2 \phi_{(k_2)}^2 k_2\otimes k_2)\|_{C_TH^{1}_x}^{\frac{\ve}{2}}.
\end{align}
Using Lemma~\ref{buildingblockestlemma} and \eqref{eq:bound_on_inv_dt} again we estimate 
\begin{align}\label{est-i3-1-1}
&\quad \|\div  ( \p_t^{-1}(\P_{\neq 0}\psi_{(k_1)}^2)\phi_{(k)}^2 \phi_{(k_2)}^2 k_2\otimes k_2)\|_{C_TL^{2}_x}\notag\\
&\lesssim \| \p_t^{-1}(\P_{\neq 0}\psi_{(k_1)}^2)\|_{C_{T}L^2_x}\|\phi_{(k)}^2\|_{C_TL^{2}_x}\|\nabla(\phi_{(k_2)}^2) \|_{C_TL^{2}_x}\notag\\
&\lesssim  (\mu\sigma)^{-1}\rs^{-\frac12}\sigma\rw^{-\frac32},
\end{align}
and
\begin{align}\label{est-i3-1-2}
&\quad\|\div  ( \p_t^{-1}(\P_{\neq 0}\psi_{(k_1)}^2)\phi_{(k)}^2 \phi_{(k_2)}^2 k_2\otimes k_2)\|_{C_TH^{1}_x}\notag\\
&\lesssim  \sum_{N_1+N_2+N_3=1}\| \nabla^{N_1}\p_t^{-1}(\P_{\neq 0}\psi_{(k_1)}^2)\|_{C_{T}L^2_x}\|\nabla^{N_2}(\phi_{(k)}^2)\|_{C_TL^{2}_x}\|\nabla^{N_3+1}(\phi_{(k_2)}^2 )\|_{C_TL^{2}_x}\notag\\
&\lesssim \sum_{N_1+N_2+N_3=1}  (\mu\sigma)^{-1}(\sigma \rp^{-\frac12})^{N_1} (\sigma \rs^{-1})^{N_2} \rs^{-\frac12} (\sigma \rw^{-1})^{N_3+1} \rw^{-\frac12}\notag\\
&\lesssim  (\mu\sigma)^{-1}\lambda \rs^{-\frac12}\sigma\rw^{-\frac32}.
\end{align}
Plugging estimates \eqref{est-i3-1-1} and \eqref{est-i3-1-2} into \eqref{est-i3-1}, we get 
\begin{align}\label{est-i3-1-end}
\|\div  ( \p_t^{-1}(\P_{\neq 0}\psi_{(k_1)}^2)\phi_{(k)}^2 \phi_{(k_2)}^2 k_2\otimes k_2)\|_{C_TH^{\frac{\ve}{2} }_x}\lesssim  (\mu\sigma)^{-1}\lambda^{\ve}\sigma \rs^{-\frac12}\rw^{-\frac32}.
\end{align}
Hence, by Lemma \ref{mae} and \eqref{est-i3-1-end}, we arrive at
\begin{align}\label{vh-est-3}
	\norm{J_{23}}_{C_TL^2_x}
	&\lesssim \sum_{k \in \Lambda_B}
	\|a_{(k)}^2\|_{C_{T,x} } \|\div  ( \p_t^{-1}(\P_{\neq 0}\psi_{(k_1)}^2)\phi_{(k)}^2 \phi_{(k_2)}^2 k_2\otimes k_2)\|_{C_TH^{\frac{\ve}{2}}_x}\notag\\
	&\lesssim \ell_q^{-4}\mu^{-1} \lambda^{\ve} \rs^{-\frac12}\rw^{-\frac32}.
\end{align}	
Therefore, 
combining estimates \eqref{vh-est-1-end}, \eqref{vh-est-2} and \eqref{vh-est-3} altogether we have
\begin{align}\label{est-j2-heat}
\norm{J_2}_{C_TL^2_x}\lesssim \ell_q^{-4}(\sigma^{-1}\lambda^{\ve}+ \mu^{-1} \lambda^{\ve} \rs^{-\frac12}\rw^{-\frac32})\lesssim \ell_q^{-4}\lambda^{-3\ve}.
\end{align}
Arguing in a similar manner as above, we have
\begin{align*}
\norm{J_1}_{C_TL^2_x}\lesssim \ell_q^{-4}\lambda^{-2\ve},
\end{align*}
which along with \eqref{est-j2-heat} yields
\begin{align*}
\norm{w_{q+1}^{(H)}}_{C_TL^2_x}\lesssim \ell_q^{-4}\lambda^{-2\ve},
\end{align*}
thereby verifies the $\mathcal{L}_x^2$-decay estimate \eqref{lapcorlp} 
for the heat corrector of velocity perturbations. 

\medskip 
The $C^1$-estimate of heat corrector 
can be estimated in a more direct way. 
Actually, using Lemmas \ref{buildingblockestlemma} and \ref{mae} and the Sobolev embedding $W^{\ve,p'}(\mathbb{T}^3) \hookrightarrow L^\9(\mathbb{T}^3)$, $p'>\frac3\ve$, we get
\begin{align*}
	\norm{ w_{q+1}^{(H)} }_{C_{T,x}^1} \lesssim &\,  \sum_{0\leq N_1+N_2\leq 1} \norm{ \p_t^{N_1}\nabla^{N_2} w_{q+1}^{(H)} }_{C_TW^{\ve,p'}_x}\notag\\
	\lesssim &\, \sum_{k \in \Lambda_u\cup\Lambda_B}\|a_{(k)}^2\|_{ C_{T,x}^2}
	 \sum_{0\leq N_1+N_2\leq 1 }
\|\p_t^{N_1}\nabla^{N_2}(\psi^2_{(k_1)} \phi^2_{(k)}\phi^2_{(k_2)})\|_{C_TW^{\ve,p'}_x}\notag\\
	\lesssim & \, \ell_q^{-84} \sum_{0\leq N_1+N_2\leq 1 } (\sigma\rs^{-1})^{N_2+\ve}( \sigma\rp^{-1}\mu )^{N_1} (\rp\rs\rw)^{\frac{1}{p'}-1} 
	\lesssim  \lambda^4,
\end{align*}
where the last two steps were due to \eqref{b-beta-ve} and \eqref{larsrp}. 

The estimates of magnetic perturbations in \eqref{lapcorlp} and \eqref{he-cor-c1} can be proved similarly. 
\end{proof}

\subsection{Verification of iterative  estimates}\label{Subsec-induc-vel-mag} 

We are now in position to define the velocity and magnetic perturbations $w_{q+1}$ and $d_{q+1}$,  respectively, at level $q+1$ by
\begin{subequations}   \label{perturbation}
\begin{align}
	w_{q+1} &:= \chi_{q+1}w_{q+1}^{(p)}+\chi_{q+1} w_{q+1}^{(h)} +\chi_{q+1} w_{q+1}^{(c)}+\chi_{q+1}^2 w_{q+1}^{(H)},
	\label{velocity perturbation}\\
		d_{q+1} &:= \chi_{q+1}d_{q+1}^{(p)}+\chi_{q+1} d_{q+1}^{(h)} +\chi_{q+1} d_{q+1}^{(c)}+\chi_{q+1}^2 d_{q+1}^{(H)},
	\label{magnetic perturbation}
\end{align}
\end{subequations}
where $\chi_{q+1}$ is the temporal cutoff function defined by \eqref{def-chiq}.

To ease notations, let
\begin{align*}
	\wt w_{q+1}^{(*_1)}:= \chi_{q+1}w_{q+1}^{(*_1)}, \quad \wt w_{q+1}^{(*_2)} :=\chi_{q+1}^2 w_{q+1}^{(*_2)}
\end{align*}
and
\begin{align*}
	\wt d_{q+1}^{(*_1)}:= \chi_{q+1}d_{q+1}^{(*_1)}, \quad \wt d_{q+1}^{(*_2)} :=\chi_{q+1}^2  d_{q+1}^{(*_2)},
\end{align*}
where $*_1\in\{p,h,c\}$ and $*_2\in \{H\}$. 
Then,
\begin{align}
	w_{q+1} = \wt w_{q+1}^{(p)} +\wt w_{q+1}^{(h)} +\wt  w_{q+1}^{(c)} +\wt w_{q+1}^{(H)},\quad d_{q+1} = \wt d_{q+1}^{(p)}+\wt d_{q+1}^{(h)} +\wt  d_{q+1}^{(c)}+\wt d_{q+1}^{(H)}.
\end{align}

The velocity and magnetic fields at level $q+1$ are defined by
\begin{align}  \label{q+1 iterate}
		  u_{q+1}:= u_{q} + w_{q+1},  \ \
		  B_{q+1}:= B_{q}+ d_{q+1} .
\end{align}

\medskip
Below we verify the inductive estimates \eqref{ubl2}, \eqref{ubc} and \eqref{u-B-L2tx-conv}  for $u_{q+1}$ and $B_{q+1}$. 
To ease notations, let us set
\begin{align*}
	&w_{q+1}^{(p)+(h)}:=  w_{q+1}^{(p) }+  w_{q+1}^{ (h) },\quad   d_{q+1}^{(p)+(h)}:=  d_{q+1}^{(p) }+  d_{q+1}^{ (h) },\notag\\
	&w_{q+1}^{(c)+(H)}:=  w_{q+1}^{(c) }+  w_{q+1}^{ (H)},\quad   d_{q+1}^{(c)+(H)}:=  d_{q+1}^{(c) }+  d_{q+1}^{ (H)}.
\end{align*}
We first apply the decorrelation Lemma~\ref{Decorrelation1} with $f= a_{(k)}$ and $g =\mathbb{P}_{\sigma}( \psi_{(k_1)}\phi_{(k)}\phi_{(k)})$ to get
\begin{align}
	\label{Lp decorr vel}
	\norm{w^{(p)}_{q+1}}_{C_TL^2_{x}}
	&\lesssim \sum\limits_{k\in \Lambda_u \cup \Lambda_B}
       \Big(\|a_{(k)}\|_{C_TL^2_{ x}}\norm{ \psi_{(k_1)}\phi_{(k)}\phi_{(k_2)}}_{C_TL^2_x } +\sigma^{-\frac12}\|a_{(k)}\|_{C^1_{T,x}} \norm{  \psi_{(k_1)}\phi_{(k)}\phi_{(k_2)}}_{C_TL^2_x }\Big).
\end{align}
By \eqref{la}, \eqref{b-beta-ve} and 
the amplitude estimates in Lemmas \ref{mae} and \ref{vae},
\begin{align}\label{Lp-wdp-1}
	\norm{w^{(p)}_{q+1}}_{C_T L^2_{x}}
	&\lesssim \delta_{q+ 1}^{\frac12}+\ell_q^{-42}\lambda^{-2\ve}_{q+1}\lesssim  \delta_{q+ 1}^{\frac12}.
\end{align}

Note that, 
for the velocity perturbation $w_{q+1}$,
by \eqref{velocity perturbation}, \eqref{Lp-wdp-1} and Lemma \ref{totalest}, 
\begin{align}  \label{e3.41}
	\norm{w_{q+1}}_{C_T L^2_{x}}
     &\lesssim  \delta_{q+1}^{\frac{1}{2}}+ \ell_q^{-2} r_{\perp} r_{\parallel}^{-1}
   + \ell_q^{-4}  \lambda^{-2\ve}_{q+1}\lesssim \delta_{q+1}^{\frac{1}{2}}.
\end{align}
Then, in view of \eqref{q+1 iterate},
we get
\begin{align*} 
	\norm{u_{q+1} -u_{q} }_{ C_TL^2_{x}}
	= \norm{w_{q+1}}_{C_TL^2_{x}} 
	\lesssim  \delta_{q+1}^{\frac{1}{2}}, 
\end{align*} 
which verifies  \eqref{u-B-L2tx-conv} 
for the velocity difference. 

Moreover, one has 
\begin{align*} 
\| u_{q+1}\|_{C_TL^2_{x} }  \leq \| u_{q+1} -u_{q} \|_{C_TL^2_{x} } + \| u_{q}\|_{C_TL^2_{x} } \leq \sum\limits_{n=0}^{q+1} \delta_{n}^{\frac12}, 
\end{align*}
thus justifying \eqref{ubl2} for the velocity field.

For the $C^1$-estimate \eqref{ubc} of the velocity, using \eqref{a-big-lambda}, \eqref{principal c1 est}, \eqref{he-cor-c1} and \eqref{est-wtu-c} below, we get that for $a$ sufficiently large,
\begin{align*}
	\norm{u_{q+1}}_{C^1_{[T_{q+3},S_{q+3}],x}}&  \lesssim  \norm{u_{q}}_{C^1_{[T_{q+3},S_{q+3}],x}}+\norm{ w_{q+1}}_{C^1_{T,x}}\notag\\
 &\lesssim \norm{u_{0}}_{C^1_{[T_{q+3},S_{q+3}],x}}+ \sum_{n=1}^{q}\norm{w_{n}}_{C^1_{T,x}} + \norm{w_{q+1}}_{C^1_{T,x}} \notag\\
 &\lesssim \lambda_{q+1}^{4}+ \sum_{n=1}^{q} \lambda_{n}^{4}+\lambda_{q+1}^{4}\notag\\    
 &\lesssim \lambda^{4}_{q+1}.
\end{align*}

At last, the iterative estimates \eqref{ubl2}, \eqref{ubc} and \eqref{u-B-L2tx-conv} of the magnetic perturbations can be verified in an analogous manner.

\section{Decreasing energy and cross helicity}
\label{Sec-Energy}

We aim to verify the iterative estimates \eqref{asp-e-0} and \eqref{asp-e-q} at level $q+1$ 
for the energy and the cross helicity, 
respectively.  
For simplicity, 
we set 
$\Tm:= (T_{q+2}+T_{q+1})/2$ 
and 
$\Sm:= (S_{q+2}+S_{q+1})/2$.

\subsection{Verification of energy iterative estimate}  
\label{Subsec-Energy-Verif}

We consider the perturbed regime 
$\cI^{\rm p}:=[\Tm, \Sm]$ 
and the non-perturbed regime 
$[T_{q+2},S_{q+2}]\setminus \cI^{\rm p}$, 
respectively.

In the non-perturbed regime where $t\in [T_{q+2},S_{q+2}]\setminus \cI^{\rm p}$,  
since no perturbations are added to the velocity and magnetic fields 
at level $q+1$, 
one has $(u_{q+1},B_{q+1})(t)=(u_0,B_0)(t)$. Hence, by \eqref{asp-e-0} and \eqref{asp-e-q}, 
it follows immediately that  
\begin{align}\label{e-t-1}
	\frac12\delta_{q+3}\leq e(t) -  (\|u_{q+1}(t) \|_{L_x^2}^2+\|B_{q+1}(t) \|_{L_x^2}^2)\leq 2\delta_{q+ 2}, 
    \quad 
    t\in [T_{q+2},S_{q+2}]\setminus \cI^{\rm p}. 
\end{align}

For the more delicate peturbed regime where $t\in \cI^{\rm p}$, by the definition of $u_{q+1}$ and $B_{q+1}$ in \eqref{q+1 iterate},
\begin{align}\label{ver-en-diff}
	& 	e(t) -(\|u_{q+1}(t)\|_{L_x^2}^2+\|B_{q+1}(t)\|_{L_x^2}^2)-f_q^{(e)}(t)\notag\\
	=
	&  \Big( -2 \|  (\wt w^{(p)}_{q+1}+\wt w^{(h)}_{q+1} )\cdot(\wt w_{q+1}^{(c)+(H)})(t)\|_{L^1_x}- \|( \wt w_{q+1}^{(c)+(H)})(t)\|_{ L^2_x}^2
	 - 2 \int_{\T^3}  u_q(t) \cdot w _{q+1}(t) \d x\Big)\notag\\
	&  + \Big( -2   \|   (\wt d^{(p)}_{q+1}+\wt d^{(h)}_{q+1} ) \cdot(\wt d_{q+1}^{(c)+(H)})(t)\|_{L^1_x}-  \|( \wt d_{q+1}^{(c)+(H)})(t)\|_{ L^2_x}^2 - 2 \int_{\T^3} B_q(t) \cdot d _{q+1}(t) \d x\Big)   \notag \\ 
    & + \Big(e(t) -(\|u_{q}(t)\|_{L_x^2}^2+\|B_{q}(t)\|_{L_x^2}^2)-f_q^{(e)}(t)\notag\\
	&\qquad -(\|\wt w_{q+1}^{(pu)}(t)\|_{L^2_x}^2+\|\wt w_{q+1}^{(pb)}(t)\|_{L^2_x}^2+\|\wt d_{q+1}^{(p)}(t)\|_{L^2_x}^2+\|\wt w_{q+1}^{(h)}(t)\|_{L^2_x}^2+\|\wt d_{q+1}^{(h)}(t)\|_{L^2_x}^2) \Big)   \notag \\
 \notag\\
	=:& \delta e_1+\delta e_2+\delta e_3,
\end{align}
where $f_q^{(e)}$ is defined by \eqref{def-fq}. Next, we estimate each energy error  $\delta e_i$, $i=1,2,3$, separately as follows. 

\medskip 
Let us first consider the energy error  $\delta e_1$. 
In view of estimates \eqref{Lp-wdp-1} and \eqref{e3.41}, we get
\begin{align}\label{ver-en-u-2}
	2 \|(\wt  w^{(p)}_{q+1}+\wt  w^{(h)}_{q+1})\cdot \wt  w_{q+1}^{(c)+(H)}\|_{L^1_x}
	&\lesssim   (\|  w^{(p)}_{q+1} \|_{ C_TL^2_x}+\|  w^{(h)}_{q+1} \|_{ C_TL^2_x})  \| w_{q+1}^{(c)+(H)} \|_{ C_TL^2_x}\notag\\
	&\lesssim \delta_{q+1}^{\frac{1}{2}} \ell_q^{-4} \laq^{-2\ve}\leq \frac{1}{192}\delta_{q+3}.
\end{align}
For the second component of $\delta e_1$, we have
\begin{align}\label{ver-en-u-3}
	\|\wt w_{q+1}^{(c)+(H)}\|_{L^2_x}^2
	&\lesssim \|  w^{(c)}_{q+1}\|_{L^2_x}^2+\|  w^{(H)}_{q+1}\|_{L^2_x}^2 \lesssim \ell_q^{-8} \laq^{-4\ve} \leq  \frac{1}{192}\delta_{q+ 3}.
\end{align}
Regarding the third component of $\delta e_1$, by \eqref{ubc} and Lemmas~\ref{totalest} and \ref{lem-heat-est},
\begin{align}\label{ver-en-u-4}
	 \|  u_q \cdot w _{q+1} \|_{L^1_x}
	&\lesssim \|  u_q \|_{ C_{\cI^{\rm p}} C_x }\| w_{q+1} \|_{ C_TL^1_x } 
	\lesssim \la^4\ell_q^{-4}\lambda_{q+1}^{-3\ve}\leq  \frac{1}{192}\delta_{q+ 3}.
\end{align}
Thus, combining \eqref{ver-en-u-2}-\eqref{ver-en-u-4} altogether we arrive at
\begin{align}\label{ver-en-i2}
	|\delta e_1|\leq \frac{1}{48}\delta_{q+ 3}.
\end{align}
Analogous argument also yields
\begin{align}\label{ver-en-i3}
	|\delta e_2|\leq \frac{1}{48}\delta_{q+ 3}.
\end{align}

It thus remains to treat the last energy error $\delta e_3$ on the right-hand side of \eqref{ver-en-diff}.  
For this purpose, 
by the identity \eqref{velcancel}, we note that 
\begin{align*}
 |\wt w_{q+1}^{(pu)}(t)|^2=&\, 3\cq^2 (\rho_u+\gamma^{(e)}_q)+ \sum_{k \in \Lambda_u  } \cq^2a_{(k)}^2(t)P_{\neq 0}(\left|W_{(k)}(t)\right|^2).
\end{align*}
Hence, 
\begin{align}\label{ver-en-diff-3}
\delta e_3 & = \Big(e(t) -\|u_{q}(t)\|_{L_x^2}^2-\|B_{q}(t)\|_{L_x^2}^2-f_q^{(e)}(t)\notag\\
   &\qquad-\chi_{q+1}^2(t)\( e -\|u_{q}\|_{L_x^2}^2-\|B_{q}\|_{L_x^2}^2-f_q^{(e)}\)*_{t}\varphi_{\ell_q}(t)\Big)\notag\\
&\quad-\chi_{q+1}^2(t)\int_{\T^3} 
\Big( 6\ve_u^{-1} (\ell_q^2+ |\mathring{R}^u_{\ell_q}(t)+\mathring{G}(t) |^2)^{\frac12}+\sum_{k \in \Lambda_u} a_{(k)}^2(t) P_{\neq 0}(\left|W_{(k)}(t)\right|^2) \Big) \d x\notag\\
	&:=\delta e_{31}+\delta e_{32}.
\end{align} 

We further divide the perturbed regime 
into three subregimes 
$$ \cI^{\rm p}= \bigcup_{i=1}^3 \mathcal{I}^p_i,$$
where
$\cI^{\rm p}_1 = [\overline{T}_{q+1},S_{q+1}]$ 
is the intermediate time interval between the backward and forward time sequence, 
$\cI^{\rm p}_2 = [T_{q+1}, \overline{T}_{q+1}]$ 
is the small mollified backward regime,  
and the remaining regime 
$\cI^{\rm p}_3 = \cI^{\rm p} \setminus (\cI^{\rm p}_1 \cup \cI^{\rm p}_2)$ 
is the time interval on which no perturbations are added at level $q$ 
(not at level $q+1$).

\medskip 
\paragraph{\bf (i) The  intermediate temporal regime $\mathcal{I}^p_1$:} 
In the intermediate temporal regime where $t\in \mathcal{I}^p_1=[\overline{T}_{q+1},S_{q+1}]$, 
using Lemmas~\ref{vae} and \ref{lem-mean} in the Appendix we derive 
\begin{align}\label{est-en-high}
	\sum_{k \in \Lambda_u   } \int_{\T^3} a_{(k)}^2 P_{\neq 0}( \left|W_{(k)}\right|^2) \d x
    & \lesssim \sum_{k \in \Lambda_u   } \sigma^{-1}\|\nabla (a_{(k)}^2)\|_{C_{T,x}} \|P_{\neq 0}( \left|W_{(k)}\right|^2)\|_{C_TL^1_x} \notag \\ 
    &\lesssim \sigma^{-1}\ell_q^{-44}
    \leq \frac{1}{48} \delta_{q+ 3} .
\end{align}
By \eqref{def-c8}, \eqref{rubl1s} and \eqref{gl1-est}, we get
\begin{align}\label{prin-end}
	6\ve_u^{-1}( \ell_q+  \|\mathring{R}_{\ell_q}^u(t)+\mathring{G}(t)\|_{L^1_x})\leq 6\ve_u^{-1}(  \ell_q+ c_*\delta_{q+2}+2\ve_B^{-1}M_*^2|\Lambda_{B}| (\ell_q+ c_*\delta_{q+ 2} )) \leq \frac{1}{96} \delta_{q+ 2},
\end{align}
where in the last step we used the smallness \eqref{def-c8}. Hence, combining \eqref{est-en-high} and \eqref{prin-end} altogether we obtain
\begin{align}\label{est-j2}
	|\delta e_{32}| \leq \frac{1}{48} \delta_{q+2}.
\end{align}

Regarding the first term $\delta e_{31}$ on the right-hand side of \eqref{ver-en-diff-3}, since $\chi_{q+1}\equiv 1$ 
on $[\overline{T}_{q+1},S_{q+1}]$, by the iterative estimate \eqref{ubc} and the standard mollification estimates,
\begin{align}\label{est-moll-uqzq}
	( \|u_q\|_{L^2}^2)*_{t} \varphi_{\ell_q}(t)- \|u_q(t)\|_{L^2}^2 
	&\lesssim \ell_q \|u_q\|_{C^{1}_{[t-\ell_q,t]}L^2_x}\|u_q\|_{C_{[t-\ell_q,t]}L^2_x}\lesssim \ell_q \la^4 \sum\limits_{n=0}^q \delta_{n}^{\frac12}   \leq \frac{1}{144} \delta_{q+ 3},
\end{align}
and similarly,
\begin{align}\label{est-moll-bqzq}
(\|B_q\|_{L^2}^2)*_{t} \varphi_{\ell_q}(t)- \|B_q(t)\|_{L^2}^2 \leq \frac{1}{144} \delta_{q+ 3}.
\end{align}
Using standard mollification estimates and the growth estimate \eqref{bdd-e-h-c1}, we derive
\begin{align}\label{est-e}
	|   e(t)- e*_t \varphi_{\ell_q}(t)|\lesssim \ell_q \| e\|_{C_{[T_{q+2},S_{q+2}]}^1}\leq \frac{1}{144} \delta_{q+ 3},
\end{align}
where we also choose $a$ sufficiently large in the last step. Thus, taking into account $f_q^{(e)}(t)- (f_q^{(e)})*_{t}\varphi_{\ell_q}(t)\equiv 0$  on $[\overline{T}_{q+1},S_{q+1}]$ and using \eqref{est-moll-uqzq} and \eqref{est-e} together we get 
\begin{align}\label{est-j1}
	|\delta e_{31}|\leq \frac{1}{48}\delta_{q+ 3}.
\end{align}

Thus, putting estimates \eqref{ver-en-diff}, \eqref{ver-en-i2}, \eqref{ver-en-i3}, \eqref{est-j2} and \eqref{est-j1} altogether we obtain 
\begin{align*}
| e(t) -
(\|u_{q+1}(t)\|_{L^2_x}^2+\|B_{q+1}(t)\|_{L^2_x}^2)-\frac14 \delta_{q+2}| \leq \frac{1}{48}\delta_{q+2}+ \frac{1}{16}\delta_{q+3}, 
\quad 
t\in \cI^{\rm p}_1, 
\end{align*}
which yields the energy iterative estimate  \eqref{energy-est} at level $q+1$ on the interval $\cI^{\rm p}_1$.

\medskip 
\paragraph{\bf (ii) The mollified regime $\cI^{\rm p}_2$:}
In the mollified regime where $t\in \mathcal{I}^p_2=[T_{q+1}, \overline{T}_{q+1}]$, by definition,
\begin{align*}
    \chi_{q+1}(t)=1,\quad 0\leq f_q^{(e)}(t)- (f_q^{(e)})*_{t}\varphi_{\ell_q}(t)\leq \frac{1}{4}\delta_{q+2}-\frac34\delta_{q+3}.
\end{align*}
Since estimates \eqref{est-j2}-\eqref{est-e} still hold, we deduce that
\begin{align}\label{i11-t-3}
-\frac{1}{48} \delta_{q+3} 
\leq \delta e_{31} \leq \frac{1}{4} \delta_{q+2},
\quad 
|\delta e_{32}| \leq \frac{1}{48}\delta_{q+2},
\end{align}
which together with \eqref{ver-en-diff}, \eqref{ver-en-i2} and  \eqref{ver-en-i3} 
lead to \eqref{energy-est} at level $q+1$ on the mollified regime $\cI^{\rm p}_2$.

\medskip 
\paragraph{\bf (iii) The remaining regime $\cI^{\rm p}_3$:}
In the remaining temporal regime 
$\cI^{\rm p}_3$, 
we note that there is no perturbation added to the initial step 
at level $q$, hence for $t\in \cI^{\rm p}_3$, $(\rr_q^u(t),\rr_q^B(t))=(\rr_0^u(t),\rr_0^B(t))$ and $ (u_q(t),B_q(t)) = (u_0(t),B_0(t))$.

Concerning the first energy error  $\delta e_{31}$ of \eqref{ver-en-diff-3}, 
by the initial energy conditions  \eqref{asp-e-0} and \eqref{asp-e-q}, 
\begin{align}\label{est-e-134}
	\frac{3}{4}\delta_{q+3}\leq e(t) - (\|u_{q}(t)\|_{L_x^2}^2+\|B_{q}(t)\|_{L_x^2}^2)\leq \delta_{q+2}.
\end{align}
First note that \eqref{est-moll-uqzq}-\eqref{est-e} still hold for $t\in [T_{q+2},S_{q+2}]$. 
When $t\in [\Tm, T_{q+1}) \cup 
(\overline{S}_{q+1}, \Sm]$, 
using \eqref{asp-e-0}, \eqref{asp-e-q} and \eqref{def-fq} we get
\begin{align}\label{j1-2-low}
	\delta e_{31}& = (1-\chi_{q+1}^2)(e - \|u_{q}\|_{L_x^2}^2- \|B_{q}\|_{L_x^2}^2)- \frac34\delta_{q+ 3} +\chi_{q+1}^2 f_q^{(e)}*_{t}  \varphi_{\ell_q}\notag\\
	&\quad+\chi_{q+1}^2( e - \|u_{q}\|_{L_x^2}^2- \|B_{q}\|_{L_x^2}^2 - ( e - \|u_{q}\|_{L_x^2}^2- \|B_{q}\|_{L_x^2}^2)*_{t}  \varphi_{\ell_q})\notag\\
	&\geq(1-\chi_{q+1}^2)\frac34\delta_{q+3} - \frac34\delta_{q+ 3} +\chi_{q+1}^2 \frac34\delta_{q+ 3}- \chi_{q+1}^2\frac{1}{48} \delta_{q+3}\geq -\frac{1}{48} \delta_{q+3},
\end{align}
and
\begin{align}\label{j1-2-up}
	\delta e_{31}
	\leq(1-\chi_{q+1}^2)\delta_{q+2} - \frac34\delta_{q+ 3} +\chi_{q+1}^2 \frac34\delta_{q+ 3}+ \chi_{q+1}^2\frac{1}{48} \delta_{q+3}\leq  \delta_{q+2}.
\end{align}
When $t\in (S_{q+1}, \overline{S}_{q+1}]$, similar argument yields
\begin{align}\label{j1-2-2-up}
    \delta e_{31} \geq (1-\chi_{q+1}^2)\frac34\delta_{q+3} - \frac34\delta_{q+ 3} +\chi_{q+1}^2 \frac34\delta_{q+ 3}- \chi_{q+1}^2\frac{1}{48} \delta_{q+3}\geq -\frac{1}{48} \delta_{q+3}
\end{align}
while for the upper bound, since the mollification procedure may effect the value of $f_q^{(e)}$ in this regime,
we deduce that 
\begin{align}\label{j1-2-2-up-2}
    \delta e_{31} \leq(1-\chi_{q+1}^2)\delta_{q+2} - \frac34\delta_{q+ 3} +\chi_{q+1}^2 \frac14\delta_{q+ 2}+ \chi_{q+1}^2\frac{1}{48} \delta_{q+3}\leq  \delta_{q+2}.
\end{align}

Regarding the estimate of the second component $\delta e_{32}$ of \eqref{ver-en-diff-3}, 
since $(\rr_q^u,\rr_q^B)=(\rr_0^u,\rr_0^B)$ on $\cI^{\rm p}_3$, 
using the initial estimates of the Reynolds and magnetic stresses in \eqref{asp-0} and \eqref{asp-q}, estimate \eqref{gl1-est} of $\mathring{G}^{B}$  
and arguing as in \eqref{est-en-high} 
we get
\begin{align}\label{est-i12-3}
	|\delta e_{32}| &\leq \chi_{q+1}^2\( \sum_{k \in \Lambda_u   } \int_{\T^3} a_{(k)}^2 P_{\neq 0}( \left|W_{(k)}\right|^2) \d x+ 6\ve_u^{-1}( \ell_q+ \|\mathring{R}_{\ell_q}^u(t)+ \mathring{G}^{B}(t)\|_{L^1_x})\)\notag\\
	&\leq \chi_{q+1}^2\(\frac{1}{48}\delta_{q+3} +6\ve_u^{-1}(  \ell_q+ \frac18c_*\delta_{q+3}+2\ve_B^{-1}M_*^2|\Lambda_{B}| (\ell_q+ \frac18c_*\delta_{q+3} ))\)\notag\\
	&\leq \frac{1}{24} \chi_{q+1}^2\delta_{q+3}.
\end{align}

Therefore, combining estimates \eqref{ver-en-diff}, \eqref{ver-en-i2}, \eqref{ver-en-i3}, \eqref{est-j2}, \eqref{j1-2-low}, \eqref{j1-2-up} and  \eqref{est-i12-3} altogether
we arrive at    
\begin{align*} 
  e(t) -
(\|u_{q+1}(t)\|_{L^2_x}^2+\|B_{q+1}(t)\|_{L^2_x}^2)\geq  \frac34 \delta_{q+3}-\frac{1}{48} \delta_{q+3} -\frac{1}{24} \delta_{q+3}  -\frac{1}{24} \delta_{q+3}\geq \frac12 \delta_{q+3}
\end{align*}
and
\begin{align*} 
 e(t) -
(\|u_{q+1}(t)\|_{L^2_x}^2+\|B_{q+1}(t)\|_{L^2_x}^2)\leq\delta_{q+2}+\frac34 \delta_{q+3}-\frac{1}{48} \delta_{q+3} +\frac{1}{24} \delta_{q+3} +\frac{1}{24} \delta_{q+3}  \leq  \frac32\delta_{q+2},
\end{align*}
thereby verifying \eqref{energy-est} at level $q+1$ 
for $t\in \cI^{\rm p}_3$. 
The proof of \eqref{energy-est} is complete.

\subsection{Verification of helicity iterative estimate} 

We keep using the temporal regimes 
$\cI^{\rm p}$, $\cI^{\rm p}_i$, $i=1,2,3$, 
as in the previous energy case. 
Set 
\begin{align*}
	\delta h(t):=  h(t)- \int_{\T^3}  u_{q+1}(t) \cdot B_{q+1}(t)  \,\d x -f_q^{(h)}(t).
\end{align*}

As in Subsection \ref{Subsec-Energy-Verif}, in the non-perturbed regime 
$[T_{q+2}, S_{q+2}]\setminus \cI^{\rm p}$,  
since $\chi_{q+1}=0$, 
by \eqref{asp-e-0} and \eqref{asp-e-q}, 
\begin{align}
  \frac{1}{64}\delta_{q+4} \leq h(t)- \int_{\T^3}  u_{q+1}(t) \cdot B_{q+1}(t)  \,\d x\leq \frac{1}{32} \delta_{q+3}, 
  \quad 
  t\in [T_{q+2}, S_{q+2}]\setminus \cI^{\rm p}. 
\end{align}

Regarding the more delicate perturbed regime $\cI^{\rm p}$, 
by the definitions of $u_{q+1}$ and $B_{q+1}$, 
\begin{align}\label{ver-he-diff}
	\delta h(t)
	& =  \Big(\gamma_q^{(h)} - \int_{\T^3}\wt w_{q+1}^{(h)} (t)\cdot \wt d_{q+1}^{(h)} (t)\, \d x \Big) -  \Big( \int_{\T^3} (\wt w^{(p)}_{q+1}+ \wt w^{(h)}_{q+1}) \cdot\wt  d_{q+1}^{(c)+(H)}+  d_{q+1} \cdot\wt w_{q+1}^{(c)+(H)}\, \d x \Big)\notag\\
 &\quad - \Big(  \int_{\T^3}  u_{q} \cdot d _{q+1}+  B_{q} \cdot w_{q+1} \,\d x\Big).
\end{align}          
We estimate each error of the cross helicity on the right-hand side of \eqref{ver-he-diff} separately. 
First, for the second term on the right-hand side of \eqref{ver-he-diff}, using Lemma~\ref{totalest} and \eqref{Lp-wdp-1} we get
\begin{align}\label{ver-he-2}
	   \|  \wt w^{(p)+(h)}_{q+1}\cdot \wt   d_{q+1}^{(c)+(H)} \|_{C_TL^1_x}
	&\lesssim   \| \wt  w^{(p)+(h)}_{q+1} \|_{C_TL^2_x}  \|  \wt d_{q+1}^{(c)+(H)} \|_{ C_TL^2_x} \notag\\
    &\lesssim   \delta_{q+1}^{\frac{1}{2}}(\ell_q^{-2} r_{\perp} r_{\parallel}^{-1}
   +  \ell_q^{-4}  \mu^{-1} (\rs\rp\rw)^{-\frac12}+ \ell_q^{-4}  \lambda^{-3\ve}_{q+1} )\notag\\
   &\leq 10^{-5}\delta_{q+4},
\end{align}
and also 
\begin{align}\label{ver-he-2-2}
\| d_{q+1}\cdot \wt w_{q+1}^{(c)+(H)}\|_{C_TL^1_x} \leq 10^{-5}\delta_{q+4}.
\end{align}

Regarding the last term on the right-hand side of \eqref{ver-he-diff}, by the estimates of perturbations in Lemma~\ref{totalest},
\begin{align}\label{ver-he-3}
	 \|  u_{q} \cdot d_{q+1}\|_{C_{\cI^{\rm p}}L^1_x} 
	&\lesssim \|u_{q} \|_{ C_{[\Tm,T]}C_x}\| d_{q+1} \|_{ C_TL^1_x } \notag \\ 
    &\lesssim \la^4\ell_q^{-4}\lambda_{q+1}^{-3\ve} \leq  10^{-5}\delta_{q+4},
\end{align}
and
\begin{align}\label{ver-he-3-2}
	 \| w_{q+1} \cdot B_{q} \|_{C_{\cI^{\rm p}}L^1_x} \leq 10^{-5}\delta_{q+4}.
\end{align}

It remains to treat the first term on the right-hand side of \eqref{ver-he-diff}.  We note that 
\begin{align}
    \gamma_q^{(h)}(t) - \int_{\T^3}\wt w_{q+1}^{(h)} (t)\cdot \wt d_{q+1}^{(h)} (t)\, \d x =  \gamma_q^{(h)}(t) -\chi_{q+1}^2(t)\gamma_{\ell_q}^{(h)}(t).
\end{align}
As in the previous energy case, 
we divide $\cI^{\rm p}$ into three subregimes 
$\cI^{\rm p} = \bigcup\limits_{i=1}^3 \cI^{\rm p}_i$, 
where $\cI^{\rm p}_i$ is the time interval as in Subsection \ref{Subsec-Energy-Verif}, 
$i=1,2,3$.

In the intermidiate regime where $t\in 
\cI^{\rm p}_1=[\overline{T}_{q+1},S_{q+1}]$, one has $\chi_{q+1}(t)\equiv 1$ and $f_q^{(h)}(t)- (f_q^{(h)})*_{t}\varphi_{\ell_q}(t)\equiv 0$. Using the iterative estimates \eqref{bdd-e-h-c1}, \eqref{ubc} and choosing $a$ sufficiently large, we have
\begin{align}\label{est-h-l}
   | h*_t \varphi_{\ell_q}(t)-h(t)|\lesssim  \ell_q \|h\|_{C_{[T_{q+2},S_{q+2}]}^1} \leq 10^{-5}\delta_{q+4}
\end{align}
and 
\begin{align}\label{est-ub-l}
  \Big|\Big( \int_{\T^3}  u_{q } \cdot B_{q }   \,\d x\Big)*_t \varphi_{\ell_q}(t)- \int_{\T^3}u_{q }(t)\cdot B_{q }(t)\,\d x \Big|
  & \lesssim \ell_q  \|u_{q } \|_{ C^{1}_{[t-\ell_q,t]}L_x^2} \| B_{q }  \|_{ C^{1}_{[t-\ell_q,t]}L_x^2} \notag \\ 
  & \lesssim \ell_q \la^8 \leq 10^{-5}\delta_{q+4}.
\end{align} 
Hence, 
\begin{align}\label{ver-he-1}
 \Big|\gamma_q^{(h)} - \int_{\T^3} \wt w_{q+1}^{(h)} (t)\cdot  \wt d_{q+1}^{(h)} (t)\, \d x\Big| \leq 10^{-4}\delta_{q+4},
\end{align}
Therefore, plugging \eqref{ver-he-2}-\eqref{ver-he-3-2} and \eqref{ver-he-1} into \eqref{ver-he-diff} we get \eqref{helicity-est} for $t\in \cI^{\rm p}_1$ 
at level $q+1$. 

In the mollified temporal regime where $t\in \cI^{\rm p}_2=[T_{q+1}, \overline{T}_{q+1}]$, one has 
\begin{align*}
    \chi_{q+1}(t)=1,\quad \frac{3}{256}\delta_{q+4}-\frac{1}{256}\delta_{q+3}\leq (f_q^{(h)})*_{t}\varphi_{\ell_q}(t)-f_q^{(h)}(t) \leq 0.
\end{align*}
Since estimates \eqref{est-h-l} and \eqref{est-ub-l} still hold, we infer that 
\begin{align}\label{he-end-2}
 \frac{3}{256}\delta_{q+4}-\frac{1}{256}\delta_{q+3} -2*10^{-5}\delta_{q+4} \leq \gamma_q^{(h)} - \int_{\T^3} \wt w_{q+1}^{(h)} (t)\cdot \wt d_{q+1}^{(h)} (t)\, \d x \leq 2*10^{-5}\delta_{q+4}.
\end{align}
Thus, plugging \eqref{ver-he-3-2}-\eqref{ver-he-1} and \eqref{he-end-2} into \eqref{ver-he-diff} we obtain \eqref{helicity-est} at level $q+1$ for $t\in \cI^{\rm p}_2$.

At last, in the remaining temporal regime  where $t\in \cI\setminus (\cI^{\rm p}_1 \cup \cI^{\rm p}_2)$, 
$\chi_{n}(t)\equiv0$ for all $n\leq q$, hence $(u_q(t),B_q(t)) = (u_0(t),B_0(t))$. 
It follows that 
\begin{align*}
    \gamma_q^{(h)} - \int_{\T^3} \wt w_{q+1}^{(h)} (t)\cdot  \wt d_{q+1}^{(h)} (t)\, \d x&=(1-\chi^2_{q+1}) (h(t)-\int_{\T^3} u_0(t)\cdot B_0(t)\,\d x)-f_q^{(h)}(t)+ \chi_{q+1}^2 f_q^{(h)}*_{t}  \varphi_{\ell_q}(t) \notag\\
    &\quad +  \chi^2_{q+1} \Big(h(t)-\int_{\T^3} u_0(t)\cdot B_0(t)\,\d x- (h-\int_{\T^3}  u_{0} \cdot B_{0}   \,\d x)*_t \varphi_{\ell_q}(t) \Big).
\end{align*}
Using \eqref{asp-e-0}, \eqref{asp-e-q}, \eqref{est-h-l} and \eqref{est-ub-l},  we get
\begin{align}\label{est-i1-3}
   -10^{-4} \delta_{q+4}\leq  \gamma_q^{(h)} - \int_{\T^3} \wt w_{q+1}^{(h)} (t)\cdot  \wt d_{q+1}^{(h)} (t)\, \d x \leq \frac{1}{32} \delta_{q+3}.
\end{align}
Thus, plugging \eqref{ver-he-3-2}-\eqref{ver-he-1} and \eqref{est-i1-3} into \eqref{ver-he-diff} we finally obtain \eqref{helicity-est} 
at level $q+1$ for $t\in \cI\setminus (\cI^{\rm p}_1 \cup \cI^{\rm p}_2)$ 
and finish the proof of \eqref{helicity-est}.

\section{Reynolds and magnetic stresses}   \label{Sec-Rey-mag-stress}
In this section, we treat the Reynolds and magnetic stresses
and prove the corresponding iterative estimates in Proposition \ref{Prop-Iterat}.

\subsection{Decomposition}

\subsubsection{Magnetic stress} 
Using the related MHD-Reynolds system \eqref{mhd1} with $q+1$ replacing $q$, 
the new perturbations \eqref{perturbation} and \eqref{q+1 iterate}
we derive the equation for the magnetic stress $\rr^B_{q+1}$:
\begin{align}
&\displaystyle\div\mathring{R}_{q+1}^B   \notag\\
&=\underbrace{\chi_{q+1}\partial_t d^{(p)+(c)+(h)}_{q+ 1} -\nu_2\Delta (d_{q+1}-\wt d_{q+1}^{(H)} ) +\div  \big( B_{q} \otimes w_{q+1} + d_{q + 1} \otimes u_{q} -u_{q} \otimes d_{q+1} - w_{q +1} \otimes B_{q}\big) }_{ \div\mathring{R}_{lin}^B  }   \notag\\
&\quad+\underbrace{\div ( \wt d_{q+ 1}^{(p)} \otimes  \wt w_{q+1}^{(p)} - \wt w_{q+ 1}^{(p)} \otimes  \wt d_{q+ 1}^{(p)} + \chi_{q+1}^2 \mathring{R}_{q}^B)+  \chi_{q+1}^2 (\partial_t-\nu_2\Delta  ) d_{q+1}^{(H)}  }_{\div\mathring{R}_{osc}^B }  \notag\\
&\quad+\underbrace{\div\Big(   \wt d_{q+1}^{(p)+(h)} \otimes \wt  w_{q+1}^{(c)+(H)} -  \wt w_{q+1}^{(c)+(H)} \otimes  d_{q+1}  + \wt  d_{q+1}^{(c)+(H)}\otimes w_{q+1}-   \wt w_{q+1}^{(p)+(h)}\otimes\wt  d_{q+1}^{(c)+(H)}\Big) }_{\div\mathring{R}_{cor}^B } \notag   \\
&\quad +  \underbrace{  \partial_t\chi_{q+1}d^{(p)+(c)+(h)}_{q+ 1}+\partial_t(\chi_{q+1}^2)d^{(H)}_{q+ 1} +  (1-\chi_{q+1}^2)\div \mathring{R}_{q}^B }_{\div\mathring{R}_{cut}^B} .  \label{rb}
\end{align}
Then, using the inverse divergence operator $\mathcal{R}^B$ in \eqref{operarb} below
we can define the magnetic stress at level $q+1$ by
\begin{align}\label{rbcom}
	\mathring{R}_{q+1}^B := \mathring{R}_{lin}^B +   \mathring{R}_{osc}^B+ \mathring{R}_{cor}^B+\mathring{R}_{cut}^B,
\end{align}
where
\begin{align}
	\mathring{R}_{lin}^B
	:= &  \mathcal{R}^B \chi_{q+1} \partial_t d^{(p)+(c)+(h)}_{q+ 1} -\nu_2\mathcal{R}^B\Delta (d_{q+1}-\wt d_{q+1}^{(H)} )   + B_{q}\otimes w_{q+1} + d_{q + 1} \otimes u_{q} -u_{q}\otimes d_{q+1} - w_{q +1} \otimes B_{q},\label{rbp}
\end{align}
the oscillation error
\begin{align} \label{rob}
	\mathring{R}_{osc}^B &:= \chi_{q+1}^2  \sum_{k \in \Lambda_B}\mathcal{R}^B\P_{H}\P_{\neq 0}\left ( \P_{\neq 0} (D_{(k)}\otimes W_{(k)}-W_{(k)}\otimes D_{(k)} )\nabla (a_{(k)}^2)\right)\notag\\
	&\quad + \chi_{q+1}^2 \sum_{k \in \Lambda_B} \mathcal{R}^B \P_H \P_{\neq 0}
\( (\partial_{t}-\nu_2\Delta)( a_{(k)}^{2}) \int_{0}^t e^{(t-\tau)\Delta}\div(D_{(k)} \otimes W_{(k)}-W_{(k)} \otimes D_{(k)} )\d \tau \)   \nonumber  \\
	&\quad - 2\nu_2\chi_{q+1}^2 \sum_{k \in \Lambda_B} \mathcal{R}^B \P_H \P_{\neq 0} \sum_{i=1}^3
\( \partial_{i}( a_{(k)}^{2})  \partial_{i}\int_{0}^t e^{(t-\tau)\Delta}\div(D_{(k)} \otimes W_{(k)}-W_{(k)} \otimes D_{(k)} ) \d \tau \)\notag\\
&\quad +\chi_{q+1}^2(\mathring{R}_{q}^B-\mathring{R}_{\ell_q}^B) ,
\end{align}
the corrector error
\begin{align}\label{rbp2}
	\mathring{R}_{cor}^B := &\, \wt  d_{q+1}^{(p)+(h)} \otimes \wt w_{q+1}^{(c)+(H)} - \wt   w_{q+1}^{(c)+(H)} \otimes d_{q+1}  + \wt  d_{q+1}^{(c)+(H)}\otimes w_{q+1}-  \wt   w_{q+1}^{(p)+(h)} \otimes \wt d_{q+1}^{(c)+(H)},
\end{align}
and the cut-off error
\begin{align} \label{RBcut-def}
	\mathring{R}_{cut}^B
	& =  \mathcal{R}^B \( \partial_t\chi_{q+1}d^{(p)+(c)+(h)}_{q+ 1}+\partial_t(\chi_{q+1}^2)d^{(H)}_{q+ 1} \) +  (1-\chi_{q+1}^2)  \mathring{R}_{q}^B.
\end{align}

\subsubsection{Reynolds stress}

Concerning the Reynolds stress we compute
\begin{align}
		&\displaystyle\div\mathring{R}_{q+1}^u - \nabla P_{q+1}  \notag\\
		&\displaystyle = \underbrace{\chi_{q+1}\partial_t w^{(p)+(c)+(h)}_{q+ 1} -\nu_1 \Delta ( w_{q+1}-\wt w_{q+1}^{(H)}) +\div \big( u_{q}  \otimes w_{q+1} + w_{q+ 1} \otimes  u_{q}  - B_{q} \otimes d_{q+1} - d_{q+1} \otimes B_{q}\big) }_{ \div\mathring{R}_{lin}^u +\nabla P_{lin} }   \notag\\
		&\displaystyle\quad+ \underbrace{\div ( \wt w_{q+1}^{(p)} \otimes   \wt w_{q+1}^{(p)} -  \wt d_{q+1}^{(p)} \otimes  \wt d_{q+1}^{(p)} + \chi_{q+1}^2 \mathring{R}_{q}^u)+ \chi_{q+1}^2 (\partial_t-\nu_1\Delta)  w_{q+1}^{(H)}}_{\div\mathring{R}_{osc}^u +\nabla P_{osc}}  \notag\\
		&\quad+\underbrace{\div\Big(\wt w_{q+1}^{(c)+(H)}\otimes w_{q+1}+   \wt w_{q+1}^{(p)+(h)}\otimes \wt w_{q+1}^{(c)+(H)} - \wt  d_{q+1}^{(c)+(H)}\otimes d_{q+1}-  \wt d_{q+1}^{(p)+(h)} \otimes \wt d_{q+1}^{(c)+(H)}\Big)}_{\div\mathring{R}_{cor}^u +\nabla P_{cor}}\notag\\
		&\quad +  \underbrace{ \partial_t\chi_{q+1}w^{(p)+(c)+(h)}_{q+ 1}+\partial_t(\chi_{q+1}^2)w^{(H)}_{q+ 1}+  (1-\chi_{q+1}^2)\div \mathring{R}_{q}^u }_{\div\mathring{R}_{cut}^u +\nabla P_{cut}} . \label{ru}
\end{align}

Then, using the inverse divergence operator $\mathcal{R}^u$  in \eqref{operaru}
we choose the Reynolds stress at level $q+1$
\begin{align}\label{rucom}
	\mathring{R}_{q+1}^u := \mathring{R}_{lin}^u +   \mathring{R}_{osc}^u+ \mathring{R}_{cor}^u+\mathring{R}_{cut}^u, 
\end{align}  
which consists of four components: 
the linear error 
\begin{align}
	\mathring{R}_{lin}^u & := \mathcal{R}^u \chi_{q+1}\partial_t w^{(p)+(c)+(h)}_{q+ 1} -\nu_1  \mathcal{R}^u\Delta ( w_{q+1}- \wt w_{q+1}^{(H)}) +u_{q} \mathring\otimes w_{q+1} + w_{q+ 1} \mathring\otimes u_{q}- B_{q} \mathring\otimes d_{q+1}
	-d_{q+1} \mathring\otimes B_{q}, \label{rup}
\end{align}
the oscillation error
\begin{align}\label{rou}
\mathring{R}_{osc}^u &:= \chi_{q+1}^2 \sum_{k \in \Lambda_u\cup\Lambda_B} \mathcal{R}^u \P_H\P_{\neq 0}\left( \P_{\neq 0}(W_{(k)}\otimes W_{(k)}-D_{(k)}\otimes D_{(k)})\nabla (a_{(k)}^2)\right)\notag\\
&\quad + \chi_{q+1}^2 \sum_{k \in \Lambda_u\cup\Lambda_B} \mathcal{R}^u \P_H \P_{\neq 0}
	\( (\partial_{t}-\nu_1\Delta)( a_{(k)}^{2}) \int_{0}^t e^{(t-\tau)\Delta}\div (W_{(k)} \otimes W_{(k)}- D_{(k)} \otimes D_{(k)})\d \tau \)   \nonumber  \\
	&\quad -2\nu_1 \chi_{q+1}^2 \sum_{k \in \Lambda_u\cup\Lambda_B} \mathcal{R}^u\P_H \P_{\neq 0} \sum_{i=1}^3
	\( \partial_{i}( a_{(k)}^{2})  \partial_{i}\int_{0}^t e^{(t-\tau)\Delta}\div (W_{(k)} \otimes W_{(k)}- D_{(k)} \otimes D_{(k)}) \d \tau \) \notag\\
	&\quad +\chi_{q+1}^2(\mathring{R}_{q}^u-\mathring{R}_{\ell_q}^u) ,
\end{align}
the corrector error
\begin{align}
	\mathring{R}_{cor}^u &
	:= \wt w_{q+1}^{(c)+(H)}\mathring\otimes w_{q+1}+ \wt w_{q+1}^{(p)+(h)}\mathring\otimes \wt w_{q+1}^{(c)+(H)} - \wt d_{q+1}^{(c)+(H)}\mathring\otimes d_{q+1}- \wt d_{q+1}^{(p)+(h)} \mathring\otimes \wt d_{q+1}^{(c)+(H)}, \label{rup2}
\end{align}
and the cut-off error
\begin{align} \label{Rucut-def}
	\mathring{R}_{cut}^u
	& =  \mathcal{R}^u\( \partial_t\chi_{q+1} w^{(p)+(c)+(h)}_{q+ 1} +\partial_t(\chi_{q+1}^2)w^{(H)}_{q+ 1} \) +  (1-\chi_{q+1}^2)  \mathring{R}_{q}^u.
\end{align}

\begin{remark}
    We note that the heat correctors contribute to extra two oscillation errors of the Reynolds and magnetic stresses.
\end{remark}

\subsection{Verification of decay estimate}

The purpose of this subsection is to verify the decay estimate \eqref{rubl1s} 
for the Reynolds and magnetic stresses 
at level $q+1$. Since the Calder\'{o}n-Zygmund operators are bounded in the space $L^p_x$ for $1<p<+\9$,
we choose
\begin{align}\label{def-p}
p: =\frac{2+40\varepsilon}{2+39\varepsilon}\in (1,2),
\end{align}
while $\ve$ given by \eqref{b-beta-ve}.
In particular, $(2+40\varepsilon)(1-\frac{1}{p})=\ve$, which, via \eqref{larsrp}, yields that
\begin{align}  \label{rs-rp-p-ve}
 (\rp\rs\rw)^{\frac 1p-1} = \lambda^{\varepsilon},
   \quad 
   (\rp\rs\rw)^{\frac 1p-\frac 12} = \lambda^{-1-19\varepsilon}.
\end{align}

Note that,  
in the non-perturbed case where $t\in 
[0, T]\setminus \cI^{\rm p}$,
by the telescoping procedure, 
no perturbations are added on this time interval  
and so, 
$\mathring{R}_{lin}^B=\mathring{R}_{osc}^B =\mathring{R}_{cor}^B=0$ and $\mathring{R}_{cut}^B=\mathring{R}_{q}^B$.
Thus, using \eqref{asp-0} and \eqref{asp-q}
we get
\begin{align}\label{rq1b-3}
	\|\mathring{R}_{q+1}^B \|_{ C_{[0, T]\setminus \cI^{\rm p}} L^1_{x}}
    =\|\mathring{R}_{cut}^B   
    \|_{C_{[0, T]\setminus \cI^{\rm p}} L^1_x}
    = \|\mathring{R}_{q}^B   \|_{ C_{[0, T]\setminus \cI^{\rm p}} L^1_x}  \leq \frac18 c_*\delta_{q+ 3}. 
\end{align}   
The Reynolds stress can be estimated in the same way. 
This verifies the decay estimate 
\eqref{rubl1s} on the non-perturbed time interval. 

Thus, in the following we only need to consider the more difficult perturbed  regime $\cI^{\rm p}=[\Tm, \Sm]$.

\subsubsection{Decay estimate of magnetic stress}

Below we consider the perturbed regime 
where $t\in \cI^{\rm p}$. 
Let us consider the four components of 
the magnetic error separately as follows: 

\medskip 
\paragraph{\bf $(i)$ Magnetic linear error.}

Note that,
by the identity \eqref{div free magnetic},
\begin{align}\label{r-lin-t}
 \| \cq\mathcal{R}^B\partial_t( d_{q+1}^{(p)+(h)+(c)}) \|_{C_{\cI^{\rm p}}L_x^p}
	\lesssim&\ \sum_{k \in \Lambda_B}\| \mathcal{R}^B \curl\partial_t( a_{(k)} D^c_{(k)})   \|_{C_TL_x^p} + \sum_{k \in \Lambda_h}\| \mathcal{R}^B \curl\partial_t( (\gamma^{(h)}_{\ell_q })^{\frac12}  W^c_{(k)})   \|_{C_TL_x^p} \nonumber \\
	\lesssim&\ \sum_{k \in \Lambda_B}\(\|  a_{(k)} \|_{C_{T,x}^1}\| D^c_{(k)} \|_{C_T L^{p}_x }
            +\|  a_{(k)} \|_{C_{T,x} }\| \p_t D^c_{(k)} \|_{C_T L^{p}_x }\) \notag\\
       &\    +   \sum_{k \in \Lambda_h}\(\| (\gamma^{(h)}_{\ell_q })^{\frac12} \|_{C_{T,x}^1}\| W^c_{(k)} \|_{C_T L^{p}_x }
            +\| (\gamma^{(h)}_{\ell_q })^{\frac12} \|_{C_{T,x} }\| \p_t W^c_{(k)} \|_{C_T L^{p}_x }\) .
\end{align}
Then, by Lemmas  \ref{buildingblockestlemma}, \ref{mae} and \eqref{rs-rp-p-ve},
\begin{align}  \label{mag time derivative}
	 \| \cq \mathcal{R}^B\partial_t(  d_{q+1}^{(p)+(h)+(c)}) \|_{C_{\cI^{\rm p}}L_x^p}
    \lesssim&\ \sigma^{-1}\ell_q^{-13} \rs(\rp\rs\rw)^{\frac 1p-\frac 12}
        +\ell_q^{-2} \rs\rp^{-1}\mu(\rp\rs\rw)^{\frac 1p-\frac 12}\notag\\
        &\ +\sigma^{-1}\ell_q^{-2}\rs(\rp\rs\rw)^{\frac 1p-\frac 12}
        +\rs\rp^{-1}\mu(\rp\rs\rw)^{\frac 1p-\frac 12}\notag\\
	\lesssim&\   \ell_q^{-2}\lambda^{-\ve}.
\end{align}

The control of the viscosity term requires the strong spatial intermittency of the magnetic flows, 
which actually motivates the constructions in Section \ref{Sec-Interm-Flow} 
and can be provided by Lemma \ref{totalest}. 
More precisely, an application of Lemma \ref{totalest} gives
\begin{align}\label{mag viscosity}
&\quad 	\norm{-\nu_2\mathcal{R}^B\Delta(d_{q+1}- \wt d_{q+1}^{(H)} ) }_{C_{\cI^{\rm p}}L^p_x} \notag\\
	& \lesssim \norm{ \mathcal{R}^B(-\Delta) \wt d_{q+1}^{(p)} }_{C_TL^p_x}+\norm{ \mathcal{R}^B(-\Delta) \wt d_{q+1}^{(h)} }_{C_TL^p_x} +\norm{ \mathcal{R}^B(-\Delta) \wt d_{q+1}^{(c)} }_{C_TL^p_x}  \notag\\
&\lesssim\ell_q^{-2}(\sigma \rs ^{-1}) (\rp \rs \rw )^{\frac{1}{p}-\frac12}+ (\sigma \rs ^{-1}) (\rp \rs \rw )^{\frac{1}{p}-\frac12} +\ell_q^{-2}(\sigma \rs ^{-1}) \rs \rp ^{-1}(\rp \rs \rw )^{\frac{1}{p}-\frac12}\notag\\
 &\lesssim\ell_q^{-2}\lambda^{-\frac18+5\ve}+ \lambda^{-\frac18+5\ve}+\ell_q^{-2}\lambda^{-\frac18+\ve}\notag\\
 &\lesssim \ell_q^{-2}\lambda^{-\frac18+5\ve} .
\end{align}

Regarding the remaining nonlinear terms in \eqref{rbp}, by \eqref{ubc} one has 
\begin{align} \label{magnetic linear estimate1}
	&\norm{   B_{q}  \otimes w_{q+1} + d_{q + 1} \otimes  u_{q} 
 - u_{q}  \otimes d_{q+1} - w_{q +1} \otimes  B_{q} }_{C_{\cI^{\rm p}}L^p_x}  \nonumber \\	
	\lesssim\,& 
   \norm{u_{q} }_{C_{\cI^{\rm p},x}} \norm{d_{q+1}}_{C_TL^p_x} +\norm{B_{q} }_{C_{\cI^{\rm p},x} }\norm{w_{q+1}}_{C_TL^p_x} \nonumber \\
	\lesssim\, & \ell_q^{-4} \lambda_q^4 (\lambda^{-1-19\ve}+\lambda^{-1-25\ve}+\lambda^{-2\ve} )\notag\\
    \lesssim\, & \ell_q^{-4}\lambda_q^4\lambda^{-2\ve}.
\end{align}

Therefore, combining \eqref{mag time derivative}-\eqref{magnetic linear estimate1} together
and using  \eqref{b-beta-ve} we obtain the acceptable order 
\begin{align}   \label{magnetic linear estimate}
	\norm{\mathring{R}_{lin}^B }_{C_{\cI^{\rm p}}L^p_x}
     & \lesssim  \ell_q^{-2}\lambda^{-\ve}.
\end{align}

\paragraph{\bf $(ii)$ Magnetic oscillation error.}
In order to treat the delicate magnetic oscillation,
we further decompose it into three components: 
\begin{align*}
	\mathring{R}_{osc}^B = \mathring{R}_{osc.1}^B +  \mathring{R}_{osc.2}^B+  \mathring{R}_{osc.3}^B,
\end{align*}
where $\mathring{R}_{osc.1}^B$ contains the 
high-low spatial  oscillations
\begin{align*}
	\mathring{R}_{osc.1}^B
	&:=   \sum_{k \in \Lambda_B}\mathcal{R}^B \P_{H}\P_{\neq 0}\left(  \P_{\neq 0}(D_{(k)}\otimes W_{(k)}-W_{(k)}\otimes D_{(k)})\nabla (a_{(k)}^2) \right),
\end{align*}
$\mathring{R}_{osc.2}^B$ contains the oscillation error caused by the heat correctors after balancing the high 
oscillations:
\begin{align}\label{def-rob3}
	\mathring{R}_{osc.3}^B &
:=\chi_{q+1}^2 \sum_{k \in \Lambda_B} \mathcal{R}^B \P_H \P_{\neq 0}
\( (\partial_{t}-\nu_2\Delta)( a_{(k)}^{2}) \int_{0}^t e^{(t-\tau)\Delta}\div(D_{(k)} \otimes W_{(k)}-W_{(k)} \otimes D_{(k)} )\d \tau \)   \nonumber  \\
&\quad -2\nu_2 \chi_{q+1}^2 \sum_{k \in \Lambda_B} \mathcal{R}^B \P_H \P_{\neq 0} \sum_{i=1}^3
\( \partial_{i}( a_{(k)}^{2})  \partial_{i}\int_{0}^t e^{(t-\tau)\Delta}\div(D_{(k)} \otimes W_{(k)}-W_{(k)} \otimes D_{(k)} ) \d \tau \),
\end{align}
and  $\mathring{R}_{osc.3}^B $ contains the mollification error
\begin{align*}
	\mathring{R}_{osc.3}^B
	&:= \cq^2(\mathring{R}_{q}^B-\mathring{R}_{\ell_q}^B).
\end{align*}

In order to estimate the high-low oscillation component $\mathring{R}_{osc.1}^B$, since
\begin{align*}
   \P_{\not=0} (D_{(k)} \otimes W_{(k)} - W_{(k)} \otimes D_{(k)})
    = \P_{\geq \frac 12 \sigma} (D_{(k)} \otimes W_{(k)} - W_{(k)} \otimes D_{(k)}),
\end{align*}
applying the stationary lemma~\ref{commutator estimate1}
with $a = \nabla (a_{(k)}^2)$ and $f =  \psi_{(k_1)}^2\phi_{(k)}^2\phi_{(k_2)}^2$
we get
\begin{align}  \label{I1-esti}
	\norm{\mathring{R}_{osc.1}^B }_{C_{\cI^{\rm p}}L^p_x}
	& \lesssim  \sum_{ k \in \Lambda_B} \sigma^{-1} \norm{ \na^3(a^2_{(k)})}_{C_{T,x}}
        \norm{\psi_{(k_1)}^2\phi_{(k)}^2\phi_{(k_2)}^2}_{C_TL^p_x }  \nonumber  \\
	& \lesssim  \ell_q^{-32} \sigma^{-1} \norm{ \psi^2_{(k_1)}}_{C_TL^p_x } \norm{\phi^2_{(k)} }_{C_TL^p_x }  \norm{\phi^2_{(k_2)} }_{C_TL^p_x } \nonumber  \\
	& \lesssim  \ell_q^{-32}\sigma^{-1}  (\rp\rs\rw)^{\frac{1}{p}-1}\lesssim \ell_q^{-32}\lambda^{-3\ve},
\end{align}
where we also used the amplitude estimates in  Lemmas \ref{buildingblockestlemma} and \ref{mae}.

Next, for the oscillation error $\mathring{R}_{osc.2}^B$ caused by the heat corrector, we derive
\begin{align}\label{I3-est}
	\norm{\mathring{R}_{osc.2}^B}_{C_{\cI^{\rm p}}L^p_x}
	& \lesssim \|\sum_{k \in \Lambda_B} \mathcal{R}^B \P_H \P_{\neq 0}
	\( (\partial_{t}-\nu_2\Delta)( a_{(k)}^{2}) \int_{0}^t e^{(t-\tau)\Delta}\div(D_{(k)} \otimes W_{(k)}-W_{(k)} \otimes D_{(k)} )\d \tau \)  \|_{C_TL^p_x} \nonumber  \\
	&\quad +\|\sum_{k \in \Lambda_B} \mathcal{R}^B \P_H \P_{\neq 0} \sum_{i=1}^3
	\( \partial_{i}( a_{(k)}^{2})  \partial_{i}\int_{0}^t e^{(t-\tau)\Delta}\div(D_{(k)} \otimes W_{(k)}-W_{(k)} \otimes D_{(k)} ) \d \tau \) \|_{C_T L^p_x} \nonumber  \\
		&=:J_1^B+J_2^B.
\end{align}
We shall estimate the two terms on the right-hand side of \eqref{I3-est} separately.

Concerning the first term on the right-hand side of \eqref{I3-est}, by Lemmas~\ref{buildingblockestlemma} and \ref{mae},
\begin{align}\label{I3-est-1}
J_1^B	&\lesssim  \sum_{k\in \Lambda_B}\norm{    a_{(k)}^2}_{C_{T,x}^2} \norm{  \Delta^{-1} \div(\phi_{(k )}^2 \psi_{(k_1)}^2 \phi_{(k_2)}^2 )}_{C_TW^{\ve,p} }\notag\\
	&\lesssim \sigma^{-1+\ve}\ell_q^{-26} (\rp\rs\rw)^{\frac 1p-1} \lesssim \ell_q^{-26}\lambda^{-2\ve}.
\end{align}
where the heat kernel estimate as in \eqref{vh-est-1} was also used in the last step.

For the second term on the right-hand side of \eqref{I3-est}, by the Leibnitz rule,
\begin{align}
	J_2^B &= \| \sum_{k \in \Lambda_B} \mathcal{R}^B \P_H \P_{\neq 0} \sum_{i=1}^3
	 \partial_{i} \( \partial_{i}( a_{(k)}^{2}) \int_{0}^t e^{(t-\tau)\Delta}\div(D_{(k)} \otimes W_{(k)}-W_{(k)} \otimes D_{(k)} ) \d \tau \)\|_{C_T L^p_x}\notag\\
	 & \quad+ \| \sum_{k \in \Lambda_B} \mathcal{R}^B \P_H \P_{\neq 0}
	\(  \Delta( a_{(k)}^{2}) \int_{0}^t e^{(t-\tau)\Delta}\div(D_{(k)} \otimes W_{(k)}-W_{(k)} \otimes D_{(k)} ) \d \tau \)\|_{C_T L^p_x}.
\end{align}
Hence, using the heat kernel estimate again we get
\begin{align}\label{I3-est-2}
	J_2^B
	&\lesssim \sum_{k\in \Lambda_B}\norm{    a_{(k)}^2}_{C_{T,x}^2} \norm{ \Delta^{-1} \div (\phi_{(k )}^2 \psi_{(k_1)}^2 \phi_{(k_2)}^2 )}_{C_TW^{\ve,p} } \lesssim \ell_q^{-26}\lambda^{-2\ve}.
\end{align}
Plugging estimates \eqref{I3-est-1}-\eqref{I3-est-2} into \eqref{I3-est}, we come to
\begin{align}\label{I3-est-end}
\norm{\mathring{R}_{osc.2}^B }_{C_{\cI^{\rm p}} L^p_x}\lesssim  \ell_q^{-26}\lambda^{-2\ve}.
\end{align}

Regarding the last mollification error $\mathring{R}_{osc.3}^B$, by 
the iterative estimate \eqref{rbl1b-s} 
we get
\begin{align}  \label{I4-esti}
\norm{\mathring{R}_{osc.3}^B   }_{C_{\cI^{\rm p}}L_x^1}
	&\lesssim \norm{ \cq^2}_{C_{T}}  \norm{(\mathring{R}_{q}^B-\mathring{R}_{\ell_q}^B)}_{C_{\cI^{\rm p}}L_x^1}\notag\\
	&\lesssim  \ell_q  ( \|\mathring{R}_{q}^B \|_{  C^{1}_{[T_{q+2},S_{q+2}]}L^1_x }+
	\| \mathring{R}_{q}^B  \|_{C_{[T_{q+2},S_{q+2}]}W^{1,1}_x} )\notag\\
 &\lesssim \ell_q \lambda_q^8. 
\end{align}

Therefore, putting estimates \eqref{I1-esti}, \eqref{I3-est-end} and \eqref{I4-esti} altogether
and using \eqref{larsrp} and \eqref{rs-rp-p-ve} we arrive at
\begin{align}\label{magnetic oscillation estimate}
\norm{\mathring{R}_{osc}^B }_{ C_{\cI^{\rm p}}L^1_x} \lesssim \ell_q^{\frac12}.
\end{align}

\paragraph{\bf $(iii)$ Magnetic corrector error.}
Using H\"older's inequality,
applying Lemma \ref{totalest} to \eqref{rbp2}
and using  \eqref{Lp-wdp-1} and \eqref{e3.41} we get
\begin{align}\label{magnetic corrector estimate}
	\norm{\mathring{R}_{cor}^B  }_{C_{\cI^{\rm p}} L^{1}_x}
	\lesssim& \norm{\wt  w_{q+1}^{(c)+(H)}}_{C_T L^{2}_x} (\norm{\wt  d^{(p)+(h)}_{q+1}  }_{C_TL^2_{x}} + \norm{d_{q+1} }_{C_TL^2_{x}}) \notag\\
	& +  (\norm{  \wt w_{q+1}^{(p)+(h)} }_{C_TL^2_{x}} + \norm{ w_{q+1} }_{C_T L^2_{x}}) \norm{  \wt d_{q+1}^{(c)+(H)}}_{C_TL^{2}_x}\notag  \\
     \lesssim& \delta_{q+ 1}^{\frac12}   \(\ell_q^{-2}r_{\perp} r_{\parallel}^{-1}+ \ell_q^{-4} \lambda^{-2\ve} \) \notag\\
	\lesssim & \ell_q^{-4} \lambda^{-2\ve} .
\end{align}

Therefore,  combining estimates \eqref{magnetic linear estimate},
\eqref{magnetic oscillation estimate} and
\eqref{magnetic corrector estimate} we obtain
\begin{align} \label{rq1b}
 \| \mathring{R}_{lin}^B  \|_{ C_{\cI^{\rm p}}  L^p_x} +  \| \mathring{R}_{osc}^B \|_{ C_{\cI^{\rm p}} L^1_x}	+  \|\mathring{R}_{cor}^B  \|_{ C_{\cI^{\rm p}} L^{ 1}_x} \lesssim \ell_q^{-2}\lambda^{-\ve}+ \ell_q^{\frac12} +\ell_q^{-4} \lambda^{-2\ve}	\leq  \frac14 c_*\delta_{q+3}.
\end{align}

\paragraph{\bf $(iv)$ Magnetic cut-off error.}  
First, one has $\mathring{R}_{cut}^B=0$ on the subinterval $[T_{q+1},S_{q+1}]$. 
While for $t\in \cI^{\rm p}\setminus [T_{q+1}, S_{q+1}] 
= [\Tm,T_{q+1})\cup (S_{q+1}, \Sm]$,  Lemma~\ref{totalest} yields 
\begin{align}  \label{R-cut-esti-1}
\|\mathring{R}_{cut}^B \|_{C_{\cI^{\rm p}\setminus [T_{q+1}, S_{q+1}]}L^1_x}
	&\lesssim \|\mathcal{R}^B (  \partial_t\chi_{q+1}d^{(p)+(c)+(h)}_{q+ 1}+\partial_t(\chi_{q+1}^2)d^{(H)}_{q+ 1})+ (1-\chi_{q+1}^2 )\mathring{R}^B_{\ell_q} \|_{ C_{\cI^{\rm p}\setminus [T_{q+1}, S_{q+1}]}L^1_x} \notag \\
	&\lesssim \ell_q^{-5} (\lambda^{-1-19\ve}+\lambda^{-1-23\ve}
  + \lambda^{-2\ve})+\| \mathring R_{q}^B\|_{C_{[ T_{q+2} ,T_{q+1}]\cup [ S_{q+1}, S_{q+2}] }L^{1}_{x}} \notag\\
	&\lesssim \ell_q^{-5}\lambda^{-3\ve} + \frac18 c_*\delta_{q+ 3}\leq \frac14 c_*\delta_{q+ 3}.
\end{align}

Finally,  combining  \eqref{rq1b} and \eqref{R-cut-esti-1} together we obtain the desirable estimate of the magnetic stress on the perturbed temporal regime  $\cI^{\rm p}$: 
\begin{align} \label{rq1b-end}
	\|\mathring{R}_{q+1}^B \|_{C_{\cI^{\rm p}}L^1_{x}}
	&\leq \| \mathring{R}_{lin}^B  \|_{C_{\cI^{\rm p}}L^p_x} +  \| \mathring{R}_{osc} ^B \|_{C_{\cI^{\rm p}}L^1_x}
	+  \|\mathring{R}_{cor}^B  \|_{C_{\cI^{\rm p}}L^{ 1}_x}   +\|\mathring{R}_{cut}^B\|_{ C_{\cI^{\rm p}}L^1_x}\leq \frac12c_*\delta_{q+3}.
\end{align}

\subsubsection{Decay estimate of Reynolds stress}

Let us now treat the Reynolds stress $\rr_{q+1}^u$ given by \eqref{rucom}. 
As in the previous magnetic stress case,  
we only need to consider the perturbed temporal regime $t\in \cI^{\rm p}$. 

Let us begin with the estimates of  
the velocity linear error. 

\medskip 
\paragraph{\bf $(i)$ Velocity linear error.}
By the  $L^p$-boundedness of the inverse-divergence operator $\mathcal{R}^u$, the identity 
\eqref{div free magnetic} and Lemmas  \ref{mae}-\ref{totalest},
\begin{align*}
  \|  \mathcal{R}^u\partial_t( w_{q+1}^{(p)+(h)+(c)})\|_{C_{\cI^{\rm p}} L_x^p}
\lesssim& \sigma^{-1}\(\sum_{k \in \Lambda_u\cup\Lambda_{B}}\| \mathcal{R}^u \curl\partial_t( a_{(k)} (W^c_{(k)}))    \|_{ C_TL_x^p} + \sum_{k \in \Lambda_h}\| \mathcal{R}^u\curl\partial_t( (\gamma^{(h)}_{\ell_q })^{\frac12} (W^c_{(k)})) \) \nonumber \\
\lesssim& \sum_{k \in \Lambda_u\cup\Lambda_{B}}\(\|  a_{(k)} \|_{C_{T,x}^1}\| W^c_{(k)} \|_{C_T L^{p}_x }
+\|  a_{(k)} \|_{C_{T,x} }\| \p_t W^c_{(k)} \|_{C_TL^{p}_x }\) \notag\\
 &\    +   \sum_{k \in \Lambda_h}\(\| (\gamma^{(h)}_{\ell_q })^{\frac12} \|_{C_{T,x}^1}\| W^c_{(k)} \|_{C_T L^{p}_x }
+\| (\gamma^{(h)}_{\ell_q })^{\frac12} \|_{C_{T,x} }\| \p_t W^c_{(k)} \|_{C_T L^{p}_x }\) \notag\\
\lesssim&  \ell_q^{-2}\lambda^{-\ve}.
\end{align*}
Regarding the viscosity term in \eqref{rup},
as in the proof of \eqref{mag viscosity},
the application of the spatial intermittency yields that
\begin{align*}
	\norm{-\nu_1\mathcal{R}^u\Delta(w_{q+1}-  w_{q+1}^{(H)} )}_{ C_{\cI^{\rm p}}L^p_x}& \lesssim  \ell_q^{-2}\lambda^{-\frac18+5\ve} .
\end{align*}
Moreover, similarly to  \eqref{magnetic linear estimate1},
\begin{align*}
	\norm{ u_{q} \mathring\otimes w_{q+1} + w_{q + 1}\mathring \otimes u_{q}-B_{q}\mathring \otimes d_{q+1} - d_{q +1}\mathring \otimes B_{q} }_{C_{\cI^{\rm p}} L^p_x} 	
	\lesssim \ell_q^{-5}\lambda^{-3\ve}.
\end{align*}
Thus, combining the above estimates together we arrive at
\begin{align}  \label{Reynolds linear estimate}
\norm{ \mathring{R}_{lin}^u  }_{ C_{\cI^{\rm p}}L^p_x}
   \lesssim \ell_q^{-2}\lambda^{-\ve} .
\end{align}

\paragraph{\bf $(ii)$ Velocity oscillation error.}
For the velocity oscillation $\mathring{R}_{osc}^u $,
using \eqref{rou} we decompose
\begin{align*}
	\mathring{R}_{osc}^u  &= \mathring{R}_{osc.1}^u + \mathring{R}_{osc.2}^u+ \mathring{R}_{osc.3}^u+ \mathring{R}_{osc.4}^u,
\end{align*}
where $\mathring{R}_{osc.1}^u $ is the high-low  spatial frequency part
\begin{align}\label{ulhfp}
	\mathring{R}_{osc.1}^u  &:= \sum_{k \in \Lambda_u\cup\Lambda_B} \mathcal{R}^u \P_H\P_{\neq 0}\left( \P_{\neq 0} (W_{(k)}\otimes W_{(k)}-D_{(k)}\otimes D_{(k)})\nabla (a_{(k)}^2)\right),
\end{align}
$\mathring{R}_{osc.2}^u$ contains the  oscillation error caused by the heat corrector
\begin{align*}
	\mathring{R}_{osc.2}^u &
	:=\chi_{q+1}^2 \sum_{k \in \Lambda_u\cup\Lambda_B} \mathcal{R}^u \P_H \P_{\neq 0}
	\( (\partial_{t}-\nu_1 \Delta)( a_{(k)}^{2}) \int_{0}^t e^{(t-\tau)\Delta}\div (W_{(k)}\otimes W_{(k)}-D_{(k)}\otimes D_{(k)})\d \tau \)   \nonumber  \\
	&\quad -2\nu_1 \chi_{q+1}^2 \sum_{k \in \Lambda_u\cup\Lambda_B} \mathcal{R}^u \P_H \P_{\neq 0} \sum_{i=1}^3
	\( \partial_{i}( a_{(k)}^{2})  \partial_{i}\int_{0}^t e^{(t-\tau)\Delta}\div(W_{(k)}\otimes W_{(k)}-D_{(k)}\otimes D_{(k)}) \d \tau \),
\end{align*}
and  $\mathring{R}_{osc.3}^u $ contains the mollification error
\begin{align*}
	\mathring{R}_{osc.3}^u
	&:= \cq^2(\mathring{R}_{q}^u-\mathring{R}_{\ell_q}^u).
\end{align*}

Let us treat the four Reynolds oscillation errors separately.
First, by Lemmas \ref{buildingblockestlemma},  \ref{mae}, \ref{vae} and \ref{commutator estimate1}, 
\begin{align} \label{J1-esti}
	\norm{\mathring{R}_{osc.1}^u }_{C_{\cI^{\rm p}}L_x^p}
	& \lesssim
	 \sum_{k \in \Lambda_u\cup\Lambda_B}
     \left\||\na|^{-1}\P_{\neq 0}\(  \P_{\neq 0}(W_{(k)}\otimes W_{(k)}-D_{(k)}\otimes D_{(k)})\nabla (a_{(k)}^2)\) \right\|_{C_TL_x^p} \notag \\
	& \lesssim  \sum_{k \in \Lambda_u \cup \Lambda_B } \sigma^{-1} \|\na^3 (a^2_{(k)})\|_{C_{T,x}}
      \|\psi^2_{(k_1)} \phi_{(k)}^2\phi_{(k_2)}^2 \|_{C_TL^p_x } \notag \\
	& \lesssim  \ell_q^{-117}\sigma^{-1}  (\rp\rs\rw)^{\frac{1}{p}-1}\lesssim  \ell_q^{-117}\lambda^{-3\ve}.
\end{align}
By Lemma~\ref{buildingblockestlemma}, as in  \eqref{I3-est}-\eqref{I3-est-end}, the oscillation error $\mathring{R}_{osc.2}^u$ caused by the heat corrector can be bounded by
\begin{align}\label{J3-est}
 \norm{\mathring{R}_{osc.2}^u }_{C_{\cI^{\rm p}}L^p_x}
	&\lesssim \ell_q^{-84}\lambda^{-2\ve}.
\end{align}
At last, the mollification error $\mathring{R}_{osc.3}^u$ can be controlled by the usual mollification estimate and the iterative estimate \eqref{rbl1b-s}:  
\begin{align}  \label{J4-esti}
	\norm{\mathring{R}_{osc.3}^u   }_{C_{\cI^{\rm p}}L_x^1}
	&\lesssim \norm{ \cq^2}_{C_{T}}  \norm{\mathring{R}_{q}^u-\mathring{R}_{\ell_q}^u}_{C_{I_2}L_x^1}\lesssim \ell^{\frac12}.
\end{align}
Thus, putting estimates \eqref{J1-esti}-\eqref{J4-esti} altogether
we arrive at
\begin{align}\label{Reynolds oscillation estimate}
\norm{\mathring{R}_{osc}^u }_{ C_{\cI^{\rm p}}L^1_x}\lesssim \ell_q^{\frac12}.
\end{align}

\paragraph{\bf $(iv)$ Velocity corrector error.}
Applying Lemma~\ref{totalest} and using \eqref{est-edp} and \eqref{Lp decorr vel}
we get
\begin{align}  \label{Reynolds corrector estimate}
	\norm{\mathring{R}_{cor}^u  }_{C_{I_2}L^{ 1}_x} &\lesssim \norm{ \wt w_{q+1}^{(c)+(H)}}_{C_TL^{ 2}_x}
	(\norm{ \wt w^{(p)+(h)}_{q+1} }_{C_T L^2_{x}} + \norm{w_{q+1}  }_{C_T L^2_{x}})\notag\\
	&\quad +   (\norm{ \wt d^{(p)+(h)}_{q+1} }_{C_TL^2_{ x}} + \norm{d_{q+1}  }_{ C_TL^2_{x}}) \norm{\wt d_{q+1}^{(c)+(H)}}_{C_T L^{ 2}_x} \notag \\
	& \lesssim  \ell_q^{-2}\lambda^{-2\ve}.
\end{align}

\paragraph{\bf $(v)$ Velocity cut-off error.} Using \eqref{asp-0}, \eqref{asp-q}, \eqref{est-chiq} and Lemma~\ref{totalest}, similarly to  \eqref{R-cut-esti-1},  the velocity cut-off error can be bounded by
\begin{align} \label{Ru-cut-esti}
	\|\mathring{R}_{cut}^u \|_{ C_{\cI^{\rm p}}L^1_{x}}
	\leq \frac14c_*\delta_{q+3}.
\end{align}

Therefore, combining  \eqref{Reynolds linear estimate},
\eqref{Reynolds oscillation estimate},  \eqref{Reynolds corrector estimate} and \eqref{Ru-cut-esti} altogether we arrive at 
\begin{align} \label{rq1u}
	\|\mathring{R}_{q+1}^u  \|_{ C_{\cI^{\rm p}}L^1_{x}}\leq \frac12c_*\delta_{q+3}.
\end{align}

\subsection{Verification of growth estimate}  \label{sucsec-est-r-b}

This subsection is to verify the $C^{1}_{[T_{q+3},S_{q+3}]} \mathcal{L}^1_x$ and $C_{[T_{q+3},S_{q+3}]} \mathcal{W}_x^{1,1}$ estimates in \eqref{rbl1b-s} for the Reynolds and magnetic stresses. 
We divide the regime 
$[T_{q+3}, S_{q+3}]$ into two subregimes
$$
[T_{q+3}, S_{q+3}] = \bigcup\limits_{i=1}^2 \cI_i, 
$$
where $\cI_1:=[T_{q+3},T_{q+2}]\cup [S_{q+2},S_{q+3}]$ and $\cI_2:=[T_{q+2},S_{q+2}]$. 
Note that $\cI_2$ contains the perturbed temporal regime $\cI^{\rm p}$.

\medskip 
First, in the non-perturbed case where $t\in \cI_1:=[T_{q+3},T_{q+2}]\cup [S_{q+2},S_{q+3}]$, by the telescoping construction, there are no perturbations added to the velocity and magnetic fields at level $q+1$,
\begin{align}
w_{q+1}(t)= 0,\quad d_{q+1}(t)= 0.
\end{align}
It follows immediately that 
$$\mathring{R}_{lin}^B=\mathring{R}_{osc}^B  =\mathring{R}_{cor}^B=0,\quad \mathring{R}_{lin}^u=\mathring{R}_{osc}^u  =\mathring{R}_{cor}^u=0,$$
and
$$\mathring{R}_{cut}^B=\mathring{R}_{q}^B=\mathring{R}_{0}^B,\quad \mathring{R}_{cut}^u=\mathring{R}_{q}^u=\mathring{R}_{0}^u.$$
Therefore, by the estimates of the initial Reynolds and magnetic stresses in \eqref{est-r0-c-end} and \eqref{est-r0-w-1} below, we obtain the desirable upper bound 
\begin{align}\label{est-ruq1-c-p1}
	\| (\mathring{R}_{q+1}^{u},\mathring{R}_{q+1}^{B}) \|_{ C^{1}_{\cI_1}\cL^1_x}+ \| (\mathring{R}_{q+1}^{u},\mathring{R}_{q+1}^{B}) \|_{C_{\cI_1}\cW^{1,1}_x} = \| (\mathring{R}_{0}^{u},\mathring{R}_{0}^{B}) \|_{ C^{1}_{\cI_1}\cL^1_x}+ \| (\mathring{R}_{0}^{u},\mathring{R}_{0}^{B}) \|_{C_{\cI_1}\cW^{1,1}_x} \lesssim \lambda_{q+1}^8.
\end{align}

\medskip 
Next, we focus on the more delicate temporal regime where $t\in \cI_2:=[T_{q+2},S_{q+2}]$.

\paragraph{\bf $(i)$ Linear error.}
First, for the linear error $\mathring{R}_{lin}^B$, 
estimating in similar manner as in the proof of \eqref{r-lin-t} we have
\begin{align}\label{Rlin-t-c}
	&\quad\| \cq \mathcal{R}^B\partial_t( d_{q+1}^{(p) +(h)+(c)})\|_{C^{1}_{\cI_2}L_x^1} 
	\notag\\
    &\lesssim  \sum_{k \in \Lambda_B }\|\cq \|_{C^{1}_{T}} \| \partial_t( a_{(k)} D^c_{(k)}) \|_{C^{1}_{\cI_2}L_x^p} + \sum_{k \in \Lambda_h}\|\cq \|_{C^{1}_{T}} \|\partial_t( (\gamma^{(h)}_{\ell_q })^{\frac12}  W^c_{(k)})   \|_{C^1_{\cI_2}L_x^p} 
	\notag\\
    &\lesssim \lambda_{q+1}^{8},
\end{align}
and
\begin{align}\label{Rlin-t-h}
	\| \cq \mathcal{R}^B\partial_t( d_{q+1}^{(p) +(h)+(c)})\|_{C_{\cI_2}W_x^{1,1}} \lesssim 	\|  \mathcal{R}^B\partial_t( d_{q+1}^{(p) +(h)+(c)})\|_{C_{\cI_2}W_x^{1 ,p}}
	\lesssim&\  \lambda_{q+1}^8,
\end{align}
where the integrability exponent $p$ is given by \eqref{def-p}.

Regarding the viscosity term, an application of interpolation and Lemma \ref{totalest} gives
\begin{align}\label{viscosity-c}
	\norm{\nu\mathcal{R}^B\Delta( d_{q+1}- \wt d_{q+1}^{(H)})}_{C^{1}_{\cI_2}L_x^1}\lesssim \norm{ d_{q+1}- \wt d_{q+1}^{(H)}}_{C^{1}_{\cI_2}W_x^{1,p}}
	&\lesssim  \lambda_{q+1}^{8},
\end{align}
and
\begin{align}\label{viscosity-h}
	\norm{\nu\mathcal{R}^B \Delta( d_{q+1}- \wt d_{q+1}^{(H)})}_{C_{\cI_2}W_x^{1,1}}\leq \norm{\mathcal{R}^B\Delta( d_{q+1}- \wt d_{q+1}^{(H)})}_{C_{\cI_2}W_x^{1,p}}  &\lesssim \lambda_{q+1}^{8}.
\end{align}

For the remaining nonlinear terms in \eqref{rbp}, we have by Lemmas~\ref{totalest} and \ref{lem-heat-est},
\begin{align} \label{linear estimate1-c}
	\norm{  B_{q} \otimes w_{q+1} - w_{q + 1} \otimes B_{q}}_{C^{1}_{\cI_2}L_x^1} 
	\lesssim\, \norm{B_{q}}_{C^{1}_{\cI_2}L_x^2}   \norm{w_{q+1}}_{C^{1}_{T,x}} 
	\lesssim  \lambda_{q+1}^8
\end{align}
and
\begin{align} \label{linear estimate1-h}
\norm{  B_{q} \otimes w_{q+1} - w_{q + 1} \otimes B_{q}}_{C_{\cI_2}W_x^{1,1}} 
	\lesssim \norm{B_{q}}_{C_{\cI_2}H_x^{1}}   \norm{w_{q+1}}_{C^{1}_{T,x}} \lesssim \lambda_{q+1}^8.
\end{align}
Similar arguments also give 
\begin{align} \label{linear estimate1-c-b}
	\norm{  d_{q+1}\otimes u_{q} - u_{q} \otimes d_{q + 1}}_{C^{1}_{\cI_2}L_x^1}\lesssim \lambda_{q+1}^8
\end{align}
and
\begin{align} \label{linear estimate1-h-b}
\norm{  d_{q+1}\otimes u_{q} - u_{q} \otimes d_{q + 1}}_{C_{\cI_2}W_x^{1,1}}\lesssim \lambda_{q+1}^8.
\end{align}

Thus, estimates \eqref{Rlin-t-c}-\eqref{linear estimate1-h-b} altogether \eqref{b-beta-ve} lead to 
\begin{align}   \label{linear estimate-c}
	\norm{\mathring{R}_{lin}^B }_{C^{1}_{\cI_2}L_x^1} + \norm{\mathring{R}_{lin}^B }_{C_{\cI_2}W_x^{1,1}}\lesssim \lambda_{q+1}^8.
\end{align}

\paragraph{\bf $(ii)$ Oscillation error.}
In order to treat the oscillation error, applying the space intermittency estimates in Lemma \ref{buildingblockestlemma} and 
the amplitude estimates in Lemma \ref{mae} we get
\begin{align}  \label{ro-esti-c}
	\norm{\mathring{R}_{osc}^B  }_{C^{1}_{\cI_2}L_x^1}
	& \lesssim 	\|\cq^2\|_{C_T^1}	\norm{\mathring{R}_{q}^B }_{C^{1}_{\cI_2}L_x^1}+	\|\cq^2\|_{C_T^1}	\norm{\mathring{R}_{\ell_q}^B }_{C^{1}_{\cI_2}L_x^1} \notag\\
    &\quad+ \sum_{ k \in \Lambda_B } \|\cq^2\|_{C_T^1} \norm{ \na (a^2_{(k)})}_{C^1_{T,x}}\norm{W_{(k)} }_{C^{1}_TL^{2p}_x }\norm{D_{(k)} }_{C^{1}_TL^{2p}_x }\notag\\
	&\quad  +  \|\cq^2\|_{C_T^1} \sum_{k\in\Lambda_B }
	\norm{a_{(k)}^2 }_{C^3_{T,x}}
	\norm{\psi_{(k_1)}^2}_{C^{1}_TL^{p}_x }\norm{\phi_{(k)}}_{L^{2p}_x }^2\norm{\phi_{(k_2)}}_{L^{2p}_x  }^2 \nonumber  \\
	& \lesssim  \lambda_{q+1}^{8}.
\end{align}
We note that when treating $\mathring{R}_{osc.2}^B $, we used the heat kernel estimate
\begin{align*}
\norm{ \int_0^t e^{(t-\tau)\Delta}\div(D_{(k)} \otimes W_{(k)}-W_{(k)} \otimes D_{(k)} )\d \tau}_{C_TL^{p}_x }\lesssim \norm{D_{(k)} \otimes W_{(k)}-W_{(k)} \otimes D_{(k)} }_{C_TL^{p}_x }.
\end{align*}

Estimating in an analogous manner, we also have
\begin{align}  \label{ro-esti-h}
\norm{\mathring{R}_{osc}^B  }_{C_{\cI_2}W_x^{1,1}}
& \lesssim 	\|\cq^2\|_{C_T}\norm{\mathring{R}_{q}^B  }_{C_{\cI_2}W_x^{1,1}} +	\|\cq^2\|_{C_T}	\norm{\mathring{R}_{\ell_q}^B }_{C_{\cI_2}W_x^{1,1}} \notag\\
&\quad+\|\cq^2\|_{C_T} \sum_{ k \in \Lambda_B } \norm{ \na (a^2_{(k)})}_{C^1_{T,x}}\norm{W_{(k)} }_{C_TW_x^{1,2p}}\norm{D_{(k)} }_{C_TW_x^{1,2p}}\notag\\
&\quad  + \|\cq^2\|_{C_T}\sum_{k\in\Lambda_B }
\norm{  (a_{(k)}^2) }_{C^3_{T,x}}
\norm{\psi_{(k_1)}^2\phi_{(k)}^2\phi_{(k_2)}^2}_{C_TW_x^{1,p}} \nonumber  \\
& \lesssim  \lambda_{q+1}^{8}.
\end{align}

\paragraph{\bf $(iii)$ Corrector error.}
Using the interpolation inequality and
applying Lemmas \ref{totalest} and \ref{lem-heat-est} to \eqref{rbp2} we get
\begin{align}\label{corrector estimate-c}
	\norm{\mathring{R}_{cor}^B   }_{C^{1}_{\cI_2}L_x^1}
	&\lesssim \norm{\wt  w_{q+1}^{(c)+{(H)}}}_{C^{1}_{\cI_2}L_x^2}  (\norm{\wt  d^{(p)+(h)}_{q+1}  }_{C^{1}_{\cI_2}L_x^2}  + \norm{d_{q+1} }_{C^{1}_{\cI_2}L_x^2} )\notag\\
	&\quad +\norm{\wt d_{q+1}^{(c)+{(H)}}}_{C^{1}_{\cI_2}L_x^2}  (\norm{\wt w^{(p)+(h)}_{q+1}  }_{C^{1}_{\cI_2}L_x^2}  + \norm{w_{q+1} }_{C^{1}_{\cI_2}L_x^2} )\notag\\
	&\lesssim  \lambda_{q+1}^{8},
\end{align}
and
\begin{align}\label{corrector estimate-h}
	\norm{\mathring{R}_{cor}^B   }_{C_{\cI_2}W_x^{1,1}}
	&\lesssim \norm{\wt  w_{q+1}^{(c)+{(H)}}}_{C_{\cI_2}H_x^{1}}  (\norm{\wt  d^{(p)+(h)}_{q+1}  }_{C_{\cI_2}H_x^{1}} + \norm{d_{q+1} }_{C_{\cI_2}H_x^{1}} )\notag\\
	&\quad +\norm{\wt d_{q+1}^{(c)+{(H)}}}_{C_{\cI_2}H_x^{1}} (\norm{\wt w^{(p)+(h)}_{q+1}  }_{C_{\cI_2}H_x^{1}}  + \norm{w_{q+1} }_{C_{\cI_2}H_x^{1}} )\notag\\
	&\lesssim \lambda_{q+1}^8.
\end{align}

\paragraph{\bf $(iv)$ Cut-off error.}
By Lemmas \ref{totalest} and \ref{lem-heat-est} again, \eqref{est-r0-c-end}, \eqref{est-r0-w-1} and the interpolation inequality, 
one has 
\begin{align}\label{cut-est-c}
    \|\mathring{R}^B _{cut}\|_{C^{1}_{\cI_2}L_x^1} 	&\leq \| \mathcal{R}^B \(  \partial_t\chi_{q+1}d^{(p)+(c)+(h)}_{q+ 1}+\partial_t(\chi_{q+1}^2)d^{(H)}_{q+ 1}\)+ (1-\chi_{q+1}^2 )\mathring{R}_{q}^B\|_{C^{1}_{\cI_2}L_x^1}\notag\\
    &\lesssim \|\partial_t\chi_{q+1}\|_{C^{1}_{\cI_2}} \|d^{(p)+(c)+(h)}_{q+ 1}\|_{C^{1}_{\cI_2}L_x^p} + \|\partial_t(\chi_{q+1}^2)\|_{C^{1}_{\cI_2}} \|d^{(H)}_{q+ 1}\|_{C^{1}_{\cI_2}L_x^p} \notag\\ 
    &\quad +\|(1-\chi_{q+1}^2 )\|_{C^{1}_{\cI_2}} \|\mathring{R}_{q}^B\|_{C^{1}_{\cI_2}L_x^1}\notag\\
    &\lesssim \lambda_{q+1}^{8}
\end{align}
and 
\begin{align}\label{cut-est-h}
    \|\mathring{R}^B _{cut}\|_{C_{\cI_2}W_x^{1,1}} &\lesssim   \|\partial_t\chi_{q+1}\|_{C_{\cI_2}} \|d^{(p)+(c)+(h)}_{q+ 1}\|_{C_{\cI_2}W_x^{1,p}} + \|\partial_t(\chi_{q+1}^2)\|_{C_{\cI_2}} \|d^{(H)}_{q+ 1}\|_{C_{\cI_2}W_x^{1,p}} \notag\\
    &\quad +\|(1-\chi_{q+1}^2 )\|_{C_{\cI_2}} \|\mathring{R}_{q}^B\|_{C_{\cI_2}W_x^{1,1}}\notag\\
     &\lesssim \lambda_{q+1}^{8}.
\end{align}

Therefore,  combining  estimates \eqref{linear estimate-c}
\eqref{cut-est-h} altogether 
we obtain the desirable bound at level $q+1$: 
\begin{align} \label{rq1b-c}
	\|\mathring{R}_{q+1}^B   \|_{C^{1}_{\cI_2}L_x^1}
	&\leq \| \mathring{R}_{lin} ^B  \|_{C^{1}_{\cI_2}L_x^1} +  \| \mathring{R}_{osc}^B  \|_{ C^{1}_{\cI_2}L_x^1}
	+  \|\mathring{R}_{cor} ^B  \|_{C^{1}_{\cI_2}L_x^1}
 +  \|\mathring{R}_{cut} ^B  \|_{C^{1}_{\cI_2}L_x^1}
	\lesssim \lambda_{q+1} ^8,
\end{align}
and 
\begin{align} \label{rq1b-h}
	\|\mathring{R}_{q+1} ^B  \|_{C_{\cI_2}W_x^{1,1}}
	\lesssim \lambda_{q+1} ^8.
\end{align}
Similar arguments also lead to 
\begin{align} \label{rq1b-h-u}
	&  \|  \mathring{R}_{q+1}^{u}  \|_{ C^{1}_{\cI_2} L^1_x}+ \|  \mathring{R}_{q+1}^{u}  \|_{  C_{\cI_2} W^{1,1}_x} \lesssim \lambda_{q+1}^8.
\end{align}

In conclusion,  estimates \eqref{rq1b}, \eqref{rq1u}, \eqref{est-ruq1-c-p1}, \eqref{rq1b-c}, \eqref{rq1b-h} and \eqref{rq1b-h-u} together verify the iterative estimates \eqref{rubl1s} and \eqref{rbl1b-s} for the Reynolds and magnetic stresses at level $q+1$, thereby finishing the proof of Proposition~\ref{Prop-Iterat}.

\section{Proof of main results}

This section is devoted to proving the main results in Theorem \ref{Thm-Non-MHD}.  
We will treat the rough $\mathcal{L}_x^2$ initial data and the critical $\mathcal{H}_x^\frac 12$ 
initial data, respectively, in 
Subsections \ref{Subsub-L2-data}  
and \ref{Subsec-dis-energy} below.

\subsection{Continuous energy solutions  with rough $L_x^2$ initial data}  
\label{Subsub-L2-data}

In this subsection, 
we first prove the continuous energy solutions 
in Theorems~\ref{Thm-Non-MHD} 
with the rough $\mathcal{L}_x^2$ initial data.  

Since the telescoping convex integration is performed on the time interval $[0,T]$, 
not on the outer regime $[T,\infty)$, 
one has that $(u_q(t),B_q(t))=(\wt u(t), \wt B(t))$ for all $t\in [T,+\infty)$ and $q\geq 0$, 
where $(\wt u(t), \wt B(t))$ is the solution to the $\Lambda$-MHD \eqref{equa-mhd-2}. 
In view of the eventual regularity in Theorem \ref{thm-fns} $(iii)$, 
$(\wt u, \wt B)$ is already a dissipative solution to the MHD system \eqref{equa-MHD} after $T$, 
and so $(\wt u, \wt B)$ obeys the continuous energy on $[T,\infty)$.  
Thus, in the following, 
we only need to consider the temporal regime $[0,T]$.

\medskip 
Let us start with verifying the inductive estimates \eqref{rubl1s}-\eqref{rbl1b-s} at the initial step $q=0$. 
More precisely, by the regularity estimates of the solution to the $\Lambda$-MHD system in  \eqref{spa-regu}, \eqref{spa-regu-2}, \eqref{tem-regu-1} and \eqref{est-tem-reg-2}, taking $a$ large enough we have
\begin{align}\label{est-u0-c1}
    \|\wt u \|_{C_{[T_{2},S_2],x}^{1}}
    &\lesssim \| \p_t \wt u  \|_{C_{[T_{2},S_2]}H^2_x }+\|\wt u  \|_{ C_{[T_{2},S_2]}H_x^{3}}\notag\\
     &\lesssim (1+T_2^{-\frac{10}{3}}+\Lambda^{10}(S_{2}) )(1+\|(v_0,H_0) \|_{\cL^2_x }^2) \lesssim \lambda_0^4.
\end{align}
Similarly, by \eqref{spa-regu} and \eqref{tem-regu-1}, 
\begin{align}\label{est-wtu-c}
    \|\wt u \|_{ C_{[T_{q+2},S_{q+2}],x}^{1}}\lesssim (1+T_{q+2}^{-\frac{10}{3}}+\Lambda^{10}(S_{q+2}) )(1+\|(v_0,H_0) \|_{\cL^2_x }^2)\lesssim \lambda_q^4.
\end{align}
Analogous argument also yields
\begin{align}\label{est-wtb-c}
 \|\wt B\|_{C^{1}_{[T_{2},S_{2}]}L^{2}_x}\lesssim \lambda_0^4,  \quad \|\wt B\|_{C^{1}_{[T_{q+2},S_{q+2}]}L^{2}_x}\lesssim \lambda_q^4.
\end{align}
Moreover, by the explicit expression \eqref{r0u} 
of the initial Reynolds stress, 
one has 
\begin{align}\label{est-r0-c-1}
\| \mathring{R}_{0}^u \|_{C^{1}_{[T_{q+2},S_{q+2}]}L^1_x} &\lesssim  \|\P_{\geq \Lambda}(\P_{< \Lambda}\wt u\mathring \otimes \P_{< \Lambda}\wt u) \|_{C^{1}_{[T_{q+2},S_{q+2}]}L^1_x} +\|(\P_{< \Lambda}\wt u\mathring \otimes \P_{\geq \Lambda}\wt u ) \|_{C^{1}_{[T_{q+2},S_{q+2}]}L^1_x} \notag\\
&\quad + \|(\P_{\geq\Lambda}\wt u\mathring \otimes \wt u) \|_{C^{1}_{[T_{q+2},S_{q+2}]}L^1_x} +\|\P_{\geq \Lambda}(\P_{< \Lambda}\wt B\mathring \otimes \P_{< \Lambda}\wt B) \|_{C^{1}_{[T_{q+2},S_{q+2}]}L^1_x}\notag \\
 &\quad +\|(\P_{< \Lambda}\wt B\mathring \otimes \P_{\geq \Lambda}\wt B ) \|_{C^{1}_{[T_{q+2},S_{q+2}]}L^1_x} + \|(\P_{\geq\Lambda}\wt B\mathring \otimes \wt B) \|_{C^{1}_{[T_{q+2},S_{q+2}]}L^1_x} \notag\\
&\lesssim \|(\wt u,\wt B) \|_{C^{1}_{[T_{q+2},S_{q+2}]}\cL^{2}_x}^2.
\end{align}
Plugging estimates \eqref{est-wtu-c} and \eqref{est-wtb-c} into \eqref{est-r0-c-1}, we get the desirable  bound 
\begin{align}\label{est-r0-c-end}
    \| \mathring{R}_{0}^u \|_{C^{1}_{[T_{q+2},S_{q+2}]}L^1_x} &\lesssim \lambda_{q}^8.
\end{align}
Arguing in the analogous manner one also has 
\begin{align}\label{est-r0-w-1}
\| \mathring{R}_{0}^u \|_{C_{[T_{q+2},S_{q+2}]}W^{1,1}_x} +\| \mathring{R}_{0}^B \|_{C^{1}_{[T_{q+2},S_{q+2}]}L^1_x}+ \| \mathring{R}_{0}^B \|_{C_{[T_{q+2},S_{q+2}]}W^{1,1}_x}&\lesssim  \lambda_{q}^8.
\end{align}

Hence, 
with $a$ and $\delta_0$ sufficiently large, by 
\eqref{asp-0}, \eqref{est-u0-c1}, \eqref{est-wtu-c}, 
\eqref{est-r0-c-end} and \eqref{est-r0-w-1}, 
the iterative estimates \eqref{ubl2}-\eqref{energy-est} are satisfied at the initial level $q=0$. 
By virtue of Proposition \ref{Prop-Iterat}, 
there exists 
a sequence of relaxed solutions $(u_{q},B_{q},\rr_{q}^u,\rr_{q}^B)$ to \eqref{mhd1}
which satisfy estimates \eqref{ubl2}-\eqref{energy-est} for all $q\geq 0$.

\medskip 
Below we claim that $\{(u_{q}, B_{q})\}_q$ is a Cauchy sequence
in $C_T \mathcal{L}^2_x$ and the corresponding limit satisfies \eqref{equa-MHD} in the sense of Definition \ref{weaksolu}. 
To this end, by \eqref{u-B-L2tx-conv}, we derive 
from \eqref{u-B-L2tx-conv} that 
\begin{align}\label{interpo}
\sum_{q \geq 0} \|(u_{q+1} - u_q, B_{q+1} - B_q)\|_{C_T\cL^{2}_{ x}}
	\leq \sum_{q \geq 0} \delta_{q+1}^{\frac12} <\9.
    \end{align}
This yields that $\{(u_q,B_q)\}_{q\geq 0}$ is a Cauchy sequence in  $C_T\cL^2_x $, 
and so, there exists $(u,B)\in C_T\cL^2_x$ such that
\begin{align}   \label{uqBq-uB-0}
	\lim_{q\rightarrow+\infty}(u_q,B_q)=(u,B)\ \ in\ \ C_T \mathcal{L}^2_x.
\end{align}
Moreover,
since by the telescoping construction, $(w_{q},d_{q})(0)=0$ and $(u_q,B_q)(0)=(v_0,H_0)$ for all $q\geq 0$, we have $(u,B)(0)=(\wt u,\wt B)(0)=(v_0,H_0)$.
Taking into account the fact that $\lim_{q \to +\infty} (\mathring{R}_{q}^u, \mathring{R}_{q}^B) = 0 $ in $C([0,T];\cL^1_x)$,  
implied by \eqref{rubl1s},  
we infer that $(u,B)$ satisfies the MHD system \eqref{equa-MHD} in the sense of Definition \ref{weaksolu}, as claimed.

\medskip 
At last, the iterative energy and cross helicity estimates \eqref{energy-est} 
and \eqref{helicity-est} 
and the strong convergence in \eqref{uqBq-uB-0} 
yield that 
\begin{align*}
	e(t)- (\|u(t)\|_{L^2_x}^2+\|B(t)\|_{L^2_x}^2) =\lim_{q \to +\infty} e(t)- (\|u_q (t)\|_{L^2_x}^2+\|B_q(t)\|_{L^2_x}^2)= 0 
\end{align*}
and
\begin{align*}
	h(t)-\int_{\T^3} u(t)\cdot B(t) \,\d x =\lim_{q \to +\infty} h(t)- \int_{\T^3}  u_q(t) \cdot B_q(t) \,\d x= 0, 
\end{align*} 
which lead to the energy and cross helicity identities in \eqref{energy-d}.  
Therefore, we obtain infinitely many  continuous energy solutions for arbitrarily prescribed $\mathcal{L}_x^2$ initial data.

\subsection{Dissipative energy solutions with critical initial data} 
\label{Subsec-dis-energy}

This subsection is devoted to proving the energy decreasing part for the critical $\mathcal{H}_x^\frac 12$ initial data in Theorem~\ref{Thm-Non-MHD}. 
To begin with, 
let us first consider the more regular $\mathcal{H}_x^3$ initial data.

\subsubsection{Improved estimate of Reynolds and magnetic stresses}

For the regular initial data, e.g., $(v_0,H_0)\in \cH^3_x$, 
using the method analogous to the proof of Theorem~\ref{thm-fns}, one has 
a solution $(\wt u,\wt B)$ to the $\Lambda$-MHD system 
in the regular space $C_T\cH^3_x$.  
The following improved decay estimate of the Reynolds and magnetic stresses is important to construct decreasing energy profile. 

\begin{lemma}[Improved decay estimate of Reynolds and magnetic stresses]\label{lem-im-decay} 
Let $M:=\|(v_0,H_0)\|_{\cH^3_x}$.  
It holds the improved decay estimate that for any {$t$} close to $0$,
   \begin{align} \label{decay-R0-refined}
\|(\mathring{R}_{0}^u,\mathring{R}_{0}^B ) \|_{C_{[0,t]}\cL^{1}_x} \lesssim E_*^{-3} tM^2 e^{M}, 
\end{align} 
where $E_*$ is the large parameter in the 
wavenumber $\Lambda$ 
as in Subsection \ref{Subsec-back-CI}. 
\end{lemma}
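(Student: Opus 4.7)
The plan is to refine the proof of Lemma~\ref{Lem-Rey-decay} by exploiting the additional $\cH^3_x$ regularity of the initial data to turn the qualitative vanishing into a quantitative rate. The decisive ingredient is the elementary frequency-localization inequality
\[
\|\P_{\geq \Lambda(t)} f\|_{L^2_x} \leq \Lambda(t)^{-3}\|f\|_{H^3_x} = E_*^{-3}\, t\, \|f\|_{H^3_x},
\qquad t \in (0, T/2],
\]
which follows from Plancherel together with the choice $\Lambda(t) = E_* t^{-1/3}$. Two of the three factors on the right-hand side of \eqref{decay-R0-refined} ($E_*^{-3}$ and $t$) will be delivered directly by this inequality; the remaining factor $M^2 e^M$ will come from a uniform-in-$\Lambda$ bound on $\|(\wt u,\wt B)(t)\|_{\cH^3_x}$ over a short initial interval together with the energy identity \eqref{eq-e-2}.

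\emph{Step 1 (Uniform $\cH^3_x$ bound).} The plan is to propagate the $\cH^3_x$ regularity along the $\Lambda$-MHD system \eqref{equa-mhd-2}. Since $\P_{<\Lambda(t)}$ is a Fourier multiplier with symbol bounded by $1$, it is a self-adjoint contraction on every Sobolev space $H^s_x$ and commutes with $\Delta$. Applying $\Delta^3$ to the equations, testing against $(\wt u,\wt B)$, and using the standard Kato--Ponce commutator estimates for the MHD nonlinearities give
\[
\frac{d}{dt}\|(\wt u,\wt B)\|_{\cH^3_x}^2 + 2\nu\|(\wt u,\wt B)\|_{\dot{\cH}^4_x}^2 \lesssim \|(\wt u,\wt B)\|_{\cH^3_x}^3
\]
with an implicit constant independent of $\Lambda$. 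A Gr\"onwall bootstrap then yields $\|(\wt u,\wt B)(t)\|_{\cH^3_x} \lesssim M e^{CM/2}$ on a short initial interval depending only on $M$ and $\nu$. As usual this is first established for the Galerkin approximants $(\wt u_n,\wt B_n)$ from the proof of Theorem~\ref{thm-fns} and then passed to the limit using \eqref{conv}.

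\emph{Step 2 (Assembly).} Each summand of \eqref{r0u}--\eqref{r0b} has the form $A \mathring\otimes B$ in which at least one factor carries $\P_{\geq\Lambda(t)}$. For the pure high-frequency pieces, Cauchy--Schwarz combined with the frequency inequality yields
\[
\|\P_{\geq\Lambda} f \mathring\otimes g\|_{L^1_x} + \|\P_{<\Lambda} f \mathring\otimes \P_{\geq\Lambda} g\|_{L^1_x}
\lesssim E_*^{-3}\, t\, \|f\|_{H^3_x}\|g\|_{L^2_x}.
\]
For the mixed piece $\P_{\geq\Lambda}(\P_{<\Lambda} f \mathring\otimes \P_{<\Lambda} g)$, the support identity recorded in \eqref{decom-I} rewrites it as a finite sum of products whose factors carry $\P_{[\Lambda/6,2\Lambda]}$, and the same frequency inequality bounds each such factor by $E_*^{-3} t \|f\|_{H^3_x}$. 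Summing the contributions with $f,g\in\{\wt u,\wt B\}$ and bounding $\|(\wt u,\wt B)\|_{\cL^2_x} \leq M$ by the energy equality \eqref{eq-e-2} together with the $\cH^3_x$ bound from Step 1 gives
\[
\|(\mathring R_0^u, \mathring R_0^B)\|_{C_{[0,t]}\cL^1_x} \lesssim E_*^{-3}\, t\, M \cdot M e^{CM/2} \lesssim E_*^{-3}\, t\, M^2 e^M,
\]
which is exactly \eqref{decay-R0-refined}.

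The main obstacle is ensuring that the constant in the $\cH^3_x$ cubic inequality of Step 1 is independent of $\Lambda$, since the projection $\P_{<\Lambda(t)}$ is a time-dependent operator embedded inside the nonlinearity. The resolution is that $\P_{<\Lambda(t)}$ acts only on the nonlinear terms rather than on the variable being differentiated in time, so the energy identity carries no spurious contribution from $\Lambda'(t)$; the cubic nonlinear bound then follows from Kato--Ponce together with $\|\P_{<\Lambda} f\|_{H^s_x}\leq \|f\|_{H^s_x}$, uniformly in $\Lambda$. Once this is checked, the rest of the argument is a quantitative rerun of the proof of Lemma~\ref{Lem-Rey-decay}.
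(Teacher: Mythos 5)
Your proof is correct, and your Step 2 (the assembly using Plancherel, Cauchy--Schwarz, and the decomposition \eqref{decom-I}) coincides with the paper's argument. The genuine divergence is in Step 1, the uniform $\cH^3_x$ control. You apply $\Delta^3$, test, and use Kato--Ponce to obtain a cubic differential inequality $\frac{d}{dt}\|(\wt u,\wt B)\|_{\cH^3_x}^2\lesssim\|(\wt u,\wt B)\|_{\cH^3_x}^3$, and then close by a Riccati-type bootstrap on a short initial interval whose length depends on $M$. The paper instead keeps the low-frequency cutoff on the nonlinear coefficient and shows
\[
\frac12\frac{\d}{\d t}\|(\wt u,\wt B)\|_{\dot\cH^3_x}^2\lesssim
\|(\P_{<\Lambda}\nabla\wt u,\P_{<\Lambda}\nabla\wt B)\|_{\cL^\infty_x}\,\|(\wt u,\wt B)\|_{\dot\cH^3_x}^2,
\]
after which Sobolev embedding and the frequency cutoff give $\|(\P_{<\Lambda}\nabla\wt u,\P_{<\Lambda}\nabla\wt B)\|_{\cL^\infty_x}\lesssim(1+\Lambda^2(s))^{11/8}\|(\wt u,\wt B)\|_{\cL^2_x}$. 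Because $\Lambda(s)=E_*s^{-1/3}$, the coefficient is of order $s^{-11/12}$ near $s=0$ and hence time-integrable, so Gr\"onwall yields an $H^3$ bound that does not rely on a shrinking local existence window. The tradeoff: the paper's argument buys a bound that does not constrain the time interval beyond the integrability of $\Lambda^{11/4}$ (and requires only $L^2$ control of the solution, not the not-yet-established $H^3$ control, in the $L^\infty$ factor), whereas yours is more standard but only yields the bound on a short interval depending on $M$. Since the lemma is invoked only for $t$ close to $0$ (and $T_1$ is later chosen small enough), your variant is adequate.

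One small imprecision to flag: the Riccati inequality $y'\lesssim y^{3/2}$ does not itself give an exponential bound $Me^{CM/2}$; what it gives on a short interval is an algebraic bound of the type $\|(\wt u,\wt B)(t)\|_{\cH^3_x}\lesssim M$. That is actually sharper than your stated estimate and feeds into the final inequality just as well, since the lemma only requires $\|(\mathring R_0^u,\mathring R_0^B)\|_{C_{[0,t]}\cL^1_x}\lesssim E_*^{-3}tM^2e^M$ as an upper bound. So the wording ``Gr\"onwall bootstrap yields $Me^{CM/2}$'' is cosmetic, not a logical flaw.
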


\begin{proof} 
As in  the decay estimates \eqref{est-non-u}-\eqref{est-non-b}, one has for the initial Reynolds stress 
\begin{align}\label{r0-est-1-2}
	\|\mathring{R}_{0}^u\|_{C_{[0,t]} L^{1}_x} 
	&\lesssim \|\P_{\geq \Lambda}(\P_{< \Lambda}\wt u\mathring \otimes \P_{< \Lambda}\wt u) \|_{C_{[0,t]}L^{1}_x}+\|\wt u \|_{C_{[0,t]}L^{2}_x}\|\P_{ \geq \Lambda} \wt u \|_{C_{[0,t]}L^2_x}\notag\\
 &\quad+ \|\P_{\geq \Lambda }(\P_{< \Lambda }\wt B\mathring \otimes \P_{< \Lambda }\wt B) \|_{C_{[0,t]}L^{1}_x}+\|\wt B \|_{C_{[0,t]}L^{2}_x}\|\P_{ \geq \Lambda} \wt B \|_{C_{[0,t]}L^2_x}, 
\end{align} 
where $(\wt u, \wt B)$ is the solution to the $\Lambda$-MHD system \eqref{equa-mhd-2}.

By the energy balance \eqref{eq-e-2},  
we first note that 
\begin{align}\label{u-l2-2r-b}
    \|(\wt u,\wt B )\|_{ C_{[0,t]}\cL^2_x}\leq \|(v_0,H_0) \|_{\cL^2_x}\leq M.
\end{align} 
Moreover, 
since $\Lambda(t)= E_* t^{-\frac13}$ 
for $t$ close to $0$, 
\begin{align}\label{ref-l2}
    \|\P_{ \geq \Lambda} \wt u \|_{ C_{[0,t]}L^2_x}\lesssim \| |\xi|^{-3}(1-\varphi_\Lambda)\|_{C_{[0,t]}L^\infty_\xi} \| |\xi|^3 \widehat{\wt u} \|_{ C_{[0,t]}L^2_\xi}\lesssim E_*^{-3} t \|\wt u\|_{ C_{[0,t]}H^3_x },
\end{align}
and
\begin{align}\label{est-non-decay-2-2}
\|\P_{\geq \Lambda}(\P_{< \Lambda}\wt u\mathring \otimes \P_{< \Lambda}\wt u)\|_{C_{[0,t]}L^1_x}
&\lesssim  ( \|\wt u \|_{ C_{[0,t]}L^2_x}  \| \P_{[\frac{\Lambda}{6},2\Lambda ]}\wt u\|_{ C_{[0,t]}L^2_x}  
+ \| \P_{[\frac{\Lambda}{6},2\Lambda]}\wt u\|_{ C_{[0,t]}L^2_x}^2 )\notag\\
&\lesssim E_*^{-3} t \|\wt u \|_{ C_{[0,t]} H^3_x}^2.
\end{align}

In order to estimate the $\mathcal{H}_x^3$ regularity of $(\wt u, \wt B)$, taking the $\dot \cH^3_x$ inner product of \eqref{equa-mhd-2} with $(\wt u, \wt B)$ and using Kato's inequality we get
\begin{align}
    \frac12\frac{\d}{\d t}\|(\wt u ,\wt B )\|_{\dot \cH^3_x}^2+\nu_1\| \wt u \|_{\dot H^4_x}^2+\nu_2\|\wt B \|_{\dot{H}_x^{4}}^2&\lesssim \|( \P_{< \Lambda } \nabla \wt u , \P_{< \Lambda } \nabla \wt B )\|_{ \cL^\infty_x} \|( \P_{< \Lambda } \wt u , \P_{< \Lambda } \wt B )\|_{ \dot \cH^3_x}^2.
\end{align}
By Sobolev's embedding $H^{\frac74}_x\hookrightarrow L^\infty_x$, 
\begin{align}   \label{uH3-est}
   \int_0^t \|( \P_{< \Lambda } \nabla \wt u , \P_{< \Lambda } \nabla \wt B )\|_{ \cL^\infty_x}  \d s
   &\lesssim  \int_0^t \| (1+|\xi|^2)^{\frac{11}{8}} \varphi_{\Lambda(s)}(\xi) (\widehat {\wt u}(s),\widehat {\wt B}(s)  ) \|_{\cL^2_\xi}  \d s\notag\\
   &\lesssim \int_0^t (1+4\Lambda^2(s))^\frac{11}{8} \d s \|(\wt u,\wt B)\|_{C_t  \cL^2_x} \lesssim M, 
\end{align} 
where the last step was due to the blow 
rate $t^{-1/3}$ of $\Lambda$ 
near the initial time  to achieve the convergence of the integral.
Hence, an application of Gronwall’s inequality yields that for $t\in (0,T)$, 
\begin{align}\label{gron}
    \|(\wt u,\wt B)\|_{C_t \dot \cH^3_x}^2\lesssim \|(v_0,B_0)\|_{\dot \cH^3_x}^2 \exp\left\{ \int_{0}^t \|(\P_{<\Lambda(s)}\wt u(s),\P_{<\Lambda(s)}\wt B(s))\|_{\dot \cH^3_x} \d s  \right\}\lesssim M^2 e^M.
\end{align} 

Therefore, plugging \eqref{ref-l2} and \eqref{est-non-decay-2-2} into \eqref{r0-est-1-2} and using \eqref{gron} we arrive at
\begin{align*}
\|\mathring{R}_{0}^u\|_{C_{[0,t]}L^{1}_x} &\lesssim E_*^{-3}t\|(\wt u,\wt B ) \|_{ C_{[0,t]} \cH^3_x}^2\lesssim E_*^{-3} t M^2 e^M, 
\end{align*}  
which gives the improved decay estimate 
for the Reynolds stress. 
The proof for the magnetic stress follows in an analogous manner.  
\end{proof}

\subsubsection{The regular $\mathcal{H}_x^3$  case}\label{subsec-con-dis-energy}
  
Next, we prove that there exist infinitely many  energy profiles $e: [0,\infty) \rightarrow [0,\9)$ that are strictly deceasing on time 
for arbitrarily prescribed $\mathcal{H}_x^3$ 
initial data.  
As explained at the beginning of Section 
\ref{Subsub-L2-data}, 
$(u_q, B_q) \equiv (\wt u, \wt B)$ 
dissipates the energy on the outer interval $[T,\infty)$, 
we only need to consider the inner temporal regime $(0,T)$. 

\medskip 
Unlike in Subsection~\ref{Subsec-time}, 
we choose $T_1$ sufficiently close to initial time such that  
the improved decay estimate in Lemma~\ref{lem-im-decay} holds on $(0,T_1]$ 
and, via the strong continuity at the initial time, 
\begin{align}\label{def-t1}
    \|( u_0(T_1+{\ell_0}), B_0(T_1+{\ell_0}))\|_{\cL^2_x}\geq \frac12 \|( v_0, H_0)\|_{\cL^2_x}.
 \end{align}
Then, we choose 
the explicit sequence of backward times $\{T_{q}\}_q$ such that $T_{q+1}=T_q/2$ for $q\geq 1$ and 
the amplitude parameter 
\begin{align}\label{edf-deltaq-2}
    \delta_{q+2}:= 16c_*^{-1}E_*^{-3}T_q M^2 e^{M}. 
\end{align} 
The forward time sequence $\{S_{q}\}_q$ is still chosen  as in Subsection~\ref{Subsec-time}, with $\delta_q$ modified by \eqref{edf-deltaq-2}.
We also keep using the shorthand 
$\overline{T}_{q+1} :=T_{q+1} + \ell_q$ 
and $\underline{T}_{q+1} :=T_{q+1} - \ell_q$, $q\geq 0$. 

Note that 
\begin{align}\label{asp-q-1}
	\|(\mathring{R}_{0}^u,\mathring{R}_{0}^B )\|_{  C_{[0,T_{q}+\ell'_{q-1}]} \cL^{1}_x}\leq \frac18c_*\delta_{q+2}
\end{align}
for all $q\geq 0$. 

Let $\nu:=\min\{\nu_1,\nu_2\}$ and $e$ satisfy \eqref{asp-e-0}-\eqref{asp-e-q}. By the energy balance \eqref{eq-e-2} and Poincar\'e's inequality, 
	\begin{align}\label{eq-e-st}
		\frac12 \frac{\d}{\d t} \| (u_0(t),B_0(t))\|_{\cL^2_x}^2= -\nu  \|(\nabla u_0(t),\nabla B_0(t))\|_{\cL^2_x}^2\leq -\nu C  \| (u_0(t),B_0(t))\|_{\cL^2_x}^2,
	\end{align}
 where $C$ is a universal constant. 
 In particular, $\|( u_0(t), B_0(t))\|_{\cL^2_x}$ is strictly decreasing on time 
 for $t\in [0,T_1]$  and 
\begin{align*}
     \|( u_0(t), B_0(t))\|_{\cL^2_x}\geq \|( u_0(\ot_1), B_0(\ot_1))\|_{\cL^2_x}>0,
 \end{align*}
 for all $t\in [0,T_1]$.

\medskip 
As a consequence, for any non-trivial initial datum $(v_0,H_0)\in \mathcal{H}_x^3$, we can choose two continuous upper and lower  energy profiles $\overline e$ and $\underline e $, respectively, 
defined by 
\begin{align}
    \overline e:= \|( u_0(t), B_0(t))\|_{\cL^2_x}^2+\overline f(t),\quad \underline e:= \|( u_0(t), B_0(t))\|_{\cL^2_x}^2+\underline f(t), 
    \quad t\in [0,T], 
\end{align}  
where 
the energy profiles  $\overline e$ and $\underline e$ coincide with the energy of the $\Lambda$-MHD solution $(u_0,B_0)$ at the initial time and 
the endpoint time $T$, 
and in the inner regime $(0,T)$ 
the upper variation profile  $\overline f$ 
is linearly continuous on each pieces 
$[\ot_{q+1},\ot_{q}]$ and $[\us_q,\us_{q+1}]$, $q\in \mathbb{N}_+$, 
and constant on $[\ot_1,\us_1]$, 
that is,  
\begin{align}\label{def-of}
&\overline f|_{[\ot_1,\us_1]}=\delta_2,\quad \overline{f}|_{[\ot_2,\ot_1]}=\delta_2+\frac{\delta_3-\delta_2}{\ot_2-\ot_1}(t-\ot_1),\quad \overline{f}|_{[\ot_{q+1},\ot_{q}]}= \delta_{q+1}+\frac{\delta_{q+2}-\delta_{q+1}}{\ot_{q+1}-\ot_q}(t-\ot_q),\notag\\
&\overline{f}|_{[\us_1,\us_2]}=\delta_2+\frac{\delta_3-\delta_2}{\us_2-\us_1}(t-\us_1),\quad \overline{f}|_{[\us_{q},\us_{q+1}]}= \delta_{q+1}+\frac{\delta_{q+2}-\delta_{q+1}}{\us_{q+1}-\us_q}(t-\us_q),
\end{align} 
while the lower variation profile 
$\underline{f}$ is linearly continuous 
on slightly different time pieces: 
\begin{align}\label{def-uf}
&\underline f|_{[\ut_1,\os_1]}=\frac34\delta_2,\quad \underline{f}|_{[\ut_2,\ut_1]}=\frac34\delta_2+\frac34\frac{\delta_3-\delta_2}{\ut_2-\ut_1}(t-\ut_1),\quad \underline{f}|_{[\ut_{q+1},\ut_{q}]}= \frac34\delta_{q+1}+\frac34\frac{\delta_{q+2}-\delta_{q+1}}{\ut_{q+1}-\ut_q}(t-\ut_q),\notag\\
&\underline{f}|_{[\os_1,\os_2]}=\frac34\delta_2+\frac34\frac{\delta_3-\delta_2}{\os_2-\os_1}(t-\os_1),\quad \underline{f}|_{[\os_{q},\os_{q+1}]}= \frac34\delta_{q+1}+\frac34\frac{\delta_{q+2}-\delta_{q+1}}{\os_{q+1}-\os_q}(t-\os_q).
\end{align}

In view of the strong continuity of the $\Lambda$-MHD solution on $[0,T]$, 
we see that the upper and lower energy profiles $\overline e$ and $\underline e$ are continuous on $[0,T]$. 
Moreover, it is not difficult to check that, since $\ell_q$ is very small such that 
$$
\ell_q<\min\{\frac{T_{q+1}-T_{q+2}}{20}, \frac{S_{q+2}-S_{q+1}}{20}\},$$
one has 
$$
 \underline f(\ut_q)<\overline f(\ut_q),\quad   \underline f(\ot_q)<\overline f(\ot_q),\quad  \underline f(\us_q)<\overline f(\us_q),\quad  \underline f(\os_q)<\overline f(\os_q),\quad \text{for all}\quad q\geq 1,
 $$ 
and thus
\begin{align*}
    \underline f(t)<\overline{f}(t), \quad \forall\ t\in (0,T]. 
\end{align*}
This shows that 
the upper and lower energy profiles 
are strictly different for all positive times 
\begin{align}\label{ue-oe}
    \underline e< \overline e,\quad \forall\ t\in (0,T).
\end{align}

On the forward regime  $[\us_1,T)$, 
by the construction, 
we note that $\overline f$ and $\underline{f}$ are decreasing on $[\us_1,T)$, 
so are the energy profiles $\overline e$ and $\underline{e}$.

\medskip 
On the backward regime 
$(0,\ot_1]$,  
though the variation profiles  $\overline f$ and $\underline{f}$ 
are increasing in time,  
we show that, 
by choosing $E_*$ sufficiently large in the wavenumber $\Lambda$,  
they are indeed relatively small compared to the decreasing energy of the $\Lambda$-MHD solution $(u_0, B_0)$, 
and so, the energy profiles $\overline e$ and $\underline e$ are still strictly decreasing on $(0,\ot_1]$ and $(0,\ut_1]$, respectively. 

To this end, 
we compute from \eqref{eq-e-st} and the above explicit expressions of $\overline f$ and $\underline f$ that 
for any $q\geq 1$,
\begin{align}\label{oe-ineq}
    \overline{e}'(t) &\leq  -2C\nu  \| (u_0(\ot_1),B_0(\ot_1))\|_{\cL^2_x}^2+ \frac{\delta_{q+2}-\delta_{q+1}}{\ot_{q+1}-\ot_q} \notag  \\
     &\leq  -2C\nu  \| (u_0(\ot_1),B_0(\ot_1))\|_{\cL^2_x}^2+ 64c_*^{-1} E_*^{-3} M^2 e^{M}, 
     \ \ t\in (\ot_{q+1},\ot_q), 
\end{align}
and  
\begin{align}\label{ue-ineq}
    \underline{e}'(t) &\leq  -2C\nu  \| (u_0(\ut_1),B_0(\ut_1))\|_{\cL^2_x}^2+ \frac34\frac{\delta_{q+2}-\delta_{q+1}}{\ut_{q+1}-\ut_q} \notag  \\
     &\leq  -2C\nu  \| (u_0(\ot_1),B_0(\ot_1))\|_{\cL^2_x}^2+ 64c_*^{-1} E_*^{-3} M^2 e^{M}, 
     \ \ t\in (\ut_{q+1},\ut_q), 
\end{align} 
where $M:=\|(v_0,H_0)\|_{\cH^3_x}$. 
Choosing $E_*$ sufficiently large such that
\begin{align}\label{decre-condi-c}
-C\nu\|(v_0,H_0)\|_{\cL^2_x}^2+64c_*^{-1}E_*^{-3} M^2 e^{M}<0. 
\end{align} 
Note that, by \eqref{def-t1}, 
\begin{align}\label{decre-condi-c-1}
-2C\nu\|(u_0(\ot_1),B_0(\ot_1))\|_{\cL^2_x}^2+64c_*^{-1}E_*^{-3} M^2 e^{M}<0. 
\end{align}  
Taking into account \eqref{oe-ineq} and \eqref{ue-ineq} one obtains that 
\begin{align}\label{oe-de}
    \overline e'(t)<0,\quad \forall\, t\in (\ot_{q+1},\ot_q),\ q\geq 1,
\end{align}
and
\begin{align}\label{ue-de}
     \underline e'(t)<0,\quad \forall\, t\in (\ut_{q+1},\ut_q),\ q\geq 1.
\end{align} 
Thus,  $\overline e$ and $\underline e$ are strictly decreasing on $(0,\overline T_1]$ and  $(0,\ut_1]$ 
respectively, as claimed. 

\medskip 
Now, $\overline{e}$ is decreasing on $[\us_1,T]$, $\overline f$ is a constant on $[\ot_1, \us_1]$ and $\|( u_0(t), B_0(t))\|_{\cL^2_x}$ is non-increasing on $[\overline T_1, \us_1]$, we thus infer that $\overline{e}$ is non-increasing on $[0, T]$. Similar argument also gives the non-increasing 
property of the lower energy profile $\underline{e}$ on $[0, T]$. Taking into account that  
the upper and lower energy profiles are strictly different on $(0,T)$,  
as revealed by \eqref{ue-oe}, 
we thus infer that there exist infinitely many strictly decreasing energy profile $e$ 
inside the energy channel formed by the  upper and lower energy profiles, 
that is, $\underline e(t)\leq e(t)\leq \overline e(t)$ for any $t\in [0,T]$,  
$e$ is decreasing on $[0,T]$, 
and $e$ is smooth for $t\in (0,T)$.

\medskip 
Finally, because the upper and lower energy profiles still satisfy the iterative estimates \eqref{asp-e-0}-\eqref{bdd-e-h-c1}, proceeding as in the telescoping constructions in Subsection \ref{Subsub-L2-data}, 
one can construct weak solutions $(u,B)$ to the MHD system with the decreasing energy profile $e$ on $[0,T]$. 
Consequently, 
as $(u,B)$ is a strong dissipative solution on $[T,+\infty)$, 
we construct infinitely many solutions with decreasing energy profiles globally on $[0,\infty)$ 
with arbitrarily prescribed $\mathcal{H}_x^3$ initial data.

\subsubsection{The critical $\mathcal{H}_x^\frac 12$ case}

Now, we are in position to deal with the critical case where the initial data $(v_0,H_0)$ are in the critical space $\cH^{\frac12}_x$. 

\medskip 
We first claim that for any given $\cH^\frac12_x$ initial data, 
the MHD system \eqref{equa-MHD} 
has a regular solution 
near the initial time. 

To this end, let $(u_n,B_n)$ be the standard  Galerkin approximations, we decompose $(u_n,B_n)$ by
\begin{align*}
    (u_n,B_n)=(v_n,H_n)+(w_n,z_n),
\end{align*}
where $v_n$ solves the linear equation 
\begin{align}\label{equa-vn}
	\p_t v_n- \nu_1 \Delta v_n=0, \quad  
    v_n(0,x)= \P_n v_0
\end{align}
and so does $H_n$:
\begin{align}\label{equa-hn}
    \p_t H_n- \nu_2 \Delta H_n=0, \quad 
     H_n(0,x)= \P_n H_0 
\end{align}
with $\P_n$ given by \eqref{def-pn}. 
Then, $(w_n,z_n)$ solves
\begin{align}\label{equa-wz}
	\begin{cases}
	\p_t w_n- \nu_1 \Delta w_n+ \P_n  ( u_n\cdot\nabla  u_n- B_n\cdot\nabla B_n)=0,  \\
    \p_t z_n- \nu_2 \Delta z_n+ \P_n  ( u_n\cdot\nabla  B_n- u_n\cdot\nabla B_n)=0,  \\
     w_n(0)=0,\ z_n(0)=0.
	\end{cases}
\end{align}
By the energy estimates, 
one has 
\begin{align}
    \frac12 \|(v_n,H_n)(t)\|_{\dot \cH^\frac12_x}^2+ \nu \int_0^t \|(v_n,H_n)(s)\|_{\dot \cH^\frac32_x}^2 \d s\leq \frac12 \|(v_0,H_0) \|_{\dot \cH^\frac12_x }^2,
\end{align}
where $\nu:=\min\{\nu_1,\nu_2\}$. 
Hence, by the interpolation, 
$(v_n,H_n)\in L^4(0,T;\dot{\cH}^1_x)$. 
Moreover, taking the $\mathcal{L}_x^2$ inner product of \eqref{equa-wz} with $(\nabla w_n,\nabla z_n)$, using Young's inequality, the interpolation inequality and the Sobolev embeddings  $\dot{H}^1_x\hookrightarrow L^6_x$ and $\dot{H}^\frac12_x\hookrightarrow L^3_x$ we get
\begin{align*}
\frac{\d}{\d t}\|(w_n ,z_n )\|_{\dot \cH^\frac12_x}^2+\nu \|(w_n ,z_n) \|_{\dot \cH^\frac32_x}^2\leq  C\|(w_n ,z_n)\|_{\dot \cH^\frac12_x}^2\|(w_n ,z_n)\|_{\dot \cH^\frac32_x}^2 + \|(v_n ,H_n)\|_{\dot \cH^1_x}^4.
\end{align*}
Thus, applying \cite[Lemma 10.3]{rrs16} for some $T'\in (0,T)$ sufficiently small, 
one gets $ (w_n,z_n)\in L^\infty_{T'}\dot{\cH}^{\frac12}_x\cap L^2_{T'}\dot{\cH}^{\frac32}_x$. Analogous arguments as in the proof of \cite[Theorem 10.1]{rrs16} 
then show that there exists a solution $(u',B')\in L^\infty_{T'}\dot{\cH}^{\frac12}_x\cap L^2_{T'}\dot{\cH}^{\frac32}_x$ to the MHD system  \eqref{equa-MHD} which obeys the energy balance \eqref{eq-e-2} on the small time interval $[0,T']$. 

Moreover, by the Sobolev embedding $\dot{H}^{\frac{9}{10}}_x\hookrightarrow L^5_x$ and the interpolation inequality, we derive
\begin{align}
    \int_{0}^{T'} \|(u',B')\|_{\cL^5_x}^5 \d t \lesssim \int_{0}^{T'} \|(u',B')\|_{\dot{\cH}^{\frac{9}{10}}_x}^5\d t\lesssim \|(u',B')\|_{L^\infty_{T'}\dot{\cH}^{\frac12}_x}^3  \|(u',B')\|_{L^2_{T'}\dot{\cH}^{\frac32}_x}^2,
\end{align}
which yields that $(u',B')\in L^5_{T'}\cL^5_x$. 
Hence, in view of the regularity criterion of MHD system in 
\cite[Theorem 1.3]{CD15}
and 
the embedding of Besov spaces (\cite[P169, (21)]{ST87})
\begin{align*}
   L^5(\mathbb{T}^3) \subseteq B^0_{5,\infty} (\mathbb{T}^3), 
\end{align*} 
we thus obtain that the local solution $(u',B')$ is regular on $[0,T']$. 
In particular, 
$(u',B')\in C_{T'}\cH^3_x$. 

\medskip 
Thus, one can choose $(u'(T'),B'(T'))\in \cH_x^3$ 
as the initial datum to construct 
infinitely many weak solutions on $[T',+\infty)$ with decreasing energy as 
in Subsection \ref{subsec-con-dis-energy}. 
Taking into account that $(u',B')$ satisfies the energy balance on $[0,T']$, 
and so, 
dissipates the energy, 
we thus obtain infinitely many weak solutions with decreasing energy 
on the whole time regime $[0,+\infty)$. 
The proof of Theorem \ref{Thm-Non-MHD} is finally complete. \hfill $\square$

\section{Appendix}
We first recall from \cite{bbv20} 
the following two geometric lemmas used in the construction of the velocity and magnetic perturbations.

\begin{lemma} [First Geometric Lemma] \cite[Lemma 4.1]{bbv20}) \label{Lem-Geo-Anti}
	\label{geometric lem 1}
	There exists a set $\Lambda_B \subset \mathbb{S}^2 \cap \mathbb{Q}^3$ that consists of vectors $k$ with associated orthonormal bases $(k, k_1, k_2)$,  $\varepsilon_B > 0$, and smooth positive functions $\gamma_{(k)}: B_{\varepsilon_B}(0) \to \mathbb{R}$, where $B_{\varepsilon_B}(0)$ is the ball of radius $\varepsilon_B$ centered at 0 in the space of $3 \times 3$ skew-symmetric matrices, such that for  $A \in B_{\varepsilon_B}(0)$ we have the following identity:
	\begin{equation}
		\label{antisym}
		A = \sum_{k \in \Lambda_B} \gamma_{(k)}^2(A) (k_2 \otimes k_1 - k_1 \otimes k_2) .
	\end{equation}
\end{lemma}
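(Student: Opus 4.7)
\textbf{Proof plan for Lemma \ref{geometric lem 1}.} The plan is to reduce the matrix problem to a vector-decomposition problem via the axial-vector isomorphism, and then solve the latter by an inverse function/implicit function argument.

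First, I would identify the space of $3\times 3$ skew-symmetric matrices with $\R^3$ via the standard Hodge-star isomorphism $\star : \mathrm{Skew}_3 \to \R^3$ sending a matrix with entries $A_{ij}$ to its axial vector. Under this isomorphism one computes directly that $\star (k_2 \otimes k_1 - k_1 \otimes k_2) = \pm k_1\times k_2 = \pm k$ whenever $(k,k_1,k_2)$ is an orthonormal basis of $\R^3$. Thus the desired identity \eqref{antisym} is equivalent, for $v = \star A \in \R^3$ with $|v|$ small, to the existence of a finite set $\Lambda_B \subset \mathbb S^2 \cap \mathbb Q^3$ and smooth positive functions $\gamma_{(k)}$ on a small ball around $0$ with
\[
v \;=\; \sum_{k \in \Lambda_B} \gamma_{(k)}^2(v)\, k.
\]

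Next, I would choose $\Lambda_B$ to be a finite set of rational unit vectors which positively spans $\R^3$ and which has $0$ in the relative interior of its positive cone. For instance, one may pick $\Lambda_B = \{\pm k^{(1)}, \pm k^{(2)}, \pm k^{(3)}\}$ where $k^{(1)}, k^{(2)}, k^{(3)}$ are any three linearly independent rational unit vectors (such vectors exist, e.g.\ normalized Pythagorean triples); by symmetry the uniform weight $\gamma_{(k)}^2(0) \equiv 1$ then realizes $0 = \sum_k \gamma_{(k)}^2(0)\, k$ and is strictly positive. Moreover, for each admissible $k$ one can choose an associated orthonormal basis $(k,k_1,k_2)$ with $k_1,k_2$ rational as well; this does not affect the decomposition and is useful only for later intermittency constructions.

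Then I would promote this decomposition at the origin to a smooth decomposition on a neighborhood. Consider the smooth map
\[
F: \R^{|\Lambda_B|} \longrightarrow \R^3, \qquad F(\{c_k\}_{k\in\Lambda_B}) = \sum_{k\in \Lambda_B} c_k\, k.
\]
Because $\Lambda_B$ spans $\R^3$, $DF$ is surjective, so by the rank theorem there exists a smooth right inverse $G: U \to \R^{|\Lambda_B|}$, defined on a neighborhood $U$ of $0 \in \R^3$, with $G(0) = (1,\ldots,1)$ and $F\circ G = \mathrm{id}_U$. Shrinking $U$, each component $G_{(k)}(v)$ remains strictly positive. Setting $\gamma_{(k)}(A) := \sqrt{G_{(k)}(\star A)}$ gives the smooth positive functions on $B_{\varepsilon_B}(0)$ satisfying \eqref{antisym}, for $\varepsilon_B$ sufficiently small.

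The only subtle point is the rationality constraint on $\Lambda_B$: the construction above must produce rational unit vectors, not merely unit vectors, because this is needed later to ensure the building blocks $W_{(k)}$, $D_{(k)}$ are periodic on $\T^3$ after the wavenumber dilation. I expect this to be the main (though mild) technical obstacle, resolved by explicitly exhibiting rational unit triples and checking that their $\pm$ closure positively spans $\R^3$; since only finitely many such vectors are needed, this is elementary but requires a concrete choice.
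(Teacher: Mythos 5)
This lemma is cited from \cite[Lemma 4.1]{bbv20} and not proved in the present paper, so there is no in-paper argument to compare against. Your sketch is correct and is essentially the standard proof of such decomposition lemmas: after the axial-vector reduction $\star(k_2\otimes k_1 - k_1\otimes k_2) = k$ (with the right-handed convention) the problem becomes a smooth, strictly positive decomposition $v = \sum_{k} \gamma_{(k)}^2(v)\,k$ over a finite positively-spanning set of rational unit vectors, and the perturbation from the uniform solution $\gamma_{(k)}^2 \equiv 1$ at $v=0$ goes through. Two minor refinements. The rank/inverse-function theorem is unnecessary: $F(\{c_k\}) = \sum_k c_k k$ is linear, so one may take any linear right inverse $L$ (e.g., write $v = \sum_j \alpha_j(v)\,k^{(j)}$ and set $L_{\pm k^{(j)}}(v) = \pm\alpha_j(v)/2$) and let $G(v) = (1,\ldots,1) + L(v)$; the components are affine and stay strictly positive on a ball, so $\gamma_{(k)} := (G_{(k)}\circ \star)^{1/2}$ is smooth and positive there. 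And the rationality of the companion frames $(k_1,k_2)$, which you flag as the main obstacle, is actually automatic rather than a constraint on the choice of $\Lambda_B$: for $k\neq e_1$, reflection through the hyperplane orthogonal to $e_1 - k$ is an orthogonal map with rational entries (when $k$ is a rational unit vector) sending $e_1\mapsto k$, and the images of $e_2,e_3$ then give a rational orthonormal completion, with a sign change on one of them fixing the orientation. The explicit $\Lambda_B$ of \cite{bbv20}, with the common denominator $N_\Lambda = 65$ recorded in \eqref{NLambda}, realizes this concretely.
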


\begin{lemma} [Second Geometric Lemma]\cite[Lemma 4.2]{bbv20})
	\label{geometric lem 2}
	There exists a set $\Lambda_u \subset \mathbb{S}^2  \cap \mathbb{Q}^3$ that consists of vectors $k$ with associated orthonormal bases $(k, k_1, k_2)$,  $\varepsilon_u > 0$, and smooth positive functions $\gamma_{(k)}: B_{\varepsilon_u}(\Id) \to \mathbb{R}$, where $B_{\varepsilon_u}(\Id)$ is the ball of radius $\varepsilon_u$ centered at the identity in the space of $3 \times 3$ symmetric matrices,  such that for  $S \in B_{\varepsilon_u}(\Id)$ we have the following identity:
	\begin{equation}
		\label{sym}
		S = \sum_{k \in \Lambda_u} \gamma_{(k)}^2(S) k_1 \otimes k_1 .
	\end{equation}
	Furthermore, we may choose $\Lambda_u$ such that $\Lambda_B \cap \Lambda_u = \emptyset$.
\end{lemma}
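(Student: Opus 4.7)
The plan is to construct the set $\Lambda_u$ together with the positive smooth functions $\gamma_{(k)}$ via a two-step procedure: first exhibit an explicit positive rational decomposition of $\Id$, and then invoke the implicit function theorem to extend it smoothly to a neighborhood in $B_{\varepsilon_u}(\Id)$. Note that the space of $3 \times 3$ symmetric matrices $\mathrm{Sym}_3(\mathbb{R})$ is $6$-dimensional, and the convex cone generated by rank-one symmetric matrices $\{k_1 \otimes k_1 : k_1 \in \mathbb{S}^2\}$ equals the closed cone of positive semidefinite matrices, whose interior contains $\Id$. Hence $\Id$ admits \emph{some} positive combination $\Id = \sum_{j=1}^N \alpha_j k_1^{(j)} \otimes k_1^{(j)}$ with $\alpha_j > 0$.

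To make this rational, I would pick a finite symmetric family $\{k_1^{(j)}\}_{j=1}^N \subset \mathbb{S}^2 \cap \mathbb{Q}^3$ large enough to span $\mathrm{Sym}_3(\mathbb{R})$ via the associated rank-one tensors. Standard candidates are Pythagorean triples and their permutations/sign changes; closing up under a finite symmetry group ensures that the averaged decomposition of $\Id$ has the symmetry forcing equal (and, by construction, strictly positive) weights on each orbit. This gives $\bar\gamma_j := \sqrt{\alpha_j} > 0$ with $\Id = \sum_j \bar\gamma_j^2 k_1^{(j)} \otimes k_1^{(j)}$.

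Next, I would define $F : (0,\infty)^N \to \mathrm{Sym}_3(\mathbb{R})$ by $F(\gamma) = \sum_j \gamma_j^2\, k_1^{(j)} \otimes k_1^{(j)}$ and note $DF(\bar\gamma)(\delta) = 2 \sum_j \bar\gamma_j \delta_j\, k_1^{(j)} \otimes k_1^{(j)}$. Since the family spans and $\bar\gamma_j > 0$, $DF(\bar\gamma)$ is surjective; restricting to a $6$-element subfamily where the rank-one tensors are linearly independent makes the restricted differential an isomorphism. The implicit function theorem then produces smooth $\gamma_j(S)$ on a neighborhood of $\Id$ with $F(\gamma(S)) = S$ and $\gamma_j(S) > 0$ by continuity; shrinking the neighborhood yields $\varepsilon_u > 0$. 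The remaining $\gamma_j$ (outside the distinguished 6) can be frozen at $\bar\gamma_j$, providing freedom to enlarge $\Lambda_u$ without breaking the parameterization.

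Finally, for the disjointness $\Lambda_u \cap \Lambda_B = \emptyset$: since $\Lambda_B$ is finite and the spanning and positivity conditions are open conditions on the family $\{k_1^{(j)}\}$, a generic rational perturbation of each $k_1^{(j)}$ (again chosen via Pythagorean parameterizations of $\mathbb{S}^2 \cap \mathbb{Q}^3$) avoids $\Lambda_B$ while preserving all required properties. The main obstacle I anticipate is the combined rational/positivity/spanning requirement: producing an \emph{explicit} rational decomposition with strictly positive coefficients is more constrained than its real counterpart. The symmetry-group averaging trick above is what I would rely on to reduce this to checking that a single orbit representative has positive weight, which in turn follows from $\Id$ being interior to the PSD cone. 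The smooth dependence and disjointness are then routine consequences of openness.
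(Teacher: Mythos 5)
The paper does not prove this lemma itself; it is cited from \cite[Lemma~4.2]{bbv20}, so there is no in-paper argument to compare against. Your outline---a positive rational rank-one decomposition of $\Id$ followed by the implicit function theorem---is precisely the standard Nash-type mechanism used for such geometric lemmas, and it is the correct skeleton of the argument.

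There is, however, a genuine gap: your argument works entirely with the vectors $k_1$ appearing in the decomposition and never constructs the orthonormal frames $(k,k_1,k_2)$ that the lemma actually asserts. Note that $\Lambda_u$ is a set of $k$'s, not $k_1$'s, and for the scheme in this paper to produce $\mathbb{T}^3$-periodic building blocks one needs \emph{all three} vectors $k,k_1,k_2$ to be rational (this is the content of the $N_\Lambda$ requirement \eqref{NLambda} in the Appendix, even though the lemma's phrasing only makes $k\in\mathbb{Q}^3$ explicit). Your proposal should therefore argue that each chosen $k_1\in\mathbb{S}^2\cap\mathbb{Q}^3$ can be completed to a \emph{rational} orthonormal basis $(k,k_1,k_2)$. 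This is true---$O(3,\mathbb{Q})$ acts transitively on $\mathbb{S}^2\cap\mathbb{Q}^3$, so every rational unit vector is a column of a rational orthogonal matrix---but it is a nontrivial arithmetic fact, not an automatic consequence of choosing Pythagorean points on the sphere. Relatedly, the ``generic rational perturbation'' argument for $\Lambda_u\cap\Lambda_B=\emptyset$ must be performed within the family of rational orthonormal frames, not merely rational unit vectors, so the openness you invoke is openness in a more constrained set and needs to be justified there. Two smaller points: planar Pythagorean triples alone do span $\mathrm{Sym}_3(\mathbb{R})$ (so that step is fine once one checks the cross terms), and your trick of freezing the $\gamma_j$ outside a distinguished six-element subfamily is legitimate, since it reduces the IFT to an invertible $6\times 6$ system with a shifted right-hand side.
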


By the choice of $\Lambda_u$ and $\Lambda_B$ in \cite{bbv20}, there exists $N_{\Lambda} \in \mathbb{N}$ (for instance, $N_{\Lambda} = 65$) such that
\begin{equation} \label{NLambda}
	\{ N_{\Lambda} k,N_{\Lambda}k_1 , N_{\Lambda}k_2 \} \subset N_{\Lambda} \mathbb{S}^2 \cap \mathbb{Z}^3.
\end{equation}
Also, let $M_*$ be a geometric constant such that
\begin{align}	\label{M bound}
	\sum_{k \in \Lambda_{u}} \norm{\gamma_{(k)}}_{C^4(B_{\varepsilon_u}(\Id))}
	+ \sum_{k \in \Lambda_{B}} \norm{\gamma_{(k)}}_{{C^4(B_{\varepsilon_B}}(0))} \leq M_*.
\end{align}
This parameter  is universal and is used in the estimates of the size of perturbations.

\medskip 
Next, we recall the definition of inverse-divergence operators $\mathcal{R}^u$ and $\mathcal{R}^B$, defined by
\begin{align}
	& (\mathcal{R}^u v)^{kl} := \partial_k \Delta^{-1} v^l + \partial_l \Delta^{-1} v^k - \frac{1}{2}(\delta_{kl} + \partial_k \partial_l \Delta^{-1})\div \Delta^{-1} v, \label{operaru} \\
	& (\mathcal{R}^Bf)_{ij} :=  \varepsilon_{ijk} (-\Delta)^{-1}(\curl f)_k,\label{operarb}
\end{align}
where $\int_{\mathbb{T}^3} v dx =0$, $\div f=0$,
and $\varepsilon_{ijk}$ is the Levi-Civita tensor, $i,j,k,l \in \{1,2,3\}$. 
The operator $\mathcal{R}^u$ returns symmetric and trace-free matrices,
while the operator $\mathcal{R}^B$ returns skew-symmetric matrices.
Moreover, one has the algebraic identities
\begin{align*}
	\div \mathcal{R}^u(v) = v,\ \ \div \mathcal{R}^B(f) = f.
\end{align*}
Both
$|\nabla|\mathcal{R}^u$ and $|\nabla|\mathcal{R}^B$ are Calderon-Zygmund operators
and thus they are bounded in the spaces $L^p$, $1<p<+\infty$.
See \cite{bbv20,dls13} for more details.

\medskip 
Below we recall the standard H\"{o}lder estimates from \cite[(130)]{bdis15}.
\begin{proposition}[Standard  H\"{o}lder estimates]\label{prop-holder}
Let $\Omega \subset \mathbb{R}^N$ and $f: \Omega \rightarrow \mathbb{R}$, $g: \mathbb{R}^n \rightarrow \Omega$ be two smooth functions. Then, there exists a constant $C$, such that
for every $m \in \mathbb{N} \backslash\{0\}$ it holds
\begin{align}
	 & \|f\circ g\|_{C^m} \leq C\left( \|f\|_{\dot C^1}\| g\|_{ \dot C^m}+\|\nabla f\|_{ C^{m-1}}\|g\|_{C^{m-1}}\| g\|_{ \dot C^m}\right), \label{holder1}\\
	 & \|f\circ g\|_{C_x^m} \leq C\left(\|f\|_{ \dot C^1}\| g\|_{\dot C^m}+\|\nabla f\|_{ C^{m-1}}\|g\|_{ \dot C^1}^{m-1}\right).\label{holder2}
\end{align}
\end{proposition}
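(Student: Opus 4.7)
The plan is to prove both estimates in \eqref{holder1}–\eqref{holder2} by the classical route: expand $\nabla^m(f\circ g)$ via Fa\`a di Bruno's formula and then invoke Gagliardo–Nirenberg interpolation in Hölder spaces. Explicitly, Fa\`a di Bruno gives
\[
\nabla^m(f\circ g) = \sum_{k=1}^{m}\ \sum_{\substack{n_1,\ldots,n_m \geq 0 \\ \sum_j j n_j = m,\ \sum_j n_j = k}} c_{n_1,\ldots,n_m}\,(\nabla^k f)\!\circ\! g \cdot \prod_{j=1}^{m} \bigl(\nabla^j g\bigr)^{\otimes n_j},
\]
with positive combinatorial constants $c_{n_1,\ldots,n_m}$. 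Taking $L_x^\infty$-norms reduces the proof of both inequalities to the pointwise bound $\|\nabla^k f\|_\infty \prod_j \|\nabla^j g\|_\infty^{n_j}$ for each admissible partition, and the goal is to dominate every such contribution by the right-hand side of \eqref{holder1} (respectively \eqref{holder2}).

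First I would isolate the distinguished partition corresponding to $k = 1$, i.e.\ $n_m = 1$ and $n_j = 0$ for $j<m$; it contributes exactly $\|\nabla f\|_\infty \|\nabla^m g\|_\infty = \|f\|_{\dot C^1}\|g\|_{\dot C^m}$, reproducing the first summand on the right of both estimates. For the remaining partitions one has $2\le k \le m$, so $\|\nabla^k f\|_\infty \le \|\nabla f\|_{C^{m-1}}$ is automatic and it suffices to control $\prod_j \|\nabla^j g\|_\infty^{n_j}$ under the constraints $\sum j n_j = m$ and $\sum n_j = k$. To obtain \eqref{holder2} I would interpolate every factor via the standard bound $\|\nabla^j g\|_\infty \lesssim \|\nabla g\|_\infty^{1-(j-1)/(m-1)}\|\nabla^m g\|_\infty^{(j-1)/(m-1)}$; the constraints collapse the product to $\|\nabla g\|_\infty^{k-(m-k)/(m-1)}\|\nabla^m g\|_\infty^{(m-k)/(m-1)}$, and a Young-type absorption removes the residual $\|\nabla^m g\|_\infty$ factor and leaves the advertised power of $\|g\|_{\dot C^1}$. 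For the sharper version \eqref{holder1}, I would instead preserve one full factor of $\|\nabla^m g\|_\infty$: for each partition with $k\ge 2$ choose the largest index $j^\ast$ with $n_{j^\ast}\ge 1$, interpolate only that factor as $\|\nabla^{j^\ast} g\|_\infty \lesssim \|g\|_{C^{m-1}}^{1-(j^\ast-1)/(m-1)}\|\nabla^m g\|_\infty^{(j^\ast-1)/(m-1)}$, and control every remaining $\nabla^j g$ directly by $\|g\|_{C^{m-1}}$; a final Young step then produces $\|g\|_{C^{m-1}}\|g\|_{\dot C^m}$, matching the second summand of \eqref{holder1}.

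The main obstacle will be the combinatorial bookkeeping across all admissible partitions: verifying that the interpolation exponents aggregate correctly (using the homogeneity relation $\sum j n_j = m$), that the extra powers of $\|g\|_{C^{m-1}}$ generated in the \eqref{holder1} route do not accumulate into higher-order factors, and that the Young-type absorptions are carried out consistently without introducing terms outside the target right-hand side. No new analytic idea beyond Fa\`a di Bruno and Gagliardo–Nirenberg interpolation is needed; the argument reproduces the one underlying \cite[(130)]{bdis15}, from which the statement is quoted.
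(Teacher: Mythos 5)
The paper does not actually prove Proposition~\ref{prop-holder}; it quotes it from \cite[(130)]{bdis15}, so there is no in-paper argument to compare yours against. Your general plan (Fa\`a di Bruno, Gagliardo--Nirenberg interpolation, Young) is indeed the standard route and the one used in \cite{bdis15}. However, as written, your argument for \eqref{holder2} has a genuine gap at the Young step. After bounding $\|\nabla^k f\|_\infty \le \|\nabla f\|_{C^{m-1}}$ and interpolating the $g$-factors, a given Fa\`a di Bruno term with $2\le k\le m-1$ is controlled by
\[
\|\nabla f\|_{C^{m-1}}\,\big(\|g\|_{\dot C^1}^{m}\big)^{1-\theta}\,\big(\|g\|_{\dot C^m}\big)^{\theta}, \qquad \theta=\frac{m-k}{m-1}\in(0,1),
\]
and Young's inequality splits this into $\|\nabla f\|_{C^{m-1}}\|g\|_{\dot C^1}^{m}+\|\nabla f\|_{C^{m-1}}\|g\|_{\dot C^m}$. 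The second summand is \emph{not} dominated by the advertised right-hand side, because $\|\nabla f\|_{C^{m-1}}\ge \|f\|_{\dot C^1}$; the Young step does not ``remove'' the $\|\nabla^m g\|_\infty$ factor, it merely splits the product, and the split produces a term with the wrong $f$-weight.

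The missing ingredient is that you must interpolate in $f$ as well, with the \emph{same} exponent $\theta$: using $\|\nabla^k f\|_\infty \lesssim \|f\|_{\dot C^1}^{\theta}\,\|\nabla f\|_{C^{m-1}}^{1-\theta}$ (valid since $1\le k\le m$ and the interpolation exponents match the constraint $\sum_j j n_j = m$, $\sum_j n_j = k$), the whole term becomes
\[
\big(\|f\|_{\dot C^1}\|g\|_{\dot C^m}\big)^{\theta}\,\big(\|\nabla f\|_{C^{m-1}}\|g\|_{\dot C^1}^{m}\big)^{1-\theta},
\]
and a single application of Young then lands exactly in the target. Without this matched double interpolation the argument does not close. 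Two further remarks: the $k=m$ partition produces $\|\nabla^m f\|_\infty\|g\|_{\dot C^1}^m$, so your own computation already forces the power $m$ rather than the $m-1$ that appears in \eqref{holder2} as printed (and, by a scaling/linear-$g$ test, the exponent $m-1$ there and the second summand of \eqref{holder1} as printed cannot be correct; these appear to be transcription slips from \cite{bdis15}); and your one-factor-interpolation strategy for \eqref{holder1} leaves $\|g\|_{C^{m-1}}^{k-\gamma}\|g\|_{\dot C^m}^{\gamma}$ with $k-\gamma>1$ in general, which a ``final Young step'' does not reduce to $\|g\|_{C^{m-1}}\|g\|_{\dot C^m}$, so that route also needs to be reworked.
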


\medskip 
The stationary phase lemma below is a main tool to handle the errors of Reynolds stress.
\begin{lemma}[\cite{lt20}, Lemma 6; see also \cite{bv19b}, Lemma B.1] \label{commutator estimate1}
	Let $a \in C^{2}\left(\mathbb{T}^{3}\right)$. For all $1<p<+\infty$ we have
	$$
	\left\||\nabla|^{-1} \P_{\neq 0}\left(a \P_{\geq k} f\right)\right\|_{L^{p}\left(\mathbb{T}^{3}\right)} \lesssim k^{-1}\left\|\nabla^{2} a\right\|_{L^{\infty}\left(\mathbb{T}^{3}\right)}\|f\|_{L^{p}\left(\mathbb{T}^{3}\right)},
	$$
	holds for any smooth function $f \in L^{p}\left(\mathbb{T}^{3}\right)$.
\end{lemma}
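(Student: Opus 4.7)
}

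The plan is to exploit a Littlewood--Paley decomposition of $a$ at the threshold $k/2$ and treat the resulting two pieces by different mechanisms. Write
\[
a\, \P_{\geq k} f = (\P_{< k/2} a)(\P_{\geq k} f) + (\P_{\geq k/2} a)(\P_{\geq k} f),
\]
set $b := \P_{\geq k} f$, and note that $\|b\|_{L^p} \lesssim \|f\|_{L^p}$ uniformly in $k$ by the $L^p$-boundedness of the Littlewood--Paley projector on $\T^3$.

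\textbf{Low-frequency piece of $a$.} Because $\P_{<k/2}a$ has Fourier support in $\{|\xi|<k/2\}$ and $b$ has Fourier support in $\{|\xi|\geq k\}$, the product $(\P_{<k/2}a)\,b$ has Fourier support contained in $\{|\xi|\geq k/2\}$. The Fourier multiplier $|\nabla|^{-1}\P_{\neq 0}$ restricted to such frequencies is a smooth symbol of order $-1$ bounded by $2/k$; by a standard Mikhlin--H\"ormander or convolution-kernel argument this yields
\[
\bigl\||\nabla|^{-1}\P_{\neq 0}[(\P_{<k/2}a) b]\bigr\|_{L^p}
\lesssim k^{-1}\bigl\|(\P_{<k/2}a) b\bigr\|_{L^p}
\lesssim k^{-1}\|a\|_{L^\infty}\|f\|_{L^p}.
\]
Since $a$ may be replaced by $a-\fint a$ inside $\P_{\neq 0}$ when paired with the mean-zero function $b$ (and the mean-zero case gives $\|a-\fint a\|_{L^\infty}\lesssim \|\nabla^2 a\|_{L^\infty}$ on $\T^3$ by iterated Poincar\'e), this contribution is bounded by $k^{-1}\|\nabla^2 a\|_{L^\infty}\|f\|_{L^p}$, as desired.

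\textbf{High-frequency piece of $a$.} Here the gain comes from the smallness of $\P_{\geq k/2}a$ itself. Writing $\P_{\geq k/2}a = \P_{\geq k/2}(-\Delta)^{-1}(-\Delta a)$ and estimating the convolution kernel of the Fourier multiplier $\P_{\geq k/2}(-\Delta)^{-1}$, which rescales to a fixed $L^1$ kernel with factor $k^{-2}$, one obtains the Bernstein-type bound
\[
\|\P_{\geq k/2}a\|_{L^\infty}\lesssim k^{-2}\|\nabla^2 a\|_{L^\infty}.
\]
Using the $L^p$-boundedness of the Calder\'on--Zygmund type operator $|\nabla|^{-1}\P_{\neq 0}$ (bounded as a zero-order operator on $\T^3$ after removing the zero mode) and H\"older's inequality, we deduce
\[
\bigl\||\nabla|^{-1}\P_{\neq 0}[(\P_{\geq k/2}a)b]\bigr\|_{L^p}
\lesssim \bigl\|(\P_{\geq k/2}a)b\bigr\|_{L^p}
\lesssim k^{-2}\|\nabla^2 a\|_{L^\infty}\|f\|_{L^p},
\]
which is even stronger than required. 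Adding the two pieces completes the proof.

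\textbf{Main obstacle.} The only genuinely delicate point is verifying the $k^{-1}$ decay for the multiplier $|\nabla|^{-1}\P_{\neq 0}$ restricted to frequencies $|\xi|\geq k/2$ as a bounded operator on $L^p(\T^3)$, uniformly in $k$. On the torus this reduces to showing that the convolution kernel of $|\xi|^{-1}\varphi(\xi/k)\mathbf 1_{\xi\neq 0}$ has $L^1$-norm $\lesssim k^{-1}$; this follows by scaling from the Euclidean analogue, combined with Poisson summation to control the contribution from lattice periodization. All other steps are routine Bernstein and Mikhlin multiplier estimates.
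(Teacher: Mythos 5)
Your decomposition $a = \P_{<k/2}a + \P_{\geq k/2}a$ together with Bernstein for the high piece and the frequency-support gain for the low piece is exactly the standard route (this is also what one finds in \cite{lt20} and \cite{bv19b}); the high-frequency estimate is fine. The gap is in the low-frequency piece, in the sentence ``Since $a$ may be replaced by $a-\fint a$ inside $\P_{\neq 0}$\,\ldots''. That replacement does \emph{not} leave the quantity unchanged: writing $c=\fint a$ and $b=\P_{\geq k}f$, one has
\begin{align*}
|\nabla|^{-1}\P_{\neq 0}\bigl[(\P_{<k/2}a)b\bigr]
 = |\nabla|^{-1}\P_{\neq 0}\bigl[(\P_{<k/2}(a-c))b\bigr] + c\,|\nabla|^{-1}b,
\end{align*}
because $\P_{\neq 0}$ acts on the \emph{product} and $b$ already has zero mean, so $\P_{\neq 0}[cb]=cb$ survives. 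The extra term contributes $|c|\,k^{-1}\|f\|_{L^p}$, and $|\fint a|$ is in no way controlled by $\|\nabla^2 a\|_{L^\infty}$.

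Indeed, the lemma as literally stated is false: take $a\equiv 1$ and $f(x)=e^{2\pi i\xi\cdot x}$ with $|\xi|$ comparable to $k$. Then the left side is $\simeq k^{-1}$ while the right side vanishes. The formulation in \cite{bv19b} (Lemma B.1) carries the extra hypothesis $\int_{\T^3} a\,\P_{\geq k}f\,\d x=0$, and in every application in the present paper (e.g.\ in \eqref{I1-esti}, \eqref{J1-esti}) one takes $a=\nabla(a_{(k)}^2)$, which is a gradient and hence mean-free, so $\fint a=0$ and the offending term is absent. You should either state this hypothesis explicitly, or strengthen the right-hand side to $k^{-1}(\|\nabla^2 a\|_{L^\infty}+|\fint a|)\|f\|_{L^p}$. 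Under the mean-zero hypothesis your low-frequency estimate is immediate, since then $\|a\|_{L^\infty}=\|a-\fint a\|_{L^\infty}\lesssim\|\nabla^2 a\|_{L^\infty}$ by the iterated Poincar\'e argument you already give, and no ``replacement'' step is needed at all. With that correction the proof is complete; one minor wording note: $|\nabla|^{-1}\P_{\neq 0}$ is an order $-1$ (not zero-order) multiplier, though it is of course bounded on $L^p(\T^3)$ for $1<p<\infty$, which is all you use.
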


The following  lemmas are the main tools to control the $L^2$-estimates of the perturbations.
\begin{lemma}[\cite{cl21}, Lemma 2.4; see also \cite{bv19b},  Lemma 3.7]   \label{Decorrelation1}
	Let $\sigma\in \mathbb{N}$ and $f,g:\mathbb{T}^3\rightarrow \R$ be smooth functions. Then for every $p\in[1,+\9]$,
	\begin{equation}\label{lpdecor}
		\big|\|fg(\sigma\cdot)\|_{L^p(\T^3)}-\|f\|_{L^p(\T^3)}\|g\|_{L^p(\T^3)} \big|\lesssim \sigma^{-\frac{1}{p}}\|f\|_{C^1(\T^3)}\|g\|_{L^p(\T^3)}.
	\end{equation}
\end{lemma}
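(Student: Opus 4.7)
The plan is to reduce the problem to an estimate on the $p$-th powers $\|fg(\sigma\cdot)\|_{L^p}^p$ versus $\|f\|_{L^p}^p\|g\|_{L^p}^p$, and then extract the $L^p$-norm by taking a $p$-th root at the end. The case $p=\infty$ is trivial: the RHS reduces to $\|f\|_{C^1}\|g\|_{L^\infty}$ (up to an absolute constant), which dominates each term on the left via the triangle inequality. So I focus on $p\in[1,\infty)$. The key idea is that $g(\sigma\cdot)$ has period $1/\sigma$, hence on cubes of side $1/\sigma$ it completes exactly one period; on the same cubes, $f$ barely varies because $|\nabla f|\le\|f\|_{C^1}$.

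More concretely, I would partition $\T^3$ into cubes $\{Q_j\}$ of side length $1/\sigma$ and pick a base point $x_j\in Q_j$. Since $\sigma\in\mathbb N$, a change of variables and the periodicity of $g$ give
\begin{equation*}
\int_{Q_j}|g(\sigma x)|^p\,dx=\sigma^{-3}\int_{\T^3}|g(y)|^p\,dy=\sigma^{-3}\|g\|_{L^p}^p.
\end{equation*}
Using the pointwise inequality $\bigl||a|^p-|b|^p\bigr|\le p\,\max(|a|,|b|)^{p-1}|a-b|$ with $|x-x_j|\lesssim\sigma^{-1}$ and the crude bound $\|f\|_{C^0}^{p-1}\|f\|_{C^1}\le\|f\|_{C^1}^p$, replacing $|f(x)|^p$ by $|f(x_j)|^p$ on each $Q_j$ costs at most $\sigma^{-1}\|f\|_{C^1}^p$ per unit mass of $|g(\sigma\cdot)|^p$. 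The residual sum $\sigma^{-3}\sum_j|f(x_j)|^p$ is then a Riemann sum for $\|f\|_{L^p}^p$ with the same $O(\sigma^{-1}\|f\|_{C^1}^p)$ error, and combining these yields
\begin{equation*}
\bigl|\,\|fg(\sigma\cdot)\|_{L^p}^p-\|f\|_{L^p}^p\|g\|_{L^p}^p\,\bigr|\lesssim \sigma^{-1}\|f\|_{C^1}^p\,\|g\|_{L^p}^p.
\end{equation*}

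To convert this into the estimate on the norms themselves, I would invoke the elementary inequality $|a-b|^p\le|a^p-b^p|$, valid for $a,b\ge0$ and $p\ge1$ (which follows from the monotonicity of $a\mapsto a^p-b^p-(a-b)^p$ for $a\ge b$). Applying it with $a=\|fg(\sigma\cdot)\|_{L^p}$ and $b=\|f\|_{L^p}\|g\|_{L^p}$, and taking $p$-th roots, delivers
\begin{equation*}
\bigl|\,\|fg(\sigma\cdot)\|_{L^p}-\|f\|_{L^p}\|g\|_{L^p}\,\bigr|\lesssim \sigma^{-1/p}\|f\|_{C^1}\|g\|_{L^p},
\end{equation*}
which is exactly \eqref{lpdecor}.

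There is no real obstacle here; the only mild point is keeping the per-cube error uniform in $p$, which is precisely why the crude bound $\|f\|_{C^0}^{p-1}\|f\|_{C^1}\le\|f\|_{C^1}^p$ is used — the extra factor is exactly what disappears when taking the $p$-th root at the last step. The argument uses only $\sigma\in\mathbb N$ and smoothness of $f$ and $g$; periodicity of $g$ enters exclusively through the identification $\int_{Q_j}|g(\sigma\cdot)|^p=\sigma^{-3}\|g\|_{L^p}^p$.
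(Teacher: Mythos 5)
The paper does not prove this lemma; it is stated in the Appendix and cited directly from \cite{cl21} (Lemma 2.4) and \cite{bv19b} (Lemma 3.7), so there is no in-paper proof to compare against. Your cube-decomposition argument — tile $\T^3$ by cubes of side $\sigma^{-1}$, use the exact identity $\int_{Q_j}|g(\sigma x)|^p\,dx=\sigma^{-3}\|g\|_{L^p}^p$ from periodicity, freeze $f$ at a base point in each cube, bound both replacement errors by $O(\sigma^{-1}\|f\|_{C^1}^p\|g\|_{L^p}^p)$, and then pass from $p$-th powers to norms via $|a-b|^p\le|a^p-b^p|$ — is the standard proof and is correct. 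One point you brush past: the pointwise inequality $\bigl||a|^p-|b|^p\bigr|\le p\max(|a|,|b|)^{p-1}|a-b|$ carries an explicit factor of $p$, so the $p$-th power error is really $O(p\,\sigma^{-1}\|f\|_{C^1}^p\|g\|_{L^p}^p)$; your remark about the ``extra factor disappearing when taking the $p$-th root'' reads as referring only to $\|f\|_{C^1}^p\mapsto\|f\|_{C^1}$, but the same step also converts the $p$ into $p^{1/p}\le e^{1/e}$, which is what makes the final implicit constant uniform in $p\in[1,\infty)$. Together with your separate treatment of $p=\infty$, that closes the argument. The only other remark worth making is that some expositions instead use a Fourier-support argument (frequencies of $|g(\sigma\cdot)|^p$ lie in $\sigma\mathbb{Z}^3$, so the interaction with $|f|^p$ only sees high modes of $f$), which yields the same bound with essentially the same $\sigma^{-1}$ gain; your real-space proof is the more elementary and more common route.
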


\begin{lemma}[\cite{bms21}, Proposition 2]\label{lem-mean}
	Let $a \in C^{\infty}\left(\mathbb{T}^3 ; \mathbb{R}\right), v \in C_0^{\infty}\left(\mathbb{T}^3 ; \mathbb{R}\right)$. Then, for any $r \in[1, \infty]$ and $\lambda\in \mathbb{N}$,
	$$
	\left|\int_{\mathbb{T}^3} a v(\lambda x)\d x\right| \leq \lambda^{-1} C_r\|\nabla a\|_{L^r_x}\|v\|_{L^{r'}_x}.
	$$
\end{lemma}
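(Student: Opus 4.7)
\medskip
\noindent\textbf{Proof plan for Lemma \ref{lem-mean}.} The guiding idea is that $v(\lambda x)$ oscillates at scale $\lambda^{-1}$, so one expects to trade each oscillation for a factor $\lambda^{-1}$ via integration by parts, with the mean-free assumption on $v$ ensuring the required antiderivative exists. A single integration by parts already delivers the claimed power $\lambda^{-1}$.

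\medskip
\noindent\textbf{Main steps.} First, since $v$ has zero spatial mean on $\mathbb{T}^3$, introduce the vector field $U := \nabla \Delta^{-1} v$, where $\Delta^{-1}$ denotes the inverse Laplacian on mean-free functions. Then $U$ is smooth, periodic, and satisfies $\div U = v$. Calder\'on--Zygmund theory (or the Bogovskii-type antidivergence on $\mathbb{T}^3$ in the endpoint cases $r'\in\{1,\infty\}$) gives
\[
\|U\|_{L^{r'}(\mathbb{T}^3)} \leq C_r \|v\|_{L^{r'}(\mathbb{T}^3)}.
\]
Second, use that $\lambda \in \mathbb{N}$ to ensure $x\mapsto U(\lambda x)$ remains a well-defined periodic function on $\mathbb{T}^3$, and observe by the chain rule that
\[
\div_x \bigl[U(\lambda x)\bigr] \,=\, \lambda (\div U)(\lambda x) \,=\, \lambda\, v(\lambda x),
\]
so $v(\lambda x) = \lambda^{-1} \div_x[U(\lambda x)]$. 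Third, substitute this expression into the integral and integrate by parts on the torus (no boundary terms):
\[
\int_{\mathbb{T}^3} a(x)\, v(\lambda x)\, dx
\,=\, \lambda^{-1} \int_{\mathbb{T}^3} a(x) \div_x\bigl[U(\lambda x)\bigr]\, dx
\,=\, -\lambda^{-1}\int_{\mathbb{T}^3} \nabla a(x)\cdot U(\lambda x)\, dx.
\]
Finally, apply H\"older's inequality together with the fact that, for $\lambda\in\mathbb{N}$, the map $x\mapsto \lambda x$ is a $\lambda^3$-fold covering of $\mathbb{T}^3$ onto itself and hence preserves all $L^{r'}$-norms:
\[
\Bigl|\int_{\mathbb{T}^3} a\, v(\lambda \cdot)\, dx\Bigr|
\,\leq\, \lambda^{-1} \|\nabla a\|_{L^r_x}\|U(\lambda \cdot)\|_{L^{r'}_x}
\,=\, \lambda^{-1} \|\nabla a\|_{L^r_x}\|U\|_{L^{r'}_x}
\,\leq\, C_r \lambda^{-1} \|\nabla a\|_{L^r_x}\|v\|_{L^{r'}_x}.
\]

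\medskip
\noindent\textbf{Where the difficulty lies.} The only non-routine step is securing the bound $\|U\|_{L^{r'}} \lesssim_r \|v\|_{L^{r'}}$ uniformly in $r\in[1,\infty]$. For $1<r'<\infty$ this is immediate from the $L^{r'}$-boundedness of the Riesz transforms $\nabla\Delta^{-1}$. The endpoints $r'\in\{1,\infty\}$ fail for the Riesz transform, but can be handled by choosing a different antidivergence on $\mathbb{T}^3$ (for instance a Bogovskii-type construction on a fundamental domain patched periodically, or a direct one-dimensional antiderivative along a chosen coordinate after reducing to the mean-free case slice by slice), producing some possibly larger finite constant $C_r$. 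Since the lemma is formulated with an $r$-dependent constant, these endpoint losses are absorbed without affecting the statement.
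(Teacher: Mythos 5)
Your proof is correct, and the paper itself does not prove this lemma---it simply cites it from the reference \cite{bms21} (Proposition 2), so there is no internal argument to compare against. Your strategy (build an antidivergence $U$ with $\div U = v$, transfer the oscillation to a factor $\lambda^{-1}$ by one integration by parts, and use that $x\mapsto \lambda x$ preserves $L^{r'}$-norms on $\mathbb{T}^3$ when $\lambda\in\mathbb{N}$) is the standard and expected route.

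One small point in your ``where the difficulty lies'' paragraph is worth clarifying, because it actually dissolves the difficulty you flag. With your choice $U=\nabla\Delta^{-1}v$ (acting on zero-mean functions), the operator $\nabla\Delta^{-1}$ is \emph{not} a Riesz transform but an order $-1$ smoothing operator: its convolution kernel $\nabla G$, where $G$ is the periodic Green's function of $-\Delta$ on $\mathbb{T}^3$, satisfies $|\nabla G(x)|\lesssim |x|^{-2}$ near the origin and is smooth elsewhere, hence $\nabla G\in L^1(\mathbb{T}^3)$. Young's convolution inequality then gives
\[
\|U\|_{L^{r'}(\mathbb{T}^3)} \;\le\; \|\nabla G\|_{L^1(\mathbb{T}^3)}\,\|v\|_{L^{r'}(\mathbb{T}^3)}
\]
for \emph{every} $r'\in[1,\infty]$, including the endpoints, with a constant that is in fact uniform in $r$. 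So no Bogovskii-type construction, endpoint substitute, or slice-by-slice antiderivative is needed; the very antidivergence you already chose works across the entire range of exponents. With that clarification your argument is complete and self-contained.
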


\medskip
\noindent{\bf Acknowledgment.} Z. Zeng is supported by Natural Science Foundation of Jiangsu Province 
 (No. SBK20240 43113). D.\ Zhang is grateful for the NSFC grants (No. 12271352, 12322108) 
and Shanghai Frontiers Science Center of Modern Analysis.


\begin{thebibliography}{99}

\bibitem{A09}
H. Aluie.
\newblock Hydrodynamic and magnetohydrodynamic turbulence: Invariants, cascades, and locality.
\newblock {Ph.D. thesis}, Johns Hopkins University, 2009.
	

\bibitem{bbv20}
R. Beekie, T. Buckmaster, and V. Vicol.
\newblock Weak solutions of ideal {MHD} which do not conserve magnetic
helicity.
\newblock {\em Ann. PDE}, 6(1):Paper No. 1, 40, 2020.


\bibitem{bcv21}
T. Buckmaster, M. Colombo, and V. Vicol.
\newblock Wild solutions of the Navier-Stokes equations whose singular sets in time have Hausdorff dimension strictly less than 1.
\newblock {\em J. Eur. Math. Soc.}, 24(9):3333-3378, 2022.

\bibitem{bdis15}
T. Buckmaster, C. De~Lellis, P. Isett, and L. Sz\'{e}kelyhidi, Jr.
\newblock Anomalous dissipation for {$1/5$}-{H}\"{o}lder {E}uler flows.
\newblock {\em Ann. of Math. (2)}, 182(1):127--172, 2015.



\bibitem{bdsv19}
T. Buckmaster, C. De~Lellis, L. Sz\'{e}kelyhidi, Jr., and
V. Vicol.
\newblock Onsager's conjecture for admissible weak solutions.
\newblock {\em Comm. Pure Appl. Math.}, 72(2):229--274, 2019.



\bibitem{bv19b}
T. Buckmaster and V. Vicol.
\newblock Nonuniqueness of weak solutions to the {N}avier-{S}tokes equation.
\newblock {\em Ann. of Math. (2)}, 189(1):101--144, 2019.


\bibitem{bv19r}
T. Buckmaster and V. Vicol.
\newblock Convex integration and phenomenologies in turbulence.
\newblock {\em EMS Surv. Math. Sci.}, 6(1-2):173--263, 2019.


\bibitem{bv21}
T. Buckmaster and V. Vicol.
\newblock Convex integration constructions in hydrodynamics.
\newblock {\em Bull. Amer. Math. Soc. (N.S.)}, 58(1):1--44, 2021.


\bibitem{bms21}
J. Burczak, S. Modena, and L. Sz\'{e}kelyhidi, Jr.
\newblock Non Uniqueness of Power-Law Flows.
\newblock {\em Commun. Math. Phys.}, 388:199-243, 2021.




\bibitem{CD15}
A. Cheskidov and M. Dai.
\newblock Regularity criteria for the 3D Navier-Stokes and MHD equations.
\newblock arXiv:1507.06611v6, 2015.

\bibitem{CK}
A. Cheskidov and L. Kavlie.
\newblock Degenerate pullback attractors for the 3D Navier-Stokes equations.
\newblock{J. Math. Fluid Mech.}, 17:411-421, 2015.

\bibitem{cl21}
A. Cheskidov and X. Luo.
\newblock Nonuniqueness of weak solutions for the transport equation at
critical space regularity.
\newblock {\em Ann. PDE}, 7(1):Paper No. 2, 45, 2021.

\bibitem{cl20.2}
A. Cheskidov and X. Luo.
\newblock Sharp nonuniqueness for the Navier-Stokes equations.
\newblock {\em Invent. math.}, 229(3):987-1054, 2022.

\bibitem{cl23}
A. Cheskidov and X. Luo.
\newblock $L^2$-critical nonuniqueness for the 2D Navier-Stokes equations.
\newblock  {\em Ann. PDE}, 9(13):Paper No. 13, 56, 2023.


\bibitem{czz24}
A. Cheskidov, Z. Zeng and D. Zhang.
\newblock Strong non-uniqueness of finite energy solutions of the 3D deterministic and stochastic Navier-Stokes equations.
\newblock arXiv:2407.17463, 2024.

\bibitem{CF}
P. Constantin and C. Foias.
\newblock {\em {N}avier-{S}tokes Equations.}
\newblock Chicago Lectures in Mathematics. University of Chicago Press, Chicago, IL, 1988.



\bibitem{dai21}
M. Dai. Non-uniqueness of Leray-Hopf weak solutions of the 3d Hall-MHD system. {\em SIAM J. Math.
Anal.}, 53(5):5979--6016, 2021.



\bibitem{d2001}
P.~A. Davidson.
\newblock {\em An Introduction to Magnetohydrodynamics}.
\newblock Cambridge Texts in Applied Mathematics. Cambridge University Press,
2001.

\bibitem{dls09}
C. De~Lellis and L. Sz\'{e}kelyhidi, Jr.
\newblock The {E}uler equations as a differential inclusion.
\newblock {\em Ann. of Math. (2)}, 170(3):1417--1436, 2009.

\bibitem{dls10}
C. De~Lellis and L. Sz\'{e}kelyhidi, Jr.
\newblock On admissibility criteria for weak solutions of the {E}uler equations.
\newblock {\em Arch. Rational Mech. Anal.}, 195, 225--260, 2010.



\bibitem{dls13}
C. De~Lellis and L. Sz\'{e}kelyhidi, Jr.
\newblock Dissipative continuous {E}uler flows.
\newblock {\em Invent. Math.}, 193(2):377--407, 2013.




\bibitem{FL18}
D. Faraco and S. Lindberg. Magnetic helicity and subsolutions in ideal MHD. arXiv:1801.04896,  2018.



\bibitem{fls21}
D. Faraco, S. Lindberg, and L. Sz\'{e}kelyhidi, Jr.
\newblock Bounded solutions of ideal {MHD} with compact support in space-time.
\newblock {\em Arch. Ration. Mech. Anal.}, 239(1):51--93, 2021.

\bibitem{fls22}
D. Faraco, S. Lindberg, and L. Sz\'{e}kelyhidi, Jr.
\newblock Rigorous results on conserved and dissipated quantities in ideal MHD turbulence. 
\newblock {\em Geophys. Astrophys. Fluid Dyn.},   116(4):237--260, 2022.

\bibitem{fls21.2}
D. Faraco, S. Lindberg, and L. Sz\'{e}kelyhidi, Jr.
\newblock Magnetic helicity, weak solutions and relaxation of ideal MHD.
\newblock {\em Comm. Pure Appl. Math.}, 77(4):2387-2412, 2024.




\bibitem{FK64}
H. Fujita and T. Kato. On the Navier–Stokes initial value problem. I. {\em Arch. Ration. Mech. Anal.}, 16, 269–315, 1964.



\bibitem{hopf1951}
E. Hopf.
\newblock \"{U}ber die {A}nfangswertaufgabe f\"{u}r die hydrodynamischen
{G}rundgleichungen.
\newblock {\em Math. Nachr.}, 4:213--231, 1951.

\bibitem{I18}
P. Isett.
\newblock A proof of {O}nsager's conjecture.
\newblock {\em Ann. of Math. (2)}, 188(3):871--963, 2018.


\bibitem{KL07}
E. Kang and J. Lee.
\newblock Remarks on the magnetic helicity and energy conservation for ideal
magneto-hydrodynamics.
\newblock {\em Nonlinearity}, 20(11):2681--2689, 2007.

\bibitem{kato84}
T. Kato. Strong Lp-solutions of the Navier–Stokes equation in R
m, with applications to weak
solutions. {\em Math. Z.}, 187:471–480, 1984.

\bibitem{KT02}
H. Koch and D. Tataru. Well-posedness for the Navier–Stokes equations. {\em Adv. Math.}, 157:22–35, 2001.


\bibitem{leray1934}
J. Leray.
\newblock Sur le mouvement d'un liquide visqueux emplissant l'espace.
\newblock {\em Acta Math.}, 63(1):193--248, 1934.



\bibitem{lqzz22}
Y. Li, P. Qu, Z. Zeng, and D. Zhang.
\newblock Sharp non-uniqueness for the 3D hyperdissipative Navier-Stokes equations: beyond the Lions exponent.
\newblock {\em J. Math. Pures Appl.}, 190: 103602, 2024.



\bibitem{lzz21}
Y. Li, Z. Zeng, and D. Zhang.
\newblock Non-uniqueness of weak solutions to 3D magnetohydrodynamic equations.
\newblock {\em J. Math. Pures Appl.}, 165(9):232-285, 2022.


\bibitem{lzz21.2}
Y. Li, Z. Zeng, and D. Zhang.
\newblock  Sharp non-uniqueness of weak solutions to 3D magnetohydrodynamic equations.
\newblock {\em J. Funct.Anal.}, 287(7):110528, 2024.



\bibitem{L96}P.-L. Lions. Mathematical Topics in Fluid Mechanics. Vol. 1. Incompressible Models. Clarendon Press, 1996.



\bibitem{lt20}
T. Luo and E.~S. Titi.
\newblock Non-uniqueness of weak solutions to hyperviscous {N}avier-{S}tokes
equations: on sharpness of {J}.-{L}. {L}ions exponent.
\newblock {\em Calc. Var. Partial Differential Equations}, 59(3):Paper No. 92, 15, 2020.


\bibitem{luo19}
X. Luo.
\newblock Stationary solutions and nonuniqueness of weak solutions for the
{N}avier-{S}tokes equations in high dimensions.
\newblock {\em Arch. Ration. Mech. Anal.}, 233(2):701--747, 2019.




\bibitem{MY22}
C. Miao and K. Ye.
\newblock On the weak solutions for the MHD systems with controllable total energy and cross helicity.
\newblock {\em J. Math. Pures Appl.},  181(9): 190--227, 2024.

\bibitem{MS06}
H. Miura and O. Sawada. On the regularizing rate estimates of Koch-Tataru’s solution to the Navier-Stokes equations. {\em Asymptotic Analysis} 49, No 1-2, 1--15, 2006.

\bibitem{rrs16}
J. C. Robinson, J. L. Rodrigo, and W. Sadowski.
\newblock {\em The three-dimensional Navier-Stokes equations, Classical theory.}
\newblock { Cambridge Studies in Advanced Mathematics, 157.} Cambridge University Press, Cambridge, 2016.


\bibitem{ST87}
	H. Schmeisser and H. Triebel.
	\newblock {\em Topics in Fourier analysis and function spaces.}
	\newblock A Wiley-Interscience Publication. John Wiley \& Sons, Ltd., Chichester, 300 pp, 1987.

\bibitem{ST83}
M. Sermange and R. Temam.
Some mathematical questions related to the MHD equations.
\newblock {\em Comm. Pure Appl. Math.}, 36(5):635-664, 1983.

\bibitem{T}
R. Temam.
\newblock {\em {N}avier-{S}tokes Equations: Theory and Numerical Analysis.}
\newblock AMS Chelsea, Providence, Rhode Island, 2000.





\end{thebibliography}
\end{document}